\newcommand{\me}{\mathrm{e}}
\newcommand{\ii}{\mathbbm{i}}
\newcommand{\tsmallbbw}{\tilde{\mathbbm{w}}}
\newcommand{\ges}{\geqslant} 
\newcommand{\les}{\leqslant} 
\newcommand{\R}{\mathbb{R}}
\newcommand{\C}{\mathbb{C}}
\newcommand{\N}{\mathbb{N}}
\newcommand{\Z}{\mathbb{Z}}
\newcommand{\dd}{\mathrm{d}}
\newcommand{\re}{\mathrm{Re\,}}
\newcommand{\im}{\mathrm{Im\,}}
\newcommand{\Vtilde}{\tilde{\mathbb{V}}}
\newcommand{\Vtildesep}{\tilde{V}}
\newcommand{\formalFlow}{\tilde{\mathfrak{W}}}
\newcommand{\formalSeparatrix}{\tilde{\mathscr{W}}}
\newcommand{\formalSeparatrixSing}{\tilde{W}}
\newcommand{\formalFlowSing}{\tilde{\mathbb{W}}}
\newcommand{\wcomb}{\overline{\mathscr{W}}}
\newcommand{\Vcaltilde}{\tilde{\mathcal{V}}}
\newcommand{\Vcalhat}{\hat{\mathcal{V}}}
\newcommand{\Ahat}{\hat{A}}
\newcommand{\Bhat}{\hat{B}}
\newcommand{\Xhat}{\hat{X}}
\newcommand{\Yhat}{\hat{Y}}
\newcommand{\myqed}{}
\newcommand{\Vector}[2]{
\begin{pmatrix}
 #1 \\
 #2 
\end{pmatrix}}
\newcommand{\bx}{\textbf{\textit{x}}}
\newcommand{\dt}{\text{t\hspace{-.35em}-}}
\newtheorem{theorem}{Theorem}[section]
\newtheorem{corollary}[theorem]{Corollary}
\newtheorem{lemma}[theorem]{Lemma}
\newtheorem{proposition}[theorem]{Proposition}
\theoremstyle{definition}
\newtheorem{definition}[theorem]{Definition}
\theoremstyle{definition}
\newtheorem*{remark}{Remark}
\title{Splitting of separatrices in a family of area-preserving maps that unfolds a fixed point at the resonance of order three}
\author{Giannis Moutsinas\thanks{This work was supported by the EPSRC  grant EP/J003948/1}
\\[6pt]
Mathematics Institute, University of Warwick\\[6pt]
{\small
E-mail: Giannis.Moutsinas@gmail.com}}
\begin{document}

\maketitle

\begin{abstract}

We study the exponentially small splitting of separatrices in an analytic one-parameter family of area-preserving
maps that generically unfolds a 1:3 resonance. Near the resonance the normal form theory predicts existence of
a small triangle formed by separatrices of a period three hyperbolic point. We prove that in a generic family the
separatrices split, provided that the Stokes constant of the map does not vanish. This constant describes the
distance between the analytical continuations of invariant manifolds associated with the degenerate saddle of
the map at the exact resonance. We provide an asymptotic formula which describes the size of the splitting.
The leading term of this asymptotic formula is proportional to the Stokes  constant.
\end{abstract}

\setcounter{tocdepth}{2}
\tableofcontents

\section{Introduction}

\subsection{Splitting of separatrices near the 1:3 resonance}

In this paper we study the splitting of separatrices in an analytic family of area-preserving maps close to 1:3 resonance. 
Such families naturally appear in the stability  analysis of periodic orbits  in a Hamiltonian system with 2
degrees of freedom. The phase space of the Hamiltonian system is of dimension 4. The level sets of the Hamiltonian
functions are invariant and are of dimension 3. A Poincaré section is used to reduce the problem to a family of
area-preserving maps parametrised by the values of the energy (e.g. \cite{arnold78}).

If multipliers of a fixed point of an area-preserving map are not real, then they belong to the unit
circle and the fixed point is called elliptic. In a neighbourhood of an elliptic fixed point, the map
can be formally embedded into an autonomous  Hamiltonian flow. This implies existence of a formal integral
for the original Hamiltonian system. In particular,  if the reduction to the normal form is analytic,
then the Hamiltonian system is integrable. A classification of possible cases and their normal
forms can be found in the book  \cite{AKN06}.

In Appendix 7E of the classical textbook  \cite{arnold78}, Arnold states that Hamiltonian systems
with resonant periodic orbits are in general non-integrable. He uses a Hamiltonian system with a
periodic orbit at 1:3 resonance as an example. Arnold conjectures that, in spite of the prediction
of the normal form theory, generically the separatrices of the hyperbolic points do not coincide but
intersect transversely. He also states that the distance between the separatrices has to be exponentially
small, since otherwise the normal form theory would be able to detect it.

Let $U$ be a neighbourhood of the origin in $\C^2$ and $V$ be a neighbourhood of the origin in $\mathbb C$.
Consider an analytic family of area-preserving maps $g_\mu:U\to\mathbb C^2$ defined for all $\mu\in V$.
We assume that the function $g_\mu$ is real when its arguments are real (including the parameter $\mu$).
Suppose that the origin is a fixed point of $g_0$, i.e. $g_0(0)=0$, and that $\lambda_0^\pm=\me^{\pm 2\pi\ii/3}$
are the eigenvalues of the Jacobian matrix $g_0'(0)$.
We say that $g_0$ is an \textit{area-preserving map at 1:3 resonance}.

The implicit function theorem implies that the fixed point of $g_\mu$ depends analytically on $\mu$ and
we can move it to the origin by an analytic change of coordinates. Therefore, without loss of generality
we assume that $g_\mu(0)=0$ without loosing in generality. We will see that when $\mu\ne0$ a hyperbolic
3-periodic orbit appears close to the origin. Let $G_\mu = g_\mu^3$ and let $\lambda_\mu^\pm$ denote the
eigenvalues of $G_\mu'$ evaluated on the periodic orbit, with $\lambda_\mu^-<1<\lambda_\mu^+$.

The Birkhoff normal form theorem \cite{Bir66} implies that there is a formal canonical change of coordinates $\Phi$
such that the map $N_\mu=\Phi\circ g_\mu \circ \Phi^{-1}$ commutes with the rotation
$R_{2\pi/3}$, i.e. $N_\mu\circ R_{2\pi/3} = R_{2\pi/3} \circ N_\mu$. The map $N_\mu$ is called
\textit{Birkhoff normal form} of $g_\mu$. Since the map $R_{4\pi/3} \circ N_\mu$ is tangent to identity,
there  is a formal Hamiltonian $H_\mu$ such that
\begin{align*}
 N_\mu = R_{2\pi/3} \circ \phi^1_{H_\mu},
\end{align*}
where $\phi^1_H$ is a formal time one flow of the Hamiltonian vector field that corresponds to the Hamiltonian $H$.
The corresponding  Hamiltonian vector field is usually called \textit{Takens normal form}, \cite{Tak73}.
The Hamiltonian $H_\mu$ is a formal integral of the map $N_\mu$. Reverting back to the original coordinates
we get a formal integral of the map $g_\mu$. This implies that if $g_\mu$ is not integrable then at least one
of the formal series $\Phi$ and $H_\mu$ has to be divergent.

We note that the formal series $H_0$ do not contain neither linear nor quadratic terms and they are $\mathbb Z_3$-symmetric,
i.e., $H_0\circ R_{2\pi/3}=H_0$. We say that $g_0$  is \textit{non-degenerate} if the terms of order 3 in
the Hamiltonian $H_0$ do not vanish.  The symmetry of $H_0$ implies that vanishing of the third order terms
is a condition of co-dimension two  \cite{Gel2009}.

If $\left|\frac{d\lambda_\mu^\pm}{d\mu}\right|_{\mu=0}\ne0$ we say that $g_\mu$ unfolds the 1:3 resonance {\em generically}. 
The level lines of a normal form Hamiltonian $H_\mu$ are sketched on  Figure \ref{fig-normal_form_flow}. 
Of course, the series cannot be assumed convergent, so the sketch represents a truncated series.
\begin{figure}[t]
	\centering
	\begin{subfigure}[b]{0.31\textwidth}
		\centering
		\includegraphics[width=\textwidth]{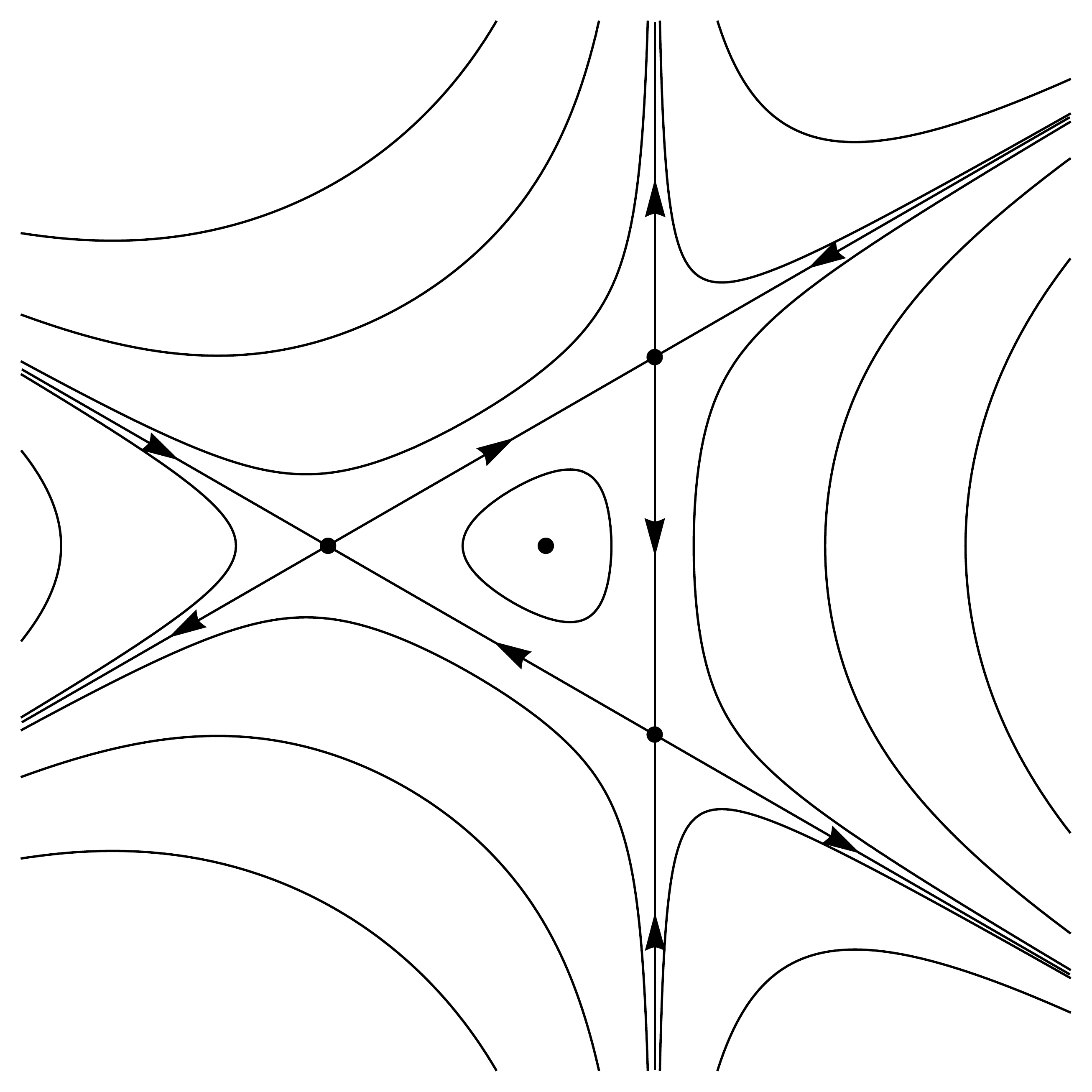}
	\end{subfigure}
	~
	\begin{subfigure}[b]{0.31\textwidth}
		\centering
		\includegraphics[width=\textwidth]{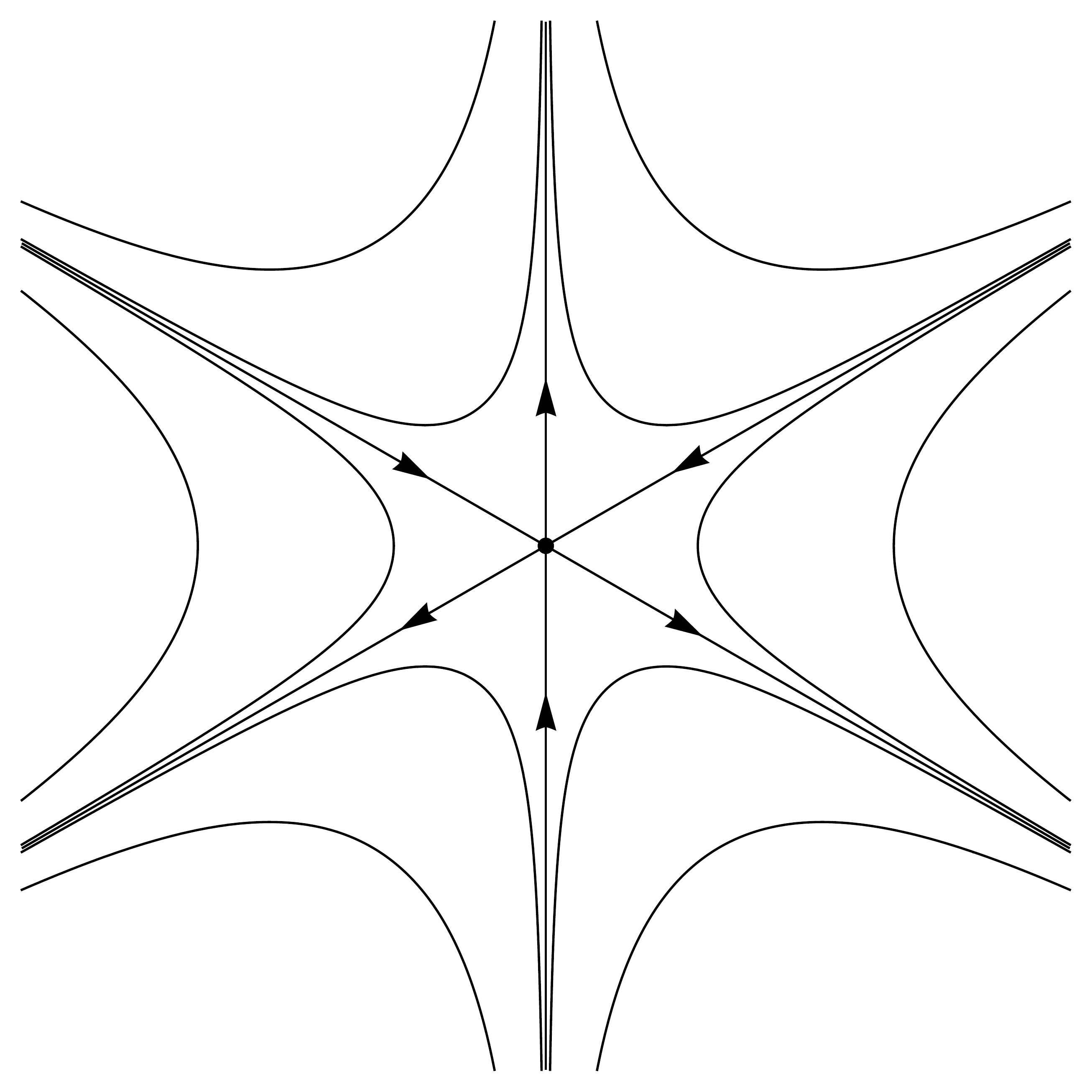}
	\end{subfigure}%
	~
	\begin{subfigure}[b]{0.31\textwidth}
		\centering
		\includegraphics[width=\textwidth]{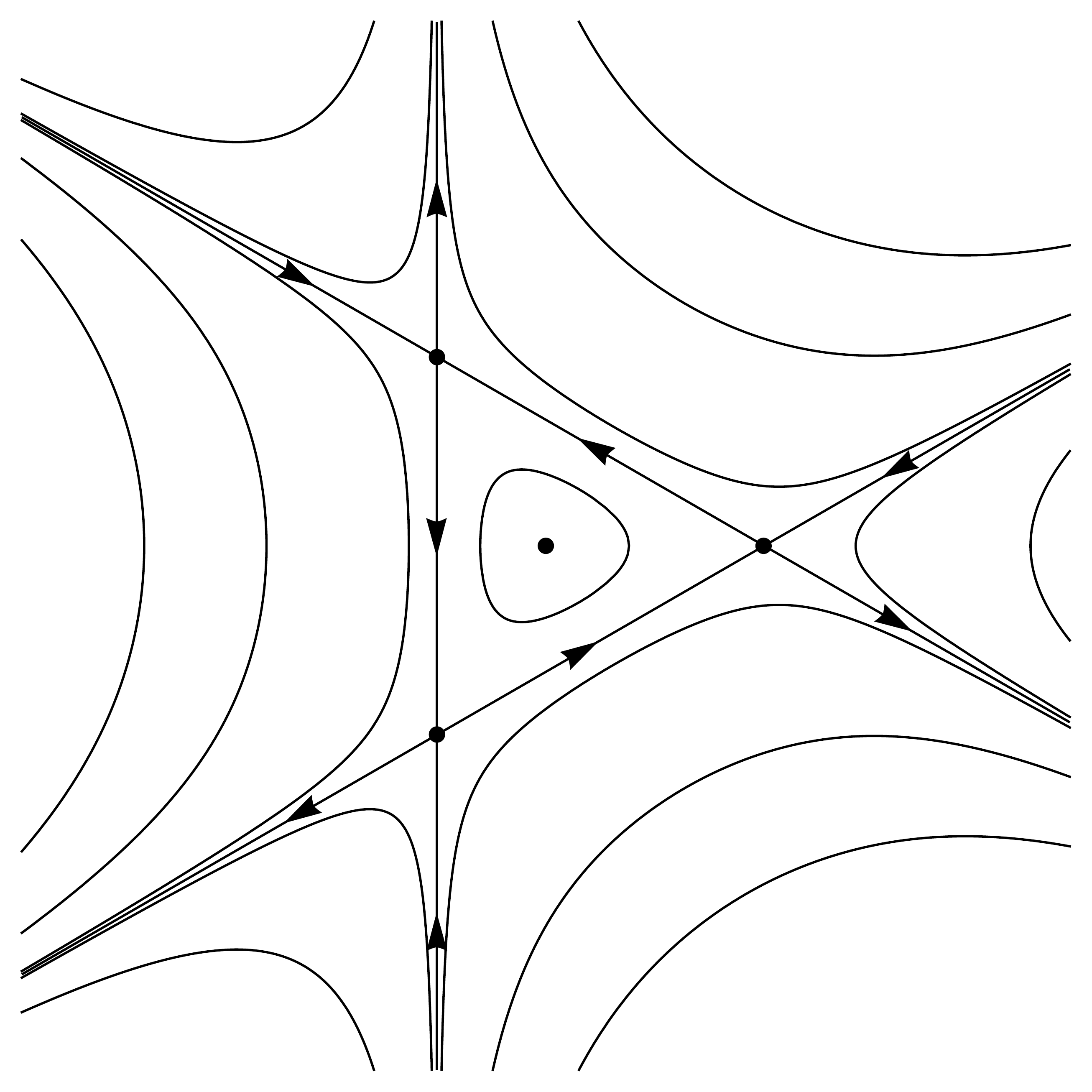}
	\end{subfigure}%
	\caption{The normal form of the generically unfolded 1:3 resonant map.}
	\label{fig-normal_form_flow}
\end{figure}
We see that close to the resonance there exists a 3-periodic orbit with homoclinic connections. The
separatrices of this orbit define an invariant triangle. In the present paper we prove that if
the Stokes constant does not vanish, then the homoclinic connections of the normal form do not
exist in the dynamics of the original map and the separatrices intersect in a non-trivial way, as
shown in Figure~\ref{fig-split_triangle}.

In order to measure the splitting of the separatrices we will use \textit{Lazutkin's homoclinic
invariant}, introduced in \cite{GLS94}. Notice that the points of intersection of the two separatrices
form a homoclinic orbit. We define the homoclinic invariant to be the area of the parallelogram that
is formed by the tangent vectors of the separatrices at a homoclinic point.

\begin{figure}[t]
  \centering
  \includegraphics[width=0.5\textwidth]{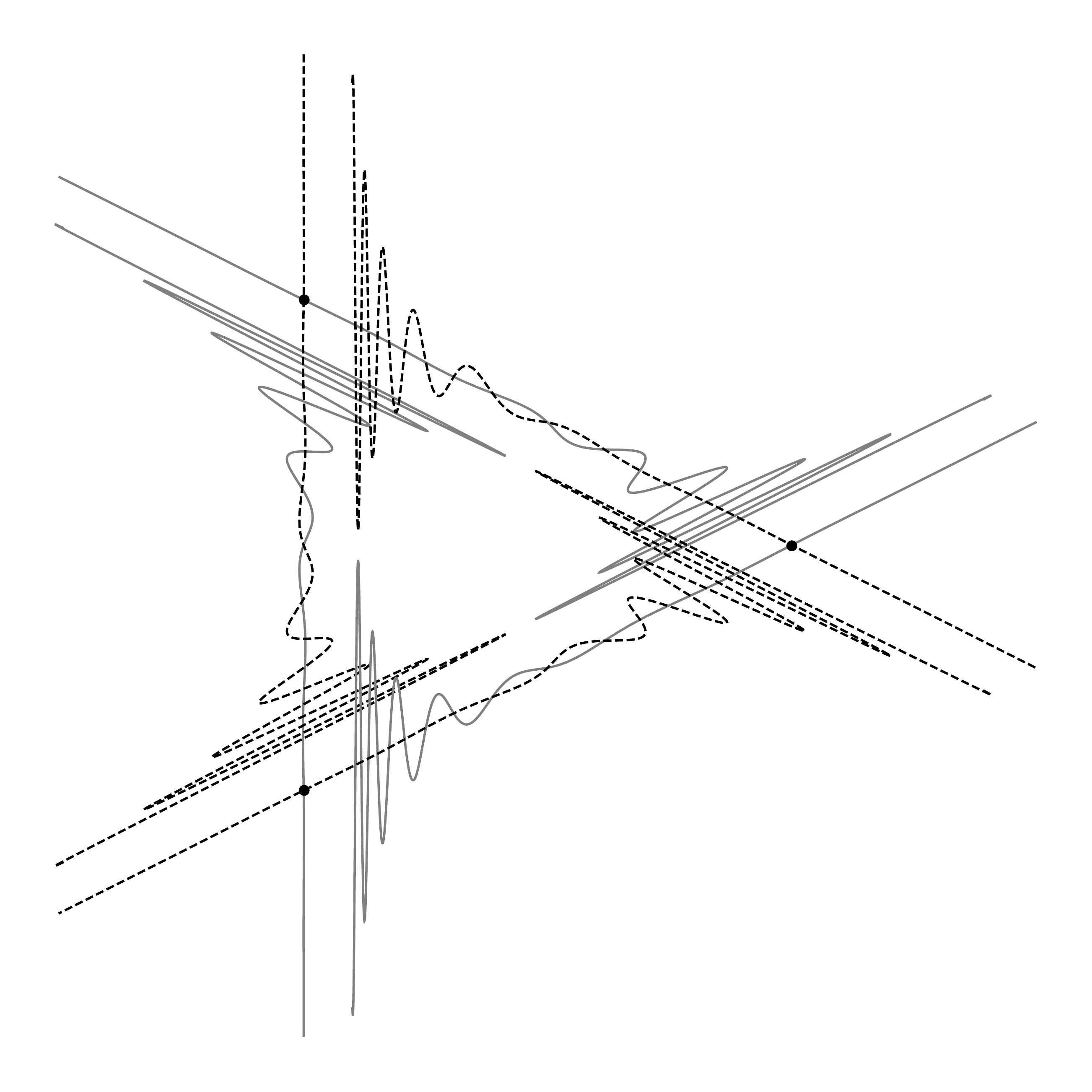}
  \caption{The splitting of the separatrices of a map close to 1:3 resonance.}
  \label{fig-split_triangle}
\end{figure}

The origin is a degenerate saddle of the map $g_0$ and its stable
and unstable manifolds can be parametrised by real analytic functions $V^+_0(t)$ and $V^-_0(t)$.
These functions share an asymptotic expansion in a sectorial neighbourhood of infinity and
satisfy the equation
$$V^\pm_0(t+1)=G_0(V^\pm_0(t)),$$
where $G_0=g_0^3$.  
We will call these two separatrices a \textit{compatible pair of separatrices of the map $G_0$}. 
Then the limit $$ \theta = \lim_{\im t\to-\infty} \me^{2\pi\ii t}\, \omega\Big( V^+_0(t)-V^-_0(t),
\dot{V}^-_0(t) \Big) $$ exists and it is called the \textit{Stokes constant} of the map $G_0$.
In this equation $\omega$ denotes the standard symplectic form in $\mathbb C^2$.
 
Let $\varepsilon\ne0$ be real and $v_s$, $v_u$ be two of the three saddle points of the map $G_\varepsilon$.
Moreover, let $V^+(\varepsilon,t)$ be the parametrization of the stable separatrix of the point $v_s$ and
$V^-(\varepsilon,t)$ be the parametrization of the unstable separatrix of the point $v_u$, such that
$V^+(0,t)$ and $V^-(0,t)$ form a compatible pair of separatrices of the map $G_0$. Let $t_h\in\R$ be
such that $V^+(\varepsilon,t_h)=V^-(\varepsilon,t_h)$, i.e. the point $V^+(\varepsilon,t_h)$ is on a
homoclinic trajectory of the map $G_\varepsilon$. We define the Lazutkin homoclinic invariant by
\[ \Omega = \omega(\dot{V}^+(\varepsilon,t_h),\dot{V}^-(\varepsilon,t_h)).\]

The main result of the paper is the following theorem.

\begin{theorem}
	If $g_\mu$ is an analytic family of area-preserving maps which generically unfolds
	a non-degenerate 1:3 resonance and its Stokes constant $\theta$ does not vanish, then 
%
there exist $\mu_0,c_0>0$ and real constants $\vartheta_n$ such that for any $\mu\in(-\mu_0,\mu_0)\backslash\{0\}$
the map $g_\mu$ has a unique hyperbolic periodic orbit of period three located in a neighbourhood of the origin
and exactly two primary homoclinic orbits to the hyperbolic periodic orbit. Moreover,
 for any $M\in\N$ the Lazutkin homoclinic invariant of the homoclinic orbits
are equal to
$$\Omega(\mu) = \pm\Bigg( \sum_{n=0}^M \vartheta_n (\log \lambda_\mu )^n  + O\big((\log \lambda_\mu )^{M+1}\big) \Bigg)\me^{-\frac{2\pi^2}{\log \lambda_\mu}},$$
where $\vartheta_0 = 4 \pi |\theta|$
and 
$\lambda_\mu=c_0|\mu|+O(\mu^2)$ is the largest multiplier of the hyperbolic periodic point.
\label{thm_result_unfolded}
\end{theorem}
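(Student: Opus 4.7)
The plan is to construct analytic parametrisations $V^{\pm}(\varepsilon,t)$ of the stable and unstable separatrices of the period-three map $G_\mu = g_\mu^3$, extend them to a common strip in complex $t$ whose width is dictated by the nearest complex singularity of the limiting separatrix $V_0^{\pm}$, and show that their difference, which is $1$-periodic on the overlap of its domain, has a leading Fourier coefficient that is exponentially small and proportional to the Stokes constant $\theta$. Translating this Fourier expansion via the symplectic form yields the claimed asymptotics for the Lazutkin invariant.

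I would begin with the local geometry. The implicit function theorem applied to the fixed-point equation for $g_\mu^3$ produces a smooth family of period-three orbits for $|\mu|<\mu_0$; generic unfolding gives multipliers that leave the unit circle at linear rate, so $\lambda_\mu = c_0|\mu|+O(\mu^2)$, and non-degeneracy of the third-order part of $H_0$ guarantees the invariant triangle of Figure~\ref{fig-normal_form_flow}. The natural small parameter is $h=\log\lambda_\mu$: the periodic orbit and its local stable/unstable manifolds live in a disk of radius $O(h)$, and $G_\mu$ is $O(h)$-close to the time-$h$ map of the Takens Hamiltonian flow, whose heteroclinic solutions have singularities at imaginary distance $T_0=\pi/h$ from the real axis. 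Starting from standard linearising coordinates at the two saddles, the parametrisations $V^{\pm}(\varepsilon,t)$ satisfying $V^{\pm}(\varepsilon,t+1)=G_\mu(V^{\pm}(\varepsilon,t))$ are obtained by iteration and extended analytically into the strip $|\im t|\les T_0-O(h)$ by a cover-by-sectors argument, using a Gronwall-type estimate for the linearised equation around $V_0^{\pm}$ to control $V^{\pm}(\varepsilon,t)-V_0^{\pm}(t)$ in the intermediate region.

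The main obstacle is identifying the leading Fourier coefficient of the difference $\Delta(t)=V^{+}(\varepsilon,t)-V^{-}(\varepsilon,t)$. Standard estimates give decay $\me^{-2\pi k T_0}=\me^{-2\pi^2 k/h}$ for the $k$-th harmonic, so the first harmonic dominates. To compute its amplitude I would push $t$ toward $\im t\to-\infty$ inside the lower part of the strip, where the defining limit of $\theta$ applies directly to the pair $V_0^{\pm}$; the matching estimate above shows that $V^{\pm}(\varepsilon,t)$ and $V_0^{\pm}(t)$ agree there up to higher exponential orders, hence the $k=\pm 1$ coefficients of $\omega(\Delta,\dot V_0^{-})$ are $\theta$ and $\bar\theta$ modulo relative errors of size $O(h)$. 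The sub-leading constants $\vartheta_n$ arise by iterating the matching procedure to successive orders in $h$, which is feasible because the deformation $G_\mu-G_0$ depends analytically on the rescaled variables and on $h$.

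Finally I would translate the Fourier expansion of $\Delta$ into the stated formula for $\Omega(\mu)$. At a real homoclinic point $t_h$ one has $\Delta(t_h)=0$ and $\dot V^{+}-\dot V^{-}=\dot\Delta$, so area preservation gives $\Omega=\omega(\dot V^{+},\dot V^{-})=\omega(\dot\Delta(t_h),\dot V^{-}(t_h))$; the two complex-conjugate harmonics $\theta\me^{2\pi\ii t}$ and $\bar\theta\me^{-2\pi\ii t}$ cancel at $t_h$ and combine in the derivative to produce the factor $4\pi|\theta|$, with the sign ambiguity $\pm$ coming from the two real zeros of the dominant harmonic in $[0,1)$. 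Uniqueness of exactly two primary homoclinic orbits per period follows from the fact that $\Delta$ has a dominant first Fourier harmonic with two simple real zeros, the remaining zeros being excluded by the controlled exponential gap between successive harmonics; this also fixes the two admissible values of $\Omega(\mu)$.
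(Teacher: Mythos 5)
Your outline correctly identifies the overall strategy (complex matching into a strip approaching the singularity at distance $\pi/h$, extracting the dominant Fourier harmonic, identifying its amplitude with the Stokes constant), and this is indeed the route the paper takes. However, there is a concrete gap at the point where you invoke periodicity.

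You assert that the difference $\Delta(t)=V^{+}(\varepsilon,t)-V^{-}(\varepsilon,t)$ ``is $1$-periodic on the overlap of its domain,'' and later you analyse ``the $k=\pm1$ coefficients of $\omega(\Delta,\dot V_0^{-})$.'' Neither of these objects is periodic. The difference $\Delta$ satisfies the cocycle relation $\Delta(t+1)=A(t)\Delta(t)$ with $A(t)=\int_0^1 F_\varepsilon'\bigl(sV^{+}(t)+(1-s)V^{-}(t)\bigr)\,\dd s$, while $\dot V^{-}$ satisfies $X(t+1)=D(t)X(t)$ with $D(t)=F_\varepsilon'(V^{-}(t))$. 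Since $A\ne D$, the pairing $\omega(\Delta,\dot V^{-})$ picks up a nonzero increment under $t\mapsto t+1$; and pairing with $\dot V_0^{-}$ (the $\varepsilon=0$ separatrix) is even worse, since $\dot V_0^{-}$ solves the wrong variational equation entirely. This is not a cosmetic issue: the quantity you want to Fourier-expand is only \emph{approximately} periodic, with deviation of the same exponential order as the signal you are trying to detect. The paper resolves this by constructing the full fundamental solution $U=(\delta,\Phi)$ of the variational equation $X(t+1)=A(t)X(t)$ normalised by $\det U=1$, so that $\Theta=\omega(\delta,\Phi)$ is \emph{exactly} $1$-periodic (because $\det A=1$), and then proving separately (Section 5) that $\Phi-\dot W^{-}$ is exponentially small, so that $\Theta^{-}=\omega(\delta,\dot W^{-})$ inherits the Fourier estimates. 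Without this two-step construction, the claimed decay of the harmonics and the identification of the leading coefficient do not follow from the periodicity you invoke.

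A second, smaller gap: your description of how the higher-order constants $\vartheta_n$ arise (``iterating the matching procedure to successive orders in $h$'') does not actually produce them. The paper first normalises the family to agree with the Takens normal form up to order $N=6M+39$, builds formal power-series solutions $\formalSeparatrixSing$ and a formal fundamental solution $\Vtilde$ near the singularity, proves Borel-Laplace summability and resurgence so that each formal coefficient has two sectorial sums $W_n^{\pm}$ with $\delta_n=W_n^{+}-W_n^{-}=O(t^{n+2}\me^{-2\pi\ii t})$, and then extracts $\theta_n$ from the limit of $\int_t^{t+1}\me^{2\pi\ii s}\omega(\delta_m,\dot W_{n-m}^{-})\,\dd s$. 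The high order $N$ of the preliminary normalisation is not an afterthought: it is what makes the error term in the complex-matching lemma polynomially small in $\varepsilon$ of high enough order to see $M+1$ coefficients. Your sketch keeps none of this accounting, so it cannot distinguish $\vartheta_1$, $\vartheta_2$, ... from noise.
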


A non-vanishing homoclinic invariant implies that the tangent vectors on the separatrices at the homoclinic
point are not parallel, which of course means that the intersection is transversal. It leads to the usual
corollaries of the existence of a horseshoe, positive topological entropy and divergence of the series in
the normal form theory. The transversality depends on a non-vanishing Stokes constant. Of course, the
Stokes constant may vanish, for example,  if the maps are integrable. On the other hand we think that
generically the Stokes constant does not vanish. For a given map, the Stokes constant can be evaluated
numerically.

The Poincaré maps obtained from  a generic analytic Hamiltonian system with two degrees of freedom
satisfy the non-degeneracy assumptions of our theorem. Consequently, the theorem
implies the transversality  of invariant manifolds for the flows, provided the Stokes constant does not
vanish.

The advantage of the homoclinic invariant is in its invariance with respect to canonical coordinate changes.
In particular, it can be computed after the map is transformed to its normal form up to any finite
order. The angle between the separatrices at a homoclinic point is not invariant and 
 cumbersome computations are required to trace its value when a coordinate change is applied.
 On the other hand, the lobe area is invariant but its positivity does not imply the transversality of the separatrices.
 We point out that both the homoclinic angle and the lobe area can be computed by the methods
 used in the proof of the main theorem.

\subsection{Historical remarks}

The phenomenon we study here was first observed by the French mathematician Henri Poincar{\'e}
around 1890 when investigating the stability of the solar system. Poincar{\'e} considered the
system formed by three bodies: Sun, Earth and Moon, under the action of Newton’s laws of gravity.
In an attempt to prove the stability of the three body system, he used perturbation series and
realized its divergent character \cite{poi1890}. He also noticed that a small differences in the
initial positions or velocities of one of the bodies would lead to a radically different state when
compared to the unperturbed system, what is now commonly known as deterministic chaos. Poincar{\'e}
realized that a small perturbation can destroy a homoclinic connection and its place is taken by
a region where the stable and the unstable manifolds intersect in a highly non-trivial way. He was
even able to prove for a concrete example that the width of this region was exponentially small
with respect to the size of the perturbation.

This splitting of separatrices is the phenomenon we are interested in here. We will give
some brief historical remarks and we encourage the reader to see the survey by Gelfreich and
Lazutkin \cite{GL01} for a more detailed exposition of the theory until 2000.

\subsubsection{Non-autonomous perturbation of flows}

One way to address the above question of stability is to embed the first return map into
the flow of a non-autonomous Hamiltonian system of one degree of freedom. This enables the
usage of methods developed for differential equations and more results are available. This
flow can be written as a periodic time dependent perturbation of a one degree of freedom
Hamiltonian system. More precisely, we describe this system with the help of the Hamiltonian
function
$$H(\mu,\varepsilon;x,y,t) = H_0(x,y) + \mu \,H_1(x,y,\tfrac{t}{\varepsilon},\varepsilon,\mu),$$
with $H_1(x,y,t,\varepsilon,\mu)$ periodic in $t$.

The natural question in this setting is whether homoclinic or heteroclinic connections that
exists in the unperturbed system persist in the perturbed one.

The case where only $\mu$ is considered to be a small parameter was solved using the so called
Melnikov method, see \cite{Mel63}. In this case we can reparametrise time such that $\varepsilon=1$.
Then for the separatrices of the system it holds
\begin{align*}
 W_{\mu,\varepsilon}^\pm(t_0,t) = W_0(t-t_0) + \mu\,W_1^\pm(t_0,t)+O(\mu^2), \quad \pm t\in[t_0,\infty).
\end{align*}
We define the \textit{Melnikov function} by
$$M(t_0):=\int_{-\infty}^\infty \{H_0,H_1\}|_{W_0(t-t_0),t} \dd t , $$
where $\{H,G\}$ is the Poisson bracket of $H$ and $G$. For the difference between the two
separatrices at $t_0$, measured in a coordinate system that uses $H_0$ as the first of its
coordinates, we get that the difference in the first component is
$$ d(t_0) = W_1^+(t_0) - W_1^-(t_0) = M(t_0) + O(\mu).$$
However, when both $\mu$ and $\varepsilon$ are considered small, then $\varepsilon$ cannot be
ignored. These systems are called \textit{rapidly forced} systems since the period of the
perturbation becomes arbitrarily small.

Nekhoroshev, \cite{Nekh77}, showed that in many degrees of freedom Hamiltonian systems, the phase
space can be covered by domains where the system behaves as if it was integrable for some time.
He showed that this time is exponentially large with the size of the perturbation. Neishtadt
showed in \cite{Neis84}  that $d$ actually admits an upper bound that is exponentially small
with $\varepsilon$. Neishtadt's results were refined by Treshchev in \cite{Treshchev97}.
Fontich showed using Lazutkin's ideas that the exponent depends on the location
of the singularities in the parameter of the unperturbed separatrix, \cite{Fon95}.

In rapidly forced systems the Melnikov function can become exponentially small with $\varepsilon$,
but since the error term is polynomially small in $\mu$, the error can become bigger than the
approximation. This situation can be avoided when $\mu$ is a function of $\varepsilon$ which
decreases exponentially as $\varepsilon$ goes to 0. Then the error is also exponentially small
and the Melnikov method can still be applied. It was shown in \cite{gel97} and \cite{DS97} that
in systems where $H_0(x,y)=y^2/2+V'(x)$ and $V''(0)\ne0$ the dependence of $\mu$ on $\varepsilon$
can be relaxed to a polynomial one, $|\mu|\les C \varepsilon^p $, with $p$ big enough. A similar
result was proved in \cite{BF04} for such systems but with $V''(0)=0$.

Stronger results have been proved in specific systems. Poincar{\'e}, \cite{Poi93}, discovered the
phenomenon of splitting by looking at the system described by the Hamiltonian
$$\frac{y^2}{2} + \cos x + a \sin x \cos \frac{t}{\varepsilon}. $$
He proved that in this system the splitting is exponentially small and he derived the first
term of the asymptotic expansion. Poincar{\'e}'s arguments require $a$ to be exponentially
small in $\varepsilon$ and his result is the same as the one that the Melnikov method provides. However,
for an $\varepsilon$-independent $a$, the  Melnikov's method provides a wrong estimate.
Treshchev \cite{Treshchev96} and Gelfreich \cite{Gel97a} independently showed this by obtaining
a different asymptotic formula using the averaging method with a continuous parameter.

The most studied system has been the rapidly perturbed pendulum with a perturbation only depending on time,
$$\ddot x = \sin x + \mu \varepsilon^\eta \sin\frac{t}{\varepsilon}.$$
Many authors have published on this, gradually strengthening the result, see
\cite{HMS88,Sch89,DS92,Ang93,EKS93,Gel94,Swa95} for results using the Melnikov method and \cite{Tre97,Gel00,GOS10}
for results beyond the Melnikov method.

Recently Gaiv{\~a}o and Gelfreich \cite{GaiGel11} used the generalized Swift-Hohenberg equation as an
example to show the transversality of the homoclinic solutions near a Hamiltonian-Hopf bifurcation.

Baldoma, Fontich, Guardia and Seara \cite{BFGS12} showed that in systems where $H_0 = \frac{y^2}{2} + V(x)$
with $V$ an algebraic or trigonometric polynomial and $|\mu| \les C \varepsilon^\eta$, the Melnikov method
can be applied if $\eta>0$. Moreover, they also showed that the Melnikov method fails when $p$ becomes
zero and they derived the first term of the asymptotic series in this case. Guardia \cite{Gua13} showed
that this result also holds close to the resonances of this Hamiltonian.

\subsubsection{Splitting of separatrices in area-preserving maps}

The other way to address the above question of stability of periodic orbits in two degrees
of freedom systems is to look directly at the first return map.

Historically the first map to be treated was the Chirikov standard map, defined on the torus by
\begin{align*}
 \Vector{x}{y}\mapsto \Vector{x+y+\varepsilon\,\sin(x)}{y+\varepsilon\,\sin(x)}.
\end{align*}
For $\varepsilon=0$ the standard map is integrable but for $\varepsilon>0$ the homoclinic
separatrix splits. An asymptotic formula for the splitting in the standard map was published
by Lazutkin in 1984 in a pioneering article, see \cite{Laz84} for the english translation.
However the proof was incomplete and it was completed and published by Gelfreich in \cite{Gel99}.
The same asymptotic formula was derived by Hakim and Mallick, \cite{HM93}, using Ecalle's
theory of resurgent functions. However their work was based on formal arguments.

Neishtadt, \cite{Neis84}, was the first to prove that the splitting in the difference of
the two separatrices of analytic maps close to identity admits an exponentially small
upper bound. Later Fontich and Sim\'o, \cite{FS90}, using Lazutkin's methods gave a
sharp upper bound.

Using the theory of resurgent functions, Gelfreich and Sauzin proved for an instance of
the H{\'e}non map at 1:1 resonance that the splitting of separatrices is exponentially
small and provided the first asymptotic term for it, see \cite{GS01} for more details.

More recently Mart\'in, Sauzin and Seara have studied the splitting of separatrices in
perturbations of the McMillan map, see \cite{MSS11a,MSS11b}. Their approach combined the
theory of resurgent functions with Lazutkin's original ideas.

A paper, \cite{Gel02}, was published by Gelfreich stating the first asymptotic term for
the resonances 1:1, 1:2 and 1:3 without proof in 2002. The only proof on these results published until
now is a preprint by Br{\"a}nnstr{\"o}m and Gelfreich  \cite{BG08}. There the authors derive
and prove the asymptotic formula for area-preserving maps near a Hamiltonian saddle-centre bifurcation.

\subsubsection{Splitting of separatrices in physics}

The same phenomenon has been studied in physics although in a different framework. The technique
conists of truncating an asymptotic series in the optimal order and then showing that the remainder
is exponentially small. This technique is called asymptotics beyond all order or superasymptotics,
see \cite{LST90,Berry91,IL04,IL05}.

There exist many examples of problems for which asymptotic power series methods lead to divergent
series. Oppenheimer \cite{oppenheimer1928three} while investigating a phenomenon in quantum physics
known as the Stark effect, demonstrated that the lifetime of a certain quantum state was inversely
proportional to a quantity exponentially small with the strength of the electric field applied at
the system.

Kruskall and Segur \cite{kruskal1991asymptotics} demonstrated that the geometric model for dendritic
crystal growth fails to produce needle crystal solutions due to exponentially small effects,
a byproduct of the breakage of a heteroclinic connection. This work has influenced many others
in the field and the same technique has been applied at the formal level to prove the non-persistence
of homoclinic or heteroclinic solutions to certain singularly perturbed systems. Examples of
application of this method include surface tension and wave formation
\cite{grimshaw1995weakly,yang1997asymmetric,tovbis2000breaking,van2009vanishing,wavelessship}),
crystal growth \cite{chapman2005exponential}, and optics \cite{chapman2009exponential}. More information
about applications of exponentially small splitting to mechanics, fluids and optics can be found
in the survey of Champneys \cite{champneys1998homoclinic}.

In his book \cite{lombardi2000oscillatory}, Lombardi puts the superasymptotics into rigorous
arguments that can be used to solve many problems in exponentially small phenomena. He did
that by reducing the problem to the study of certain oscillatory integrals which describe the
exponentially small terms. He applied his method to water waves.

\subsection{Unique normal form for families of maps}
\label{ch_normal_form}

The formal Hamiltonian $H$ constructed for the normal form is not defined uniquely so there
is room for further normalization.

\begin{proposition}[\cite{Gel2009}]
 Let $f_0$ be a non-degenerate area-preserving map at 1:3 resonance. Then there is a
 formal Hamiltonian H and formal canonical change of variables which conjugates $f_0$
 with $R_{2\pi/3} \circ \phi^1_H$. Moreover, $H$ has the following form:
 \begin{equation}
  H(x,y) = (x^2+y^2)^3\, A(x^2+y^2)+(2x^3-6x y^2)\, B(x^2+y^2),
  \label{eq_second_norm_hamiltonian}
 \end{equation}
 where $A$ and $B$ are series in one variable with real coefficients
  \begin{equation*}
  A(I)=\mathop{\sum_{k\ges0}}_{k\ne2\mod3} a_k I^k,\;\;\;\;\;\;\;\;\; B(I)=\frac{b_0}{6}
  + \mathop{\sum_{k\ges1}}_{k\ne2\mod3} b_k I^k
 \end{equation*}
  and the coefficient of $A$ and $B$ are uniquely defined if $b_0\ne0$.
 \label{thm_form_of_res_map}
\end{proposition}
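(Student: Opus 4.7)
The approach is a standard two-step formal normalization of the Birkhoff $\mathbb{Z}_3$-invariant Hamiltonian, using the residual gauge freedom that preserves the factorized form $R_{2\pi/3}\circ\phi^1_H$, and culminating in a cokernel analysis of a single homological operator.

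\textbf{Setup via invariant theory.} Birkhoff's theorem, as recalled earlier in the introduction, formally conjugates $f_0$ to $R_{2\pi/3}\circ\phi^1_{H_0}$ with $H_0\circ R_{2\pi/3}=H_0$. The ring of $\mathbb{Z}_3$-invariant polynomials on $\R^2\simeq\C$ under $z\mapsto\me^{2\pi\ii/3}z$ is generated by $I=z\bar z$, $u=\re z^3$ and $v=\im z^3$ with the single syzygy $u^2+v^2=I^3$; every $\mathbb{Z}_3$-invariant formal series therefore admits a unique representation $P(I,u)+v\,Q(I,u)$. The non-degeneracy hypothesis says the cubic part of $H_0$ is non-zero, and a single constant rotation $z\mapsto\me^{\ii\phi_0}z$ (symplectic and commuting with $R_{2\pi/3}$) can be chosen to align that cubic part with $u$, so I may assume $H_0=\tfrac{b_0}{3}u+(\text{higher order})$ for some real $b_0\neq 0$.

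\textbf{Homological reduction.} The symplectic changes of variable that preserve the factorization $R_{2\pi/3}\circ\phi^1_H$ are exactly the time-one flows $\Phi=\phi^1_K$ of $\mathbb{Z}_3$-invariant formal Hamiltonians $K$, under which $H\mapsto H\circ\Phi^{-1}=H+\{H,K\}+O(K^2)$. The plan is a standard Lie-series induction on total polynomial degree (with $\deg I=2$, $\deg u=\deg v=3$), solving at each step a homological equation $L(K_d)=(\text{term to kill})$ with $L(K):=\{K,\tfrac{b_0}{3}u\}$. The elementary brackets
\[ \{I,u\}=-6v,\qquad \{v,u\}=-9I^2,\qquad \{u,u\}=0 \]
combine via Leibniz into
\[ L(I^a u^b v^c)=-\tfrac{b_0}{3}\bigl(6a\,I^{a-1}u^b v^{c+1}+9c\,I^{a+2}u^b v^{c-1}\bigr), \]
and reducing modulo $v^2=I^3-u^2$ to the basis $\{I^a u^b,\,I^a u^b v\}$ turns the cokernel computation at each fixed degree into a finite-dimensional exercise.

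\textbf{Cokernel analysis and uniqueness.} A direct degree-by-degree calculation then shows that at total degree $d$ the cokernel of $L$ is zero when $d\equiv 1\pmod 3$ (for $d\ge 4$) and one-dimensional otherwise, with canonical representatives $I^k$ (for $k\ge 3$, $k\not\equiv 2\pmod 3$) and $I^k u$ (for $k\ge 0$, $k\not\equiv 2\pmod 3$). Assembling these representatives reproduces exactly the ansatz $I^3 A(I)+2u\,B(I)$ with $A$ and $B$ of the shape stated in \eqref{eq_second_norm_hamiltonian}. The hypothesis $b_0\ne 0$ is used precisely to guarantee surjectivity of $L$ onto the non-resonant complement at every degree, so the homogeneous piece $K_d$ is uniquely determined modulo $\ker L$, and the coefficients $a_k,\,b_k$ are thus uniquely fixed by the induction. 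The principal technical step, and essentially the only non-routine one, is the combinatorial verification of the cokernel description just stated; once the closed-form expression for $L$ is in hand, it amounts to inductive bookkeeping on the bigrading $(a,b)$ and presents no genuine difficulty.
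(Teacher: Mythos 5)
The proposition is quoted from \cite{Gel2009} and not reproved in this paper, so there is no in-text argument to compare with; I am judging the proposal on its own. Your route --- Birkhoff reduction to a $\mathbb{Z}_3$-invariant formal Hamiltonian, a constant rotation to put the cubic part on $u$, then a Lie-series normalization against $\tfrac{b_0}{3}u$ via the homological operator $L=\{\cdot,\tfrac{b_0}{3}u\}$ on the invariant ring $\R[I,u]\oplus v\,\R[I,u]$ --- is the standard one and almost certainly what \cite{Gel2009} does. The bracket relations and the cokernel description check out: for $d\ge 4$ one has $\dim\ker L|_{d-1}=1$ iff $3\mid(d-1)$ (spanned by $u^{(d-1)/3}$), the count $\dim\operatorname{coker}=\dim\operatorname{Inv}_d-\dim\operatorname{Inv}_{d-1}+\dim\ker L$ vanishes exactly when $d\equiv 1\pmod 3$, and the representatives $I^m$ ($m\ge 3$, $m\not\equiv 2\bmod 3$) and $I^k u$ ($k\ge 0$, $k\not\equiv 2\bmod 3$) are indeed transverse to the image (I verified $d=4,\dots,9$ directly). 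So existence of the stated form is solid.

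The genuinely incomplete step is the uniqueness clause. You observe correctly that $K_{d-1}$ is only determined modulo $\ker L=\R[u]$, but then conclude that the coefficients $a_k,b_k$ are ``thus'' uniquely fixed by the induction; that does not follow. A kernel perturbation $K_{3b}\mapsto K_{3b}+c\,u^{b}$ generates a nontrivial $\mathbb{Z}_3$-equivariant conjugation whose second-order Lie term does feed the normal-form complement: for instance
\begin{equation*}
\tfrac12\{\{I^3,u\},u\}=189\,I^4-108\,I u^2,
\end{equation*}
so replacing $K_3$ by $K_3+cu$ shifts the would-be coefficient $a_1$ of $I^4$ at order $c^2\,a_0$ before the higher $K_j$ are re-solved. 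Showing that this shift is exactly cancelled once the homological equations at intermediate degrees are re-solved --- equivalently, that the only equivariant symplectic maps fixing the normal form ansatz setwise act trivially on the retained coefficients (e.g.\ by identifying them with the flow $\phi^t_H$ of $H$ itself, or by fixing a canonical complement to $\ker L$ and proving the result is choice-independent) --- is the actual content of ``uniquely defined if $b_0\ne 0$'' and requires an argument beyond the sentence you give. As written the uniqueness is asserted, not proved.
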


For the coefficient $b_0$ it holds $b_0=6|h_{30}|$, where $h_{30}$ is the 3rd order coefficient
in the Takens normal form Hamiltonian.

For a real analytic family of maps, $f_\mu$, such that $f_0$ is a map at 1:3 resonance
we have the following result.

\begin{proposition}[\cite{Gel2009}]
 \label{thm_normal_form}
 Let $f_\mu$ be an analytic family of area-preserving maps such that $f_0$ is at
 resonance 1:3 and let the coefficient $b_0$ for the map $f_0$ not vanish. Then
 there is a formal Hamiltonian H and formal canonical change of variables which
 conjugates $f_\mu$ with $R_{2\pi/3} \circ \phi^1_H$. Moreover, $H$ has the
 following form:
 \begin{align*}
  H(\mu;x,y) = (x^2+y^2)\, A(\mu,x^2+y^2)+(2x^3-6x y^2)\, B(\mu,x^2+y^2),
 \end{align*}
 where $A$ and $B$ are series in two variables with real coefficients
 \begin{align*}
  A(\mu,I)=\mathop{\sum_{k,m\ges0}}_{k\ne1\mod3} a_{k,m} I^k \mu^m,\;\;\;\;\;\;\;\;\;
  B(\mu,I)=\frac{b_{0,0}}{6} + \mathop{\sum_{k,m\ges1}}_{k\ne2\mod3} b_{k,m} I^k \mu^m,
 \end{align*}
 with $b_{0,0}=b_0$ and $a_{0,0}=a_{1,0}=0$. Moreover the coefficients of these series are unique.
\end{proposition}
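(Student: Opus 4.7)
The plan is to extend the non-parametric result of Proposition \ref{thm_form_of_res_map} to the family setting by treating $\mu$ as an extra formal variable and running essentially the same Birkhoff-type procedure.

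First I would apply the parametric Birkhoff--Takens normal form theorem to $f_\mu$, producing a formal canonical transformation $\Phi(\mu;\cdot)$ whose coefficients are formal power series in $\mu$ and which conjugates $f_\mu$ to $R_{2\pi/3}\circ\phi^1_{\tilde H(\mu;\cdot)}$, with $\tilde H$ a $\Z_3$-invariant formal power series in $(x,y)$ depending formally on $\mu$. The cohomological equations at each order of the joint $(x,y,\mu)$-grading are solved in the standard way, the condition that the normal form commute with $R_{2\pi/3}$ forcing $\tilde H$ to be $\Z_3$-invariant. Next, I would invoke invariant theory for the $\Z_3$ action on $\R^2$: the invariant ring is generated by $I=x^2+y^2$, $u=2(x^3-3xy^2)=z^3+\bar z^3$ and $v=2(3x^2y-y^3)=-\ii(z^3-\bar z^3)$ subject to the single syzygy $u^2+v^2=4I^3$. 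Modulo this syzygy, $\tilde H$ has a unique reduced expression $\tilde H=\tilde P(\mu,I,u)+v\,\tilde Q(\mu,I,u)$, so reaching the stated normal form $I\,A(\mu,I)+u\,B(\mu,I)$ amounts to killing every monomial of $\tilde H$ that either contains the factor $v$, or contains $u^b$ with $b\ge 2$, or corresponds to an $I$-power congruent to $2\pmod 3$.

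The cohomological iteration uses $\tilde H_0=u\cdot b_0/6$ as the leading term and the operator $L_0=\mathrm{ad}_{\tilde H_0}$, which raises the total $(x,y)$-degree by one. Using the Poisson brackets $\{u,I\}=6v$ and $\{u,v\}=36\,I^2$ together with the syzygy, one checks term by term that, up to lower-order corrections, $L_0$ is surjective onto the subspace of monomials containing $v$ or $u^b$ with $b\ge 2$, relying on $b_0\ne 0$ as the pivot. This allows the iterative elimination of all such terms. For the remaining modular exclusions, one uses generators of the form $G(\mu,I)\,v$ (and variants preserving the reduced form), and a careful bookkeeping shows that exactly the $I$-powers congruent to $2\pmod 3$ in both $I\,A$ and $u\,B$ can be eliminated, while the complementary monomials survive in the normal form. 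The additional conditions $a_{0,0}=0$ and $a_{1,0}=0$ follow, respectively, from the resonance of $g_0$ (which forces $\tilde H(0;\cdot)$ to have no quadratic part) and from the modular exclusion forbidding $I^2$. Uniqueness of all coefficients follows from the fact that at each step the killing generator is determined uniquely up to the resonant kernel.

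The main technical obstacle is the bookkeeping of the modular exclusions in the last stage: one must verify that the cokernel of the full cohomology operator on normal-form-preserving generators is exactly spanned by the stated resonant monomials, with no spurious survivors. The hypothesis $b_0\ne 0$ is essential throughout, since it guarantees that $L_0$ is invertible on the appropriate complementary subspace; without it, the leading term of $\tilde H$ would be of higher degree and the entire grading structure would change.
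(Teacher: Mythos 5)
The paper does not prove this proposition: it is quoted from \cite{Gel2009} without proof, so there is no internal argument to compare against, and what follows assesses your proposal on its own terms. Your broad strategy---parametric Birkhoff--Takens normalization followed by a second cohomological reduction inside the $\Z_3$-invariant ring $\R[I,u,v]/(u^2+v^2-4I^3)$, using $L_0=\mathrm{ad}_{H_0}$ with $H_0 = b_0 u/6$ as the pivot---is the correct one and matches the methodology of the cited reference. Your algebraic ingredients check out: $u=z^3+\bar z^3$, $v=-\ii(z^3-\bar z^3)$, the syzygy $u^2+v^2=4I^3$, and the brackets $\{u,I\}=6v$, $\{u,v\}=36I^2$, $\{I,v\}=6u$ are all correct, and so is the observation that $a_{0,0}=0$ is forced by the exact resonance at $\mu=0$.

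The genuine gap is that the heart of the proof is asserted rather than carried out. The two claims ``$L_0$ is surjective onto the subspace of monomials containing $v$ or $u^b$ with $b\ge 2$'' and ``careful bookkeeping shows that exactly the $I$-powers congruent to $2\pmod 3$ can be eliminated'' are precisely the content of the proposition, and the proposal gives no argument for either. What is actually needed is a characterization of the kernel and cokernel of $L_0$ degree by degree: one must show that $\ker L_0 = \R[[\mu,u]]$ (dimension $1$ in $(x,y)$-degrees divisible by $3$, else $0$), note that in the reduced basis $\{I^a u^b,\,I^a u^b v\}$ the map $L_0$ sends $I^a u^b\mapsto 6a\,I^{a-1}u^b v$ (so the entire $v$-sector at degree $d$ is in the image of degree $d-1$) and $I^a u^b v\mapsto (24a+36)I^{a+2}u^b - 6a\,I^{a-1}u^{b+2}$ (a bidiagonal transfer into the $v$-free sector), and then carry out a rank/dimension computation against the Hilbert series $(1+t^3)/((1-t^2)(1-t^3))$ to see that the cokernel has dimension $0$ exactly in degrees $\equiv 1\pmod 3$ and dimension $1$ otherwise, with the transversal realizable by $I^m$ or $I^m u$ with $m\not\equiv 2\pmod 3$. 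None of this appears, and without it neither the surjectivity nor the uniqueness of the $a_{k,m}$ and $b_{k,m}$---the most delicate assertion of the proposition---is established. A secondary gap is the parameter grading: you invoke a ``joint $(x,y,\mu)$-grading'' but never fix the weight of $\mu$, nor explain why $H_0 = b_0 u/6$ remains the leading-order term of the graded pieces for the whole family; this must be pinned down for the iteration to terminate degree by degree.
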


\begin{remark}
 Since the formal Hamiltonian $H_\mu$ is invariant under the rotation $R_{2\pi/3}$ we have that
 $N^3 = (R_{2\pi/3} \circ \phi^1_H)^3 = \phi^3_H = \phi^1_{3H} $. Then from the relation
 $N_\mu^3=\Phi_\mu \circ f_\mu^3 \circ \Phi_\mu^{-1}$ we see that the third iterate of the
 map $f_\mu$ can be described by the same normal form as $f_\mu$.
\end{remark}

\begin{remark}
 The fact that the map unfolds the 1:3 resonance generically,
 $\left.\frac{d\lambda_\mu^+}{d\mu}\right|_{\mu=0}\ne0$, implies that $a_{0,1}\ne0$.
\end{remark}

\subsection{Stokes phenomenon at the resonance}

Let $U\subset\C^2$ be a neighbourhood of the origin. Let $F_0 = f_0^3$.
\begin{theorem}[\cite{GM17}]
 Let $f_0:U\to\mathbb C^2$ be a non-degenerate area-preserving map at 1:3 resonance
 such that its Taylor expansion agrees with its normal form   up to order 4. 
  Then there exists a unique formal series of the form 
 \begin{align*}
  \formalSeparatrixSing_0(t)=\Vector{0}{-\frac{1}{b_0 \, t}}+O(|t|^{-3}) \in \frac{1}{t} \R^2[[\frac{1}{t}]],
 \end{align*}
 which satisfies the equation 
 \begin{align}
  \formalSeparatrixSing_0(t+1) = F_0(\formalSeparatrixSing_0(t)).
  \label{eq_res_separatrix_equation}
 \end{align}
 Any other formal solution of the form $(0,-\frac{1}{b_0 \, t})+O(|t|^{-2})$ can
 be written as $\formalSeparatrixSing_0(t+c)$ for some $c\in\C$. Moreover, there
 exists a formal series, $\tilde{\Xi}_0\in t^2 \R[[\frac{1}{t}]]^2$, with real coefficients
 which satisfies the equations
 $$ \tilde{\Xi}_0(t+1)=F'_0(\formalSeparatrixSing_0(t))\cdot \tilde{\Xi}_0(t),$$
 \begin{align*}
  \tilde{\Xi}_0(t)=\Vector{b_0 \,t^2-18\frac{ b_1}{b_0^2} +\frac{24 b_1^2}{b_0^5} \,
  t^{-2}}{-\frac{8 a_0}{b_0^3}\,t^{-1} }+O(|t|^{-3}),
 \end{align*}
 and\footnote
 {
 Notice that $\dot{\formalSeparatrixSing}_0$ satisfies that same equation.
 } $\det(\tilde{\Xi}_0(t),\dot{\formalSeparatrixSing}_0(t))=1$.
 
 There exist two unique real-analytic solutions of the equation \eqref{eq_res_separatrix_equation},
 $W^+_0$ and $W^-_0$  such that $\lim_{t\to\pm\infty}W^\pm_0=0$, that admit
 $\formalSeparatrixSing_0$ as asymptotic in $D^\pm$, where $D^+$ is
 a sectorial neighbourhood of infinity of opening bigger than $\pi$ symmetric
 around the real axis and $D^-=-D^+$.  Moreover, there exist two complex constants,
 $\theta$ and $\rho$, such that
 \begin{align}
  W^+_0(t)-W^-_0(t) \asymp e^{-2\pi\ii t}\left( \theta\, \tilde{\Xi}(t) + \rho\, \dot{\formalSeparatrixSing}(t)
  \right) + O(t^7 e^{-4\pi\ii t})
  \label{eq_res_var_eq}
 \end{align}
 for large enough $t$ in the semistrip $\im t <0$, $|\re t| <2$
 and consequently
 $$\theta=\lim_{t\to+\infty}\me^{2\pi t}\omega(W^+_0(-\ii t)-W^-_0(-\ii t),\dot{W}^-_0(-\ii t)).$$
 \label{thm_existence_Borel_transform}
\end{theorem}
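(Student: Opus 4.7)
The plan is to construct the objects in three stages: first the formal series $\formalSeparatrixSing_0$ and $\tilde{\Xi}_0$ by an order-by-order recursion, then the analytic solutions $W^\pm_0$ by Borel--Laplace resummation in two overlapping sectors, and finally the Stokes data $\theta,\rho$ by analysing the exponentially small difference $W^+_0-W^-_0$ in the overlap.

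For the formal stage I would write $\formalSeparatrixSing_0(t)=\sum_{k\ges 1} w_k\, t^{-k}$ with $w_1=(0,-1/b_0)$ and substitute into \eqref{eq_res_separatrix_equation}. Expanding $\formalSeparatrixSing_0(t+1)$ by its formal Taylor series in the shift and $F_0(\formalSeparatrixSing_0)$ via the prescribed normal form up to order $4$, one obtains at each order $t^{-k}$ a linear equation for $w_{k+1}$; the non-degeneracy $b_0\ne 0$ makes the corresponding operator surjective, with one-dimensional kernel along $\dot{\formalSeparatrixSing}_0$. This both determines $\formalSeparatrixSing_0$ uniquely modulo the translation $t\mapsto t+c$ and yields the explicit lower-order coefficients stated. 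Differentiating the functional equation gives $\dot{\formalSeparatrixSing}_0$ as one formal solution of the variational equation; a second independent formal solution $\tilde{\Xi}_0$ of quadratic growth is produced by the same recursion with leading term $b_0 t^2$, and since $\det F_0'\equiv 1$ the Wronskian $\det(\tilde{\Xi}_0,\dot{\formalSeparatrixSing}_0)$ is a formal $1$-periodic polynomial, hence constant. The normalization of the leading coefficient then forces this constant to be $1$, and the subleading terms in $\tilde{\Xi}_0$ follow by direct computation.

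For the analytic stage I would establish that $\formalSeparatrixSing_0$ is Gevrey-$1$ and resurgent: its formal Borel transform $\hat{W}_0(\zeta)$ in the variable conjugate to $t$ is analytic near $\zeta=0$ and, by the convolution equation induced by \eqref{eq_res_separatrix_equation}, admits analytic continuation along any ray avoiding the set $2\pi\ii\Z\setminus\{0\}$, where its singularities must lie. Laplace transform along rays in the lower (resp.\ upper) half of the $\zeta$-plane then produces real-analytic solutions $W^-_0$ (resp.\ $W^+_0$) of the difference equation, defined and asymptotic to $\formalSeparatrixSing_0$ on sectorial domains of opening strictly larger than $\pi$; Watson-type uniqueness pins them down. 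On the overlap $\{\im t<0,\,|\re t|<2\}$ the difference $\Delta=W^+_0-W^-_0$ is exponentially small and satisfies $\Delta(t+1)=F_0'(\formalSeparatrixSing_0(t))\Delta(t)+O(\Delta^2)$, so up to quadratic error it lies in the two-dimensional space spanned by $\tilde{\Xi}_0$ and $\dot{\formalSeparatrixSing}_0$ with $1$-periodic coefficients. In the half-plane $\im t\to-\infty$ such coefficients admit Fourier expansions in $\me^{-2\pi\ii n t}$ with $n\ges 1$, and exponential smallness forces the $n=1$ mode to dominate, yielding the expansion \eqref{eq_res_var_eq} with constants $\theta$ and $\rho$ the corresponding Fourier coefficients. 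Pairing \eqref{eq_res_var_eq} symplectically with $\dot{W}^-_0$ and using $\det(\tilde{\Xi}_0,\dot{\formalSeparatrixSing}_0)=1$ isolates $\theta$ and produces the limit formula stated.

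The main obstacle is the resummation stage: because the origin is a parabolic (degenerate) fixed point of $F_0$ rather than hyperbolic, the classical stable/unstable manifold theorems do not deliver $W^\pm_0$ directly, and the Borel transform is not a convergent power series but only a resurgent function; controlling its analytic continuation and locating its singularities exactly at $2\pi\ii\Z\setminus\{0\}$ is what simultaneously fixes the maximal sector opening (just above $\pi$) and the precise exponential rate $\me^{-2\pi\ii t}$ appearing in \eqref{eq_res_var_eq}, and is therefore the quantitative heart of the argument.
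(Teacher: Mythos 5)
Your proposal follows essentially the same resurgence-based strategy that the paper sketches in Section~\ref{ch_borel_trans} and defers in full to the cited reference \cite{GM17}: an order-by-order formal recursion using $b_0\ne0$, a second variational solution by reduction of order with the Wronskian shown to be a $1$-periodic power series in $1/t$ and hence constant, Borel--Laplace resummation with singularities confined to $2\pi\ii\Z$, and extraction of the first exponential mode $\me^{-2\pi\ii t}$ --- expressed in the paper via the alien derivative $\Delta_{2\pi\ii}$ and the Leibniz rule, and in your write-up via the Fourier expansion of the $1$-periodic coefficients in the span of $\tilde{\Xi}_0$ and $\dot{\formalSeparatrixSing}_0$; these are the same computation. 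Be aware that this paper does not itself prove Theorem~\ref{thm_existence_Borel_transform}, so your comparison is against the preparatory discussion rather than a proof given here, and the one quantitative item you leave unaddressed (the precise power $t^7$ in the $O(t^7\me^{-4\pi\ii t})$ error, governed by the singularity at $4\pi\ii$) is likewise left to \cite{GM17}.
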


Even though the constants $\theta$ and $\rho$ have similar roles, we will focus on $\theta$ as it describes the normal
component of the separatrix splitting. We call $\theta$  \textit{a Stokes constant of $F_0$}.

\begin{lemma}
 Let $g_0$ and $f_0$ be two non-degenerate, real analytic, area-preserving maps at 1:3
 resonance. Let $G_0 = g_0^3$ and $F_0 = f_0^3$ and let $\Phi$ be an analytic symplectic
 transformation such that $G_0=\Phi^{-1}\circ F_0 \circ\Phi$. Let $\theta_G$ and $\theta_F$
 be the Stokes constants of the maps $G_0$ and $F_0$ respectively. Then $|\theta_G| = |\theta_F|$.
 \label{thm_stokes_const_invariant}
\end{lemma}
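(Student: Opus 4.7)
The plan is to transport the compatible pair $(W^+_F, W^-_F)$ through $\Phi^{-1}$, observe that the resulting pair is a compatible pair of separatrices for $G_0$, and use symplecticity to relate its Stokes constant to $\theta_G$ up to a unimodular phase. Concretely, set $\tilde W^\pm(t) := \Phi^{-1}(W^\pm_F(t))$. Since $\Phi$ is a real analytic symplectic transformation fixing the origin (the unique degenerate fixed point of both $F_0$ and $G_0$), the intertwining gives $\tilde W^\pm(t+1) = G_0(\tilde W^\pm(t))$, $\tilde W^\pm(t) \to 0$ as $t \to \pm\infty$, and the two functions share a common asymptotic expansion in their sectorial domains (obtained by formal substitution into $\Phi^{-1}$ of the shared asymptotic of $W^\pm_F$). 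Hence $(\tilde W^+, \tilde W^-)$ is a compatible pair of separatrices for $G_0$, with Stokes constant $\tilde\theta := \lim_{\im t \to -\infty} \me^{2\pi\ii t}\,\omega(\tilde W^+ - \tilde W^-, \dot{\tilde W}^-)$.

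The next step is to show $\tilde\theta = \theta_F$. Writing $W^\pm_F = \Phi(\tilde W^\pm)$ and $\dot W^-_F = \Phi'(\tilde W^-)\dot{\tilde W}^-$ and Taylor-expanding $\Phi$ at $\tilde W^-(t)$ yields
\[\Phi(\tilde W^+) - \Phi(\tilde W^-) = \Phi'(\tilde W^-)(\tilde W^+ - \tilde W^-) + O\bigl(|\tilde W^+ - \tilde W^-|^2\bigr).\]
By \eqref{eq_res_var_eq}, $|\tilde W^+ - \tilde W^-|(t) = O(\me^{-2\pi\ii t} t^2)$ in the relevant semistrip, so the quadratic remainder is of order $\me^{-4\pi\ii t}$ and its contribution to the limit, even after multiplication by the exponentially large factor $\me^{2\pi\ii t}$, vanishes. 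The pointwise symplectic identity $\omega(\Phi'(v)u_1, \Phi'(v)u_2) = \omega(u_1, u_2)$ then reduces the limit defining $\theta_F$ to $\tilde\theta$.

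Finally I compare $\tilde\theta$ with $\theta_G$. Both $(\tilde W^\pm)$ and the canonical pair $(W^\pm_G)$ are compatible pairs of separatrices for $G_0$; a short formal calculation shows that any reparametrization of the formal separatrix commuting with $t \mapsto t+1$ is a plain shift, so by the uniqueness part of Theorem \ref{thm_existence_Borel_transform} we must have $\tilde W^\pm(t) = g_0^{\,k}(W^\pm_G(t+c))$ for some $k \in \{0,1,2\}$ (allowing for a cyclic permutation of the three branches of the invariant triangle implemented by $g_0$) and some $c$; reality of $\Phi$ and of the separatrices forces $c \in \R$. Substituting $s = t + c$ and applying the same linearization-plus-symplecticity argument with $g_0^{\,k}$ in place of $\Phi$ gives $\tilde\theta = \me^{-2\pi\ii c}\theta_G$, whose modulus is $|\theta_G|$. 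Combining with $\theta_F = \tilde\theta$ yields $|\theta_F| = |\theta_G|$. The main obstacle is this last rigidity identification — showing that the two compatible pairs differ only by a real shift (and a branch permutation) — which relies on the uniqueness of the formal separatrix in Theorem \ref{thm_existence_Borel_transform} and on the reality of $\Phi$.
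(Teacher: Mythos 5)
Your proposal is correct and follows essentially the same route as the paper: conjugate one compatible pair into the other up to a real shift, Taylor-expand the transformation around $W^-$, use the pointwise symplectic identity to cancel the derivative factors, and observe that the quadratic remainder is $O(\delta^2)=O(\me^{4\pi\im t})$, so its contribution to $\lim_{\im t\to-\infty}\me^{2\pi\ii t}\,\omega(\,\cdot\,,\,\cdot\,)$ vanishes, leaving $\theta_G=\me^{-2\pi\ii c}\theta_F$. The one place you go beyond the paper's terse argument is allowing the possible branch permutation $g_0^k$, $k\in\{0,1,2\}$, when you identify $\Phi^{-1}(W^\pm_F)$ with a shift of $W^\pm_G$; the paper writes the identification directly as $V^\pm_0(t)=\Phi\circ W^\pm_0(t+c)$ without flagging that $\Phi$ might permute the three branches of the degenerate saddle. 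Your extra care here is sound (and actually needed in full generality), and since $g_0$ is a real symplectic map commuting with $G_0$, the same linearization-plus-symplecticity computation absorbs it without changing the modulus, so both proofs land at the same identity.
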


\begin{proof}
 Let $W^\pm_0(t)$ be parametrizations of the invariant manifolds of the map $F_0$, that satisfy the equation 
 $$W^\pm_0(t+1) = F_0 (W^\pm_0(t))$$
 and let $V^\pm_0(t)$ be parametrizations of the invariant manifolds of the map $G_0$, that satisfy the equation 
 $$V^\pm_0(t+1) = G_0 (V^\pm_0(t)).$$
 Then $V^\pm_0(t) = \Phi\circ W^\pm_0(t+c)$ for some $c\in\R$.
 
 We can choose the functions $V^\pm_0$ to be real analytic and then $c$ is a real constant. 
 We define $\delta_0(t) = W^+_0(t) - W^-_0(t)$ and we get 
 \begin{align*}
  &\omega\Big( V^+_0(t) - V^-_0(t), \dot{V}^-_0(t) \Big)  = \\
  &\quad = \omega\Big( \Phi\circ W^+_0(t+c) - \Phi\circ W^-_0(t+c),
  \Phi'\circ W^-_0(t+c) \cdot \dot{W}^-_0(t+c) \Big)  \\
  &\quad = \omega\Big( \Phi'\circ W^-_0(t+c) \cdot \delta_0(t+c) + O(\delta_0(t+c)^2),
  \Phi'\circ W^-_0(t+c) \cdot  \dot{W}^-_0(t+c) \Big) \\
  &\quad = \omega\Big(\delta_0(t+c),\dot{W}^-_0(t+c) \Big) + O(\delta_0(t+c)^2). \\  
 \end{align*}
 We know that $\delta_0(t) = O(\me^{-2\pi\ii t})$, so the term $O(\delta_0(t+c)^2)$ vanishes at the limit. This gives
 \begin{align*}
  \theta_G =& \lim_{\im t\to-\infty}\me^{2\pi\ii t} \omega\Big( V^+_0(t) - V^-_0(t), \dot{V}^-_0(t) \Big)\\
  =& \me^{-2\pi\ii c} \lim_{\im t\to-\infty}\me^{2\pi\ii (t+c)} \omega\Big(\delta_0(t+c) ,\dot{W}^-_0(t+c) \Big) \\
  =& \me^{-2\pi\ii c} \;\theta_F. \qedhere
 \end{align*}
\myqed
\end{proof}

\subsection{Notation and outline of the proof}

Consider a family of maps $g_\mu$ which satisfies the assumptions of Theorem \ref{thm_result_unfolded}.
Before proceeding with the proof we fix $M\in\N$ and make make a canonical change of coordinates which
transforms the family to $f_\mu$ for which the Taylor series coincide with the normal form of Proposition
\ref{thm_normal_form} up to the order
\begin{equation}
 N=6M+39.
 \label{eq_N_definition}
\end{equation}
As we saw in Lemma \ref{thm_stokes_const_invariant}, if the Stokes constant of $g_\mu$ does
not vanish then the Stokes constant of $f_\mu$ also does not vanish. Moreover since the
Lazutkin constant is a symplectic invariant, it is sufficient to prove Theorem \ref{thm_result_unfolded}
for the map $f_\mu$. This proof  takes up the rest of the paper.

We define $F_\mu=f_\mu^3$ and by choosing a small $\mu\ne0$ we get three saddle points with
$\lambda_\mu>1$ being their largest eigenvalue. Then we define a new parameter $\varepsilon = \log(\lambda_\mu)>0$.
The implicit function theorem tells us that we can write the parameter $\mu$ as a function of $\varepsilon$. 
We use $f_\varepsilon$ to denote the map written in terms of the new coordinates and the new parameter.

Note that by definition, $\varepsilon$ is always positive, however this assumption does not
restrict the genericity of the result as we can consider the cases $\mu\in(0,\mu_0)$ and
$\mu\in(-\mu_0,0)$ separately.

We define $F_\varepsilon = f_\varepsilon^3$. Since the map $F_\varepsilon$ is analytic
in $x$, $y$ and $\varepsilon$, it can be decomposed in two ways, namely
$ F_\varepsilon(x,y)=\sum_{n\ges0} \varepsilon^n F_n(x,y)$ and
$ F_\varepsilon(x,y)=\sum_{n\ges1}  \mathcal{F}_n(\varepsilon;x,y)$. Here $F_n$ are real analytic functions
independent of $\varepsilon$ and $\mathcal{F}_n$ are polynomials of degree $n$ homogeneous in its three variables.

Let $\tilde{H}(\varepsilon;x,y)$ be the formal Hamiltonian of the normal form of
Proposition \ref{thm_normal_form} for $F_\varepsilon$. Let  $\tilde{F}_\varepsilon$  be  its formal time-1 flow.
Similarly, we define $ \tilde F_\varepsilon(x,y)=\sum_{n\ges0} \varepsilon^n \tilde F_n(x,y)$
and $ \tilde F_\varepsilon(x,y)=\sum_{n\ges1}  \tilde{\mathcal{F}}_n(\varepsilon;x,y)$,
where $\tilde F_n(x,y)$ are formal series in $x$ and $y$ and $\tilde{\mathcal{F}}_n$
are homogeneous polynomials. Moreover $\mathcal{F}_n = \tilde{\mathcal{F}}_n$ for all $n\les N$.

The central objects in this analysis are the functions $W^-(\varepsilon;\tau)$
and $W^+(\varepsilon;\tau)$ which parametrize the stable and the unstable separatrix
that are close to the vertical separatrices of the normal form
shown in Figure \ref{fig-normal_form_flow}. They satisfy the equation $$
W^\pm(\varepsilon;\tau+1)=F_\varepsilon(W^\pm(\varepsilon;\tau)).
$$ 
Unless it is explicitly stated, it will be assumed from now on that the separatrices
are parametrized with step 1 as above. We fix the parametrization by asking that
$W^+(\varepsilon;0)$ is the point where the stable separatrix meets the horizontal
axis for the first time. Similarly $W^-(\varepsilon;0)$ is the point where the unstable
separatrix meets the horizontal axis for the first time.

There are four formal solutions of the separatrix equation to be considered:
$ \formalSeparatrix $, $\formalFlow$, $\formalSeparatrixSing$ and $\formalFlowSing$.
The first two are formal series in $\varepsilon$ and $\tanh(\tfrac{1}{2}\varepsilon \tau)$.
The former, $\formalSeparatrix$, satisfies the equation
$\formalSeparatrix(\varepsilon;\tau+1)=F_\varepsilon(\formalSeparatrix(\varepsilon;\tau))$
and the latter, $\formalFlow$, satisfies
$\formalFlow(\varepsilon;\tau+1)=\tilde{F}_\varepsilon(\formalFlow(\varepsilon;\tau))$.

The other two formal solutions, $\formalSeparatrixSing$ and $\formalFlowSing$, are formal solutions
``around the singularity  $\pi\ii/\varepsilon$'' and they are formal series in $\varepsilon$ and $t$.
They are obtained by $\formalSeparatrix$ and $\formalFlow$ respectively, by substituting
$\tanh(\tfrac{1}{2}(\varepsilon t - \pi\ii))$ by its Laurent series.
Then $\formalSeparatrixSing$ satisfies $\formalSeparatrixSing(\varepsilon;t+1)= 
F_\varepsilon(\formalSeparatrixSing(\varepsilon;t))$ and
$ \formalFlowSing $ satisfies $ \formalFlowSing(\varepsilon;t+1)=F_\varepsilon(\formalFlowSing(\varepsilon;t))$.

Due to the symmetry of the normal form, $\formalFlow$ has its first component even and its second component
odd in $\tau$.
Similarly $\formalFlowSing$ has its first component even and its second component odd in $t$.

There are two linear equations that play an important role in the proof. These are
\begin{align}
 U(\varepsilon;\tau+1) = A(\varepsilon;\tau) \cdot U(\varepsilon;\tau) , \label{eq_var_eq_01} \\
 V(\varepsilon;\tau+1) = D(\varepsilon;\tau) \cdot V(\varepsilon;\tau) , \label{eq_var_eq_02}
\end{align}
with
$$ A(\varepsilon;\tau)=\int_0^1 F_\varepsilon'\left( s\,W^+(\varepsilon;\tau) + (1-s)\,
W^-(\varepsilon;\tau) \right) \dd s $$
and
$$ D(\varepsilon;\tau) =  F_\varepsilon'\left( W^-(\varepsilon;\tau) \right).$$
Evidently, $\delta = W^+-W^-$ satisfies the first one and $\dot{W}^-$ satisfies the second.
We denote by $U$ the fundamental solution of the first and by $V$ the fundamental solution
of the second, normalized by $ \det U = \det V =1$.
Moreover we require that $ U \cdot (\begin{smallmatrix} 1 \\ 0 \end{smallmatrix} ) = \delta$
and  $ V \cdot  (\begin{smallmatrix} 0 \\ 1 \end{smallmatrix} ) = \dot{W}^-$.

If we look at equations \eqref{eq_var_eq_01} and \eqref{eq_var_eq_02} formally we see that the
formal matrices $\tilde{A}$ and $\tilde{D}$ coincide. The formal equation close to the singularity is
$$ \Vtilde(\varepsilon;t+1) = \tilde{D}(\varepsilon;t) \cdot \Vtilde(\varepsilon;t).$$
We denote by $\Vtilde$ its fundamental solution that satisfies $\det \Vtilde = 1$ and $\Vtilde
\cdot  (\begin{smallmatrix} 0 \\ 1 \end{smallmatrix} ) = \dot{\formalFlowSing}$.

We will see that there exists an open domain in variable $\tau$ that contains the origin and goes
close to the singularities $\pm\frac{\pi}{\varepsilon}\ii$ in which both $U$ and $V$ are analytic
and their difference is small.

Initially, in Section \ref{ch_form_sols}, we prove the existence of the various formal solutions.
In Section \ref{ch_form_sols_approximate} we prove that the formal solution around the singularity
approximates the separatrix close to the singularity. This approach to the proof is called
{\em complex matching\/}, see \cite{GL01}.

In Section \ref{ch_var_eqs} we introduce the solutions to the variational equations and prove
that the formal solution approximates both. In Section \ref{ch_sharper_bounds} we prove that on
the real line we have exponentially small upper bounds.

In Section \ref{ch_asym_exp_of_splitting} we introduce the function
$$\Theta^-(\varepsilon;\tau) = \omega(\delta(\varepsilon;\tau),\dot{W}^-(\varepsilon;\tau)).$$
We will see that $\Theta^-$ is approximately periodic and can be approximated using the formal
solution of the variational equation. This enables us to compute its ``first Fourier coefficient''.
The derivative of this  function at a homoclinic point gives the homoclinic invariant.
We estimate the value of this function  close to the singularity where it is polynomially
small with $\varepsilon$ and finally we translate the result to the real axis where we see
it is exponentially small.

Notice that throughout the proof we treat $\varepsilon_0$ as if it was fixed  but we are allowed
to decrease it if the need arises. We also need to choose a constant $\Lambda>1$ such that
$\Lambda^2\varepsilon_0<1$. We are also allowed to increase $\Lambda$ if the need arises making
sure that $\varepsilon_0$ will be decreased proportionately. The constant $\Lambda$ is used to
fine tune the domains where the separatrices are approximated by the formal solutions.

\section{Formal solutions}
\label{ch_form_sols}

\subsection{Formal separatrix close to the saddle point}
\label{ch_form_sep}

\begin{lemma}
 Let $\sigma=\tanh (\tfrac{1}{2}\varepsilon \tau)$ and let $\tilde{H}(\mu;x,y)$ be a formal
 Hamiltonian as described in Proposition \ref{thm_normal_form}. Then there exist a real
 formal power series $\mu(\varepsilon)=\sum_{n\ges 1} \mu_n \varepsilon^n$ and a real
 formal solution $\formalFlow(\varepsilon;\tau)=(\tilde{x}(\varepsilon;\tau),\tilde{y}(\varepsilon;\tau))$
 of Hamilton's equations\footnote{Here the dot denotes derivation with respect to $\tau$.}
 \begin{equation}
 \begin{aligned}
  \dot{x} &= \,\partial_y \tilde{H}(\mu(\varepsilon);x,y) , \\
  \dot{y} &= -\partial_x \tilde{H}(\mu(\varepsilon);x,y) ,
  \label{eq_norm_form_ham_eq_inner}
 \end{aligned}
 \end{equation}
 such that
 \begin{equation}
 \begin{aligned}
  \tilde{x}(\varepsilon;\tau) = \sum_{n\ges1} \varepsilon^n P_n(\sigma), \\
  \tilde{y}(\varepsilon;\tau) = \sum_{n\ges1} \varepsilon^n Q_n(\sigma),
  \label{eq_norm_formflow_inner}
 \end{aligned}
 \end{equation}
 with $P_n(\sigma)$ even polynomials of degree $2 \lfloor \frac{n}{2}\rfloor$ and $Q_n(\sigma)$
 odd polynomials of degree $2 \lfloor \frac{n+1}{2} \rfloor-1$ and in particular
 $P_1(\sigma)=\frac{1}{2\sqrt{3} b_{0,0}}$, $Q_1(\sigma)=\frac{\sigma}{2 b_{0,0}}$,
 $\mu_1=\frac{1}{2\sqrt{3} a_{0,1}}$. Moreover, for all $n>1$ $P_n$, $Q_n$ and $\mu_n$ are uniquely
 defined.
 \label{thm_formal_flow_existence}
\end{lemma}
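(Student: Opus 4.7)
The approach is to substitute the ansatz into Hamilton's equations, convert them to differential equations in the new variable $\sigma$ (using $\dot\sigma=\tfrac{\varepsilon}{2}(1-\sigma^2)$), and solve the resulting hierarchy of linear inhomogeneous problems order by order in $\varepsilon$, using $\mu_n$ as the free parameter that absorbs the solvability obstruction at each step. Concretely, the system \eqref{eq_norm_form_ham_eq_inner} becomes
\begin{align*}
\tfrac{\varepsilon}{2}(1-\sigma^2)\,\tilde{x}'(\sigma) &= \partial_y \tilde{H}(\mu;\tilde{x},\tilde{y}),\\
\tfrac{\varepsilon}{2}(1-\sigma^2)\,\tilde{y}'(\sigma) &= -\partial_x \tilde{H}(\mu;\tilde{x},\tilde{y}),
\end{align*}
with the prime denoting $d/d\sigma$. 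Because $a_{0,0}=a_{1,0}=0$ and $B(\mu,0)=b_{0,0}/6+O(\mu)$, substituting the ansatz \eqref{eq_norm_formflow_inner} together with $\mu(\varepsilon)=\sum_n \mu_n \varepsilon^n$ produces an identity whose $\varepsilon^{n+1}$-coefficient is linear in $(P_n,Q_n,\mu_n)$ modulo terms built from data determined at strictly lower orders.

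For the base case, I would take $P_1$ constant. The $\varepsilon^2$-coefficient of the first equation then reads $0=2Q_1\bigl(a_{0,1}\mu_1-b_{0,0}P_1\bigr)$, forcing $\mu_1=b_{0,0}P_1/a_{0,1}$. The $\varepsilon^2$-coefficient of the second equation is $\tfrac{1}{2}(1-\sigma^2)Q_1'(\sigma)=-2a_{0,1}\mu_1 P_1-b_{0,0}(P_1^2-Q_1^2)$; inserting $Q_1(\sigma)=\beta\sigma$ and matching the constant and $\sigma^2$ coefficients fixes $P_1^2=1/(12b_{0,0}^2)$ and $\beta^2=1/(4b_{0,0}^2)$. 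Selecting the branch that orients the separatrix outward from the saddle (consistent with the $\tanh$ model) yields the stated $P_1$, $Q_1$ and $\mu_1$.

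Assuming $(\mu_j,P_j,Q_j)$ constructed uniquely with the claimed parity and degree for $j<n$, the $\varepsilon^{n+1}$-coefficients become
\begin{align*}
\tfrac{1}{2}(1-\sigma^2)P_n'(\sigma) &= \mathcal{L}_n^x[P_n,Q_n,\mu_n](\sigma) + R_n^x(\sigma),\\
\tfrac{1}{2}(1-\sigma^2)Q_n'(\sigma) &= \mathcal{L}_n^y[P_n,Q_n,\mu_n](\sigma) + R_n^y(\sigma),
\end{align*}
where $\mathcal{L}_n^{x,y}$ is the Jacobian of $(\partial_y\tilde{H},-\partial_x\tilde{H})$ at the leading-order saddle solution and $R_n^{x,y}$ are explicit polynomials in $\sigma$ built only from lower-order data. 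Since $\tilde{H}$ depends on $y$ only via $y^2$-combinations, $\partial_y\tilde{H}$ is odd and $-\partial_x\tilde{H}$ is even in $y$; an induction on the parity shows $R_n^x$ odd and $R_n^y$ even in $\sigma$, so the even/odd pattern for $(P_n,Q_n)$ is preserved. Tracking degrees through the factors $(x^2+y^2)$ and $(2x^3-6xy^2)$ and the derivatives of $A$, $B$ gives the claimed degree bounds $2\lfloor n/2\rfloor$ for $P_n$ and $2\lfloor (n+1)/2\rfloor-1$ for $Q_n$. The constant coefficient of the first scalar equation is an affine function of $\mu_n$ with nonzero slope (proportional to $a_{0,1}Q_1'(0)\neq 0$), hence uniquely determines $\mu_n$; matching the remaining $\sigma^{2k}$-coefficients of the first equation and $\sigma^{2k+1}$-coefficients of the second gives a triangular linear system which fixes $P_n$ and $Q_n$ uniquely.

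The main obstacle is verifying that the solvability condition at every order is really one-dimensional and is resolved by $\mu_n$. Formally the linearised operator has a one-parameter kernel generated by the time-translation mode $\dot{\formalFlow}$; I must show that the derivative of the leading-order saddle with respect to $\mu$ is transverse to this kernel, so that varying $\mu_n$ hits the unique obstruction. This transversality boils down to the non-degeneracy $b_{0,0}\neq 0$ and the generic-unfolding hypothesis $a_{0,1}\neq 0$, both of which are in force. The chosen parity (even $P_n$, odd $Q_n$) together with the parametrisation normalisation removes the residual time-translation freedom and pins down uniqueness for $n>1$.
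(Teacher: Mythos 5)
Your overall strategy is the same as the paper's: substitute the ansatz, convert Hamilton's equations to the $\sigma$-variable via $\dot\sigma=\tfrac{\varepsilon}{2}(1-\sigma^2)$, and solve the resulting hierarchy order by order, treating $\mu_n$ as the extra unknown. Your base case calculation is correct and matches the paper's. However, the crucial inductive step --- that the linear system at each order has a unique solution --- is asserted rather than proved, and this is precisely the step the paper works out explicitly. You acknowledge this yourself (``the main obstacle is verifying that the solvability condition at every order is really one-dimensional and is resolved by $\mu_n$'') but then replace the verification with a heuristic about transversality of the $\mu$-derivative to the translation kernel, appealing to $b_{0,0}\ne0$ and $a_{0,1}\ne0$ without showing how these facts actually close the argument.

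The concrete mechanism you propose also does not quite hold up. You claim the ``constant coefficient of the first scalar equation'' fixes $\mu_n$, after which a ``triangular linear system'' fixes $P_n,Q_n$. But by the parity you yourself establish, the $\dot x$-equation is odd in $\sigma$, so its constant coefficient vanishes identically; and in the paper's explicit block form
$M=\bigl(\begin{smallmatrix}A&B\\C&D\end{smallmatrix}\bigr)$ in the unknowns $(\mu_n,Q_n\text{-coeffs}\,;P_n\text{-coeffs})$ the blocks $B$ and $C$ couple $\mu_n$ and the $Q_n$-coefficients to the $P_n$-coefficients, so the system is not triangular in the naive sense. What the paper actually does is compute the Schur complement $A-BD^{-1}C$ and observe that $D$ has nonzero diagonal and $A-BD^{-1}C$ is upper triangular with nonzero diagonal (the $(1,1)$-entry being $d_0+\tfrac{2a_{0,1}}{\sqrt{3}b_{0,0}}=\tfrac{\sqrt3\,a_{0,1}}{b_{0,0}}\neq0$, the rest $\tfrac12-j\neq0$), giving $\det M\neq0$. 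That explicit matrix computation is the content missing from your argument; without it, ``hence uniquely determines $\mu_n$'' and ``which fixes $P_n$ and $Q_n$ uniquely'' are unsupported.
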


Since the series $\tilde{H}$ is invariant under the transformation $(x,y)\mapsto(x,-y)$
we can look for a formal solution of the Hamiltonian equations such that its 
first component is even and the second is odd.

\begin{proof}
We note that $\sigma$ is odd and  $\dot{\sigma}=\frac{1}{2}\varepsilon(1-\sigma^2)$.
Then we substitute the series into the equations and collect the terms at the same order of $\varepsilon$. 
 
 The first term that appears in Hamilton's equations is of order 2. Let $P_1(\sigma)=A_{1,0}$
 and $Q_1(\sigma)=A_{1,1} \sigma$.
 Then we have for terms of order $\varepsilon^2$
 \begin{align*}
  & 0 = 2 b_{0,0} A_{1,0} A_{1,1}  \sigma - 2 a_{0,1} \mu_1 A_{1,1} \sigma, \\
  & \frac{1}{2} A_{1,1} (1-\sigma^2) = b_{0,0}(A_{1,0}^2-A_{1,1}^2 \sigma^2) +2 a_{0,1} \mu_1 A_{1,0} .
 \end{align*}
 From the possible solutions we choose the ones described in the lemma and we let
 $$
   P_n(\sigma)=\sum_{k=0}^{\lfloor\frac{n}{2}\rfloor} A_{n,2k}\, \sigma^{2k} , 
   \qquad
   Q_n(\sigma)=\sum_{k=0}^{\lfloor\frac{n}{2}\rfloor} A_{n,2k+1}\, \sigma^{2k+1}
$$
 for all $n>1$.
 Thus for each $n$ there are $n+2$ coefficients, counting $\mu_n$ as an unknown.
 By taking into account that at the order of $\varepsilon^n$, $P_n$ and $Q_n$
 appear only in the second order terms of the Hamiltonian equations, we find that
 we need to solve a linear system and the matrix $M$ of the system is of the form
  \begin{align*}
   M=\begin{pmatrix}
      A & B \\
      C & D
     \end{pmatrix}.
  \end{align*}
 We have two cases.
 \begin{itemize}
  \item $n=2m$
  
  We order the unknowns by $(\mu_{2m}$, $A_{2m,1},\dots$, $A_{2m,2m-1}$, $A_{2m,0},\dots$, $A_{2m,2m})$
  and we see that the matrices $A,B,C,D$ are $(m+1) \times (m+1)$ matrices
 \begin{align*}
  & A=\begin{pmatrix}
    d_0 & t_1 & & & & \\
    & d_1 & t_2 & & & \\
    & & \ddots & \ddots & \\
    & & & d_{m-1} & t_m \\
    & & & & d_m \\
   \end{pmatrix} , \\
  & B=-\frac{2}{\sqrt{3}} \,\mbox{Id}_{n+1} , \\
  & C=\begin{pmatrix}
    \frac{a_{0,1}}{b_{0,0}} & 0 & \cdots & 0 \\
    0 & 0 & \cdots & 0 \\
    \vdots & \vdots &  \ddots & \vdots \\
    0 & 0 & \cdots & 0 \\
   \end{pmatrix} , \\
  & D=\begin{pmatrix}
    -1 & 1 & & & & \\
    & -2 & 2 & & & \\
    & & \ddots & \ddots & \\
    & & & -m & m \\
    & & & & -m-1 \\
   \end{pmatrix} , \\
 \end{align*}
 with $d_0=\frac{a_{0,1}}{\sqrt{3}b_{0,0}}$ and for $j>0$ $d_j=\frac{1}{2}-j$, $t_j=\frac{1}{2}+j$.
 \item $n=2m+1$
 
  We use an order similar to the above: $(\mu_{2m+1}$, $A_{2m+1,1},\dots$, $A_{2m+1,2m+1}$, $A_{2m+1,0},\dots$, $A_{2m+1,2m})$.
  In this case the matrices $A,B,C,D$ are of dimensions $(m+2) \times (m+2)$,
  $(m+2) \times (m+1)$, $(m+1) \times (m+2)$ and $(m+1) \times (m+1)$ respectively.
  The matrix $D$ is as above and
 \begin{align*}
  & A=\begin{pmatrix}
    d_0 & t_1 & & & & \\
    & d_1 & t_2 & & & \\
    & & \ddots & \ddots & \\
    & & & d_{m} & t_{m+1} \\
    & & & & d_{m+1} \\
   \end{pmatrix} , \\
  & B=\begin{pmatrix}
    -\frac{2}{\sqrt{3}} \,\mbox{Id}_{n+1} \\
    0
   \end{pmatrix} , \\
  & C=\begin{pmatrix}
    \frac{a_{0,1}}{b_{0,0}} & 0 & \cdots & 0 & 0 \\
    0 & 0 & \cdots & 0 & 0 \\
    \vdots & \vdots &  \ddots & \vdots & \vdots \\
    0 & 0 & \cdots & 0 & 0 \\
   \end{pmatrix}.
 \end{align*}
 \end{itemize}
 Then we have\footnote
 {
 This is a direct implication of the equality $$\begin{pmatrix}
                                                 A & B \\
                                                 C & D
                                                \end{pmatrix} =
                                                \begin{pmatrix}
                                                 A-B D^{-1}C & B D^{-1} \\
                                                 0 & I_m
                                                \end{pmatrix}\cdot
                                                \begin{pmatrix}
                                                 I_n & 0 \\
                                                 C & D
                                                \end{pmatrix}. $$
 }
 $\det(M)=\det(A-B D^{-1}C) \det(D)$. Since
 \begin{align*}
  D^{-1}=-\begin{pmatrix}
    1 & \frac{1}{2} & \frac{1}{3} & \cdots & \frac{1}{m} & \frac{1}{m+1} \\
    & \frac{1}{2} & \frac{1}{3} & \cdots & \frac{1}{m} & \frac{1}{m+1} \\
    & & \frac{1}{3} & \cdots & \frac{1}{m} & \frac{1}{m+1} \\
    & & & \ddots & \vdots & \vdots \\
    & & & & \frac{1}{m} &  \frac{1}{m+1}\\
    & & & & & \frac{1}{m+1} \\
   \end{pmatrix} \\
 \end{align*}
 we get
 \begin{align*}
  B D^{-1}C=\begin{pmatrix}
    \frac{2 a_{0,1}}{\sqrt{3}b_{0,0}} & \cdots & 0 \\
    \vdots & \ddots & \vdots \\
    0 & \cdots & 0 \\
   \end{pmatrix},
 \end{align*}
 so $\det(A-B D^{-1}C)\ne0$.
 This means that the matrix is invertible and the linear system can be solved at each order.
 Then the coefficients can be computed inductively up to any degree.
\myqed
\end{proof}

\begin{lemma}
 The formal solution $\formalFlow(\varepsilon;\tau)$ satisfies the equation 
 $$\formalFlow(\varepsilon;\tau+1)=\tilde{F}_\varepsilon(\formalFlow(\varepsilon;\tau)).$$
 \label{thm_formal_flow_satisfies_map}
\end{lemma}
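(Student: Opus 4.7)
The plan is to exploit the standard fact that a formal integral curve of a vector field, evaluated at time $1$, agrees with its formal time-$1$ flow map applied to the initial point. By Lemma \ref{thm_formal_flow_existence}, $\formalFlow(\varepsilon;\tau)$ is by construction a formal solution of the Hamiltonian system with Hamiltonian $\tilde{H}(\mu(\varepsilon);x,y)=\tilde{H}(\varepsilon;x,y)$, i.e.\ it satisfies $\dot{\formalFlow}=X_{\tilde H}(\formalFlow)$ where $X_{\tilde H}$ is the Hamiltonian vector field. On the other hand, $\tilde F_\varepsilon$ is by definition the formal time-$1$ flow $\phi^1_{X_{\tilde H}}$ of this same vector field.

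Concretely, I would first define, for an auxiliary formal variable $s$, the two formal expressions
\[
u(\varepsilon;\tau,s) := \formalFlow(\varepsilon;\tau+s),
\qquad
v(\varepsilon;\tau,s) := \phi^{s}_{X_{\tilde H}}\bigl(\formalFlow(\varepsilon;\tau)\bigr).
\]
Both are formal series solutions of the autonomous ODE $\partial_s w = X_{\tilde H}(w)$ with the same initial condition $w|_{s=0}=\formalFlow(\varepsilon;\tau)$. Since an initial value problem for a formal power series ODE has a unique formal solution (one simply solves order by order in $s$, using the Lie series expansion of $\phi^s$), the two series must coincide. Setting $s=1$ then gives the stated identity $\formalFlow(\varepsilon;\tau+1)=\tilde F_\varepsilon(\formalFlow(\varepsilon;\tau))$.

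The only genuinely delicate point is to check that everything lives in a common ring of formal series so that the uniqueness argument applies. On the left-hand side, $\formalFlow(\varepsilon;\tau+1)$ involves $\tanh(\tfrac12\varepsilon(\tau+1))$, which must be re-expanded using the addition formula
\[
\tanh\!\bigl(\tfrac12\varepsilon(\tau+1)\bigr)
= \frac{\sigma+\tanh(\varepsilon/2)}{1+\sigma\tanh(\varepsilon/2)},
\qquad \sigma=\tanh(\tfrac12\varepsilon\tau),
\]
so the left side becomes an element of $\R[\sigma][[\varepsilon]]$. On the right-hand side, $\tilde F_\varepsilon(x,y)=(x,y)+O(\varepsilon)$ is a formal series in $\varepsilon,x,y$ and the substitution of the $O(\varepsilon)$ series $\formalFlow(\varepsilon;\tau)$ is legitimate, again yielding an element of $\R[\sigma][[\varepsilon]]$. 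The ODE $\partial_s w = X_{\tilde H}(w)$ can then be solved uniquely in $\R[\sigma][[\varepsilon]][[s]]$.

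The main obstacle is essentially bookkeeping: to make rigorous the claim that the group property $\phi^{s}\circ\phi^{\tau}=\phi^{s+\tau}$ transfers from the classical Lie series construction of $\phi^{s}_{X_{\tilde H}}$ to the present situation, where the solution is expressed in the non-standard variable $\sigma=\tanh(\tfrac12\varepsilon\tau)$ rather than in $\tau$ directly. Once one identifies both sides as formal solutions of the same ODE in the ring described above, the conclusion is immediate from the order-by-order uniqueness of formal initial value problems.
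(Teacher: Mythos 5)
Your approach is genuinely different from the paper's. The paper does not invoke uniqueness of formal IVP solutions at all; instead it truncates $\tilde H$ to polynomial Hamiltonians $H_n$, for which the time-$1$ flow is an actual analytic map so that the identity $X(\tau+1)=\phi^1_{H_n}(X(\tau))$ holds as an identity between analytic functions, and then passes to the limit using continuity of the defect map $(H,X)\mapsto X(\tau+1)-\phi^1_H(X(\tau))$ in the valuation topology on formal series. That construction is designed precisely to avoid ever having to evaluate a formal expression in an auxiliary time variable at a concrete value.

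That evaluation is where your argument has a gap. You correctly set up $u(s)=\formalFlow(\varepsilon;\tau+s)$ and $v(s)=\phi^s_{X_{\tilde H}}(\formalFlow(\varepsilon;\tau))$ as solutions of the same IVP $\partial_s w = X_{\tilde H}(w)$, $w|_{s=0}=\formalFlow(\varepsilon;\tau)$, and you invoke uniqueness in $\R[\sigma][[\varepsilon]][[s]]$ by solving order by order in $s$. But evaluation at $s=1$ is not a well-defined operation on $\R[\sigma][[\varepsilon]][[s]]$: a general element of that ring has, at each fixed power of $\varepsilon$, an infinite power series in $s$, and there is no ``sum at $s=1$''. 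To license ``setting $s=1$'' you must first show that $u$ and $v$ actually lie in the smaller ring $\R[\sigma,s][[\varepsilon]]$, i.e.\ that the coefficient of each $\varepsilon^n$ is a \emph{polynomial} in $s$. You verify this for $u$ via the $\tanh$ addition formula, but for $v$ the analogous statement is left unaddressed. It requires the structural observation that, because $\formalFlow=O(\varepsilon)$ and the low-order part of $\tilde H$ has valuation at least $3$ (the leading term is cubic in $(x,y)$ and $a_{0,0}=a_{1,0}=0$), each application of $\mathrm{ad}_{X_{\tilde H}}$ raises the $\varepsilon$-valuation, so the Lie series $\sum_{k\ges0}\tfrac{s^k}{k!}\,\mathrm{ad}_{X_{\tilde H}}^k$ truncates at each fixed order of $\varepsilon$. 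Equivalently, solve the IVP order by order in $\varepsilon$ rather than in $s$: an easy induction then shows each $\varepsilon^n$-coefficient of the solution is a polynomial in $s$. Once this point is supplied, your argument closes and is a clean alternative to the paper's truncation-and-continuity argument; as written, the identity $u=v$ in $\R[\sigma][[\varepsilon]][[s]]$ does not by itself yield the desired identity in $\R[\sigma][[\varepsilon]]$.
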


\begin{proof}
 Let $\mathcal{M}$ be the space of formal series of the form $\mu(\varepsilon) = \sum_{n\ges1}\mu_n \varepsilon^n$, with $\mu_n\in\C$.
 Let $\mathcal{A}$ and $\mathcal{B}$ be the spaces of formal series of the form
 \begin{align*}
  A(\mu,I)=\mathop{\sum_{k,m\ges0}}_{k\ne1\mod3} a_{k,m} I^k \mu^m,\quad a_{k,m}\in\C,\, a_{0,0}=a_{1,0}=0
 \end{align*}
 and
 \begin{align*}
  B(\mu,I)=\mathop{\sum_{k,m\ges0}}_{k\ne2\mod3} b_{k,m} I^k \mu^m,\quad b_{k,m}\in\C,
 \end{align*}
 respectively. Let $\mathcal{H}$ be the space of formal series of the form
 \begin{align*}
  H(\varepsilon,x,y) = (x^2+y^2)\, A(\mu(\varepsilon),x^2+y^2) + (x^3-3x y^2) \, B(\mu(\varepsilon),x^2+y^2)
 \end{align*}
 for some $\mu\in\mathcal{M}$, $A\in\mathcal{A}$ and $B\in\mathcal{B}$.
 We define $ \text{val}(I^k \mu^m) = 2k + m$ and we define $\text{val}(A)$ to be the minimum of
 the valuation of all its monomials, we define $\text{val}(B)$ in a similar way. Then we define
 $ \text{val}(H) = \min\{ \text{val}(A)+2, \text{val}(B)+3 \} $.
 
 Let $\mathcal{X}$ be the set of formal series $x=\sum_{n\ges1}P_n(\sigma)\varepsilon^n$ with $P_n$ a polynomial
 of degree at most $n$ and $\sigma=\tanh(\varepsilon\tau/2)$.  Since $\sigma'(\tau) = (1-\sigma^2(\tau))\varepsilon/2 $
 and $\sigma'(\tau+1) = \sum_{n\ges0}\frac{1}{n!}\sigma^{(n)}(\tau) $, if the series $X(\tau)$ is in $\mathcal{X}$,
 then the series $X(\tau+1)$ is also in $\mathcal{X}$.

 We define the valuation of $x\in\mathcal{X}$ to be the smallest non-vanishing order of $\varepsilon$.
 We define a metric on any space of formal series by
 $$ d(x,y) = 2^{-\text{val}(x-y)}. $$
 We define the map $\Phi:\mathcal{H}\times\mathcal{X}^2\to\mathcal{X}^2$, such that for $H\in\mathcal{H}$ and $X\in\mathcal{X}^2 $ $$\Phi(H,X) = X(\tau+1) - \phi_{H}^1(X(\tau)).$$
 Let $H_n$ be the sum of all terms in $H$ with valuation at most $n$. We define $X_n$ in a similar way.
 Then it is easy to see that
 $$ d\big(T(H,X), T(H_{n+1},X_n)\big) \les 2^{-n}. $$
 This implies that $\Phi$ is continuous.
 
 Since $H_n$ is a polynomial, its flow $X_{H_n}\in\mathcal{X}^2$ is analytic.
 Then $\Phi(H_n,X_{H_n})=0$.  Because $\Phi$ is continuous we can take the limit
 $n\to\infty$, which proves the lemma.
\myqed
\end{proof}

We denote $\mathcal{Z}_n(\tau)=(P_n(\sigma),Q_n(\sigma))$, so $ \formalFlow(\varepsilon;\tau)=
\sum_{n\ges1}\varepsilon^n \mathcal{Z}_n(\sigma)$, and $ \formalFlow_n(\varepsilon;\tau)=\sum_{m=1}^n\varepsilon^m
\mathcal{Z}_m(\sigma)$. The standard arguments about the remainder of a Taylor series imply the following statement.

\begin{corollary}
 Let $F_\varepsilon$ be an analytic map with Taylor series that agrees with $\tilde{F}_\varepsilon$ up
 to degree $n$. Then we have $\text{\emph{val}}\big(\formalFlow_n(\varepsilon;\tau+1)-
 F_\varepsilon(\formalFlow_n(\varepsilon;\tau))\big)=n+2$.
\end{corollary}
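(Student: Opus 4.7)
My plan is to start from the exact formal identity $\formalFlow(\varepsilon;\tau+1)=\tilde F_\varepsilon(\formalFlow(\varepsilon;\tau))$ provided by Lemma \ref{thm_formal_flow_satisfies_map} and to carefully book-keep the two perturbations that are introduced: truncating $\formalFlow$ at order $\varepsilon^n$ and replacing $\tilde F_\varepsilon$ by the analytic map $F_\varepsilon$. Writing the tail $R_n = \formalFlow - \formalFlow_n$, which has valuation $n+1$ with leading term $\varepsilon^{n+1}\mathcal{Z}_{n+1}(\sigma)$, I decompose
\[
\formalFlow_n(\tau+1) - F_\varepsilon(\formalFlow_n(\tau)) = \underbrace{\bigl[\tilde F_\varepsilon(\formalFlow(\tau)) - \tilde F_\varepsilon(\formalFlow_n(\tau)) - R_n(\tau+1)\bigr]}_{A} + \underbrace{\bigl[\tilde F_\varepsilon(\formalFlow_n(\tau)) - F_\varepsilon(\formalFlow_n(\tau))\bigr]}_{B},
\]
where $A$ is obtained by substituting $\formalFlow(\tau+1) = \formalFlow_n(\tau+1) + R_n(\tau+1)$ into the formal identity, and each piece will be shown to have valuation at least $n+2$.

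To estimate $A$, I would Taylor expand $\tilde F_\varepsilon$ around $\formalFlow_n$: since the linear part of $\tilde F_\varepsilon$ at the origin is the identity (the cube of the rotation $R_{2\pi/3}$) and $\formalFlow_n = O(\varepsilon)$, its Jacobian satisfies $\tilde F_\varepsilon'(\formalFlow_n) = I + O(\varepsilon)$. Hence $\tilde F_\varepsilon(\formalFlow) - \tilde F_\varepsilon(\formalFlow_n) = \tilde F_\varepsilon'(\formalFlow_n)\,R_n + O(R_n^2) = R_n(\tau) + O(\varepsilon^{n+2})$. Combining with $-R_n(\tau+1)$, the two leading $\varepsilon^{n+1}$ contributions collapse to $\varepsilon^{n+1}\bigl[\mathcal{Z}_{n+1}(\sigma(\tau))-\mathcal{Z}_{n+1}(\sigma(\tau+1))\bigr]$; then the shift formula $\sigma(\tau+1)-\sigma(\tau) = \tfrac{\varepsilon}{2}(1-\sigma^2) + O(\varepsilon^2)$ forces this difference itself to be $O(\varepsilon)$, yielding $A = O(\varepsilon^{n+2})$.

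To estimate $B$, the assumption that $F_\varepsilon$ agrees with $\tilde F_\varepsilon$ up to degree $n$ places their difference at order $\varepsilon^{n+1}$ or higher; combined with the fact that every summand of $F_\varepsilon - \tilde F_\varepsilon$ vanishes at $(x,y)=(0,0)$ and that $\formalFlow_n$ itself has valuation $1$ in $\varepsilon$, the substitution picks up an additional factor of $\varepsilon$, giving $B = O(\varepsilon^{n+2})$. Adding the two bounds proves $\text{val} \ge n+2$; the exact equality follows generically from the non-vanishing of the corresponding $\mathcal{Z}_{n+2}$-coefficient. The key delicate point is the cancellation at order $\varepsilon^{n+1}$ inside $A$: it depends equally on the identity linear part of $\tilde F_\varepsilon$ and on the $\tanh$-structure of $\formalFlow$, which is what supplies the extra $\varepsilon$ needed to promote the naive order $n+1$ to the claimed $n+2$.
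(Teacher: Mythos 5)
Your overall strategy mirrors what the paper compresses into ``standard arguments about the remainder of a Taylor series'': start from the exact identity $\formalFlow(\varepsilon;\tau+1)=\tilde F_\varepsilon(\formalFlow(\varepsilon;\tau))$ of Lemma~\ref{thm_formal_flow_satisfies_map} and split the discrepancy into a truncation piece $A$ and a map-replacement piece $B$. Your treatment of $A$ is correct and is the heart of the matter: since $\tilde F'_\varepsilon(\formalFlow_n)=I+O(\varepsilon)$, the leading contribution reduces to $R_n(\tau)-R_n(\tau+1)$, and the shift identity $\sigma(\tau+1)-\sigma(\tau)=\tfrac{\varepsilon}{2}(1-\sigma^2)+O(\varepsilon^2)$ promotes each term of $R_n$ by one order, giving $A=O(\varepsilon^{n+2})$.

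The estimate for $B$, however, conflates two different degree gradings. You claim that agreement up to degree $n$ makes $F_\varepsilon-\tilde F_\varepsilon$ of order $\varepsilon^{n+1}$, and that the vanishing of its summands at $(x,y)=(0,0)$ contributes one \emph{extra} factor of $\varepsilon$ upon substitution. But the paper's convention (stated in the outline, ``$\mathcal{F}_n=\tilde{\mathcal{F}}_n$ for all $n\les N$'') is that $\mathcal{F}_k$ is homogeneous of degree $k$ \emph{jointly} in $(\varepsilon,x,y)$, so agreement up to degree $n$ means the difference begins at joint degree $n+1$. A monomial $\varepsilon^a x^b y^c$ with $a+b+c=n+1$ and $b+c\ges1$, evaluated on $\formalFlow_n=\varepsilon\mathcal{Z}_1(\sigma)+O(\varepsilon^2)$, yields $\varepsilon^{n+1}P_1(\sigma)^bQ_1(\sigma)^c+O(\varepsilon^{n+2})$: the origin-vanishing constraint $b+c\ges 1$ is already priced into $a+b+c=n+1$ and supplies no further $\varepsilon$. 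Thus $B$ is generically $O(\varepsilon^{n+1})$, not $O(\varepsilon^{n+2})$; your argument would be valid only under the different reading that ``degree $n$'' refers to agreement of the $\varepsilon$-Taylor jets $F_k=\tilde F_k$ for $k\les n$ (where the origin-vanishing of $F_{n+1}-\tilde F_{n+1}$ genuinely costs one more $\varepsilon$). You should either justify that reading or strengthen the hypothesis to agreement at joint degree $n+1$. The closing claim that the exact equality $\mathrm{val}=n+2$ ``follows generically'' from a nonvanishing $\mathcal{Z}_{n+2}$-coefficient is also merely asserted rather than argued.
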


\begin{corollary}
 There exists a real formal series $\formalSeparatrix$ in $\varepsilon$ such that it satisfies
 the equation $\formalSeparatrix(\varepsilon;\tau+1) = F_\varepsilon(\formalSeparatrix(\varepsilon;\tau)) $
 and its components at the order of $ \varepsilon^n $ are polynomials in $\sigma$ of order at most $n$.
 We define  $ \formalSeparatrix (\varepsilon;\tau)=\sum_{n\ges1}\varepsilon^n Z_n(\sigma) .$
 \label{thm_form_sol_corollary}
\end{corollary}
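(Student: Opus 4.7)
The plan is to build $\formalSeparatrix$ order by order, starting from the formal flow $\formalFlow$ of Lemma~\ref{thm_formal_flow_existence}. By Lemma~\ref{thm_formal_flow_satisfies_map}, $\formalFlow(\varepsilon;\tau+1) = \tilde{F}_\varepsilon(\formalFlow(\varepsilon;\tau))$ as formal series. Since the Taylor series of $F_\varepsilon$ agrees with that of $\tilde{F}_\varepsilon$ up to total degree $N$ in $(\varepsilon,x,y)$ and $\formalFlow$ has valuation $1$ in $\varepsilon$, the residual
\[
R(\varepsilon;\tau) := \formalFlow(\varepsilon;\tau+1) - F_\varepsilon(\formalFlow(\varepsilon;\tau)) = \bigl(\tilde{F}_\varepsilon - F_\varepsilon\bigr)(\formalFlow(\varepsilon;\tau))
\]
has valuation at least $N+1$ in $\varepsilon$, with polynomial-in-$\sigma$ coefficients of controlled degree.

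Writing $\formalSeparatrix = \formalFlow + \eta$, the separatrix equation rearranges to the linear cohomological equation
\[
\eta(\varepsilon;\tau+1) - F_\varepsilon'(\formalFlow(\varepsilon;\tau))\,\eta(\varepsilon;\tau) = -R(\varepsilon;\tau) + \text{(quadratic and higher in } \eta\text{)},
\]
whose right-hand side is known at each order from previous data. Because $F_\varepsilon'(\formalFlow)|_{\varepsilon=0} = F_0'(0) = \mathrm{Id}$ (the map $F_0$ is tangent to the identity at the origin) and $\sigma(\tau+1)-\sigma(\tau) = \tfrac{\varepsilon}{2}(1-\sigma^2) + O(\varepsilon^2)$, the linear operator on the left, applied to a single term $\varepsilon^n \eta_n(\sigma)$, yields its leading contribution only at order $\varepsilon^{n+1}$. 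Equating coefficients at that order gives a linear algebraic system for the coefficients of the polynomial $\eta_n$ whose leading square block is structurally the same as the block of the matrix $M$ in the proof of Lemma~\ref{thm_formal_flow_existence} obtained by dropping the $\mu_n$-column. That block is invertible by the determinant computation done there, so $\eta_n$ can be uniquely determined at each step; realness is preserved at every order.

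The main technical obstacles are twofold. First, one must verify that the linear system remains consistent once $\mu_n$ is no longer available as an extra unknown; this reduces to a compatibility condition that follows from the symplectic nature of $F_\varepsilon$ and its formal tangency to the identity at $(\varepsilon,x,y)=(0,0,0)$, allowing $F_\varepsilon$ to be formally embedded as the time-one map of a Hamiltonian vector field in $\varepsilon$, which inherits the compatibility already encoded in Lemma~\ref{thm_formal_flow_existence}. Second, one must preserve the degree bound $\deg \eta_n \les n$ through the induction, which is handled by carefully tracking $\sigma$-degrees through the Taylor expansion of $F_\varepsilon$ and through the tanh-shift expansion of $\sigma(\tau+1)$, both of which only produce polynomials in $\sigma$ of controlled degree. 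An alternative that avoids the direct solvability check is to adapt the functional framework of Lemma~\ref{thm_formal_flow_satisfies_map}: equip the space of formal series $\sum_{n\ges 1}\varepsilon^n P_n(\sigma)$ with $\deg P_n \les n$ with the valuation metric, verify that $X \mapsto X(\tau+1) - F_\varepsilon(X(\tau))$ is continuous, produce approximating solutions by truncating $F_\varepsilon$ to polynomial maps and applying Lemma~\ref{thm_formal_flow_existence} to their Birkhoff normal forms, and pass to the limit.
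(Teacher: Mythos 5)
The paper's own proof is a one-line conjugacy argument: by Proposition~\ref{thm_normal_form}, $F_\varepsilon$ and $\tilde F_\varepsilon$ are conjugated by a formal canonical series $\Phi$, so $\formalSeparatrix := \Phi^{-1}\circ\formalFlow$ satisfies the separatrix equation for $F_\varepsilon$; substituting $\formalFlow=\sum_{n\ges1}\varepsilon^n\mathcal Z_n(\sigma)$ with $\deg\mathcal Z_n\les n$ into a formal power series in $(\mu(\varepsilon),x,y)$ preserves the property ``$\varepsilon^n$-coefficient has $\sigma$-degree at most $n$'' because that property is multiplicative and $\mu(\varepsilon)$ contains no $\sigma$; realness is inherited from the realness of $\Phi$ and $\formalFlow$.

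Your primary route --- setting $\formalSeparatrix=\formalFlow+\eta$ and solving the cohomological equation order by order --- has a genuine gap at the solvability step. Dropping the $\mu_n$-column from the matrix $M$ of Lemma~\ref{thm_formal_flow_existence} leaves a rectangular (for $n=2m$, a $(2m+2)\times(2m+1)$) system; there is no ``square block'' whose invertibility follows from the determinant computation in that proof. That computation used the $\mu_n$-column in an essential way: the entry $d_0=a_{0,1}/(\sqrt3 b_{0,0})$ of $A$ and the only nonzero entry of $C$ both sit in that column, and the nonvanishing of $\det(A-BD^{-1}C)$ hinges on them. So the system you are left with is genuinely overdetermined, and its consistency is exactly what needs to be proved; invoking the ``symplectic nature'' of $F_\varepsilon$ and a ``formal embedding into a Hamiltonian flow'' is a restatement of the normal-form conjugacy, not a justification independent of it. Your closing alternative --- truncate $F_\varepsilon$, normalize, apply Lemma~\ref{thm_formal_flow_existence}, conjugate back, and pass to the valuation-metric limit --- is in substance the paper's argument, just phrased via truncations rather than by transporting $\formalFlow$ directly under $\Phi^{-1}$. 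If you want a self-contained proof, abandon the cohomological equation and simply apply the conjugacy of Proposition~\ref{thm_normal_form} to $\formalFlow$, then verify the degree bound for the composite series as above.
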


\begin{proof}
 Since $F_\varepsilon$ and $\tilde{F}_\varepsilon$ are conjugated by a formal series, their respective
 formal separatrices are also conjugated by a formal series and from this the result follows.
\myqed
\end{proof}

Notice that unlike $\formalFlow$, $\formalSeparatrix$ does not have one even and one odd component.

\subsection{Approximation of the separatrix}

For the existence of the two separatrices we have the following theorem.

\begin{theorem}[\cite{BG08}]
 Let $\varepsilon\in(0,\varepsilon_0)$ for some $\varepsilon_0>0$ and let $\Gamma_\varepsilon(\bx)=\bx +
 \varepsilon H_\varepsilon(\bx)$ denote a real analytic family of area preserving maps defined on
 a bounded domain $\mathcal{D}\subset \C^2$ for all $\varepsilon$.
 Moreover, assume that $\Gamma_\varepsilon$ is analytic in $\varepsilon$ around $0$ and that $\det H_0'(0) <0$.
 Let the origin be a hyperbolic fixed point for every map and $\varepsilon$ be the logarithm of the largest eigenvalue.
 We consider the separatrix equation
 \begin{align*}
  \mathbf{X}^-(\varepsilon;s+\varepsilon) = \Gamma_\varepsilon(\mathbf{X}^-(\varepsilon;s)).
 \end{align*}
 Then the following statements are true.
 \begin{itemize}
  \item The separatrix equation has a solution tangent to the eigenvector of $\Gamma_\varepsilon'(0)$
  that corresponds to the eigenvalue that is bigger than 1.
  \item There exists a formal solution of the separatrix equation of the form
  $$\tilde{\mathbf{X}}(\varepsilon;s)=\sum_{k\ges0} \varepsilon^k \Psi_k (e^s),$$
  with $\Psi_k$ being analytic around $0$ and $\Psi_k(0)=0$.
  \item Let $\tilde{\mathbf{X}}_n(\varepsilon;s)=\sum_{k=0}^{n-1}\varepsilon^k \Psi_k(e^s)$ be one such formal solution.
  We fix $r>0$ and let $\mathcal{D}$ be the domain such that $|e^s|<r$ and all $\Psi_k(e^s)$ are bounded.
  Fix $L>0$ and let $D$ be the $L$-time extension of $D$ such that all $\Psi_k(e^s)$ are still bounded
  (see Figure \ref{fig-theorem_domain} for an example).
  Then there exist a unique parametrization of the unstable separatrix $\mathbf{X}^-(\varepsilon;s)$
  and a constant $C_n>0$ such that
  $$\left|\mathbf{X}^-(\varepsilon;s)- \tilde{\mathbf{X}}_n(\varepsilon;s) \right|\les C_n \varepsilon^n$$
  for all $s\in D$.
 \end{itemize}
 \label{th-vassili_approx}
\end{theorem}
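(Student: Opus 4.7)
My approach follows the standard complex matching scheme: build a formal series solution order-by-order in $\varepsilon$, construct the true separatrix on a fixed neighbourhood of $s=-\infty$ via the unstable manifold theorem, and then propagate this piece forward under the time-$\varepsilon$ dynamics to cover the extended domain $D$, controlling the error via the linearized functional equation.

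For the formal series, I substitute the Ansatz $\tilde{\mathbf{X}}(\varepsilon;s)=\sum_{k\ges0}\varepsilon^k\Psi_k(z)$ with $z=\me^s$ into $\mathbf{X}(\varepsilon;s+\varepsilon)-\mathbf{X}(\varepsilon;s)=\varepsilon H_\varepsilon(\mathbf{X}(\varepsilon;s))$, Taylor expand both sides in $\varepsilon$ (using $\Psi_k(\me^{s+\varepsilon})=\sum_{j\ges0}\tfrac{\varepsilon^j}{j!}(z\partial_z)^j\Psi_k(z)$) and match coefficients. At order $\varepsilon$ this produces the autonomous ODE $z\Psi_0'(z)=H_0(\Psi_0(z))$, namely a time-parametrization of the unstable manifold of the vector field $H_0$ at the origin; the classical unstable manifold theorem delivers a unique analytic germ $\Psi_0$ with $\Psi_0(0)=0$ tangent to the unstable eigendirection (after a normalization of the tangent vector). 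At each higher order the equation reads $(z\partial_z - H_0'(\Psi_0(z)))\Psi_k(z)=G_k(z)$, with $G_k$ a polynomial expression in $\Psi_0,\dots,\Psi_{k-1}$ and their derivatives; because $\det H_0'(0)<0$, the eigenvalues of $H_0'(0)$ have opposite sign, so the linear operator $z\partial_z - H_0'(\Psi_0(z))$ is invertible on the space of analytic germs at $0$ vanishing at $0$, and $\Psi_k$ is constructed inductively.

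For the local existence of $\mathbf{X}^-$, I invoke the unstable manifold theorem for $\Gamma_\varepsilon$ at the hyperbolic fixed point $0$: it yields a one-dimensional analytic unstable manifold on which the dynamics conjugates analytically to multiplication by the unstable multiplier $\me^\varepsilon$. Pulling back via $s\mapsto\me^s$ gives a parametrization $\mathbf{X}^-(\varepsilon;s)$ satisfying the separatrix equation on a disc $\{|\me^s|<r_0\}$, with $r_0>0$ uniform in $\varepsilon$ thanks to the analytic dependence of $\Gamma_\varepsilon$ on $\varepsilon$. Comparing $\mathbf{X}^-$ with $\tilde{\mathbf{X}}_n$ inside this disc, both being solutions of the separatrix equation to order $\varepsilon^{n+1}$ and sharing the same truncated expansion at $s=-\infty$, yields $|\mathbf{X}^- - \tilde{\mathbf{X}}_n|\les C_n'\varepsilon^n$ there by a uniqueness and Cauchy-estimate argument on the common formal Taylor series in $\varepsilon$.

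To propagate this bound onto $D$, set $\mathbf{R}=\mathbf{X}^- - \tilde{\mathbf{X}}_n$ and subtract the separatrix equation from its truncated analogue; one obtains the recurrence
\begin{align*}
 \mathbf{R}(\varepsilon;s+\varepsilon)=\Gamma_\varepsilon'(\tilde{\mathbf{X}}_n(\varepsilon;s))\cdot\mathbf{R}(\varepsilon;s)+O(|\mathbf{R}|^2)+\varepsilon^{n+1}E_n(\varepsilon;s),
\end{align*}
where $E_n$ is the defect of $\tilde{\mathbf{X}}_n$, uniformly bounded on $D$. Because $\Gamma_\varepsilon'=I+\varepsilon H_\varepsilon'$ has operator norm $1+O(\varepsilon)$ on $D$, iterating the recurrence forward over the $O(L/\varepsilon)$ steps needed to sweep $D$ from the base disc gives $|\mathbf{R}|\les(1+C\varepsilon)^{L/\varepsilon}\varepsilon^n=O(\me^{CL}\varepsilon^n)$; the quadratic error term is absorbed by a standard bootstrap after shrinking $\varepsilon_0$. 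The principal obstacle I expect is the uniform-in-$\varepsilon$ control of $r_0$ together with the bookkeeping of these iterations, which is cleanest if one works throughout in the variable $z=\me^s$ and sweeps out $D$ by iterating $\Gamma_\varepsilon$ on a nested family of annuli whose outer radii expand by the factor $\me^\varepsilon$ at each step.
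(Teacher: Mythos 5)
Note first that the paper itself does not prove this theorem; it is quoted from \cite{BG08}, so there is no in-paper argument to compare against. Your outline is sensible, but the inductive construction of $\Psi_k$ contains a concrete error. You claim that $z\partial_z - H_0'(\Psi_0(z))$ is invertible on analytic germs vanishing at $z=0$ because $\det H_0'(0)<0$ yields eigenvalues of opposite sign. In fact area preservation forces $\operatorname{tr}H_0'(0)=0$, and the normalization $\varepsilon=\log\lambda_{\max}$ (equivalently, analyticity of $\Psi_0$ at $z=0$ with leading term $\Psi_0(z)\sim z\,v_u$) forces those eigenvalues to be exactly $\pm1$. On the $m$-th Taylor coefficient the operator then acts, modulo lower-order contributions, as $m\,\mathrm{Id}-H_0'(0)$, which is singular at $m=1$: the kernel is one-dimensional, spanned by $z\Psi_0'(z)$ (the infinitesimal $s$-shift), with a matching one-dimensional cokernel. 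The induction therefore needs both a normalization to kill the kernel and a solvability argument showing that the $z^1$-component of $G_k$ along the unstable eigendirection vanishes; that condition is usually extracted from the Hamiltonian / area-preserving structure via a formal conserved quantity, and you do not address it.

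Two further claims are asserted without justification and carry the real analytical weight of the theorem: that the unstable manifold theorem for $\Gamma_\varepsilon$ delivers a parametrization analytic on $\{|z|<r_0\}$ with $r_0$ uniform in $\varepsilon$ (the map degenerates to the identity as $\varepsilon\to0$, so the classical contraction domain does not stay bounded below for free), and that $\mathbf{X}^-$ and $\tilde{\mathbf{X}}_n$ are $O(\varepsilon^n)$-close on the base disc ``by a uniqueness and Cauchy-estimate argument on the common formal Taylor series in $\varepsilon$.'' The second presupposes that $\mathbf{X}^-$ has an $\varepsilon$-asymptotic expansion on a fixed domain, which is essentially the conclusion of the theorem and cannot be quoted as input. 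Both steps require a genuine singular-perturbation estimate — in the cited reference this is carried out by a fixed-point argument for the remainder directly on the full extended domain $D$ — rather than two applications of off-the-shelf theorems followed by forward propagation.
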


\begin{figure}[t]
  \centering
  \includegraphics[width=0.5\textwidth]{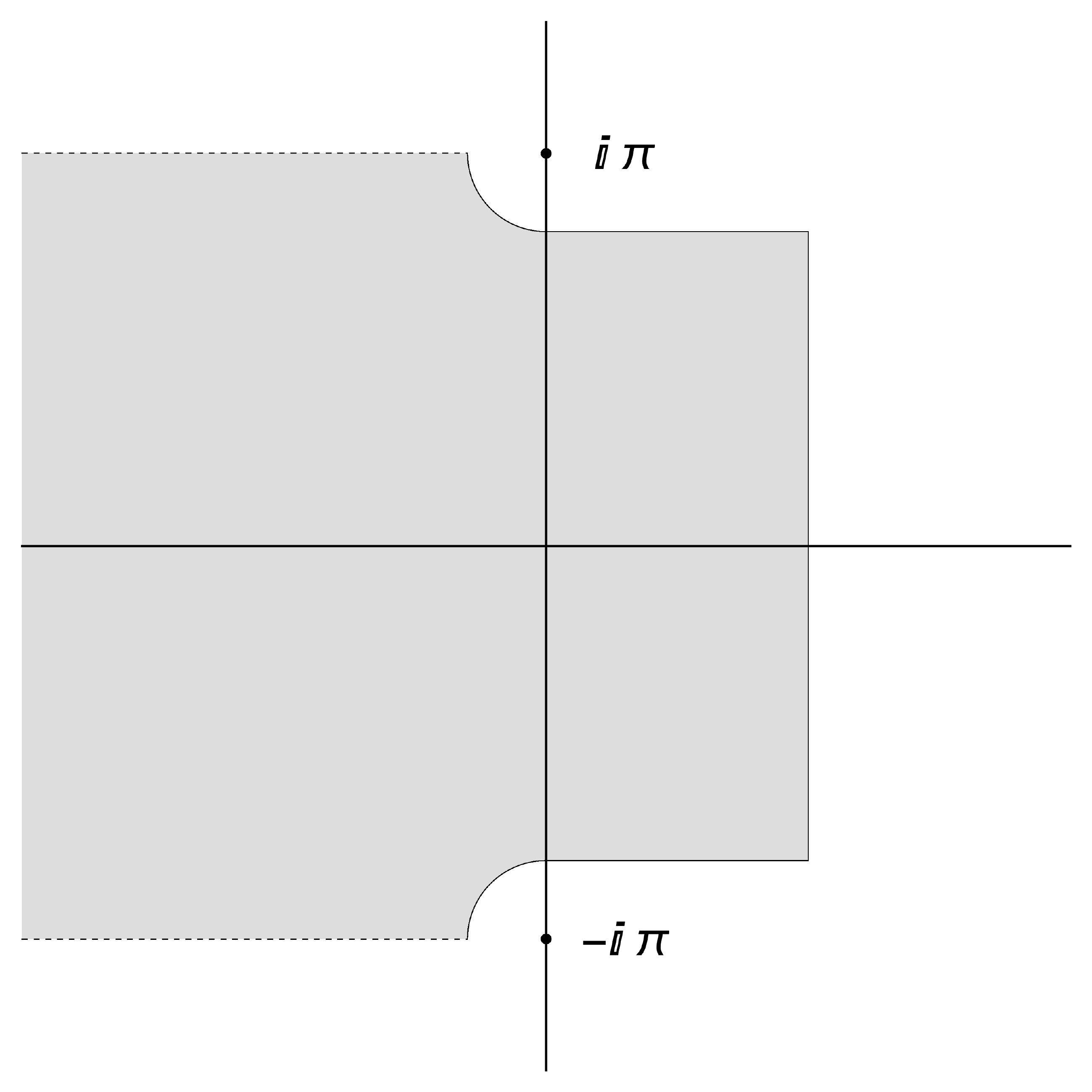}
  \caption{The domain in which we can apply the theorem.}
  \label{fig-theorem_domain}
\end{figure}

In order to apply the theorem we need to scale the map. We also move the fixed point to the origin
to make the process more transparent. Let $\varepsilon w_*$ be a hyperbolic fixed point, namely
$\varepsilon w_*= F_\varepsilon(\varepsilon w_*)$. We define the map 
$$\Gamma_\varepsilon (\bx)=\frac{1}{\varepsilon} F_\varepsilon(\varepsilon (\bx+w_*))-w_*.$$
Then $\Gamma_\varepsilon$ satisfies the conditions needed for the application of the theorem.
The condition $\det H_0'(0) <0$ is equivalent to our initial assumption that the map $g_\mu$
unfolds the resonance generically.

We define
\begin{align*}
 \tilde{\mathbf{X}}(\varepsilon;s)&=\frac{1}{\varepsilon} \formalSeparatrix (\varepsilon;\frac{s}{\varepsilon})
 =\sum_{n\ges1}\varepsilon^{n-1} Z_n\left(\tanh\left(\frac{s}{2}\right)\right)-w_*\\
 &= \sum_{n\ges1}\varepsilon^{n-1} Z_n\left(\frac{e^s-1}{e^s+1}\right)-w_*.
\end{align*}
By construction, $\tilde{\mathbf{X}}$ is a formal solution of the map $\Gamma_\varepsilon$ in the form
that the theorem predicts. Then we can apply the theorem and get the existence of a parametrization
of the unstable separatrix  $\mathbf{X}^-(\varepsilon; s )$. We define the unstable separatrix of
the map $F_\varepsilon$ by
$$ W^-(\varepsilon;\tau) = \varepsilon \mathbf{X}^-(\varepsilon;\varepsilon \tau)
+ \varepsilon w_*.$$

Let $D$ be a domain as shown in figure \ref{fig-theorem_domain} where $\tanh(s/2)$ is bounded, then
for all $\tau\in D$ it is true that
\begin{align}
 \left|W^-(\varepsilon;\tau)- \formalSeparatrix(\varepsilon;\tau) \right|\les C_n \varepsilon^{n+1}.
 \label{eq_first_approx}
\end{align}
Existence of the stable separatrix and similar estimates can be obtained
by applying the theorem on the inverse map.

\subsection{Formal separatrix close to the singularity}
\label{ch_form_sep_sing}

\begin{lemma}
 There exists a formal separatrix $\formalSeparatrixSing$ of the map $F_\varepsilon$ such that
 \begin{equation*}
  \formalSeparatrixSing(\varepsilon,t+1) = F_\varepsilon( \formalSeparatrixSing(\varepsilon,t) )
 \end{equation*}
 with
 \begin{equation*}
  \formalSeparatrixSing(\varepsilon,t) = \sum_{n\ges0}\varepsilon^n \formalSeparatrixSing_n(t)
 \end{equation*}
 and $\formalSeparatrixSing_n \in t^{n-1}\R^2[[t^{-1}]]$. Moreover $\formalSeparatrixSing_n$'s satisfy the equations
 \begin{align}
 & n=0:\quad \formalSeparatrixSing_0(t+1) = F_0( \formalSeparatrixSing_0(t) ), \nonumber \\
 & n>0:\quad \formalSeparatrixSing_n(t+1) = F_0'( \formalSeparatrixSing_0(t) )\cdot\formalSeparatrixSing_n(t) + B_n(t),
 \label{eq_Wn_equations}
\end{align}
with $B_n(t)$ depending on $\formalSeparatrixSing_m$ and $F_m$, $0\les m <n$.
\label{thm_form_sol_sing_coeffs_equations}
\end{lemma}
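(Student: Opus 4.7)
The plan is induction on $n$. The base case $n=0$ is supplied by Theorem \ref{thm_existence_Borel_transform}, which provides a real formal series $\formalSeparatrixSing_0 \in t^{-1}\R^2[[t^{-1}]]$ solving $\formalSeparatrixSing_0(t+1) = F_0(\formalSeparatrixSing_0(t))$ together with a second independent formal solution $\tilde{\Xi}_0 \sim b_0 t^2$ of the variational equation $\Xi(t+1)=F_0'(\formalSeparatrixSing_0(t))\,\Xi(t)$, normalised by $\det(\tilde{\Xi}_0,\dot{\formalSeparatrixSing}_0)=1$ and with $\dot{\formalSeparatrixSing}_0\sim t^{-2}$.

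For the induction step, assume $\formalSeparatrixSing_0,\ldots,\formalSeparatrixSing_{n-1}$ have been constructed in the required classes. Writing $F_\varepsilon=\sum_{k\ges 0}\varepsilon^k F_k$ and Taylor-expanding each $F_k$ around $\formalSeparatrixSing_0(t)$, the functional equation $\formalSeparatrixSing(\varepsilon,t+1)=F_\varepsilon(\formalSeparatrixSing(\varepsilon,t))$ at order $\varepsilon^n$ becomes
$$\formalSeparatrixSing_n(t+1)-F_0'(\formalSeparatrixSing_0(t))\,\formalSeparatrixSing_n(t)=B_n(t),$$
where $B_n$ is the sum, over $k\ges 0$, $p\ges 0$ and $m_1,\ldots,m_p\ges 1$ with $k+\sum m_i=n$ and $(k,p,m_1)\ne(0,1,n)$, of the contributions $\tfrac{1}{p!}F_k^{(p)}(\formalSeparatrixSing_0(t))[\formalSeparatrixSing_{m_1}(t),\ldots,\formalSeparatrixSing_{m_p}(t)]$. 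This is exactly \eqref{eq_Wn_equations}. Because $F_\varepsilon(0)=0$ forces $F_k(0)=0$ for every $k$, every surviving term in $B_n$ carries at least one factor lying in $t^{-1}\R^2[[t^{-1}]]$, and a direct count using the inductive hypothesis gives $B_n\in t^{n-2}\R^2[[t^{-1}]]$.

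The remaining task is to solve this inhomogeneous linear difference equation inside the class $t^{n-1}\R^2[[t^{-1}]]$. I would use variation of parameters with $(\tilde{\Xi}_0,\dot{\formalSeparatrixSing}_0)$: setting $\formalSeparatrixSing_n(t)=c_1(t)\tilde{\Xi}_0(t)+c_2(t)\dot{\formalSeparatrixSing}_0(t)$ and using $\det=1$, Cramer's rule reduces the vector equation to two scalar difference equations $c_i(t+1)-c_i(t)=g_i(t)$, which are solved formally coefficient by coefficient. Tracking the growths $\tilde{\Xi}_0\sim t^2$, $\dot{\formalSeparatrixSing}_0\sim t^{-2}$, $B_n\sim t^{n-2}$ through the formal summation, which raises $t$-degrees by one, yields $c_1\in t^{n-3}\R[[t^{-1}]]$ and $c_2\in t^{n+1}\R[[t^{-1}]]$, hence $\formalSeparatrixSing_n\in t^{n-1}\R^2[[t^{-1}]]$. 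Reality of all coefficients descends from the reality of $F_k$, $\formalSeparatrixSing_0$ and $\tilde{\Xi}_0$.

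The main obstacle is the formal solvability of the scalar difference equations. The operator $f\mapsto f(t+1)-f(t)$ on $t^m\R[[t^{-1}]]$ has the constants in its kernel and never produces a $t^{-1}$ term in its image, so one must check that each $g_i(t)$ extracted from $B_n$ has vanishing coefficient of $t^{-1}$. This compatibility condition is expected to follow from the area-preserving nature of $F_\varepsilon$, which guarantees the Wronskian identity satisfied by the fundamental system $(\tilde{\Xi}_0,\dot{\formalSeparatrixSing}_0)$ and aligns the structure of $g_i$ with the image of the difference operator; the residual kernel freedom at each step corresponds exactly to the already-known one-parameter translation symmetry $\formalSeparatrixSing_0(t+c)$.
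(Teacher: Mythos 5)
Your approach is genuinely different from the paper's, and it contains a gap that you yourself flag but do not close.

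The paper does not construct $\formalSeparatrixSing$ by inductively solving the difference equations \eqref{eq_Wn_equations}. Instead it takes the formal series $\formalSeparatrix(\varepsilon;\tau)=\sum_{n\ges1}\varepsilon^n Z_n(\sigma)$ already built in Corollary \ref{thm_form_sol_corollary}, whose $\varepsilon^n$ coefficients are polynomials of degree $\les n$ in $\sigma=\tanh(\tfrac12\varepsilon\tau)$, shifts $\tau=t+\pi\ii/\varepsilon$, replaces $\tanh\bigl(\tfrac12\varepsilon t+\tfrac{\pi\ii}{2}\bigr)$ by its Laurent series in $t$, and re-sums the resulting double series by columns (Table \ref{tab-monomials}). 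This immediately produces coefficients $\formalSeparatrixSing_n\in t^{n-1}\R^2[[t^{-1}]]$, because $\tanh$ has a simple pole at $\tfrac{\pi\ii}{2}$ and hence a log-free Laurent expansion; the degree bound follows from the degree count on $Z_n$. The recursion \eqref{eq_Wn_equations} then simply falls out by substituting the re-summed series into the functional equation and collecting powers of $\varepsilon$. Nothing about solvability of the scalar difference equations ever needs to be checked.

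In your inductive scheme, that solvability check is precisely where the argument stops. You correctly identify the obstruction: the difference operator $f\mapsto f(t+1)-f(t)$ on $t^m\R[[t^{-1}]]$ never produces a $t^{-1}$ term, so the scalar right-hand sides $g_i$ obtained by Cramer's rule from $B_n$, $\tilde\Xi_0$, $\dot{\formalSeparatrixSing}_0$ must have vanishing $t^{-1}$ coefficient, otherwise $c_i$ picks up a $\log t$ and $\formalSeparatrixSing_n\notin t^{n-1}\R^2[[t^{-1}]]$. You then write that this compatibility ``is expected to follow from the area-preserving nature of $F_\varepsilon$'' — but you do not prove it, and it is not an immediate consequence of symplecticity alone. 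The Wronskian identity $\det(\tilde\Xi_0,\dot{\formalSeparatrixSing}_0)=1$ gives you the fundamental system, not the vanishing of the residue of $g_i$. In the paper's Section 2.5 the same dichotomy is stated explicitly (``if the term $t^{-1}$ appears then $Y$ contains also logarithmic terms''), and the log-free form is obtained only because $\formalSeparatrixSing$ is constructed as a re-expansion of $\formalSeparatrix$, whose log-free structure was already guaranteed by the normal-form machinery of Lemma \ref{thm_formal_flow_existence}. Without an argument actually showing the $t^{-1}$ residues cancel — which would presumably use the $\Z_3$-symmetric normal-form structure and the parity of the components, not just area-preservation — your induction does not establish $\formalSeparatrixSing_n\in t^{n-1}\R^2[[t^{-1}]]$. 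What your route would buy, if closed, is independence from the existence of $\formalSeparatrix$; what the paper's route buys is avoiding the compatibility computation entirely by reusing the structure of $\formalSeparatrix$ already built in the preceding subsection.

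A secondary, minor point: your set-up for $B_n$ is fine, but it does include the term $F_n(\formalSeparatrixSing_0)$ (the $k=n$, $p=0$ contribution), so the dependence is really on $F_m$ for $m\les n$ and on $\formalSeparatrixSing_m$ for $m<n$; the lemma's phrasing and your own should be read with that understanding.
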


\begin{proof}
We defined the formal separatrice $\formalSeparatrix$ for the maps $F_\varepsilon$ and
we saw on each order of $\varepsilon$ is a polynomial of $\sigma = \tanh(\tfrac{1}{2}\varepsilon\tau)$.

Each component of $\formalSeparatrix$ has a singularity at $\pi\ii/\varepsilon$ since the hyperbolic
tangent has a simple pole there. We introduce a new parameter $t$ by translating the origin to the
singularity, $\tau=t+\pi\ii/\varepsilon$. Then we can substitute $\tanh(\tfrac{1}{2}(\varepsilon\tau+\pi\ii))$
by its Laurent series in $\formalSeparatrix$. Since the power of $\sigma$ in the order of $\varepsilon^n$
is at most $n$, the expansion does not have terms with negative powers of $\varepsilon$. The monomials
that appear in this expansion are summarized in Table \ref{tab-monomials}.

Recall that we have defined $\formalSeparatrix(\varepsilon;\tau) = \sum_{n\ges0} \varepsilon^n Z_n(\sigma)$, so
$\formalSeparatrix(\varepsilon;t+\tfrac{\pi}{\varepsilon}\ii) = \sum_{n\ges0} \varepsilon^n Z_n
\big( \tanh(\tfrac{1}{2}(\varepsilon\tau+\pi\ii)) \big)$. In Table \ref{tab-monomials} each row 
shows the monomials in the expansion of $\varepsilon^n Z_n$ without the coefficients. By changing 
summation order we can sum by columns so we have
$$ \formalSeparatrixSing(\varepsilon;t) := \formalSeparatrix(\varepsilon;t+\tfrac{\pi}{\varepsilon}\ii)
= \sum_{n\ges0}\varepsilon^n \formalSeparatrixSing_n(t), $$
with each $\formalSeparatrixSing_n(t)$ being a 2-vector of formal series in $t$.  From now on
$\formalSeparatrixSing(\varepsilon;t)$ will denote $\formalSeparatrix(\varepsilon;t+\tfrac{\pi}{\varepsilon}\ii)$
summed by columns. To conclude the proof we substitute the series in the equation
$\formalSeparatrixSing(\varepsilon;t+1)=F_\varepsilon(\formalSeparatrixSing(\varepsilon;t))$
and we gather terms in powers of $\varepsilon$.
\myqed
\end{proof}

\begin{table}[t]
\begin{center}
{\tabulinesep=1.2mm
\begin{tabu}{c|ccccccccc}
  & $\formalSeparatrixSing_0$ & $\varepsilon \formalSeparatrixSing_1$ & $\varepsilon^2 \formalSeparatrixSing_2$ & $\varepsilon^3 \formalSeparatrixSing_3$ & $\varepsilon^4 \formalSeparatrixSing_4$ & $\varepsilon^5 \formalSeparatrixSing_5$ & $\varepsilon^6 \formalSeparatrixSing_6$ & $\varepsilon^7 \formalSeparatrixSing_7$ & $\cdots$ \\
 \hline
 $\varepsilon Z_1 $ & $t^{-1}$ & $\varepsilon$ & $\varepsilon^2 \, t$ & $\varepsilon^3 \, t^2$ & $\varepsilon^4 \, t^3$ & $\varepsilon^5 \, t^4$ & $\varepsilon^6 \, t^5$ & $\varepsilon^7 \, t^6$ & $\cdots$ \\
 $\varepsilon^2 Z_2$ & $t^{-2}$ & $\varepsilon \, t^{-1}$ & $\varepsilon^2$ & $\varepsilon^3 \, t$ & $\varepsilon^4 \, t^2$ & $\varepsilon^5 \, t^3$ & $\varepsilon^6 \, t^4$ & $\varepsilon^7 \, t^5$ & $\cdots$ \\
 $\varepsilon^3 Z_3$ & $t^{-3}$ & $\varepsilon \, t^{-2}$ & $\varepsilon^2 \, t^{-1}$ & $\varepsilon^3$ & $\varepsilon^4 \, t$ & $\varepsilon^5 \, t^2$ & $\varepsilon^6 \, t^3$ & $\varepsilon^7 \, t^4$ & $\cdots$ \\
 $\varepsilon^4 Z_4$ & $t^{-4}$ & $\varepsilon \, t^{-3}$ & $\varepsilon^2 \, t^{-2}$ & $\varepsilon^3 \, t^{-1}$ & $\varepsilon^4$ & $\varepsilon^5 \, t$ & $\varepsilon^6 \, t^2$ & $\varepsilon^7 \, t^3$ & $\cdots$ \\
 $\varepsilon^5 Z_5$ & $t^{-5}$ & $\varepsilon \, t^{-4}$ & $\varepsilon^2 \, t^{-3}$ & $\varepsilon^3 \, t^{-2}$ & $\varepsilon^4 \, t^{-1}$ & $\varepsilon^5$ & $\varepsilon^6 \, t$ & $\varepsilon^7 \, t^2$ & $\cdots$ \\
 $\varepsilon^6 Z_6$ & $t^{-6}$ & $\varepsilon \, t^{-5}$ & $\varepsilon^2 \, t^{-4}$ & $\varepsilon^3 \, t^{-3}$ & $\varepsilon^4 \, t^{-2}$ & $\varepsilon^5 \, t^{-1}$ & $\varepsilon^6$ & $\varepsilon^7 \, t$ & $\cdots$ \\
 $\varepsilon^7 Z_7$ & $t^{-7}$ & $\varepsilon \, t^{-6}$ & $\varepsilon^2 \, t^{-5}$ & $\varepsilon^3 \, t^{-4}$ & $\varepsilon^4 \, t^{-3}$ & $\varepsilon^5 \, t^{-2}$ & $\varepsilon^6 \, t^{-1}$ & $\varepsilon^7$ & $\cdots$ \\
 $\varepsilon^8 Z_8$ & $t^{-8}$ & $\varepsilon \, t^{-7}$ & $\varepsilon^2 \, t^{-6}$ & $\varepsilon^3 \, t^{-5}$ & $\varepsilon^4 \, t^{-4}$ & $\varepsilon^5 \, t^{-3}$ & $\varepsilon^6 \, t^{-2}$ & $\varepsilon^7 \, t^{-1}$ & $\cdots$ \\
 $\vdots$ & $\vdots$ & $\vdots$ & $\vdots$ & $\vdots$ & $\vdots$ & $\vdots$ & $\vdots$  & $\vdots$ &
\end{tabu}}
\caption{Monomials in expansion close to the singularity}
\label{tab-monomials}
\end{center}
\end{table}

Using $\formalFlow$ we can show an analogous lemma.

\begin{lemma}
 There exists a formal separatrix $\formalFlowSing$ of the map $\tilde{F}_\varepsilon$.
 The first component of $\formalFlowSing$ is even in $t$ and the second is odd.
 Moreover
 $$\formalFlowSing(\varepsilon;t) = \sum_{n\ges0} \varepsilon^n \tsmallbbw_n(t) $$
 with $\tsmallbbw_n\in t^{n-1}\R^2[[t^{-1}]]$ and they satisfy the equations
 \begin{align}
 & n=0:\quad \tsmallbbw_0(t+1) = \tilde{F}_0( \tsmallbbw_0(t) ) \nonumber \\
 & n>0:\quad \tsmallbbw_n(t+1) = \tilde{F}_0'( \tsmallbbw_0(t) )\tsmallbbw_n(t) + \mathbbm{B}_n(t),
 \label{eq_bbmWn_equations}
\end{align}
with $\mathbbm{B}_n(t)$ depending on $\tsmallbbw_m$ and $\tilde{F}_m$, $0\les m <n$.
\label{thm_form_flow_sing_coeffs_equations}
\end{lemma}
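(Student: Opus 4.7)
The plan is to mimic the proof of Lemma \ref{thm_form_sol_sing_coeffs_equations}, with $\formalFlow$ and $\tilde F_\varepsilon$ replacing $\formalSeparatrix$ and $F_\varepsilon$, while carrying through the parity information of $\formalFlow$. Starting from the real formal solution $\formalFlow(\varepsilon;\tau)=\sum_{n\ges1}\varepsilon^n \mathcal{Z}_n(\sigma)$ with $\sigma=\tanh(\tfrac{1}{2}\varepsilon\tau)$ produced by Lemma \ref{thm_formal_flow_existence} (where $\mathcal{Z}_n=(P_n,Q_n)$ with $P_n$ even and $Q_n$ odd of degree at most $n$ in $\sigma$), I would translate the origin to the singularity by $\tau = t + \pi\ii/\varepsilon$. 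The identity $\tanh(x+\ii\pi/2)=\coth(x)$ turns $\sigma$ into $\coth(\tfrac{1}{2}\varepsilon t)$, whose Laurent expansion in the variable $\varepsilon t$ consists of the simple pole $2/(\varepsilon t)$ together with only positive odd powers of $\varepsilon t$.

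Substituting this Laurent expansion into each $\varepsilon^n \mathcal{Z}_n(\sigma)$ and rearranging the resulting double series by powers of $\varepsilon$, exactly as organized by Table \ref{tab-monomials}, produces the decomposition $\formalFlowSing(\varepsilon;t)=\sum_{n\ges0}\varepsilon^n \tsmallbbw_n(t)$. The valuation claim $\tsmallbbw_n\in t^{n-1}\R^2[[t^{-1}]]$ follows from the same monomial bookkeeping as in Lemma \ref{thm_form_sol_sing_coeffs_equations}: a typical contribution is of the form $\varepsilon^{m+j}t^{j}$ with $m\ges1$ and $j\ges-k$ for some $k\les\deg_\sigma\mathcal{Z}_m\les m$, so in the column of $\varepsilon^n$ one has $j=n-m\les n-1$, while the polar term $(\varepsilon t)^{-1}$ permits arbitrarily large negative powers of $t$. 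Only finitely many $(m,k)$ contribute to a fixed power $\varepsilon^n t^j$, so the rearrangement is unambiguous as a formal object.

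The parity statement is then immediate. Since $\coth$ is odd, $P_n$ is even and $Q_n$ is odd, each contribution $\varepsilon^m P_m(\coth(\tfrac{1}{2}\varepsilon t))$ to the first component is even in $t$, and each contribution $\varepsilon^m Q_m(\coth(\tfrac{1}{2}\varepsilon t))$ to the second is odd in $t$; regrouping by powers of $\varepsilon$ preserves these parities coefficient by coefficient, so every $\tsmallbbw_n$ has first component even in $t$ and second component odd in $t$.

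Finally, the functional equation $\formalFlowSing(\varepsilon;t+1) = \tilde F_\varepsilon(\formalFlowSing(\varepsilon;t))$ is inherited from $\formalFlow(\varepsilon;\tau+1) = \tilde F_\varepsilon(\formalFlow(\varepsilon;\tau))$ (Lemma \ref{thm_formal_flow_satisfies_map}) via the same translation. Extracting coefficients of $\varepsilon^n$ on both sides and Taylor-expanding $\tilde F_\varepsilon$ around $\tsmallbbw_0(t)$ yields the recursion \eqref{eq_bbmWn_equations}, with $\mathbbm{B}_n(t)$ given by a polynomial expression in $\tsmallbbw_0,\dots,\tsmallbbw_{n-1}$ and in the Taylor coefficients of $\tilde F_0,\dots,\tilde F_n$ evaluated at $\tsmallbbw_0(t)$. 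I do not expect a real obstacle: everything is purely formal, so convergence never enters the picture, and the only item requiring care is confirming that the rearrangement by $\varepsilon$-columns gathers only finitely many contributions at each fixed power of $t$, which is guaranteed by $\deg_\sigma \mathcal{Z}_m\les m$.
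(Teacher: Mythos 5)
Your proposal is correct and follows exactly the route the paper intends: the paper's proof of this lemma simply points back to the proof of Lemma~\ref{thm_form_sol_sing_coeffs_equations} (translate to the singularity, substitute the Laurent series of the hyperbolic tangent, regroup by columns as in Table~\ref{tab-monomials}, and read off the recursion in powers of $\varepsilon$) and adds the one-line remark that the even/odd structure is inherited from $\formalFlow$. You carry this out faithfully, and in fact supply the detail the paper elides — the identity $\tanh(x+\ii\pi/2)=\coth(x)$, the observation that $\coth$ is odd so the parity of $P_m,Q_m$ survives the substitution, and the bookkeeping showing $\tsmallbbw_n\in t^{n-1}\R^2[[t^{-1}]]$ and that each monomial $\varepsilon^n t^j$ collects only finitely many contributions — so there is nothing to object to.
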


The proof of the lemma is essentially the same as of the one above.
The evenness and oddness of the components are inherited by $\formalFlow$.

\subsection{Formal solution to the variational equation}

Let us denote $\tilde{H}_x(\varepsilon;x,y)=\partial_x \tilde{H}(\varepsilon;x,y)$, $\tilde{H}_y(\varepsilon;x,y)
=\partial_y \tilde{H}(\varepsilon;x,y)$, $\tilde{H}_{xy}(\varepsilon;x,y)=\partial_x \partial_y \tilde{H}(\varepsilon;x,y)$
and so on. Let $X_{\tilde{H}}$ be the Hamiltonian vector field of the formal Hamiltonian $\tilde{H}$, i.e. 
$X_{\tilde{H}} = (\tilde{H}_y,-\tilde{H}_x)$.

The formal separatrix $\formalFlowSing$, as defined in Lamma \ref{thm_form_flow_sing_coeffs_equations}, satisfies
the formal Hamilton's equation 
\begin{align*}
 \partial_t \formalFlowSing(\varepsilon;t) = X_{\tilde{H}}\big(\formalFlowSing(\varepsilon;t) \big).
\end{align*}

Let $J_{\tilde{H}}$ be the Jacobian of $X_{\tilde{H}}$, i.e.
$$ J_{\tilde{H}} = \begin{pmatrix}
                    \tilde{H}_{xy} & \tilde{H}_{yy} \\
                    -\tilde{H}_{xx} & -\tilde{H}_{xy}
                   \end{pmatrix}.
$$
Let $ \tilde{Z}(\varepsilon,t) = \dot \formalFlowSing(\varepsilon,t)$.
Then we have the following lemma.

\begin{lemma} 
 For all $n\in\N$ there exists $\tilde\xi_n\in t^{n+2}\C^2[[t^{-1}]]$ such that
 the formal matrix $ \Vtilde(\varepsilon;t) = (\Xi(\varepsilon,t),\tilde{Z}(\varepsilon,t)) $
 with $\Xi(\varepsilon,t) = \sum_{n\ges0}\varepsilon^n \xi_n(t)$ 
 satisfies the equations
 $$\partial_t \Vtilde(\varepsilon;t) = J_{\tilde{H}}\big(\formalFlowSing(\varepsilon;t) \big) \cdot \Vtilde(\varepsilon;t)$$
 and $\det \Vtilde(\varepsilon;t) = 1 $.
\end{lemma}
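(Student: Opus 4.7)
The plan is to construct $\Xi$ order by order in $\varepsilon$ and then renormalize to achieve $\det\Vtilde = 1$. The starting observation is that $J_{\tilde H}(\formalFlowSing)$, being the Jacobian of a Hamiltonian vector field, is trace-free; hence by Liouville's formula, applied formally in $t$ at each order in $\varepsilon$, the determinant of any matrix solution of the variational equation is independent of $t$ and is therefore a formal power series in $\varepsilon$ alone. Moreover, differentiating the Hamilton equation $\partial_t \formalFlowSing = X_{\tilde H}(\formalFlowSing)$ in $t$ yields $\partial_t(\dot\formalFlowSing) = J_{\tilde H}(\formalFlowSing)\,\dot\formalFlowSing$, so the second column $\tilde Z = \dot\formalFlowSing$ is automatically a solution and only the first column $\Xi$ needs to be constructed.

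Write $\Xi = \sum_{n\ges0}\varepsilon^n \xi_n$ and expand the variational equation in powers of $\varepsilon$ to obtain, at order $n$, an equation
$$\partial_t \xi_n \;=\; J_{\tilde H_0}(\tsmallbbw_0)\, \xi_n \;+\; C_n(t),$$
where $C_n$ depends only on $\xi_m$, $\tsmallbbw_m$, $\tilde H_m$ with $m<n$, and $C_0\equiv 0$. The homogeneous problem is a planar linear system with the known formal solution $\tilde Z_0 = \dot\tsmallbbw_0 \in t^{-2}\R^2[[t^{-1}]]$; reduction of order in the formal-series setting — write $\xi_0 = \phi(t)\,\tilde Z_0 + \psi(t)\, v(t)$ for a transverse formal vector $v$ and solve the resulting scalar ODEs by formal antidifferentiation — yields a second independent solution $\xi_0 \in t^2\C^2[[t^{-1}]]$ normalized by $\det(\xi_0,\tilde Z_0)=1$. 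The leading pole order $t^2$ is forced by this normalization together with $\tilde Z_0 \sim t^{-2}$. For $n\ges 1$, variation of parameters with the fundamental pair $(\xi_0,\tilde Z_0)$ reduces the inhomogeneous equation to scalar equations $\dot\alpha_n = -\det(\tilde Z_0,C_n)$, $\dot\beta_n = \det(\xi_0,C_n)$, each solvable by a formal antiderivative, and gives $\xi_n = \alpha_n\,\xi_0 + \beta_n\,\tilde Z_0$.

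The main obstacle is the pole-order bookkeeping needed to conclude $\tilde\xi_n \in t^{n+2}\C^2[[t^{-1}]]$. Using the explicit structure of $\tilde H$ from Proposition \ref{thm_normal_form} (polynomial in $(x,y)$ vanishing to order at least two at the origin, with coefficients that are series in $\mu(\varepsilon)$) together with $\tsmallbbw_m \in t^{m-1}\R^2[[t^{-1}]]$, one shows inductively that $C_n \in t^{n+1}\C^2[[t^{-1}]]$; then $\det(\tilde Z_0,C_n)$ and $\det(\xi_0,C_n)$ have controlled $t$-degrees, formal antidifferentiation shifts the $t$-degree by $+1$, and multiplication by $\xi_0 \in t^2\C^2[[t^{-1}]]$ or $\tilde Z_0 \in t^{-2}\R^2[[t^{-1}]]$ lands in $t^{n+2}\C^2[[t^{-1}]]$, as required. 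Once $\Xi$ has been built, $\det\Vtilde$ is a formal constant in $t$ whose $\varepsilon^0$-coefficient equals $1$ by construction and is therefore invertible as a formal power series in $\varepsilon$; replacing $\Xi$ by $\Xi/\det\Vtilde$ preserves the fact that $\Xi$ solves the (linear) variational equation and enforces $\det\Vtilde=1$, completing the construction.
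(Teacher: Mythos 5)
Your overall architecture is close to the paper's (reduction of order using the known solution $\tilde Z=\dot{\formalFlowSing}$, then an inductive construction), and your decision to defer the normalization $\det\Vtilde=1$ to a final division by the $t$-independent formal constant $\det\Vtilde$ is a perfectly valid alternative to the paper's device of eliminating $\tilde{\Xi}_{(1)}$ via the determinant constraint at the outset. Your use of Liouville's formula to get $t$-independence of the determinant is sound, and the degree bookkeeping you sketch is essentially what the paper also leaves as ``straightforward computations.''

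However, there is a genuine gap at the one point where the lemma is actually delicate: you assert, several times, that the scalar equations arising from reduction of order and variation of parameters are ``solvable by a formal antiderivative,'' and your inductive bookkeeping only controls the \emph{top} power of $t$ in the right-hand sides. But formal antidifferentiation in $\C[t][[t^{-1}]]$ fails precisely when the right-hand side contains a $t^{-1}$ term, which would force a $\log t$ and leave the claimed space $t^{n+2}\C^2[[t^{-1}]]$. Bounding the leading degree says nothing about the residue coefficient, so ``solvable by a formal antiderivative'' is exactly the unproven claim. The paper closes this gap by a parity argument: the normal form Hamiltonian $\tilde H$ is invariant under $(x,y)\mapsto(x,-y)$, the formal separatrix $\formalFlowSing$ has its first component even and its second odd in $t$, hence $\tilde H_{xx}(\formalFlowSing)$ and $\tilde Z_{(2)}$ are both even formal series in $t$, so the right-hand side $-\tilde H_{xx}/\tilde Z_{(2)}^2$ of the reduced scalar equation is even and therefore has no $t^{-1}$ term at any order in $\varepsilon$. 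This is what guarantees that the primitive exists as a formal Laurent series and, propagated inductively, that every $\tilde\xi_n$ is log-free. Without an argument of this kind — either the parity argument or some other proof that the relevant residues vanish — your construction does not establish membership of $\tilde\xi_n$ in $t^{n+2}\C^2[[t^{-1}]]$, which is the substantive content of the lemma.
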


Notice that we can write 
$\Vtilde(\varepsilon;t) = \sum_{n\ges0} \varepsilon^n \Vtilde_n(t) $
with $\Vtilde_n(t)=(\tilde\xi_n(t),\dot{\tsmallbbw}_n(t))$.

\begin{proof}
We define a valuation of each monomial $\varepsilon^n t^m$ by $ \text{val}(\varepsilon^n t^m)=2n-m$.
For a series $\tilde A$ formal in $\varepsilon^n t^m$ we define $ \text{val} (\tilde A) $ to be the minimum
of the valuations of all its terms. We define this distance between two such formal series $\tilde A $
and $\tilde B$ by $d(A,B) = 2^{\text{val} (\tilde A -\tilde A)}$, and we consider the space of all the
formal series of this type and use the distance to define a topology on this space.

We know that $\tilde{H}(\varepsilon;x,-y)=\tilde{H}(\varepsilon;x,y)$. This implies that
\begin{align*}
 & \tilde{H}_x(\varepsilon;x,-y)=\tilde{H}_x(\varepsilon;x,y), \\
 & \tilde{H}_y(\varepsilon;x,-y)=-\tilde{H}_y(\varepsilon;x,y), \\
 & \tilde{H}_{xx}(\varepsilon;x,-y)=\tilde{H}_{xx}(\varepsilon;x,y), \\
 & \tilde{H}_{xy}(\varepsilon;x,-y)=-\tilde{H}_{xy}(\varepsilon;x,y), \\
 & \tilde{H}_{yy}(\varepsilon;x,-y)=\tilde{H}_{yy}(\varepsilon;x,y).
\end{align*}

We also define
$$\tilde{H}_\kappa(t):= \tilde{H}_\kappa(\varepsilon;\formalFlowSing_1(\varepsilon;t),\formalFlowSing_2(\varepsilon;t)), $$
with $\kappa\in\{x,y,xx,xy,yy\}$. Notice that the dependence on $\varepsilon$ is implied.

Since $\formalFlowSing_1(\varepsilon;t)$ is even in $t$ and $\formalFlowSing_2(\varepsilon;t)$ is odd in $t$ we have that
\begin{itemize}
 \item $\tilde{H}_x(t)$, $\tilde{H}_{xx}(t)$ and $\tilde{H}_{yy}(t)$ are formal series even in $t$,
 \item $\tilde{H}_y(t)$ and $\tilde{H}_{xy}(t)$ are formal series odd in $t$.
\end{itemize}

We are searching for a solution of the equation
$$\partial_t \Vtilde(\varepsilon;t) = \tilde{J}(t) \Vtilde(\varepsilon;t),$$
with
\begin{align*}
 \tilde{J}(t) = \begin{pmatrix}
                 \tilde{H}_{xy}(t) & \tilde{H}_{yy}(t) \\
                 -\tilde{H}_{xx}(t) & -\tilde{H}_{xy}(t)
                \end{pmatrix}.
\end{align*}

Let $\Vtilde(\varepsilon;t) = (\tilde{\Xi}(\varepsilon;t) , \tilde{Z}(\varepsilon;t))$.
We denote by $\tilde{\Xi}_{(1)}$ and $\tilde{\Xi}_{(2)}$ the first and the second component of
$\tilde{\Xi}$, respectively, and similarly for $\tilde{Z}$. Since we ask that $\det\Vtilde(\varepsilon;t) =1$
we get
$$\tilde{\Xi}_{(1)}(\varepsilon;t)  =\frac{1 + \tilde{\Xi}_{(2)}(\varepsilon;t)
\tilde{Z}_{(1)}(\varepsilon;t)}{\tilde{Z}_{(2)}(\varepsilon;t)} $$
We substitute this in the variational equation
$$\partial_t  \tilde{\Xi}_{(2)}(\varepsilon;t) = -\left(\tilde{H}_{xx}(t)
\frac{ \tilde{Z}_{(1)}(\varepsilon;t)}{\tilde{Z}_{(2)}(\varepsilon;t)} + \tilde{H}_{xy}(t) \right)
\tilde{\Xi}_{(2)}(\varepsilon;t)  - \frac{\tilde{H}_{xx}(t) }{\tilde{Z}_{(2)}(\varepsilon;t)}. $$
Since $ \tilde{Z}(\varepsilon;t) = \partial_t \formalFlowSing(\varepsilon;t) $, $\tilde{Z}_{(2)}(\varepsilon;t)$ 
is a solution for the homogeneous equation. We set 
$\tilde{\Xi}_{(2)}(\varepsilon;t) = C(\varepsilon;t) \tilde{Z}_{(2)}(\varepsilon;t)$
and substitute in the previous equation to finally get 
$$\partial_t  C(\varepsilon;t)  = - \frac{\tilde{H}_{xx}(t) }{\tilde{Z}_{(2)}(\varepsilon;t)^2}. $$
Both $\tilde{H}_{xx}(t)$ and $\tilde{Z}_{(2)}(\varepsilon;t)$ are even series.
This implies that the right-hand side of the equation is even
and it does not contain the term $\frac1t$.
This means that the above equation can be solved in the space of power series,
so $C(\varepsilon;t)$ is an odd series without logarithmic terms.
From that we get that $\tilde{\Xi}_{(2)}(\varepsilon;t)$ is odd and $\tilde{\Xi}_{(1)}(\varepsilon;t)$ is even.

By collecting terms in the same order of $\varepsilon$ in $\tilde{\xi}$ we write
\begin{align*}
 \tilde{\Xi}(\varepsilon;t) = \sum_{n\ges0} \varepsilon^n \tilde{\xi}_n(t).
\end{align*}
Straightforward computations show that the highest order of $t$ that appears in $\tilde{\xi}_n(t)$ is $n+2$.
\myqed
\end{proof}

\begin{corollary}
 The formal solution $ \Vtilde $ satisfies the equation
 $$ \Vtilde(\varepsilon;t+1) = \tilde{F}_\varepsilon(\formalFlowSing(\varepsilon;t))\cdot \Vtilde(\varepsilon;t).$$
 \label{thm_local_ref_00}
\end{corollary}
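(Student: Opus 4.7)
The plan is to derive the discrete variational equation from its continuous counterpart, exploiting the fact that $\tilde{F}_\varepsilon$ is, by construction, the formal time-1 flow of the Hamiltonian vector field $X_{\tilde H}$. The preceding lemma already establishes that
\[
\partial_t \Vtilde(\varepsilon;t) = J_{\tilde H}\bigl(\formalFlowSing(\varepsilon;t)\bigr)\cdot \Vtilde(\varepsilon;t),\qquad \det \Vtilde(\varepsilon;t)=1.
\]
In the analytic setting, a classical ODE argument identifies the linearisation of a Hamiltonian time-1 flow along a trajectory with the fundamental matrix, evaluated at $s=1$, of the variational equation along that trajectory. Applied to the trajectory $\formalFlowSing(\varepsilon;t+s)$ of $X_{\tilde H}$ (whose time-1 shift property $\formalFlowSing(\varepsilon;t+1)=\tilde F_\varepsilon(\formalFlowSing(\varepsilon;t))$ is the formal statement proved in Section \ref{ch_form_sep_sing}), this gives
\[
\tilde{F}_\varepsilon'\bigl(\formalFlowSing(\varepsilon;t)\bigr) \;=\; \Phi(1;t),
\]
where $\Phi(s;t)$ is the fundamental matrix of $\partial_s U = J_{\tilde H}(\formalFlowSing(\varepsilon;t+s))\,U$ with $\Phi(0;t)=\mathrm{Id}$. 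Uniqueness of solutions to the linear ODE gives the propagator identity $\Vtilde(\varepsilon;t+s) = \Phi(s;t)\cdot\Vtilde(\varepsilon;t)$, and specialising to $s=1$ produces the claimed formula.

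To make this argument work in the formal category, I would reuse the truncation-and-continuity device of the proof of Lemma \ref{thm_formal_flow_satisfies_map}. For each $N$, replace $\tilde H$ by its polynomial truncation $\tilde H^{(N)}$ of valuation at most $N$; the corresponding time-1 map $\tilde F_\varepsilon^{(N)}$, formal separatrix $\formalFlowSing^{(N)}$, and fundamental matrix $\Vtilde^{(N)}$ are all genuine analytic objects (in a sectorial neighbourhood of infinity of the form used earlier), so the classical argument above applies verbatim and yields
\[
\Vtilde^{(N)}(\varepsilon;t+1) = \bigl(\tilde F_\varepsilon^{(N)}\bigr)'\bigl(\formalFlowSing^{(N)}(\varepsilon;t)\bigr)\cdot \Vtilde^{(N)}(\varepsilon;t).
\]
Each of the maps $\tilde H \mapsto \formalFlowSing$, $\tilde H\mapsto \Vtilde$, and $\tilde H\mapsto \tilde F_\varepsilon'$ is continuous in the metric induced by the valuation $\mathrm{val}(\varepsilon^n t^m)=2n-m$ used in the preceding lemma: the series coefficients are determined by the recursions of Lemmas \ref{thm_form_sol_sing_coeffs_equations}--\ref{thm_form_flow_sing_coeffs_equations}, and these recursions are stable under truncation. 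Passing to the limit $N\to\infty$ in this topology gives the desired formal identity.

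The only real obstacle is bookkeeping the topology: we must check that both sides of the identity lie in the same space of formal series (in $\varepsilon$ and $t^{-1}$, with the polynomial-in-$t$ growth controlled by the power of $\varepsilon$) and that the products and compositions appearing are well-defined in that space. This is straightforward once one observes that $\tilde F_\varepsilon'(\formalFlowSing(\varepsilon;t))$ is a formal series in $\varepsilon$ whose $n$th coefficient is a polynomial in $t$ of bounded degree (it inherits the same monomial structure as Table \ref{tab-monomials}), so the product with $\Vtilde(\varepsilon;t)$ makes sense term by term and matches $\Vtilde(\varepsilon;t+1)$ after expanding $(t+1)^k=\sum_j\binom{k}{j}t^j$, which preserves the valuation.
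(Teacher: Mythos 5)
Your proof is correct and takes essentially the same route as the paper: the paper's proof is a one-line reference to the truncation-and-continuity argument of Lemma~\ref{thm_formal_flow_satisfies_map}, which is exactly the device you employ, with the propagator identity for the truncated (analytic) objects supplying the step the paper leaves implicit. Incidentally, the printed statement of the corollary is missing a derivative --- the right-hand side should involve $\tilde{F}_\varepsilon'$ rather than $\tilde{F}_\varepsilon$ --- and your argument correctly targets this corrected form.
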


The proof of this corollary is similar to the proof of Lemma \ref{thm_formal_flow_satisfies_map}.

\begin{corollary}
 For any $n\in\N$ there exists a $2\times2$ matrix $\Vtildesep_n$ with entries in $t^{n+2}\C[[t^{-1}]]$ such that
 the formal matrix $\Vtildesep(\varepsilon;t) = \sum_{n\ges0} \varepsilon^n \Vtildesep_n(t) $ satisfies
 $$\partial_t \Vtildesep(\varepsilon;t) = F_\varepsilon'\big(\formalSeparatrixSing(\varepsilon;t) \big)
 \cdot \Vtildesep(\varepsilon;t)$$
 and $\det \Vtildesep(\varepsilon;t) = 1 $.
\end{corollary}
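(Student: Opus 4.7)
The plan is to mirror the construction in the preceding lemma, replacing $\formalFlowSing$ by $\formalSeparatrixSing$ throughout, and to leverage the formal symplectic conjugation between $F_\varepsilon$ and $\tilde F_\varepsilon$ supplied by Proposition \ref{thm_normal_form} in order to inherit existence, the degree bound, and the determinant identity from the already-constructed $\Vtilde$.

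Concretely, let $\Phi$ be a formal symplectic change of coordinates with $\tilde F_\varepsilon = \Phi \circ F_\varepsilon \circ \Phi^{-1}$. Then, by the uniqueness of the formal separatrix around the singularity (Lemma \ref{thm_form_sol_sing_coeffs_equations}), $\formalFlowSing = \Phi \circ \formalSeparatrixSing$ after an appropriate translation of $t$ that we absorb into the parametrization. I then set
$$\Vtildesep(\varepsilon;t) := \bigl[\Phi'(\formalSeparatrixSing(\varepsilon;t))\bigr]^{-1} \cdot \Vtilde(\varepsilon;t).$$
Symplecticity of $\Phi$ gives $\det \Phi' \equiv 1$, so $\det \Vtildesep = \det \Vtilde = 1$.

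To verify the degree structure, that each $\Vtildesep_n$ has entries in $t^{n+2}\C[[t^{-1}]]$, I expand $[\Phi'(\formalSeparatrixSing(\varepsilon;t))]^{-1}$ as a formal series in $\varepsilon$ by composing the Taylor series of $(\Phi')^{-1}$ near the origin with $\formalSeparatrixSing = \sum_n \varepsilon^n \formalSeparatrixSing_n$, where $\formalSeparatrixSing_n \in t^{n-1}\R^2[[t^{-1}]]$ by Lemma \ref{thm_form_sol_sing_coeffs_equations}. Using the grading $\text{val}(\varepsilon^n t^m) = 2n - m$ introduced in the proof of the preceding lemma, the convolution product $[\Phi'(\formalSeparatrixSing)]^{-1} \cdot \Vtilde$ preserves the valuation bound inherited from $\Vtilde$, yielding the desired $t^{n+2}$ bound at each order in $\varepsilon$.

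Finally, to check the variational ODE, I differentiate the definition of $\Vtildesep$ with respect to $t$ using product and chain rules, substitute $\partial_t \Vtilde = J_{\tilde H}(\formalFlowSing) \Vtilde$ from the preceding lemma, and invoke the fact that $\Phi$ conjugates the formal Hamiltonian vector fields generating $F_\varepsilon$ and $\tilde F_\varepsilon$. This conjugation identity, differentiated once at $\formalSeparatrixSing$, provides exactly the algebraic relation needed to combine the contribution from $[\partial_t (\Phi'(\formalSeparatrixSing))^{-1}] \Vtilde$ with $(\Phi'(\formalSeparatrixSing))^{-1} J_{\tilde H}(\formalFlowSing) \Vtilde$ into $F_\varepsilon'(\formalSeparatrixSing) \Vtildesep$. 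The main obstacle is executing this chain-rule calculation cleanly so that both terms assemble correctly through the pull-back identity; this is delicate but formal, and all the required grading and parity bookkeeping carries over directly from the proof of the preceding lemma.
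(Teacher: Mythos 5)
Your construction coincides with the paper's: both define $\Vtildesep$ by conjugating $\Vtilde$ through the formal symplectic change of coordinates $\Phi$ that conjugates $F_\varepsilon$ to $\tilde F_\varepsilon$, and both obtain the determinant from symplecticity and the degree bounds from the grading used in the preceding lemma. The paper compresses this to two sentences; you spell out the definition $\Vtildesep = (\Phi'(\formalSeparatrixSing))^{-1}\Vtilde$, which is exactly what ``formally conjugated'' means there.

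Your final verification step, however, aims at the wrong identity. As printed, $\partial_t \Vtildesep = F_\varepsilon'(\formalSeparatrixSing)\cdot\Vtildesep$ cannot hold: $F_\varepsilon'$ is the Jacobian of a near-identity map, so $F_\varepsilon'(\formalSeparatrixSing)\Vtildesep$ carries the full term $\Vtildesep$, whereas $\partial_t\Vtildesep$ has no identity-sized part (differentiation lowers the $t$-degree at each order in $\varepsilon$). The equation is a misprint; the paper's own proof cites Corollary \ref{thm_local_ref_00}, which is the \emph{difference} equation for $\Vtilde$, so the intended conclusion is $\Vtildesep(\varepsilon;t+1) = F_\varepsilon'(\formalSeparatrixSing(\varepsilon;t))\cdot\Vtildesep(\varepsilon;t)$. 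Your chain-rule computation, carried through correctly, produces $\partial_t\Vtildesep = J_H(\formalSeparatrixSing)\Vtildesep$ with $J_H$ the Jacobian of the pulled-back vector field $\Phi^{-1}_* X_{\tilde H}$, not of the map $F_\varepsilon$; the claim that the two contributions assemble into $F_\varepsilon'(\formalSeparatrixSing)\Vtildesep$ is false. Replace that paragraph with the one-step check of the difference relation: evaluate $\Vtildesep=(\Phi'(\formalSeparatrixSing))^{-1}\Vtilde$ at $t+1$, insert $\Vtilde(t+1)=\tilde F_\varepsilon'(\formalFlowSing(t))\Vtilde(t)$ from Corollary \ref{thm_local_ref_00}, and use $\formalSeparatrixSing(t+1)=F_\varepsilon(\formalSeparatrixSing(t))$ with the conjugation identity $F_\varepsilon'(x)=(\Phi'(F_\varepsilon(x)))^{-1}\tilde F_\varepsilon'(\Phi(x))\Phi'(x)$; the difference equation then drops out immediately.
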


\begin{proof}
The corollary \ref{thm_local_ref_00} and the fact that $ F_\varepsilon'$ and $\tilde F_\varepsilon'$ are formally conjugated
imply that $\Vtildesep$ and $\Vtilde$ are formally conjugated. Moreover since $F_\varepsilon$ agrees with
the normal form up to order $N$, each $\Vtildesep_n$ will agree with $\Vtilde_n$ up to order $N$.
\myqed
\end{proof}

\subsection{Borel transform and linear difference equations}
\label{ch_borel_trans}

\begin{lemma}
 Let $\formalSeparatrixSing_n$ be as defined in Lemma \ref{thm_form_sol_sing_coeffs_equations}. The
 Borel-Laplace sum of $\formalSeparatrixSing_n$ defines two functions $ W_n^\pm $ that admit
 $\formalSeparatrixSing_n$ as asymptotic expansion in the sectorial neighbourhoods of infinity $D^\pm$
 defined in Theorem \ref{thm_existence_Borel_transform}.
 
 Using these we define the formal separatrices
 \begin{align*}
  \formalSeparatrixSing^\pm(\varepsilon,t) = \sum_{n\ges0} \varepsilon^n W^\pm_n(t).
 \end{align*}
 Let $\delta_n(t) = W_n^+(t) - W_n^-(t)$. Then $\delta_n(t) = O(t^{n+2}\me^{-2\pi\ii t})$ for all
 $t\in D^+\cap D^-$. 
 \label{thm_borel_trans_form_sol_sing}
\end{lemma}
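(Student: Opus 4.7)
The plan is to proceed by induction on $n$. The base case $n=0$ is exactly Theorem \ref{thm_existence_Borel_transform}, which furnishes real-analytic $W_0^\pm$ on the sectorial domains $D^\pm$ admitting $\formalSeparatrixSing_0$ as Gevrey-1 asymptotic expansion, together with the bound $\delta_0(t) = O(t^2 e^{-2\pi\ii t})$ coming from the leading term $\theta\,\tilde\Xi(t) + \rho\,\dot\formalSeparatrixSing_0(t)$ in \eqref{eq_res_var_eq}. In particular, the first column of $\Vtildesep_0$ also Borel-sums on $D^\pm$ to analytic functions $\Xi_0^\pm$, giving a fundamental matrix $V_0^\pm(t) = (\Xi_0^\pm(t), \dot W_0^\pm(t))$ of the linearised equation $U(t+1)=F_0'(W_0^\pm(t))\,U(t)$ on each $D^\pm$, with $\det V_0^\pm = 1$.

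For the inductive step, I would assume $W_m^\pm$ have been constructed on $D^\pm$ for all $m<n$, each admitting $\formalSeparatrixSing_m$ as asymptotic expansion and satisfying $\delta_m(t) = O(t^{m+2}e^{-2\pi\ii t})$ on $D^+\cap D^-$. Substituting the analytic functions $W_m^\pm$ (instead of the formal $\formalSeparatrixSing_m$) into the expression for $B_n$ from Lemma \ref{thm_form_sol_sing_coeffs_equations} produces analytic inhomogeneities $B_n^\pm$ on $D^\pm$ whose asymptotic expansion is the formal $B_n$. The equation \eqref{eq_Wn_equations} for $\formalSeparatrixSing_n$ is linear with homogeneous part solved by $V_0^\pm$, so I would solve it analytically by variation of constants: write
\begin{equation*}
W_n^\pm(t) = V_0^\pm(t)\cdot\Big(c_n^\pm + S_n^\pm(t)\Big),
\end{equation*}
where $S_n^\pm$ is a convergent one-sided telescoping sum of $[V_0^\pm(t+k+1)]^{-1}B_n^\pm(t+k)$ (summed in the direction where the fundamental matrix contracts, separately on each sector), and the constant vector $c_n^\pm$ is fixed so that the resulting solution has the prescribed formal asymptotics on $D^\pm$. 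Standard Borel–Laplace arguments then confirm that $W_n^\pm$ is analytic on $D^\pm$ and admits $\formalSeparatrixSing_n \in t^{n-1}\R^2[[t^{-1}]]$ as its Gevrey-1 asymptotic expansion.

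For the exponential bound on $\delta_n = W_n^+ - W_n^-$, I would subtract the two equations on $D^+\cap D^-$ and rewrite
\begin{equation*}
\delta_n(t+1) = F_0'(W_0^-(t))\,\delta_n(t) + R_n(t),
\end{equation*}
where $R_n(t) := [F_0'(W_0^+(t)) - F_0'(W_0^-(t))]\,W_n^+(t) + [B_n^+(t) - B_n^-(t)]$. By the inductive bound on $\delta_0$ and on the $\delta_m$ with $m<n$, and using that $W_n^+(t) = O(t^{n-1})$, both brackets are bounded by a polynomial in $t$ of degree at most $n$ times $e^{-2\pi\ii t}$. Using the fundamental matrix $V_0^-$ to solve this inhomogeneous linear difference equation, and noting that $V_0^-(t)$ has polynomial growth of order $t^2$ (from its first column $\Xi_0^-$), a discrete variation-of-constants/telescoping argument yields $\delta_n(t) = O(t^{n+2}e^{-2\pi\ii t})$, the extra two powers of $t$ coming precisely from this fundamental-matrix growth.

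The main obstacle will be the rigorous construction of the Borel sums $W_n^\pm$ in step two: one must verify that variation of constants, performed with the already-summed ingredients $V_0^\pm$ and $B_n^\pm$, produces an \emph{analytic} function on $D^\pm$ whose asymptotic expansion is the formal $\formalSeparatrixSing_n$ and not merely some other formal solution differing by a homogeneous term. This requires care with the choice of the integration constants $c_n^\pm$ and control of the convergence of the telescoping sums in the full sectorial neighbourhoods $D^\pm$, using the polynomial growth of $V_0^\pm$ to offset the $t^{n-1}$ growth of $B_n^\pm$. Once this Borel–Laplace machinery is set up, the exponential bound on $\delta_n$ is a comparatively mechanical computation.
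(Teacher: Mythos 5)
Your proposal is a genuinely different route from the paper: you try to avoid the resurgence/Borel-transform machinery altogether and replace it with an induction on $n$ carried out entirely at the analytic level (variation of constants, one-sided telescoping sums, and a direct difference equation for $\delta_n$). The paper instead works at the Borel level: it passes to the convolution equation \eqref{eq_non_hom_var_eq_Borel}, invokes resurgence results of Sauzin to show each $\hat W_n$ is a resurgent function, and then reads the bound $\delta_n=O(t^{n+2}\me^{-2\pi\ii t})$ off the bridge equation $W_n^+ - W_n^- \asymp \me^{-2\pi\ii t}\Delta_{2\pi\ii}[\tilde W_n]$, combined with the Leibniz-rule argument showing $\Delta_{2\pi\ii}[\formalSeparatrixSing]$ satisfies the linearised equation and is therefore $\Theta_{2\pi\ii}\tilde\Xi + q_{2\pi\ii}\tilde Z$; the $t^{n+2}$ then comes solely from $\tilde\xi_n(t)=O(t^{n+2})$.

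The gap in your plan is precisely the step you flag as ``the main obstacle,'' and I do not think it can be brushed off as a regularity issue. Your telescoping sums $\sum_k [V_0^\pm(t+k+1)]^{-1}B_n^\pm(t+k)$ do not converge as written: $B_n^\pm(t)=O(t^{n-1})$ has polynomial growth, $(V_0^\pm)^{-1}$ has one row growing like $t^2$, and along integer shifts $\me^{-2\pi\ii(t+k)}=\me^{-2\pi\ii t}$ provides no decay. To make this work one must first subtract the polynomial part of the formal solution and telescope only the remainder, and doing that uniformly on the sectorial domains $D^\pm$ with the correct Gevrey-1 asymptotics is essentially the content that the paper outsources to resurgence theory (\cite{Sauz15}) — it is not ``standard Borel--Laplace arguments.'' The same issue infects your bound on $\delta_n$: after passing to $\delta_n(t+1)=F_0'(W_0^-)\delta_n(t)+R_n(t)$, the variation-of-constants solution again requires a telescoping sum, and even granting convergence, the degree count is off. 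Already the single term $[F_0'(W_0^+)-F_0'(W_0^-)]W_n^+$ is $O(\delta_0)\cdot O(t^{n-1})=O(t^{n+1}\me^{-2\pi\ii t})$ (not $t^n$ as you claim, since $F_0''(W_0^-)\to F_0''(0)\ne0$), and the variation-of-constants formula does not simply attach an extra factor $t^2$ to that. The clean statement $\delta_n=O(t^{n+2}\me^{-2\pi\ii t})$ in the paper is a structural consequence of the alien-derivative decomposition together with the explicit growth of $\tilde\xi_n$; your inhomogeneous-equation route would need a much more careful accounting of how the degrees of $R_n$ and of the fundamental matrix interact, and as written it neither converges nor lands on the stated power.
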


We will present here a sketch of the proof, the full proof will be published in \cite{GM17}.

The Borel transform is defined as the formal inverse of the Laplace transform, i.e.
$$\mathcal B [t^{-n-1}] = \frac{s^n}{n!}.$$
This means that the Borel transform of a divergent series can be convergent.
If this is the case and if the Borel transform can be extended beyond a neighbourhood of the origin,
its Laplace transform will give the Borel-Laplace sum of the initial formal series. 
This method was generalized with the theory of resurgent functions by \'Ecalle in \cite{ecalle1981}.
For the purpose of the present text we will use the term \textit{resurgent function} to refer to
a function with a singularities at $2\pi\ii\Z$, which is of exponential type along paths that avoid
the set $2\pi\ii\Z$ and eventually go to infinity following a straight non-vertical line. Moreover
each singularity is a polar part and an integrable branching part. The formal Laplace transform of a resurgent
function will be called \textit{resurgent series}.

The Borel transform maps the product of two series to the convolution of their Borel sums.
This means that if we consider the space of all resurgent series as a ring under multiplication,
the space of their Borel sums is a ring under convolution and the Borel transform acts as a
ring homomorphism. On the space of the Borel sums a set of new operators that act as derivations
can be defined. These derivatives, the celebrated \textit{alien derivatives}, can be pulled back
on the space of resurgent series and do not have a classical counterpart. For each singularity of
the resurgent function there can be defined one alien derivative. These will be denoted by
$\Delta_{2\pi\ii n}$. An alien derivative of a resurgent function describes the corresponding singularity. 

We define $\Vcaltilde_0(t) = (\tilde{\Xi}_0(t),\dot{\formalSeparatrixSing}_0(t)) $. Due to
Theorem \ref{thm_existence_Borel_transform}, $\Vcaltilde_0$ satisfies
\begin{align*}
 \quad \Vcaltilde_0(t+1) = F_0'( \tilde{W}_0(t) )\cdot \Vcaltilde_0(t)
\end{align*}
and $\det \Vcaltilde_0(t) =1$.

Here we are interested in solutions of equations of the form
\begin{equation}
 X(t+1) = A(t)\cdot X(t) + B(t),
 \label{eq_non_hom_var_eq}
\end{equation}
with $A(t) = F_0'( \tilde{W}_0(t) )$ and $B\in\C^{2}[t][[t^{-1}]]$. We define $X(t) = \Vcaltilde(t)
\cdot Y(t)$ and from this we get
\begin{equation}
 Y(t+1) - Y(t) = \Vcaltilde^{-1}(t) \cdot A^{-1}(t) \cdot B(t).
 \label{eq_simple_diff_eq}
\end{equation}
Then $Y\in\C^{2}[t][[t^{-1}]]$ if and only if the formal series in the vector $\Vcaltilde^{-1} \cdot A^{-1}
\cdot B$ do not contain the term $t^{-1}$. If the term $t^{-1}$ appears then $Y$ contains also logarithmic terms.

For a resurgent series $A$ will will denote by $\hat{A}$ its the Borel transform. Since $F_0$ is convergent
and $\formalSeparatrix_0$ is a resurgent series, $\hat A$ is also a resurgent function. For a proof of
this see \cite{ecalle1981} or \cite{Sauz12b}. Suppose that the Borel transform of $B$ is a resurgent
function $\Bhat$. Then the Borel transform of equation \eqref{eq_non_hom_var_eq} is
\begin{equation}
 \me^{-s} \Xhat (s) = \Ahat * \Xhat (s) + \Bhat(s)
 \label{eq_non_hom_var_eq_Borel}
\end{equation}
and by defining $\Xhat(s) = \Vcalhat * \Yhat (s)$ we derive the equation
\begin{equation}
 \me^{-s} \Yhat(s) - \Yhat(s) = \Vcalhat^{-1} * \Ahat^{-1} * B(s).
 \label{eq_simple_diff_eq_Borel}
\end{equation}
This last equation can be solved trivially and gives
\begin{align*}
 \Yhat(s) = \frac{\me^{s}}{1-\me^{s}} \Big( \Vcalhat^{-1} * \Ahat^{-1} * B(s) \Big),
\end{align*}
so finally we get
\begin{align*}
 \Xhat(s) = \Vcalhat *  \Bigg( \frac{\me^{s}}{1-\me^{s}} \Big( \Vcalhat^{-1} * \Ahat^{-1} * B \Big) \Bigg)(s).
\end{align*}
From this we deduce that $\Xhat$ is a resurgent function.

We can now apply the above to equation \eqref{eq_Wn_equations}. In this case $B_n$ depends on $F_m$
and $\tilde{W}_m$ with $m<n$. Then using the results of \cite{Sauz15} it can be shown inductively
that the Borel transform of any $\tilde{W}_n$ defines a resurgent function $\hat{W}_n$.

Since the Borel transform $\hat{W}_n$ is resurgent, there are two Borel-Laplace sums for each
$\tilde{W}_n$, namely $W^+_n$ and $W^-_n$ and each one is the sum of a polynomial of at most
degree $n-1$ and a function decaying as $t^{-1}$ as $t$ goes to infinity. Both $W^+_n$ and
$W^-_n$ are analytic in the sectorial neighbourhoods of infinity in which $W^+_0$ and $W^-_0$
are defined.

Since $\tilde{W}_n$ are resurgent, it is a standard result of the theory that for all
$n\in\Z$
\begin{equation}
 W^+_n(t) - W^-_n(t) \asymp \me^{-2\pi\ii t} \Delta_{2\pi\ii} [\tilde{W}_n] (t)
 \label{eq_res_th_asym_diff}
\end{equation}
as $\im t \to -\infty$.
We can define the action of the alien derivative $\Delta_{2\pi\ii}$ on $ \formalSeparatrixSing $ by
$$\Delta_{2\pi\ii} [\formalSeparatrixSing] (\varepsilon;t) =
\sum_{n\ges0} \varepsilon^n \Delta_{2\pi\ii}[\tilde{W}_n](t). $$
Since $\Delta_{2\pi\ii}$ satisfies the Leibniz rule\footnote
{
This is because $\Delta_{2\pi\ii}[X_k Y_{n-k}](t) = \Delta_{2\pi\ii}[X_k](t) Y_{n-k}(t) + X_k(t)
\Delta_{2\pi\ii}[Y_{n-k}](t) $ implies that $\Delta_{2\pi\ii}[X Y](\varepsilon;t) =
\Delta_{2\pi\ii}[X](\varepsilon;t) Y(\varepsilon;t) + X(\varepsilon;t) \Delta_{2\pi\ii}[Y](\varepsilon;t) $.
},
then all $\Delta_{2\pi\ii} [\formalSeparatrixSing] (\varepsilon;t)$ satisfy the variational
equation, which means that there exist two formal series in $\varepsilon$, $\Theta_{2\pi\ii}$
and $q_{2\pi\ii}$, such that
\begin{align*}
 \Delta_{2\pi\ii} [\formalSeparatrixSing] (\varepsilon;t) = \Theta_{2\pi\ii}(\varepsilon)\,
 \tilde{\Xi}(\varepsilon;t) + q_{2\pi\ii}(\varepsilon)\,\tilde{Z}(\varepsilon;t)
\end{align*}
This relation combined with \eqref{eq_res_th_asym_diff} implies that
\begin{align*}
 \delta_n(t) \asymp \me^{-2\pi\ii t}(\theta_n \xi_n(t) + \rho_n \dot{\formalSeparatrixSing}_n(t)) + O(t^k \me^{-4\pi\ii t}),
\end{align*}
for some $k\in\N$. The term $O(t^k \me^{-4\pi\ii t})$ in the above equation comes from the singularity at
the point $4\pi\ii$. Taking into account the form of $\xi_n$ we find that
\begin{equation}
 \delta_n(t) = O(t^{n+2} \me^{-2\pi\ii t} ).
 \label{eq_bound_for_deltaN}
\end{equation}

\section{Asymptotic expansion for the separatrix and complex matching}
\label{ch_form_sols_approximate}

In this section we will show that the formal solutions $\formalSeparatrixSing^\pm$ defined in
Lemma \ref{thm_borel_trans_form_sol_sing} describe the asymptotic behaviour of $W^\pm$ close
to the singularity $\pi\ii/\varepsilon$.

Let $\mbox{SQ}(r),\mbox{SQ}_{-1}(r),\mbox{HP}(r)\subset\C$ be defined  as follows:
\begin{align*}
 &\mbox{SQ}(r):=\{ z\in\C : |\im(z)|<r, \re(z)>-r  \}, \\
 &\mbox{SQ}_{-1}(r):=\mbox{SQ}(r)-1, \\
 &\mbox{HP}(r):=\{ z\in\C : \re(z)>r  \}.
\end{align*}

Recall that we have assumed $\varepsilon<\varepsilon_0$ for some fixed $\varepsilon_0$. Since we
are interested in the asymptotic behaviour of the separatrices, we can choose $\varepsilon_0$ to
be as small as it is convenient. We choose $\Lambda > 2$ such that $\Lambda^2 \varepsilon_0 < 1$.
During the course of this proof we will see that it may be important to increase the value of
$\Lambda$. In this case we simultaneously decrease $\varepsilon_0$ such that the relation
$\Lambda^2 \varepsilon_0 < 1$ still holds. So $\Lambda$ is just a technical constant that can
be tuned to the needs of the proof. We choose $\Lambda$ such that $\{z\in\C:|\re(t)|\les2,\im(t)
\les\Lambda \}\subset \big( D^+ \cap D^- \big)$ with $D^\pm$ from Theorem \ref{thm_existence_Borel_transform}.

Let $\mbox{R}>4$ and we define the following domains:
\begin{align*}
 & \mathcal{D}_0:=\{t\in\C:|\im t| \les \pi/\varepsilon\}\backslash(\mbox{SQ}((\Lambda \varepsilon)^{-1}) \cup \mbox{HP}(\mbox{R})), \\
 & \mathcal{D}_1:=\mbox{SQ}_{-1}((\Lambda \varepsilon)^{-1}) \backslash (\mbox{SQ}(\varepsilon^{-\frac{1}{2}}) \cup \mbox{HP}(\mbox{R})), \\
 & \mathcal{D}_2:=\mbox{SQ}_{-1}(\varepsilon^{-\frac{1}{2}}) \backslash (\mbox{SQ}(\Lambda) \cup \mbox{HP}(\mbox{R})).
\end{align*}
These can be seen in Figure \ref{fig-asymt_domains}. Note that $\mathcal{D}_1$ intersects $\mathcal{D}_0$ in
a narrow strip of width 1 (Figure \ref{fig-asymt_domain0}) and that $\mathcal{D}_2$ intersects
$\mathcal{D}_1$ in another narrow strip of width 1 (Figure \ref{fig-asymt_domainN}).

\begin{figure}[t]
        \centering
        \begin{subfigure}[b]{0.49\textwidth}
                \centering
                \includegraphics[width=\textwidth]{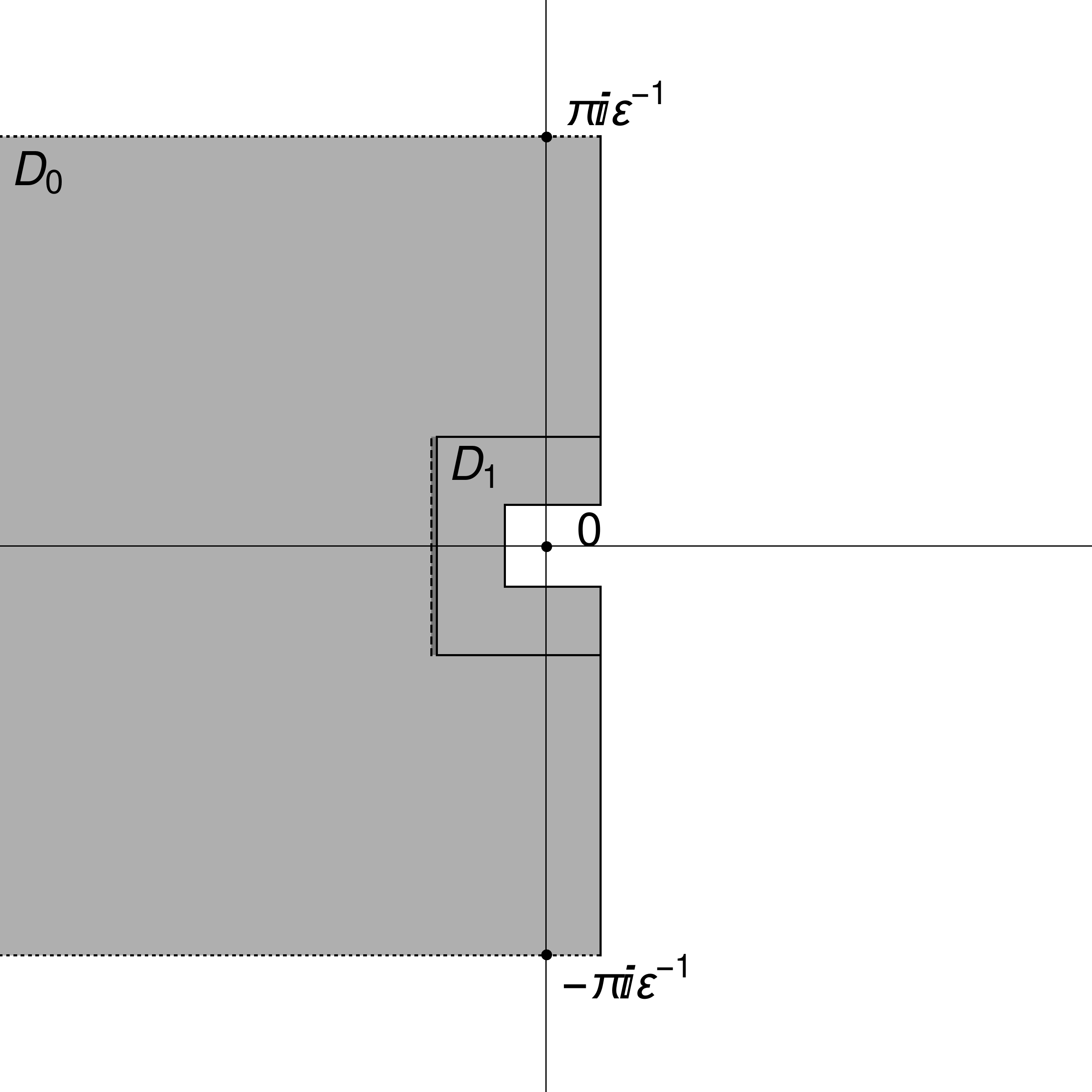}
                \caption{The domains $\mathcal{D}_0$ and $\mathcal{D}_1$.}
                \label{fig-asymt_domain0}
        \end{subfigure}%
        ~
        \begin{subfigure}[b]{0.49\textwidth}
                \centering
                \includegraphics[width=\textwidth]{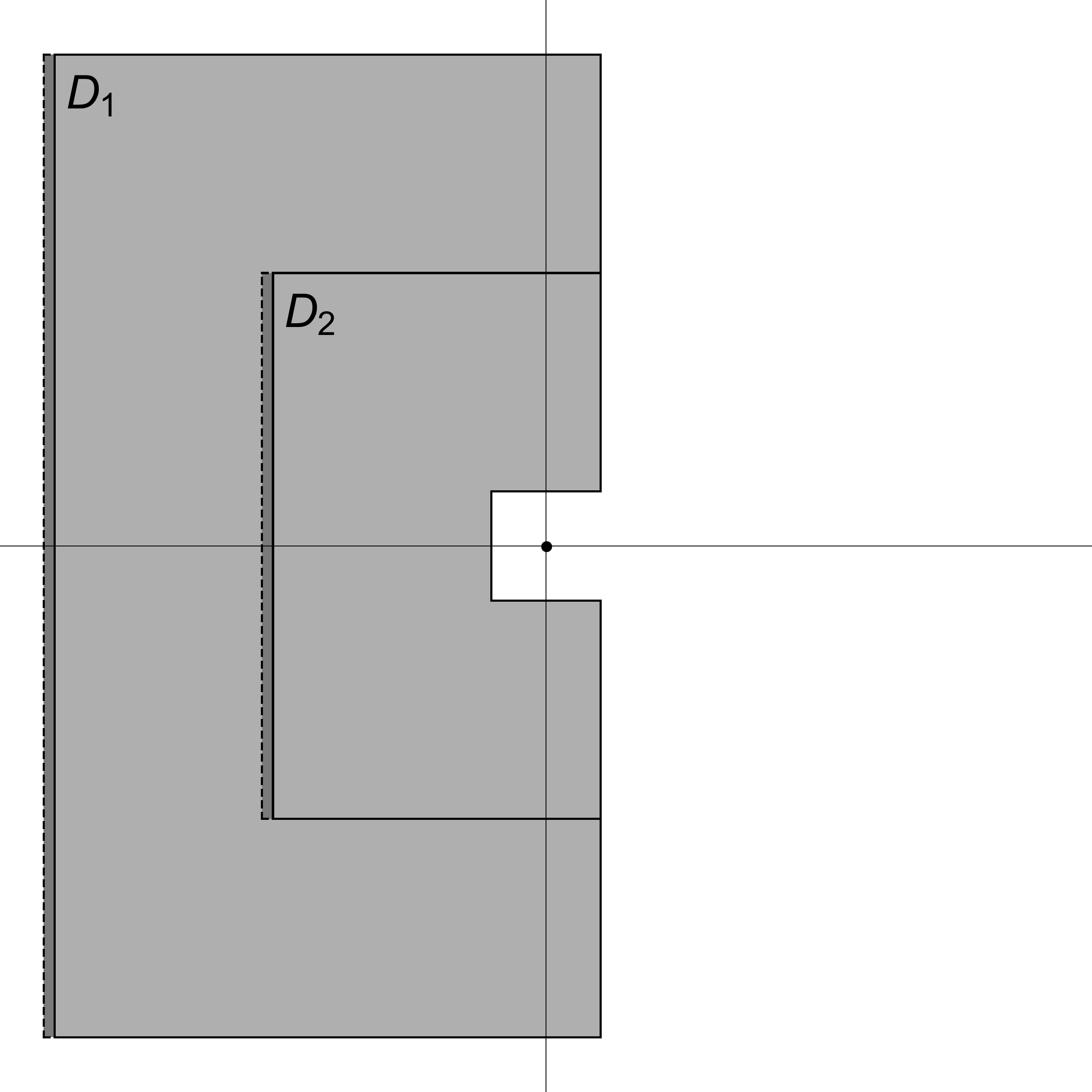}
                \caption{The domains $\mathcal{D}_1$ and $\mathcal{D}_2$.}
                \label{fig-asymt_domainN}
        \end{subfigure}%
        \caption{The domains considered in this section.}
        \label{fig-asymt_domains}
\end{figure}

Let $n\in\N$, $n\les N$, with $N$ defined by equation \eqref{eq_N_definition}. We define
\begin{align*}
 &\formalSeparatrixSing_n^\pm(\varepsilon;t) := \sum_{k=0}^{n-1}\varepsilon^k W^\pm_k(t), \\
 &\formalSeparatrix_n(\varepsilon;t) := \sum_{k=1}^{n}\varepsilon^k Z_k(\sigma),
\end{align*}
with $\varepsilon^k W^\pm_k$ defined in Lemma \ref{thm_borel_trans_form_sol_sing} and $Z_k$
defined in Corollary \ref{thm_form_sol_corollary}.

The main result of this section is the following lemma.

\begin{lemma}
 There exist $\Lambda>2$ and $\varepsilon_0>0$ such that for every $n\in\N$, $5\les n\les N$ and
 every $\varepsilon<\varepsilon_0$ there exists $C_2>0$ such that for all $t\in \mathcal{D}_2$
 \begin{align*}
  \left| W^-(\varepsilon;t+\tfrac{\pi}{\varepsilon}\ii) - \formalSeparatrixSing_n^-(\varepsilon;t)
  \right| \les C_2 \, \varepsilon^{\frac{n-1}{2}}
 \end{align*}
 and for all $t\in-\mathcal{D}_2$ it holds
 \begin{align*}
  \left| W^+(\varepsilon;t+\tfrac{\pi}{\varepsilon}\ii) - \formalSeparatrixSing_n^+(\varepsilon;t)
  \right| \les C_2 \, \varepsilon^{\frac{n-1}{2}},
 \end{align*}
 with the inequality and the absolute value interpreted componentwise.
 \label{thm_local_lemma_matching}
\end{lemma}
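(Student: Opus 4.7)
The plan is to execute a \emph{complex matching} argument. Theorem~\ref{th-vassili_approx} cannot be applied on $\mathcal{D}_2$ directly, since the separatrix is being evaluated too close to its singularity at $\pi\ii/\varepsilon$; the bound must therefore be established first on the overlap strip $\mathcal{D}_1\cap\mathcal{D}_2$ by matching an outer expansion against the inner formal series $\formalSeparatrixSing_n^-$, and then propagated throughout $\mathcal{D}_2$ by iterating the functional equation.

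I would start by applying Theorem~\ref{th-vassili_approx} (through the bound \eqref{eq_first_approx}) to obtain the outer estimate $|W^-(\varepsilon;\tau)-\formalSeparatrix_n(\varepsilon;\tau)|\les C\varepsilon^n$ on a region containing $\mathcal{D}_0$ and, in particular, the overlap $\mathcal{D}_0\cap\mathcal{D}_1$ after translation $\tau=t+\pi\ii/\varepsilon$. On that strip I would then compare the two \emph{formal} objects $\formalSeparatrix_n(\varepsilon;t+\tfrac{\pi\ii}{\varepsilon})$ and $\formalSeparatrixSing_n^-(\varepsilon;t)$ directly. By the construction in Section~\ref{ch_form_sep_sing} (cf.\ Table~\ref{tab-monomials}), $\formalSeparatrixSing$ is obtained from $\formalSeparatrix$ by expanding $\tanh(\tfrac12(\varepsilon t+\pi\ii))=\coth(\varepsilon t/2)$ as a Laurent series and re-summing along the antidiagonals of the table. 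Truncating at level $n-1$ therefore leaves a tail of monomials of type $\varepsilon^{m}t^{m-1-j}$ with $m\ges n$, and on $\mathcal{D}_1$, where $|t|\sim\varepsilon^{-1/2}$, each such term is bounded by a power $\varepsilon^{(m+1+j)/2}$; summing yields a discrepancy of order $\varepsilon^{(n-1)/2}$. Combining this with the outer estimate produces
$$\bigl|W^-(\varepsilon;t+\tfrac{\pi\ii}{\varepsilon})-\formalSeparatrixSing_n^-(\varepsilon;t)\bigr|\les C_1\,\varepsilon^{(n-1)/2}$$
on the intersection $\mathcal{D}_1\cap\mathcal{D}_2$, which is the desired estimate at the entry.

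Finally I would propagate this estimate across the whole of $\mathcal{D}_2$. By Lemma~\ref{thm_form_sol_sing_coeffs_equations}, $\formalSeparatrixSing_n^-$ satisfies $\formalSeparatrixSing_n^-(\varepsilon;t+1)=F_\varepsilon(\formalSeparatrixSing_n^-(\varepsilon;t))+S(\varepsilon;t)$ with an explicit source of size $\varepsilon^{n+1}$ times a polynomial in $t$, while $W^-(\varepsilon;t+\tfrac{\pi\ii}{\varepsilon})$ satisfies the same equation with $S=0$. Writing $R:=W^--\formalSeparatrixSing_n^-$ and linearising around $\formalSeparatrixSing_n^-$ gives
\begin{equation*}
R(t+1)=F_\varepsilon'(\formalSeparatrixSing_n^-(\varepsilon;t))\cdot R(t)-S(\varepsilon;t)+O(|R(t)|^2),
\end{equation*}
and I would diagonalise this by conjugating with the formal fundamental matrix $\Vtildesep$ of the variational equation constructed in the preceding subsection, reducing the equation for $Y:=\Vtildesep^{-1}R$ to a telescoping first-order difference with an explicitly summable source. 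Iterating over the $O(\varepsilon^{-1/2})$ unit steps needed to sweep $\mathcal{D}_2$ and closing a contraction argument in a $t$-weighted sup-norm yields the claimed bound. The main obstacle is precisely this last step: the cocycle $\prod F_\varepsilon'(\formalSeparatrixSing_n^-)$ can amplify $R$ at each iteration, and both $S$ and $\Vtildesep$ carry polynomial factors in $t$ of degree growing with $n$; choosing the weighted norm so that the linear amplification, the polynomial growth of $S$ and $\Vtildesep$, and the quadratic remainder are all simultaneously absorbed is what forces the exponent $(n-1)/2$ and the threshold $n\ges5$ (which guarantees that $S$ is already subdominant on entry to $\mathcal{D}_2$). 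The estimate for $W^+$ on $-\mathcal{D}_2$ follows symmetrically by applying the same argument to $f_\varepsilon^{-3}$.
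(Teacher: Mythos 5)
The main missing step in your proposal is the propagation of the estimate \emph{across} $\mathcal{D}_1$. You correctly note that Theorem~\ref{th-vassili_approx} (through \eqref{eq_first_approx}) gives control of $W^-$ only in a region whose intersection with the annular decomposition is $\mathcal{D}_0$ — that is, $|\im t|\ges(\Lambda\varepsilon)^{-1}$. You then carry out a purely formal comparison of $\formalSeparatrix_n$ against $\formalSeparatrixSing_n^-$ and claim the conclusion ``on the intersection $\mathcal{D}_1\cap\mathcal{D}_2$''; but that strip sits at $|\im t|\sim\varepsilon^{-1/2}$, which is strictly \emph{inside} $\mathcal{D}_1$ and well outside the domain where the outer estimate holds. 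A comparison of two truncated asymptotic series cannot supply any information about the actual function $W^-$ there. This is exactly what Lemma~\ref{thm_loc_lemma_D1} is for in the paper: starting from the bound at $\mathcal{D}_0\cap\mathcal{D}_1$ it iterates the separatrix equation step by step through $\mathcal{D}_1$, using the Jacobian estimate of Lemma~\ref{thm_jacobian_D1_bound} and the cocycle product bound of Lemma~\ref{thm_prod_bound}, and yields a $t$-dependent estimate $\|W^--\formalSeparatrix_n\|_\infty\les C_1|t|^{-n-1}$ valid throughout $\mathcal{D}_1$. Only after this does one arrive at a bound at $\mathcal{D}_1\cap\mathcal{D}_2$ that can feed the iteration on $\mathcal{D}_2$, and the conversion $|t|^{-(n+1)}\les\varepsilon^{(n+1)/2}$ there is where the half-powers of $\varepsilon$ come from. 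In your write-up you also conflate the scales: you assert ``on $\mathcal{D}_1$, where $|t|\sim\varepsilon^{-1/2}$,'' but that holds only at the upper boundary of $\mathcal{D}_1$; over most of $\mathcal{D}_1$ one has $|t|$ ranging up to $(\Lambda\varepsilon)^{-1}$.

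A secondary issue is your proposed mechanism for crossing $\mathcal{D}_2$. You suggest conjugating the error equation for $R=W^--\formalSeparatrixSing_n^-$ by the formal fundamental matrix $\Vtildesep$ to reduce it to a first-order difference equation. But $\Vtildesep$ is a \emph{formal} (generically divergent) series; one cannot apply it to an analytic function $R$ without first Borel--Laplace summing it or truncating and controlling the remainder — and estimating that remainder is of the same order of difficulty as the iteration it is meant to replace. The paper's proof of Lemma~\ref{thm_local_lemma_matching} avoids this entirely: it iterates the nonlinear separatrix equation directly, again reducing to the discrete Gronwall bound $\delta_{k+1}\les\alpha_k\delta_k+\beta_k$, with $\alpha_k$ controlled by Lemma~\ref{thm_jacobian_D2_bound}, $\beta_k$ controlled by the $O(\varepsilon^{n+1}t^n)$ size of the residual of $\formalSeparatrixSing_n^-$, and the products controlled by Lemma~\ref{thm_prod_bound}. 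Your contraction scheme, if it can be made to work at all, would need to be carried out on a truncation of $\Vtildesep$ with explicit error control; as written it is not a complete argument.
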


The rest of this section is devoted to the proof of this lemma. Before giving the proof
we need a few intermediate results.

For a $2\times2$ matrix $A$ we define $$|A|_\infty = \max\{ |A_{1,1}| + |A_{1,2}|, |A_{2,1}| + |A_{2,2}| \}.$$

\begin{lemma}
 Let $Q:\mathcal{D}_1\to\C^2$ and $ c>0$ such that $|Q(t)| \les c |t|^{-2}$ for all $t\in \mathcal{D}_1$. Then
 there exists $C_{1,1}>0$ such that 
 \begin{align*}
  \left| F'_\varepsilon ( \varepsilon Z_1(\sigma) + Q(t) ) \right|_\infty \les 1+\frac{2}{|t|}
  + C_{1,1} \varepsilon 
 \end{align*}
 for all $t\in \mathcal{D}_1$.
 \label{thm_jacobian_D1_bound}
\end{lemma}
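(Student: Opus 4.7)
My plan is a direct Taylor expansion argument. Using $F_0(0)=0$, $F_0'(0)=(f_0'(0))^3=(R_{2\pi/3})^3=I$, and the fact that by our normalization $F_\varepsilon$ agrees with $\tilde F_\varepsilon=\phi^1_{\tilde H_\varepsilon}$ up to order $N$ with $\tilde H_0=\tfrac{b_0}{6}(2x^3-6xy^2)+O(4)$, I would first identify
\begin{equation*}
F'_\varepsilon(x,y) \;=\; I + \begin{pmatrix} -2b_0\,y & -2b_0\,x \\ -2b_0\,x & 2b_0\,y \end{pmatrix} + R(\varepsilon;x,y),
\end{equation*}
with remainder $R=O\bigl(|(x,y)|^2+\varepsilon\,|(x,y)|+\varepsilon^2\bigr)$ coming from cubic-and-higher terms of $F_0$ and from the $\varepsilon$-dependence of $F_\varepsilon-F_0$.

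Next I would substitute $p(t)=\varepsilon Z_1(\sigma)+Q(t)$. Lemma \ref{thm_formal_flow_existence} gives $Z_1(\sigma)=\bigl(\tfrac{1}{2\sqrt{3}\,b_0},\,\tfrac{\sigma}{2b_0}\bigr)$, and the shift $\tau=t+\pi\ii/\varepsilon$ converts the hyperbolic tangent to $\sigma=\coth(\tfrac{1}{2}\varepsilon t)$. The key expansion is
\begin{equation*}
\varepsilon\sigma \;=\; \frac{2}{t}+\frac{\varepsilon^2 t}{6}+O\bigl(\varepsilon^4|t|^3\bigr),
\end{equation*}
valid on $\mathcal{D}_1$ because $|\varepsilon t|\les 1/\Lambda$ bounds $\varepsilon t/2$ away from the poles of $\coth$. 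Plugging into the linear-in-$p$ matrix, the diagonal entries produce $\pm 2b_0\cdot\varepsilon\sigma/(2b_0)=\pm 2/t+O(\varepsilon^2|t|)$, while the off-diagonal entries give $2b_0\cdot\varepsilon/(2\sqrt{3}b_0)=\varepsilon/\sqrt{3}$; the contribution of $Q(t)$ to either is $O(1/|t|^2)$. Computing the row sums yields $|F'_\varepsilon(p)|_\infty\les 1+2/|t|+O(\varepsilon+\varepsilon^2|t|+1/|t|^2)$.

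Finally, I would use the geometry of $\mathcal{D}_1$ to absorb every error term. Since $\mathcal{D}_1\subset\mbox{SQ}_{-1}((\Lambda\varepsilon)^{-1})\setminus\mbox{SQ}(\varepsilon^{-1/2})$ gives $|t|\ges\varepsilon^{-1/2}$ and $|\varepsilon t|\les 1/\Lambda$, we have $1/|t|^2\les\varepsilon$, $\varepsilon^2|t|\les\varepsilon/\Lambda$, and $|p(t)|=O(\varepsilon^{1/2})$, so $R(\varepsilon;p)=O(\varepsilon)$. Collecting constants depending on $b_0$, $c$, and $\Lambda$ gives $C_{1,1}$. The only subtle point is obtaining the coefficient $2$ in the bound exactly: this is a sharp consequence of the cancellation $2b_0\cdot\tfrac{1}{2b_0}=1$ between the quadratic coefficient of $F_0$ and the $\sigma$-component of $Z_1$, combined with the residue-like asymptotic $\varepsilon\coth(\varepsilon t/2)\sim 2/t$. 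This precision is what will make the contraction-type arguments of the following lemmas in Section \ref{ch_form_sols_approximate} produce the stated rate $\varepsilon^{(n-1)/2}$ in Lemma \ref{thm_local_lemma_matching}.
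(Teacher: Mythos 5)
Your proof is correct and takes essentially the same route as the paper's: isolate the identity and the degree-one part of the Jacobian, use $\varepsilon\sigma = \tfrac{2}{t} + O(\varepsilon^2 t)$ (valid on $\mathcal{D}_1$ since $|\varepsilon t/2|<1/\Lambda$) to produce the exact $2/|t|$, and absorb everything else into $O(\varepsilon)$ via $|t|^2\varepsilon\ges1$. One small bookkeeping slip worth noting: $F'_\varepsilon(0)-I$ is genuinely $O(\varepsilon)$, not $O(\varepsilon^2)$ — these are the $\pm\varepsilon/\sqrt{3}$ off-diagonal constants that the paper places inside $\mathcal{F}'_2$, coming from the $a_{0,1}\mu\,(x^2+y^2)$ term of the normal form — so your remainder should read $R=O\bigl(\varepsilon+|(x,y)|^2\bigr)$ rather than $O\bigl(|(x,y)|^2+\varepsilon|(x,y)|+\varepsilon^2\bigr)$; on $\mathcal{D}_1$ this is still $O(\varepsilon)$, so your final bound is unaffected.
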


\begin{proof}
 Throughout the proof we keep in mind that for all $t\in \mathcal{D}_1$, $|t|^2 \varepsilon\ges1$.
 
 Let $s\in\C$, $|s|<1/2$ then it holds
 \begin{align*}
  \tanh\left( \frac{\pi \ii}{2} + s \right) = \frac{1}{s} + s\, \phi(s),
 \end{align*}
  with $  |\phi(s)| \les 1 $
 and
 \begin{align*}
  \left|\tanh\left( \frac{\pi \ii}{2} + s \right) \right| \les \frac{2}{|s|}.
 \end{align*}
 So for $\tau = t+\pi\ii/\varepsilon$ we have
 \begin{align*}
  \varepsilon \sigma = \frac{2}{t} + \frac{\varepsilon^2 t}{2}\phi\left(\tfrac{\varepsilon t}{2}\right)
  \quad \text{ and } \quad
  |\varepsilon\sigma| \les\frac{4}{|t|}.
 \end{align*}
 Recall that we write $F_\varepsilon(x,y) = \sum_{n\ges1} \mathcal{F}_n(\varepsilon,x,y)$, with
 $\mathcal{F}_n$ a homogeneous polynomial of degree $n$.
 Since $F_\varepsilon$ is tangent to identity, $\mathcal{F}'_1$ is the identity
 (so trivially $\| \mathcal{F}'_1 \|_\infty = 1$). Also
 \begin{align*} \mathcal{F}'_2(\varepsilon, x,y)=
  \left(
  \begin{array}{cc}
   -2 b_{0,0} y &  -2 b_{0,0} x + \frac{\varepsilon}{\sqrt{3} }   \\
   -2 b_{0,0} x- \frac{\varepsilon}{\sqrt{3}}   &  2 b_{0,0} y \\
  \end{array}
 \right),
 \end{align*}
 which implies that $\|\mathcal{F}'_2(\varepsilon,Q(t))\|_\infty \les C_1\varepsilon$ and
 \begin{align*}
  \mathcal{F}'_2(\varepsilon,\varepsilon \mathcal{Z}_1(\sigma))=\begin{pmatrix}
                                                     \varepsilon \sigma & 0 \\
                                                     \frac{\sqrt{3}}{2}\varepsilon & -\varepsilon \sigma
                                                    \end{pmatrix}.
 \end{align*}
 The last relation implies that $\|\mathcal{F}'_2(\varepsilon,\varepsilon \mathcal{Z}_1(\sigma))\|_\infty
 \les \frac{2}{|t|} +  \frac{\sqrt{3}}{2 }\varepsilon$.

 For all $t\in \mathcal{D}_1$, the first component of $\varepsilon \mathcal{Z}_1(\sigma)$ is a
 constant times $\varepsilon$ and the second component is bounded by a constant over $|t|$. 
 From this we get 
 \begin{align*}
  \left|  \varepsilon \mathcal{Z}_1(\sigma) + Q(t)  \right| &= \frac{1}{|t|} \left|  t \varepsilon \mathcal{Z}_1(\sigma) + t Q(t)  \right| \\
  & \les \frac{1}{2 b_{0,0} |t|} \left( \left|  \Vector{\frac{\varepsilon t}{\sqrt{3}}}{2 + \frac{\varepsilon^2 t^2}{2}\phi\left(\frac{\varepsilon t}{2}\right)}  \right| + \Vector{\frac{2 b_{0,0} \, c}{|t|}}{\frac{2 b_{0,0} \, c}{|t|}} \right) \\ 
  & \les \frac{1}{2 b_{0,0} |t|}   \Vector{\frac{ \varepsilon |t|}{\sqrt{3}} + \frac{2 b_{0,0} \, c}{|t|}}{\left| 2 + \frac{\varepsilon^2 t^2}{2}\phi\left(\frac{\varepsilon t}{2}\right)\right|+ \frac{2 b_{0,0} \, c}{|t|}}   \\ 
  & \les \frac{1}{2 b_{0,0} |t|}   \Vector{\frac{ 1}{\sqrt{3} \Lambda} + \frac{2 b_{0,0} \, c}{\Lambda}}{ 2 + \frac{1}{2 \Lambda^2} \left| \phi\left(\frac{\varepsilon t}{2}\right)\right|+ \frac{2 b_{0,0} \, c}{\Lambda}}   \\ 
  & \les \frac{1}{2 b_{0,0} |t|}   \Vector{\frac{ 1}{\sqrt{3} \Lambda} + \frac{2 b_{0,0} \, c}{\Lambda}}{ 2 + \frac{1}{2 \Lambda^2} + \frac{2 b_{0,0} \, c}{\Lambda}}   \\ 
  & \les \frac{1}{2 b_{0,0} |t|}   \left( 2 + \frac{1}{ \Lambda} + \frac{2 b_{0,0} \, c}{\Lambda} \right)=\frac{C_2}{|t|},
 \end{align*}
 where inequality and absolute value interpreted componentwise. Notice that
 the constant $C_2$ is a decreasing function of $\Lambda$.
 
 For all $\mathcal{F}'_n(\varepsilon;x,y)$ with $n\ges 3$, each monomial of $\mathcal{F}'_n$
 is of degree $n-1$. We substitute $\varepsilon \mathcal{Z}_1(\sigma) + Q(t)$ in $\sum_{n\ges3}
 \mathcal{F}'_n$. Then all the monomials are in $O(\varepsilon)$.
 
 Collecting everything together we get the result. Notice that the constant $C_{1,1}$ is also a
 decreasing function of $\Lambda$.
\myqed
\end{proof}

\begin{lemma}
 Let $Q:\mathcal{D}_2\to\C^2$ and assume that there exists $ c>0$ such that $|Q(t)|\les c \varepsilon $.
 Then there exist $C_{2,1},C_{2,2}>0$ such that 
 \begin{align*}
  \left\| F'_\varepsilon ( W^-_0(t) + Q(t) ) \right\|_\infty \les 1+\frac{2}{|t|} + C_{2,1} \varepsilon
  + \frac{C_{2,2}}{|t|^2}.
 \end{align*}
 \label{thm_jacobian_D2_bound}
\end{lemma}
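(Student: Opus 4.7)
The plan is to mirror the proof of Lemma \ref{thm_jacobian_D1_bound}, but with $W^-_0(t)$ playing the role that $\varepsilon Z_1(\sigma)$ played in $\mathcal{D}_1$. The key observation is that $\mathcal{D}_2\subset\mathcal{D}_1$ lies further from the singularity: inside $\mathcal{D}_2$ we have $|t|\ges\Lambda$ and the domain sits inside the sectorial neighbourhood $D^-$ of infinity from Theorem \ref{thm_existence_Borel_transform}. Hence $W^-_0(t)$ admits the asymptotic expansion $\formalSeparatrixSing_0(t)=(0,-1/(b_0 t))+O(|t|^{-3})$, and in particular there is a constant $c_0>0$ (independent of $t$) such that $|W^-_0(t)|\les c_0/|t|$ componentwise on $\mathcal{D}_2$, with the second component being $-1/(b_0 t)+O(|t|^{-3})$ and the first of order $O(|t|^{-3})$.

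First I would expand $F_\varepsilon(x,y)=\sum_{n\ges1}\mathcal{F}_n(\varepsilon;x,y)$ into homogeneous polynomials, as in the previous lemma. The order-one piece gives $\mathcal{F}'_1=\mathrm{Id}$, contributing the leading $1$ in the bound. Using the explicit form of $\mathcal{F}'_2$ recalled in the proof of Lemma \ref{thm_jacobian_D1_bound}, evaluated at $W^-_0(t)+Q(t)$, the diagonal entries become $\mp 2b_{0,0}(-1/(b_0 t)+O(|t|^{-3})+Q_{(2)}(t))=\pm 2/t+O(|t|^{-3})+O(\varepsilon)$, while the off-diagonal entries are $\mp\varepsilon/\sqrt{3}\mp 2 b_{0,0}(O(|t|^{-3})+Q_{(1)}(t))$, also bounded by $O(\varepsilon)+O(|t|^{-3})$. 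Taking the maximum-column norm yields the contribution $2/|t|+C\varepsilon+C'/|t|^3$, with the $1/|t|^3$ term absorbed into $C_{2,2}/|t|^2$ using $|t|\ges\Lambda$.

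For $n\ges3$, each monomial of $\mathcal{F}'_n(\varepsilon;\cdot)$ has total degree $n-1$ in $(\varepsilon,x,y)$. Since $|W^-_0(t)+Q(t)|\les c_0/|t|+c\varepsilon$ and $F_\varepsilon$ is analytic on a fixed polydisc (independent of $\varepsilon$ for $\varepsilon<\varepsilon_0$), Cauchy estimates give a geometric bound
\begin{equation*}
\sum_{n\ges3}\bigl\|\mathcal{F}'_n(\varepsilon;W^-_0(t)+Q(t))\bigr\|_\infty\les K\bigl(\varepsilon+c_0/|t|+c\varepsilon\bigr)^{2}
\end{equation*}
provided $\Lambda$ is large enough and $\varepsilon_0$ small enough to make the ratio test converge. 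Expanding the square produces $K'/|t|^2+K''\varepsilon/|t|+K'''\varepsilon^{2}$, and the cross and pure-$\varepsilon$ terms can be folded into $C_{2,1}\varepsilon$ using $1/|t|\les 1/\Lambda$, while the $1/|t|^2$ term joins $C_{2,2}/|t|^2$.

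The only mildly delicate point is ensuring that the constants $C_{2,1}$ and $C_{2,2}$ can be chosen uniformly in $\varepsilon\in(0,\varepsilon_0)$ and $t\in\mathcal{D}_2$. This follows because the asymptotic expansion $W^-_0(t)\sim\formalSeparatrixSing_0(t)$ is uniform in sectors of opening $>\pi$ by Theorem \ref{thm_existence_Borel_transform}, so $c_0$ does not depend on $\varepsilon$; and because the radius of convergence of $F_\varepsilon$ around $0$ is bounded below uniformly for $\varepsilon<\varepsilon_0$, so the geometric series estimate is valid throughout $\mathcal{D}_2$. I do not anticipate a genuine obstacle beyond this bookkeeping; the lemma is essentially a routine Taylor estimate adapted to the specific asymptotic profile of $W^-_0$ near the singularity.
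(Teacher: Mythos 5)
Your argument is correct and reaches the required bound; it is worth noting, though, that it organizes the Taylor expansion differently from the paper. You mirror Lemma~\ref{thm_jacobian_D1_bound} and decompose $F_\varepsilon=\sum_{n\ges1}\mathcal{F}_n(\varepsilon;x,y)$ into $(\varepsilon,x,y)$-homogeneous polynomials, computing $\mathcal{F}_1'$ and $\mathcal{F}_2'$ explicitly and controlling $\sum_{n\ges3}\mathcal{F}_n'$ by Cauchy estimates with a geometric tail. The paper instead uses the other decomposition mentioned in the introduction, $F_\varepsilon=\sum_{k\ges0}\varepsilon^k F_k(x,y)$: it Taylor-expands the whole analytic map $F_0$ up to cubic order (so the error is already $O_4(x,y)$ and gives $C_R(\varepsilon+|t|^{-2})$ after substituting $W_0^-+Q$), and then bounds each $\varepsilon^k F_k'$ with $k\ges1$ separately by $C_k\varepsilon^k\les C_k\varepsilon\Lambda^{2-2k}$. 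Both routes extract the same leading diagonal $1\pm 2/t$ from the quadratic normal-form coefficients and fold everything else into $C_{2,1}\varepsilon+C_{2,2}/|t|^2$ using $|t|\ges\Lambda$; the practical difference is that your decomposition makes the proofs of Lemmas~\ref{thm_jacobian_D1_bound} and~\ref{thm_jacobian_D2_bound} structurally parallel, while the paper's keeps the $\varepsilon$-dependence confined to a separate sum whose terms it bounds term by term rather than through a joint Cauchy estimate. One small bookkeeping remark: the paper only records $W_0^-(t)=(0,-1/(b_{0,0}t))+r(t)$ with $|r(t)|\les C_r|t|^{-2}$, slightly weaker than the componentwise $O(|t|^{-3})$ you invoke, but since $|t|\ges\Lambda$ this makes no difference to the final constants.
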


\begin{proof}
 For this proof we take into account that $|t|>\Lambda$ and $\Lambda^2 \varepsilon_0<1$.
 
 Recall from Section
 \ref{ch_form_sep_sing} that
 \begin{align*}
  W^-_0(t) = \Vector{0}{-\frac{1}{b_{0,0}\, t}}+ r(t),
 \end{align*}
 with $|r(t)| \les C_r |t|^{-2}$, which also implies trivially that $\|W^-_0(t)\|_\infty \les C_0 |t|^{-1}$.
 Expanding $F_0$ in Taylor series and taking into account that it agrees with the normal
 form up to the order $N$ which is bigger than $3$ we get
 \begin{align*}
  F_0(x,y)= \Vector{x - 2 b_{0,0} x y +   b_{0,0}^2 x^3 +  b_{0,0}^2 x y^2}{y -   b_{0,0} x^2 +  b_{0,0} y^2 +  b_{0,0}^2 x^2 y +  b_{0,0}^2 y^3} +O_4(x,y).
 \end{align*}
 Differentiating the above and substituting $W_0^-$ we get
 \begin{align*}
  F_0'( W^-_0(t) + Q(t) ) = \begin{pmatrix}
                                        1 + \frac{2}{t} & 0 \\
                                        0 & 1 - \frac{2}{t} 
                            \end{pmatrix} + R(t)
 \end{align*}
 with $\|R(t)\|_\infty \les C_R (\varepsilon + |t|^{-2})$. Moreover for all $ k\in\N$,
 $k\ges 1$ there exists $C_k$ such that $\| \varepsilon^k F'_k ( W^-_0(t) + Q(t) ) \|_\infty
 \les C_k \varepsilon^k \les C_k \varepsilon \Lambda^{2-2k} $ and since $F_\varepsilon$ is
 analytic around the origin. Summing these gives the result.  As before the constants $C_{2,1}$
 and $C_{2,2}$ are decreasing functions of $\Lambda$.
\myqed
\end{proof}

\begin{lemma}
 Let $\mu:\C\backslash (\mbox{SQ}(\Lambda) \cup \mbox{HP}(R)) \to\R_+$ with
 \begin{align*}
  \mu(t)\les 1 + \frac{2}{|t|} + c_1 \varepsilon + \frac{c_2}{|t|^2}
 \end{align*}
 for some $c_1,c_2>0$. Then for all $m\in\N$ with $m\les (\Lambda \varepsilon)^{-1}+2$
 and $t+m\in\C\backslash (\mbox{SQ}(\Lambda) \cup \mbox{HP}(\Lambda))$ it holds
 \begin{align*}
  \prod_{k=0}^{m} \mu(t+k) \les C  \frac{|t|^2}{|t+m|^2},
 \end{align*}
 with $$C=\Bigg(1 +\frac{2}{\Lambda} + \frac{c_1+c_2}{\Lambda^2} \Bigg)\cdot\exp\Bigg(2\pi
 + \frac{\pi}{\Lambda}  \left(c_2 + \left(4 + \frac{c_1}{\Lambda}  + \frac{c_2}{\Lambda}
 \right)^2 \right) + c_1\left(\frac{1}{\Lambda}+ \frac{2}{\Lambda^2}\right) \Bigg).$$
 \label{thm_prod_bound}
\end{lemma}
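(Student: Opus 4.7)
The plan is to take logarithms and reduce the product bound to three sum bounds via integral comparison. Since $\log(1+x) \les x$ for $x \ges 0$, I would first write
\[
\log \mu(t+k) \les \frac{2}{|t+k|} + c_1 \varepsilon + \frac{c_2}{|t+k|^2},
\]
and sum from $k=0$ to $m$. The goal is then to control each of $\sum 2/|t+k|$, $(m+1)c_1\varepsilon$, and $\sum c_2/|t+k|^2$ separately. The middle piece is handled immediately: $m \les (\Lambda\varepsilon)^{-1}+2$ together with $\Lambda^2\varepsilon_0 < 1$ gives $(m+1)c_1\varepsilon \les c_1(1/\Lambda + 2/\Lambda^2)$, which is precisely the final $c_1$ contribution to the exponent of $C$.

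Writing $t = a + \ii b$, the domain hypothesis forces either $|b| \ges \Lambda$ or $\re(t+k) \les -\Lambda$ for every $k\in\{0,\ldots,m\}$, so in all cases $|t+k| \ges \Lambda$. For the third sum I would compare $\sum c_2/|t+k|^2$ with $\int_\R c_2\,dx/((a+x)^2 + b^2) = \pi c_2/|b|$ plus the peak value $c_2/|b|^2$, yielding a bound of order $c_2(\pi/\Lambda + 1/\Lambda^2)$; this explains the $\pi c_2/\Lambda$ piece in the exponent of $C$.

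The main work is on $\sum 2/|t+k|$, which must be shown to behave like $2\log(|t|/|t+m|)$ plus a universal constant. The function $1/\sqrt{(a+x)^2 + b^2}$ is unimodal in $x$ with peak $1/|b|$, so Riemann-sum-to-integral comparison gives
\[
\sum_{k=0}^m \frac{2}{|t+k|} \les \frac{2}{|b|} + \int_0^m \frac{2\,dx}{\sqrt{(a+x)^2 + b^2}} = \frac{2}{|b|} + 2\log\frac{(a+m)+|t+m|}{a+|t|},
\]
via the antiderivative $2\sinh^{-1}((a+x)/|b|) = 2\log\bigl(((a+x) + |t+x|)/|b|\bigr)$. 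I would then use the elementary identities $(|t|-a)(|t|+a) = b^2$ and $(|t+m|-(a+m))(|t+m|+(a+m)) = b^2$ to rewrite the logarithm as $2\log\bigl((|t|-a)/(|t+m|-(a+m))\bigr) = 2\log(|t|/|t+m|) + 2\log\bigl((1 - a/|t|)/(1 - (a+m)/|t+m|)\bigr)$. A short case analysis in the subcases $|b| \ges \Lambda$ and $a+m \les -\Lambda$ shows the correction factor is uniformly bounded and can be absorbed into the constant $2\pi$ in the exponent.

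Exponentiating and combining the three bounds recovers $\prod \mu(t+k) \les C\,|t|^2/|t+m|^2$. The front factor $(1 + 2/\Lambda + (c_1+c_2)/\Lambda^2)$ of $C$ would come from pulling out a single extremal factor $\mu(t+k_0)$, where $|t+k_0|$ is minimal (of size $\sim\Lambda$), and bounding it pointwise, since that is sharper than folding it into the exponential sum. The main obstacle I foresee is matching the $(4 + c_1/\Lambda + c_2/\Lambda)^2\,\pi/\Lambda$ term in the exponent exactly: this appears to arise from a second-order correction to $\log(1+x)\les x$ applied near $k_0$, combined with a further integral comparison on $\sum 1/|t+k|^2$. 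There is no conceptual obstacle, but the bookkeeping required to reproduce $C$ in precisely the stated form is intricate.
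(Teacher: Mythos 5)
Your approach is conceptually sound but takes a genuinely different route. The paper first replaces $2/|t+k|$ by the larger quantity $2\bigl(|\re(t+k)|+|\im(t+k)|\bigr)/|t+k|^2$, so that every term under the logarithm is either a constant or of the form (const)$/|t+k|^2$, and then uses the \emph{second}-order bound $\log(1+x)\leq x+x^2$; the remainder $x^2\leq \bigl(4+c_1/\Lambda+c_2/\Lambda\bigr)^2/|t+k|^2$ is folded into the $1/|t|^2$ bucket. That is where the $(4+c_1/\Lambda+c_2/\Lambda)^2$ comes from — it is produced at \emph{every} $k$ by the Taylor remainder, not ``near $k_0$'' as you conjecture, and has nothing to do with a Riemann-sum correction. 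The paper's comparison integrals then have elementary $\log|\dt|^2$ and $\arctan$ antiderivatives (no $\sinh^{-1}$ needed), with $\arctan$'s range supplying the $2\pi$ and $\pi/\Lambda$ factors. Your version — first-order $\log(1+x)\leq x$ plus $\sinh^{-1}$ — avoids the second-order remainder entirely and therefore yields a \emph{smaller} exponent than the one stated; one checks that your surplus terms ($\leq 2/\Lambda$ from the $1/|t+k|$ peak and $\leq c_2/\Lambda^2$ from the $1/|t+k|^2$ peak) are always dominated by the paper's surplus $\frac{\pi}{\Lambda}(4+c_1/\Lambda+c_2/\Lambda)^2$, so the stated constant still holds, just not with equality in your bookkeeping. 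Two small corrections: the peak values in the Riemann-sum comparison should be bounded by $2/\Lambda$ and $c_2/\Lambda^2$, not $2/|b|$ and $c_2/|b|^2$, since $|\im t|$ can be arbitrarily small when $\re t\leq -\Lambda$ (the right estimate is $|t+k|\geq\Lambda$ for all $k$, by the domain hypothesis); and the passage via $(|t|-a)(|t|+a)=b^2$ formally degenerates when $b=0$, so it is cleaner to write the integral directly as $2\log\bigl((|t|-a)/(|t+m|-(a+m))\bigr)$, which is continuous down to $b=0$.
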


\begin{proof}
 For all $x\in\R$ with $x\ges 0$, it holds that $\log(1+x)=x+r(x)$ with $|r(x)|\les x^2$. So we have
 \begin{align*}
  & \log \left(1 + 2\frac{|\re(t)|+|\im(t)|}{|t|^2}+ c_1 \varepsilon + \frac{c_2}{|t|^2}\right)= \\
  & = 2\frac{|\re(t)|+|\im(t)|}{|t|^2}+ c_1 \varepsilon + \frac{c_2}{|t|^2} + r\left( \frac{1}{|t|} \left(2\frac{|\re(t)|+|\im(t)|}{|t|}+ c_1 \varepsilon |t| + \frac{c_2}{|t|} \right) \right) \\
  & \les 2\frac{|\re(t)|+|\im(t)|}{|t|^2}+ c_1 \varepsilon + \frac{c_2}{|t|^2} + \frac{1}{|t|^2} \left(2\frac{|\re(t)|+|\im(t)|}{|t|}+ c_1 \varepsilon |t| + \frac{c_2}{|t|} \right)^2 \\
  & \les 2\frac{|\re(t)|+|\im(t)|}{|t|^2}+ c_1 \varepsilon + \frac{1}{|t|^2} \left(c_2 + \left(4 + \frac{c_1}{\Lambda}  + \frac{c_2}{\Lambda} \right)^2 \right) \\
  & \les 2\frac{|\re(t)|+|\im(t)|}{|t|^2}+ c_1 \varepsilon + \frac{C_2}{|t|^2}.
 \end{align*}
 
 Then by standard integration we get
 \begin{align*}
  \int_t^{t+m} 2\frac{|\re(\dt)|+|\im(\dt)|}{|\dt|^2} \dd \dt &=  \log\left( \frac{|t|^2}{|t+m|^2}
  \right) - 2 \arctan\left(\frac{|\re(t)|-m}{|\im(t)|} \right) \\
  & + 2 \arctan\left(\frac{|\re(t)|}{|\im(t)|} \right) \les \log\left( \frac{|t|^2}{|t+m|^2} \right) +2\pi
 \end{align*}
 and
 \begin{align*}
  \int_t^{t+m} \frac{C_2}{|\dt|^2} \dd \dt &= \frac{C_2}{|\im(t)|} \arctan\left(\frac{|\re(t)|-m}{|\im(t)|} \right)
  - \frac{C_2}{|\im(t)|} \arctan\left(\frac{|\re(t)|}{|\im(t)|}\right) \les \frac{C_2}{\Lambda}  \pi.
 \end{align*}
 Also note that 
 \begin{align*}
  c_1 m \varepsilon \les c_1\left(\frac{1}{\Lambda}+2\varepsilon \right) .
 \end{align*}
 So collecting everything together we get
 \begin{align*}
  \int_t^{t+m} 2\frac{|\re(\dt)|+|\im(\dt)|}{|\dt|^2} + \frac{C_2}{|\dt|^2} \dd \dt
  + c_1 \varepsilon m \les \log\left( \frac{|t|^2}{|t+m|^2} \right) + 2\pi + \frac{C_2}{\Lambda}
  \pi + c_1\left(\frac{1}{\Lambda}+2\varepsilon \right) ,
 \end{align*}
 By the above we get 
 \begin{align*}
  \log & \left( \prod_{k=1}^{m} \mu(t+k) \right) = \sum_{k=0}^{m-1} \log(\mu(t+k)) \\
  & \les \sum_{k=1}^{m} 2\frac{|\re(t+k)|+|\im(t+k)|}{|t+k|^2} + \frac{C_2}{|t+k|^2} + c_1 \varepsilon \\
  & \les \int_t^{t+m} 2\frac{|\re(\dt)|+|\im(\dt)|}{|\dt|^2} + \frac{C_2}{|\dt|^2} \dd \dt + c_1 \varepsilon m\\
  & \les \log\left( \frac{|t|^2}{|t+m|^2} \right) + 2\pi + \frac{C_2}{\Lambda}  \pi
  + c_1\left(\frac{1}{\Lambda}+2\varepsilon \right) .
 \end{align*}
 Note that trivially $\mu(t) \les 1 +\frac{2}{\Lambda} + \frac{c_1+c_2}{\Lambda^2}$.
 Then we need to exponentiate the last relation and multiply one last time by $\mu(t)$.
 Using the bound of $\mu(t)$ we get the result.
\myqed
\end{proof}

\begin{lemma}
 There exists $\Lambda>1$ and $\varepsilon_0>0$  such that for every $n\in\N$, $5\les n\les N$
 and every $\varepsilon<\varepsilon_0$ there exists $C_1>0$ such that for all $t\in \mathcal{D}_1$ it holds
 \begin{align*}
  \left\| W^-(\varepsilon;t+\tfrac{\pi}{\varepsilon}\ii) - \formalSeparatrix_n(\varepsilon;t)
  \right\|_\infty \les \frac{C_1}{|t|^{n+1}}.
 \end{align*}
 \label{thm_loc_lemma_D1}
\end{lemma}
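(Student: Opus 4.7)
The plan is to propagate the $\varepsilon^{n+1}$ bound supplied by Theorem~\ref{th-vassili_approx} from the overlap $\mathcal{D}_0\cap\mathcal{D}_1$ into the full domain $\mathcal{D}_1$ by a discrete Gronwall argument on the affine difference equation satisfied by the error
\begin{equation*}
 e(t) := W^-(\varepsilon;t+\tfrac{\pi}{\varepsilon}\ii) - \formalSeparatrix_n(\varepsilon;t).
\end{equation*}
Subtracting $W^-(t+1+\pi\ii/\varepsilon)=F_\varepsilon(W^-(t+\pi\ii/\varepsilon))$ from $\formalSeparatrix_n(t+1)=F_\varepsilon(\formalSeparatrix_n(t))+R_n(t)$, with the truncation residue $R_n(t):=\formalSeparatrix_n(t+1)-F_\varepsilon(\formalSeparatrix_n(t))$, gives
\begin{equation*}
 e(t+1)=M(t)\,e(t)-R_n(t),\qquad M(t)=\int_0^1 F_\varepsilon'\bigl(\formalSeparatrix_n(t)+s\,e(t)\bigr)\,\dd s.
\end{equation*}
On the strip $\mathcal{D}_0\cap\mathcal{D}_1$, where $|t|\asymp(\Lambda\varepsilon)^{-1}$, Theorem~\ref{th-vassili_approx} applied to the scaled map $\Gamma_\varepsilon$ yields $|e(t_0)|\les C_n\varepsilon^{n+1}$, which, because $\varepsilon|t_0|\les\sqrt{2}/\Lambda$ on this strip, is equivalent to the scaled form $|e(t_0)|\les C'/|t_0|^{n+1}$ needed for the conclusion.

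Two auxiliary bounds then drive the iteration. For the residue, since $\formalSeparatrix$ satisfies the separatrix equation formally, $R_n$ has $\varepsilon$-valuation at least $n+1$, and its leading coefficient equals $Z_{n+1}(\sigma(t))-Z_{n+1}(\sigma(t+1))$; estimating the finite difference by $Z_{n+1}'(\sigma)\cdot(\sigma(t+1)-\sigma(t))$ and using $\sigma(t+1)-\sigma(t)=\tfrac{\varepsilon}{2}(1-\sigma^2)+O(\varepsilon^2\sigma^3)$ together with $\varepsilon\sigma\sim -2/t$ near the singularity produces an extra power of $1/|t|$ beyond the naive estimate, giving $|R_n(t)|\les C_R/|t|^{n+2}$ on $\mathcal{D}_1$; analogous treatment of the higher $\varepsilon$-orders preserves this bound. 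For the propagator, the bootstrap hypothesis $|e(t)|\les c/|t|^2$ (weaker than the target bound once $n\ges 5$) together with $|\formalSeparatrix_n(t)-\varepsilon Z_1(\sigma)|\les c/|t|^2$ lets Lemma~\ref{thm_jacobian_D1_bound} supply $|M(t)|_\infty\les 1+2/|t|+C\varepsilon$, whence Lemma~\ref{thm_prod_bound} gives
\begin{equation*}
 \Bigl|\prod_{j=a}^{b-1}M(t_0+j)\Bigr|_\infty\les C\,\frac{|t_0+a|^2}{|t_0+b|^2}.
\end{equation*}

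Variation of constants then writes
\begin{equation*}
 e(t_0+m)=\Bigl(\prod_{j=0}^{m-1}M(t_0+j)\Bigr)e(t_0)-\sum_{k=0}^{m-1}\Bigl(\prod_{j=k+1}^{m-1}M(t_0+j)\Bigr)R_n(t_0+k).
\end{equation*}
The homogeneous contribution is bounded by $C\varepsilon^{n+1}|t_0|^2/|t_0+m|^2=C(\varepsilon|t_0|)^2(\varepsilon|t_0+m|)^{n-1}/|t_0+m|^{n+1}\les C\Lambda^{-(n+1)}/|t_0+m|^{n+1}$, using $\varepsilon|t_0|,\varepsilon|t_0+m|\les 1/\Lambda$. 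The inhomogeneous contribution reduces, after absorbing $|t_0+k+1|^2/|t_0+k|^{n+2}\les C/|t_0+k|^n$, to $(C/|t_0+m|^2)\sum_{k=0}^{m-1}|t_0+k|^{-n}$; a Riemann-sum-to-integral comparison gives $\sum_k|t_0+k|^{-n}\les C/u_{\min}^{n-1}$, where $u_{\min}:=\min_k|t_0+k|$, and the geometry of $\mathcal{D}_1$ (in particular the exclusion of $\mathrm{SQ}(\varepsilon^{-1/2})$, which forces $u_{\min}\ges\varepsilon^{-1/2}$ and either $|t_0+m|\asymp u_{\min}$ or $|t_0+m|\les\sqrt{2}\,|\im t_0|$) yields $|t_0+m|/u_{\min}\les C$ uniformly for any admissible iteration path. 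Combining the two contributions closes the bound $|e(t_0+m)|\les C_1/|t_0+m|^{n+1}$ and verifies the bootstrap.

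The main obstacle is the sharp residue estimate $|R_n|\les C/|t|^{n+2}$: the naive bound $C/|t|^{n+1}$ would leave the Gronwall sum short by one power of $|t|$, so it is essential to extract cancellation from the finite difference $Z_{n+1}(\sigma(t+1))-Z_{n+1}(\sigma(t))$ and to verify that the higher $\varepsilon$-orders — generated by the nonlinearities of $F_\varepsilon$ acting on the truncated series — do not destroy that gain. The geometric step $|t_0+m|/u_{\min}\les C$ and the bootstrap verification are then a careful but routine case analysis, and the hypothesis $n\ges 5$ enters exactly to guarantee both the summability of $\sum_k|t_0+k|^{-n}$ and the a priori smallness needed to invoke Lemma~\ref{thm_jacobian_D1_bound}.
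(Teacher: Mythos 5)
Your proof is correct and follows essentially the same route as the paper's: you derive the affine difference equation for the error, seed it on the overlap $\mathcal{D}_0\cap\mathcal{D}_1$ with the $\varepsilon^{n+1}$ bound from Theorem~\ref{th-vassili_approx}, control the propagator via Lemma~\ref{thm_jacobian_D1_bound} and the product via Lemma~\ref{thm_prod_bound}, and close a bootstrap by variation of constants using the crucial residue bound $|R_n|\les C/|t|^{n+2}$. The only cosmetic difference is that you justify the $|t|^{-n-2}$ residue bound by explicitly extracting the gain from the finite difference of the first omitted coefficient and spell out the geometric estimate $|t_0+m|/u_{\min}\les C$, whereas the paper invokes the earlier corollary on the valuation of the truncation residue and writes the sum estimate $\sum_i|t+i+1|^{-n}\les|t+k|^{1-n}$ without elaboration — but the content is the same.
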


\begin{proof}
 Let
 \begin{align*}
  \xi_n(\varepsilon;t) : &= W^-(\varepsilon;t+\tfrac{\pi}{\varepsilon}\ii) -
  \formalSeparatrix_n(\varepsilon;t+\tfrac{\pi}{\varepsilon}\ii), \\
  R_n(\varepsilon;t): &=  F_\varepsilon(\formalSeparatrix_n(\varepsilon;t-1+\tfrac{\pi}{\varepsilon}\ii) -
  \formalSeparatrix_n(\varepsilon;t+\tfrac{\pi}{\varepsilon}\ii) ).
 \end{align*}
 As we saw in Section \ref{ch_form_sep}  $ \formalSeparatrix_1(\varepsilon;t+1+\tfrac{\pi}{\varepsilon}\ii)
 - F_\varepsilon(\formalSeparatrix_1(\varepsilon;t+\tfrac{\pi}{\varepsilon}\ii) )=O(\varepsilon^{3} \sigma^{3})$.
 Then each order in $\formalSeparatrix_n$ cancels an order of the difference.
 This implies that $ \formalSeparatrix_n(\varepsilon;t+1+\tfrac{\pi}{\varepsilon}\ii)
 - F_\varepsilon(\formalSeparatrix_n(\varepsilon;t+\tfrac{\pi}{\varepsilon}\ii) )=O(\varepsilon^{n+2} \sigma^{n+2})$.
 So for all $t\in \mathcal{D}_1$ it holds $R_n(\varepsilon;t)=O(|t|^{-n-2})$.
 
 Then we have
 \begin{align*}
  \xi_n(\varepsilon;t+1) &= W^-(\varepsilon;t+1+\tfrac{\pi}{\varepsilon}\ii) -
  \formalSeparatrix_n(\varepsilon;t+1+\tfrac{\pi}{\varepsilon}\ii) \\
  &= F_\varepsilon(W^-(\varepsilon;t+\tfrac{\pi}{\varepsilon}\ii)) -
  F_\varepsilon(\formalSeparatrix_n(\varepsilon;t-1+\tfrac{\pi}{\varepsilon}\ii) ) + R_n(\varepsilon;t+1) \\
  &= \left( \int_0^1 F_\varepsilon' ( \formalSeparatrix_n(\varepsilon;t+\tfrac{\pi}{\varepsilon}\ii) +
  \dt\,\xi_n(\varepsilon;t) ) \dd \dt \right) \xi_n(\varepsilon;t) + R_n(\varepsilon;t+1)
 \end{align*}
 and consequently
 \begin{align}
  \xi_n(\varepsilon;t+k+1) &= \left( \int_0^1 F_\varepsilon ( \formalSeparatrix_n(\varepsilon;t+k+\tfrac{\pi}{\varepsilon}\ii)
  + \dt\,\xi_n(\varepsilon;t+k) ) \dd \dt \right) \xi_n(\varepsilon;t+k) \nonumber \\
  & \phantom{=} + R_n(\varepsilon;t+k+1). \label{eq_local_eq_00}
 \end{align}
 Let
 \begin{align*}
  \delta_k :&= \left| \xi_n(\varepsilon;t+k) \right| , \\
  \alpha_k :&= \left\| \int_0^1 F_\varepsilon ( \formalSeparatrix_n(\varepsilon;t+k+\tfrac{\pi}{\varepsilon}\ii) +
  \dt\,\xi_n(\varepsilon;t+k) ) \dd \dt \right\|_\infty , \\
  \beta_k :&= \left| R_n(\varepsilon;t+k+1) \right| . \\
 \end{align*}
 Then by taking the absolute value of equation \eqref{eq_local_eq_00} we have
 $ \delta_{k+1} \les \alpha_k \delta_k+\beta_k $, which implies that
 \begin{align*}
  \delta_k\les \left( \prod_{i=1}^{n-1} \alpha_i \right)\delta_0 + \sum_{i=0}^{k-1} \left(\prod_{j=i+1}^{k-1} \alpha_j\right) \beta_i.
 \end{align*}
 This equation allows induction to be used.
 
 We know from Theorem \ref{th-vassili_approx} that for all $ t\in \mathcal{D}_0\cap \mathcal{D}_1$ it holds
 \begin{align*}
  \left|W^-(\varepsilon;t+\tfrac{\pi}{\varepsilon}\ii)- \formalSeparatrix_n(\varepsilon;t+\tfrac{\pi}{\varepsilon}\ii) \right|\les \frac{C_0}{|t|^{n+1}},
 \end{align*}
 so we get $\delta_0\les C_0 |t|^{-n-1}$ and $\beta_k \les C_\beta |t+k+1|^{-n-2}$ from Taylor's theorem.
 
 Assume that for all $j < k$ it holds $\delta_j\les C_1 |t+j|^{-n-1}$ with some constant $C_1>2\exp(2\pi+1)(C_0+C_\beta)$.
 Then we can use Lemma \ref{thm_jacobian_D1_bound} with $\xi_j$ in the place of $Q$ to get
 \begin{align*}
  \alpha_j \les 1+\frac{2}{|t+j|}+ C_{1,1} \varepsilon + \frac{C_{1,2}}{|t+j|^2}.
 \end{align*}
 This implies that
 \begin{align*}
  \delta_k& \les \left( \prod_{i=1}^{k-1} \alpha_i \right)\delta_0 + \sum_{i=0}^{k-1} \left(\prod_{j=i+1}^{k-1} \alpha_j\right) \beta_i \\
  & \les C\frac{|t|^2}{|t+k|^2} \frac{C_0}{|t|^{n+1}} + \sum_{i=0}^{k-1} C \frac{|t+i+1|^2}{|t+k|^2} \frac{C_\beta}{|t+i+1|^{n+2}} \\
  & \les \frac{C_0 C}{|t+k|^2|t|^{n-1}} +  \frac{C_\beta C}{|t+k|^2}\sum_{i=0}^{k-1}\frac{1}{|t+j+1|^{n}} \\
  & \les \frac{C_0 C}{|t+k|^{n+1}} + \frac{C_\beta C}{|t+k|^2}\frac{1}{|t+k|^{n-1}} \\
  & \les \frac{C(C_0+C_\beta) }{|t+k|^{n+1}},
 \end{align*}
 with $C$ given by Lemma \ref{thm_prod_bound}.
 
 We choose $\Lambda$ big enough to have
 $C(C_0+C_\beta) < C_1$. Then we get that the inductive hypothesis holds also for $k$.
 To extend the bound to the whole $\mathcal{D}_1$ we need to apply the same technique for
 $\mbox{R}$ more steps which changes only the constants.
\myqed
\end{proof}

The above actually proves that the bound is true in $\mbox{SQ}_{-1}(\Lambda)\backslash \mbox{HP}(0)$.
Of course the bound becomes arbitrarily big close to the origin so it will be used only in $\mathcal{D}_1$.

\begin{proof}[Proof of Lemma \ref{thm_local_lemma_matching}]
 Let
 \begin{align*}
  \xi_n(\varepsilon;t) : &= W^-(\varepsilon;t+\tfrac{\pi}{\varepsilon}\ii) - \formalSeparatrixSing_n^-(\varepsilon;t), \\
  R_n(\varepsilon;t): &= F_\varepsilon(\formalSeparatrixSing_n^-(\varepsilon;t-1) ) - \formalSeparatrixSing_n^-(\varepsilon;t) .
 \end{align*}
 We saw in Section \ref{ch_form_sep_sing} that $\formalSeparatrixSing_0^- (\varepsilon;t+1)
 - F_\varepsilon(\formalSeparatrixSing_0^- (\varepsilon;t) )=O(\varepsilon)$ and then each
 order in $\formalSeparatrixSing_n^-$ cancels one row of the Table \ref{tab-monomials}.
 This implies that $\formalSeparatrixSing_n^-(\varepsilon;t+1) -
 F_\varepsilon(\formalSeparatrixSing_n^-(\varepsilon;t) )=O(\varepsilon^{n+1} t^{n})$.
 So for all $t\in \mathcal{D}_2$ it holds $R_n(\varepsilon;t)=O(\varepsilon^{n+1} t^{n})$.
 
 Similarly to the previous proof we have
 \begin{align*}
  \xi_n(\varepsilon;t+1) = \left( \int_0^1 F_\varepsilon ( \formalSeparatrixSing_n^- (\varepsilon;t) + \dt\,\xi_n(\varepsilon;t) ) \dd \dt \right) \xi_n(\varepsilon;t) + R_n(\varepsilon;t+1),
 \end{align*}
 which implies
 \begin{align*}
  \xi_n(\varepsilon;t+k+1) = \left( \int_0^1 F_\varepsilon ( \formalSeparatrixSing_n^- (\varepsilon;t+k) + \dt\,\xi_n(\varepsilon;t+k) ) \dd \dt \right) \xi_n(\varepsilon;t+k) + R_n(\varepsilon;t+k+1).
 \end{align*}
 We define
 \begin{align*}
  \delta_k :&= \left\| \xi_n(\varepsilon;t+k) \right\|_\infty , \\
  \alpha_k :&= \left\| \int_0^1 F_\varepsilon ( \formalSeparatrixSing_n^- (\varepsilon;t+k) + \dt\,\xi_n(\varepsilon;t+k) ) \dd \dt \right\|_\infty , \\
  \beta_k :&= \left\| R_n(\varepsilon;t+k+1) \right\|_\infty.
 \end{align*}
 Then again we have $\delta_{k+1} \les \alpha_k \delta_k+\beta_k$
 and
 \begin{align*}
  \delta_k\les \left( \prod_{i=1}^{n-1} \alpha_i \right)\delta_0 + \sum_{i=0}^{k-1}
  \left(\prod_{j=i+1}^{k-1} \alpha_j\right) \beta_i.
 \end{align*}
 From now on we assume that $t\in \mathcal{D}_1\cap \mathcal{D}_2$ and for such $t$ by Lemma \ref{thm_loc_lemma_D1} it holds
 \begin{align*}
  \left\|W^-(\varepsilon;t+\tfrac{\pi}{\varepsilon}\ii)- \formalSeparatrixSing_n^-(\varepsilon;t) \right\|_\infty
  \les \frac{C_1}{|t|^{n+1}}\les C_1 \varepsilon^{\frac{n+1}{2}}.
 \end{align*}
 Assume that for all $j < k$ it holds $\delta_j\les C_2 \varepsilon^{\frac{n-1}{2}}$ with a constant $C_2> 2\exp(2\pi+1)$. 
 Then using Lemma \ref{thm_jacobian_D2_bound} with $\delta_j$ being $Q$ we get that
 \begin{align*}
  \alpha_j \les 1+\frac{2}{|t+j|}+ C_{1,1} \varepsilon + \frac{C_{1,2}}{|t+j|^2}
 \end{align*}
 and
 \begin{align*}
  \delta_k& \les \left( \prod_{i=1}^{k-1} \alpha_i \right)\delta_0 + \sum_{i=0}^{k-1} \left(\prod_{j=i+1}^{k-1} \alpha_j\right) \beta_i \\
  & \les C\frac{|t|^2}{|t+k|^2} \frac{C_1}{|t|^{n+1}}, + \sum_{i=0}^{k-1} C \frac{|t+j+1|^2}{|t+k|^2} C_\beta \varepsilon^{n+1} |t+j+1|^{n-1} \\
  & \les \frac{C_1 C}{|t+k|^2|t|^{n-1}} +  \frac{C_\beta C \varepsilon^{n+1}}{|t+k|^2 }\sum_{i=0}^{k-1}|t+j+1|^{n+1} \\
  & \les \frac{C_0 C}{|t+k|^{2}} \varepsilon^{\frac{n-1}{2}} + \frac{C_\beta C \varepsilon^{n+1} }{|t+k|^2} |t+k|^{n+2}\\
  & \les \frac{C_0 C}{|t+k|^{2}} \varepsilon^{\frac{n-1}{2}} + C_\beta C \varepsilon^{n+1}  |t+k|^{n+1}\\
  & \les \frac{C_0 C}{|t+k|^{2}} \varepsilon^{\frac{n-1}{2}} + C_\beta C \varepsilon^{n+1}  \left( \frac{\sqrt{2}}{\sqrt{\varepsilon}} \right)^{n+1}\\
  & \les \frac{C_0 C}{|t+k|^{2}} \varepsilon^{\frac{n-1}{2}} + 2^{\frac{n+1}{2}} C_\beta C \varepsilon^{\frac{n+1}{2}} \\
  & \les C \left( \frac{C_0}{|t+k|^{2}}  + 2^{\frac{n+1}{2}} C_\beta \varepsilon \right) \varepsilon^{\frac{n-1}{2}} \\
  & \les C\left( \frac{C_0}{\Lambda^{2}}  + 2^{\frac{n+1}{2}} \frac{C_\beta}{\Lambda^{2}} \right) \varepsilon^{\frac{n-1}{2}} 
 \end{align*}
 Similarly to the previous proof we can choose $\Lambda$ big enough to get $\delta_{k}\les C_2 \varepsilon^{\frac{n-1}{2}}$.
 Then by induction we get the result.
 To extend the bound to the whole $\mathcal{D}_2$ we need to apply the same technique for $\mbox{R}$
 more steps which changes only the constants.
\myqed
\end{proof}

Using the inverse map we get a similar result for the stable separatrix.

\section{Variational equations}
\label{ch_var_eqs}

There are two variational equations that are important in this analysis. In this section we
will show that the solutions of both can be approximated by the same formal series. 

\subsection{Linear difference equations in a rectangular domain}

We consider rectangular symmetric domains around the origin, i.e. $D=\{z\in\C : |\re(z)|\les \alpha,
|\im(z)|\les \beta \} $ for some $\alpha,\beta>1$. Let $\mathcal{O}(D)$ be the space
of functions analytic in the interior of $D$ and continuous at its boundary equipped
with the supremum norm over $D$.  

Let $g\in\mathcal{O}(D)$. We will examine the equation
\begin{align}
 X(z+1)-X(z)=g(z).
 \label{eq_lin_diff_eq_model}
\end{align}
We define the operator
\begin{align*}
 \mathcal{S}: X(z) \mapsto X(z+1)-X(z).
\end{align*}
To solve the equation \eqref{eq_lin_diff_eq_model} we need to invert the operator $\mathcal{S}$.
We can construct the following two formal solutions
\begin{align*}
 S^+[g](z):&= - \sum_{n\ges0} g(z+n) \\
 \text{and }\;\quad S^-[g](z):&= \sum_{n\ges1} g(z-n).
\end{align*}

Since $g$ is defined in a compact set around the origin, the above solutions have no analytic
meaning unless $g$ can be extended beyond its initial domain of definition. Towards this end
we have the following lemma.

\begin{lemma}[\cite{Gel99}]
 Let $h\in\mathcal{O}(D)$, $\chi$ be a Lipschitz continuous function of $\partial \mathcal{D}$ and
 \begin{align*}
  J_h=\frac{1}{2\pi}\int_{\partial \mathcal{D}} |h(\zeta)| |\dd \zeta| < \infty.
 \end{align*}
 
 Then the integral
 \begin{align*}
  H(z)=\frac{1}{2\pi \ii}\int_{\partial \mathcal{D}} \frac{h(\zeta) \chi(\zeta)}{\zeta-z} \dd \zeta
 \end{align*}
 defines two functions $H_{\text{int}}$ and $H_{\text{ext}}$ in the interior and the exterior
 of $\mathcal{D}$ respectively. Both functions admit continuous extensions onto the closure of
 their respective domains and
 \begin{align*}
  |H_{\text{int,ext}}|\les  (J_h+\|h\|_\infty) \|\chi\|_{\text{Lip}}.
 \end{align*}
 If $\text{supp}(\chi)\ne \partial \mathcal{D}$ then $H_{\text{int}}$ and $H_{\text{ext}}$
 define a single analytic function on $\C\backslash \text{supp}(\chi)$.
 
 Moreover let $\mathcal{D}$ be contained in a square of side length $R$. Then
 \begin{align*}
  |H_{\text{int,ext}}|\les  C\, \log(R)\, \|h\|_\infty\, \|\chi\|_{\text{Lip}}
 \end{align*}
 for some $C>0$.
\end{lemma}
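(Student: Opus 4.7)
My plan is to prove this by the standard technique for Cauchy-type integrals with a Lipschitz density, the key tool being the Cauchy identity $\frac{1}{2\pi i}\int_{\partial\mathcal{D}}\frac{d\zeta}{\zeta-z}$ equals $1$ for $z$ in the interior and $0$ for $z$ in the exterior. This lets us subtract off the singular part of the kernel, at the cost of a term controlled by the Lipschitz constant of $\chi$.

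First I would establish the pointwise estimate for $H_{\mathrm{int}}$. Fix $z$ in the interior of $\mathcal{D}$ and let $z_\ast\in\partial\mathcal{D}$ be a nearest boundary point; by the triangle inequality, $|\zeta-z_\ast|\les 2|\zeta-z|$ for all $\zeta\in\partial\mathcal{D}$. Write
$$
H(z)=\frac{1}{2\pi\ii}\int_{\partial\mathcal{D}}\frac{h(\zeta)\bigl[\chi(\zeta)-\chi(z_\ast)\bigr]}{\zeta-z}\,\dd\zeta\;+\;\chi(z_\ast)\cdot\frac{1}{2\pi\ii}\int_{\partial\mathcal{D}}\frac{h(\zeta)}{\zeta-z}\,\dd\zeta.
$$
The second term equals $\chi(z_\ast)h(z)$ by Cauchy's formula applied to $h\in\mathcal{O}(D)$, so it is bounded by $\|\chi\|_\infty\|h\|_\infty$. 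In the first term, the Lipschitz bound $|\chi(\zeta)-\chi(z_\ast)|\les \|\chi\|_{\mathrm{Lip}}|\zeta-z_\ast|\les 2\|\chi\|_{\mathrm{Lip}}|\zeta-z|$ cancels the singularity and leaves an integrand of absolute value $\les 2\|\chi\|_{\mathrm{Lip}}|h(\zeta)|$, producing a contribution $\les 2J_h\|\chi\|_{\mathrm{Lip}}$. For $H_{\mathrm{ext}}$ the same decomposition works, except the Cauchy-formula term now vanishes because $z$ lies outside $\mathcal{D}$; only the Lipschitz piece survives, with the same bound.

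Next I would deal with the continuous boundary extension. The Lipschitz splitting above is uniform as $z$ approaches $\partial\mathcal{D}$ from either side: the first integral has a non-singular integrand in the limit, and the Cauchy-formula term has value $\chi(z_\ast)h(z_\ast)$ on the boundary. So $H_{\mathrm{int}}$ and $H_{\mathrm{ext}}$ extend continuously to $\overline{\mathcal{D}}$ and its exterior respectively, with a jump of magnitude $|h\chi|$ across $\partial\mathcal{D}$ (Plemelj--Sokhotski). In particular the uniform bound persists on the closures. If $\mathrm{supp}(\chi)\ne\partial\mathcal{D}$, then at any boundary point $\zeta_0$ where $\chi$ vanishes on a neighborhood the two one-sided limits coincide (both equal the principal-value integral, which is there nonsingular because $\chi\equiv 0$ near $\zeta_0$), so $H_{\mathrm{int}}\cup H_{\mathrm{ext}}$ defines a continuous function on $\mathbb{C}\setminus\mathrm{supp}(\chi)$ which is analytic off $\partial\mathcal{D}$; Morera's theorem gives the analytic extension across $\partial\mathcal{D}\setminus\mathrm{supp}(\chi)$.

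Finally, the refined estimate in terms of $\log R$ sharpens the $J_h$ factor when $\mathcal{D}$ fits in a square of side $R$. Here I would not use $J_h\les \tfrac{2R}{\pi}\|h\|_\infty$ directly (which gives a linear-in-$R$ bound) but estimate the Lipschitz term as
$$
\left|\frac{1}{2\pi\ii}\int_{\partial\mathcal{D}}\frac{h(\zeta)[\chi(\zeta)-\chi(z_\ast)]}{\zeta-z}\dd\zeta\right|\les \frac{\|h\|_\infty\|\chi\|_{\mathrm{Lip}}}{\pi}\int_{\partial\mathcal{D}}\frac{|\zeta-z_\ast|}{|\zeta-z|}\frac{|\dd\zeta|}{|\zeta-z_\ast|+1},
$$
splitting the arc-length integral into the part near $z_\ast$ (which is $O(1)$ by the cancellation of $|\zeta-z_\ast|$) and the far part, where the integrand decays like $1/|\zeta-z_\ast|$ over a domain of diameter $\les R$, producing the logarithmic factor $\log R$. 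The main technical obstacle is this boundary continuity and the corresponding uniformity of constants when $\partial\mathcal{D}$ is merely piecewise smooth, which is where the Lipschitz assumption on $\chi$ (as opposed to just continuity) is essential: without it the subtraction trick would fail to cancel the Cauchy kernel.
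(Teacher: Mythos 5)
The paper does not prove this lemma; it is cited directly from \cite{Gel99}, so there is no internal proof to compare against. Evaluating your argument on its own merits: the approach (subtract $\chi(z_\ast)$ at the nearest boundary point, invoke Cauchy's formula for the constant part, use the Lipschitz cancellation on the remainder) is the standard and correct technique for this class of Cauchy-type estimates, and your near/far splitting to obtain the $\log R$ refinement is the right idea.

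Two small points deserve attention. First, your pointwise estimate actually delivers $2 J_h\|\chi\|_{\mathrm{Lip}}+\|\chi\|_\infty\|h\|_\infty$ (the inequality $|\zeta-z_\ast|\les 2|\zeta-z|$ produces the factor $2$), so to match the stated bound one must interpret $\|\chi\|_{\mathrm{Lip}}$ as the full Lipschitz norm (supremum plus Lipschitz constant) and absorb the factor $2$ into an implicit constant; this is consistent with how the lemma is used in the paper, where only the order of magnitude matters, but it would be cleaner to state explicitly. Second, the boundary-continuity step is genuinely more delicate than your sketch acknowledges: the nearest boundary point $z_\ast$ moves with $z$, so passing to the limit $z\to\zeta_0\in\partial\mathcal{D}$ in the Lipschitz integral requires a dominated-convergence argument with a $z$-uniform majorant (which you do have, thanks to the $2\|\chi\|_{\mathrm{Lip}}|h(\zeta)|$ bound), and one should verify that the resulting one-sided limit is independent of the direction of approach. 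The argument goes through, but these are exactly the places where a full write-up would need to be careful. Your handling of the case $\mathrm{supp}(\chi)\ne\partial\mathcal{D}$ via Morera is fine, though it is actually more elementary to observe that for $z$ near a boundary point off $\mathrm{supp}(\chi)$ the integrand is analytic in $z$ (the kernel never becomes singular on the support of $\chi$), so $H$ extends holomorphically there with no need to glue one-sided limits.
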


We define the function $\chi^+:\partial D\to [0,1]$ to be Lipschitz continuous.
We also ask that $\chi^+$ has the value 1 on $\partial D \cap \{z\in\C : \re(z)<-\alpha/2 \}$
and $\chi^+$ has the value 0 on $\partial D \cap \{z\in\C : \re(z)>\alpha/2 \}$.
We also define $\chi^-(z) = 1- \chi^+(z)$ and let $L>0$ be such that
$\|\chi^+\|_{\text{Lip}},\|\chi^-\|_{\text{Lip}}\les  L $.
We define
\begin{align*}
 h^\pm(z)=\frac{1}{2\pi \ii}\int_{\partial \mathcal{D}} \frac{h(\zeta) \chi^\pm(\zeta)}{\zeta-z} \dd \zeta.
\end{align*}
The functions $h^+$ and $h^-$ are analytic on $\C\backslash \text{supp}(\chi^+)$ and $\C\backslash
\text{supp}(\chi^-)$ respectively and $h^+(z)+h^-(z)=h(z)$ when $z\in\mathring{D}$ because
of the Cauchy integral. 

With these we define
\begin{align*}
 \mathcal{S}^{-1}[h](z) = S[h](z) := \sum_{n\ges1} h^-(z-n) - \sum_{n\ges0} h^+(z+n).
\end{align*}
This solves the equation $X(z+1)-X(z)=h(z)$ if both sums are convergent.

In order to generalize this method we need to introduce a weigh function. Let
$\phi_a(z)=\me^{a z}+\me^{-a z}$ for some $a>0$ and we denote $\|\phi_a\|_D=\sup_{z\in D}|\phi_a(z)|$.
Then we repeat the above construction with $h(z)=\phi_a(z)\,g(z)$. We define
\begin{align*}
 g_a^\pm(z)=\frac{1}{2\pi \ii\,\phi_a(z)}\int_{\partial \mathcal{D}} \frac{\phi_a(\zeta) h(\zeta) \chi^\pm(\zeta)}{\zeta-z} \dd \zeta.
\end{align*}
By definition we have again $g_a^+(z)+g_a^-(z)=h(z)$ when $z\in\mathring{D}$. So we finally define
\begin{align*}
 S_a[g](z) := \sum_{n\ges1} g_a^-(z-n) - \sum_{n\ges0} g_a^+(z+n).
\end{align*}

\begin{lemma} 
 Let $h\in\mathcal{O}(D)$, $a\ges\frac{\pi}{3 \beta}$ and $r=\max\{ 2\alpha, 2\beta \}$.
 Then $ S_a:\mathcal{O}(D)\to \mathcal{O}(D)$ and
 \begin{align*}
  \|S_a\| \les C \, L \, (1+a^{-1}) \log(r) \|\phi_a\|_D
 \end{align*}
 for some $C>0$
 and $S_a[g]$ is a solution of equation \eqref{eq_lin_diff_eq_model}.
 \label{thm_lin_diff_eq_sol}
\end{lemma}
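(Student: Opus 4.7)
The strategy is to (i) apply the preceding Cauchy-integral lemma to bound the individual terms $g_a^\pm$, (ii) sum those bounds using a geometric-type estimate on $1/|\phi_a|$ along horizontal shifts, and (iii) verify the difference equation by a shift of summation indices.

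First, I would apply the preceding lemma to the boundary datum $\phi_a g\in\mathcal O(D)$ together with the cut-offs $\chi^\pm$. Since $\|\phi_a g\|_{\partial D}\les\|\phi_a\|_D\|g\|_D$, $\|\chi^\pm\|_{\mathrm{Lip}}\les L$, and $D$ is contained in a square of side $r$, the lemma produces
\[
|\phi_a(z)\,g_a^\pm(z)|\les C\,L\,\log(r)\,\|\phi_a\|_D\,\|g\|_D
\]
for every $z$ outside $\mathrm{supp}(\chi^\pm)$, and dividing by $|\phi_a(z)|$ yields the analogous bound on $|g_a^\pm(z)|$ with the weight in the denominator.

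Second, I would sum the translated terms. Using $|\phi_a(x+\ii y)|^2=4(\cosh^2(ax)-\sin^2(ay))$ one obtains a lower bound $|\phi_a(z\pm n)|\gtrsim \me^{a|\re(z\pm n)|}$ as soon as $|\re(z\pm n)|$ is sufficiently large, so that the tails of both series are geometric with ratio $\me^{-a}$. The hypothesis $a\ges\pi/(3\beta)$ is used to guarantee that on the supports of $\chi^\pm$ the weight $\phi_a$ stays comparable (up to a universal constant) with $\|\phi_a\|_D$, keeping the constants in the geometric sum benign. Combining gives
\[
\sum_{n\ges 1}\frac{1}{|\phi_a(z-n)|}+\sum_{n\ges 0}\frac{1}{|\phi_a(z+n)|}\les\frac{C'}{1-\me^{-a}}\les C''\Bigl(1+\frac1a\Bigr),
\]
which together with Step~1 produces $\|S_a\|\les C\,L\,(1+a^{-1})\,\log(r)\,\|\phi_a\|_D$. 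The uniform convergence of the defining series on $D$ also implies $S_a[g]\in\mathcal O(D)$.

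Third, a simple reindexing in $S_a[g](z+1)$ collapses both sums, leaving only the boundary terms, and gives $S_a[g](z+1)-S_a[g](z)=g_a^-(z)+g_a^+(z)$. Using $\chi^++\chi^-\equiv 1$ on $\partial D$ together with Cauchy's formula applied to the analytic function $\phi_a g$ shows that this equals $\phi_a(z)g(z)/\phi_a(z)=g(z)$, so $S_a[g]$ solves \eqref{eq_lin_diff_eq_model}. The main obstacle, and the reason for the hypothesis $a\ges\pi/(3\beta)$, is the uniform control of the shifts by $1/|\phi_a|$: because $\phi_a$ may vanish at $\pm\ii\pi/(2a)$ which can lie in $D$ when $a$ is moderately large, I need to treat a bounded block of initial terms directly (using the pointwise bound on $\phi_a(z\pm n)g_a^\pm(z\pm n)$) separately from the geometric tails, and the stated lower bound on $a$ is precisely what makes this split produce the clean $(1+a^{-1})$ dependence.
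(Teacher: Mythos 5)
Your proof follows the same three-step outline as the paper's: apply the preceding Cauchy-integral lemma to bound $g_a^\pm$ pointwise, sum the horizontal translates, and verify the difference equation by reindexing and Cauchy's formula. The paper's own proof is terse — the verification is declared ``trivial to check'' and the summation step is compressed into the assertion that $|\phi_a(z)^{-1}|$ stays bounded by $1$ on $D$ because $a\ges\pi/(3\beta)$, followed by the bound $\int_0^\infty\me^{-as}\dd s$. You expand all three steps, and your Step 3 is correct and complete.

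On the summation step you correctly sense that the paper's ``$|\phi_a^{-1}|\les 1$'' is suspect. Indeed $\min_{z\in D}|\phi_a(z)|=2|\cos(a\beta)|$, so $|\phi_a^{-1}|\les 1$ would follow from $a\les\pi/(3\beta)$ — the \emph{opposite} of the stated hypothesis — and in the paper's own application ($a=\varepsilon/2$, $\beta\approx\pi/\varepsilon$, hence $a\beta\approx\pi/2$) one has $|\phi_a^{-1}|=O(1/a)$ near $\pm\ii\beta$. Your proposal to peel off a finite initial block and estimate it directly is the right patch, but two pieces of your explanation need repair. First, the claim that $a\ges\pi/(3\beta)$ makes $\phi_a$ ``comparable to $\|\phi_a\|_D$'' on the supports of $\chi^\pm$ is not right for the same reason: $\cos(a\beta)$ can be arbitrarily small under that hypothesis while $\|\phi_a\|_D=2\cosh(a\alpha)$ is not. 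Second, for the intermediate terms (those $n$ with $1\lesssim|\re(z\pm n)|\lesssim 1/a$) you must use the $\sinh^2(a\,\re)$ contribution to $|\phi_a|^2=4(\sinh^2(a x)+\cos^2(a y))$ rather than the $\cos^2(a y)$ term; if you bound \emph{all} $\sim 1/a$ initial terms by $|\cos(a\beta)|^{-1}=O(1/a)$ alone, the block contributes $O(1/a^2)$, not $O(1/a)$. With the $\sinh$ bound the block contributes at most $O\big(a^{-1}\log(1/a)\big)$, an extra logarithm which the paper's formula does not display but which is harmless in the only place the lemma is used (with $a=\varepsilon/2$ and a slack bound $\|S_a\|\les c'\varepsilon^{-2}$). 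Finally, neither you nor the paper quite closes the point you raise at the end: the stated hypothesis $a\ges\pi/(3\beta)$ does not by itself exclude $a\ges\pi/(2\beta)$, in which case $\phi_a$ vanishes in $D$, $g_a^\pm$ acquire poles, and $S_a:\mathcal O(D)\to\mathcal O(D)$ fails — an implicit upper bound $a<\pi/(2\beta)$ is being used.

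In summary: same approach as the paper, more careful on Steps 1 and 3, and your instinct about the weak point in Step 2 is correct, though the details of how $a\ges\pi/(3\beta)$ enters need to be reworked along the lines above.
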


\begin{proof}
 It is trivial to check that formally $S_a[g]$ is a solution, so we only need to check
 that the sums converge and get the bound for the norm. For $z\in\mathring{D}$ and by
 the previous lemma we have
 \begin{align*}
  |S_a[g](z)| &\les \left| \sum_{n\ges1} g_a^-(z-n) \right| + \left| \sum_{n\ges0} g_a^+(z+n) \right| \\
  &\les   C\,L\, \log(r)\, \|\phi_a\|_D \, \|g\|_\infty \left(\left|\frac{1}{\phi_a(z)} \right|
  + \sum_{n\ges1} \left|\frac{1}{\phi_a(z-n)}  \right| + \sum_{n\ges1}  \left|\frac{1}{\phi_a(z+n)} \right| \right).
 \end{align*}
 Because $a\ges\frac{\pi}{3 \beta}$, $z$ stays far enough from the roots of $\phi_a$ so that
 $\phi_a(z)^{-1}$ stays bounded by $1$. Then both sums can be bounded by some
 constant times the integral $\int_0^\infty\me^{-a s}\dd s$ and from this the result follows.
\myqed
\end{proof}

\subsection{Approximation of fundamental solutions}
\label{ch_approx_of_fund_sols}

The first difference equation we are interested in is the one that the difference of the separatrices satisfies.
For $\delta(\varepsilon;\tau) = W^+(\varepsilon;\tau) - W^-(\varepsilon;\tau)$ and we have
\begin{align*}
 \delta(\varepsilon;\tau+1) &= W^+(\varepsilon;\tau+1)-W^-(\varepsilon;\tau+1) \\
 &= F_\varepsilon(W^+(\varepsilon;\tau)) - F_\varepsilon(W^-(\varepsilon;\tau)) \\
 &= \left( \int_0^1 F_\varepsilon'\left( s\,W^+(\varepsilon;\tau) + (1-s)\, W^-(\varepsilon;\tau)\dd s \right)  \right) \left( W^+(\varepsilon;\tau)-W^-(\varepsilon;\tau) \right) \\
 &= \left( \int_0^1 F_\varepsilon'\left( W^-(\varepsilon;\tau) + s\,\delta(\varepsilon;\tau)  \right) \dd s  \right) \delta(\varepsilon;\tau) ,
\end{align*}
so we write
\begin{align*}
 \delta(\varepsilon;\tau+1) &= A(\varepsilon;\tau)\, \delta(\varepsilon;\tau),
\end{align*}
with $A(\varepsilon;\tau)=\int_0^1 F_\varepsilon'\left( s\,W^+(\varepsilon;\tau) + (1-s)\, W^-(\varepsilon;\tau) \right) \dd s$.
We denote by $U(\varepsilon;\tau)$ the fundamental solution of this equation, i.e. a $2\times2 $ matrix that satisfies
\begin{align}
 U(\varepsilon;\tau+1) = A(\varepsilon;\tau)\cdot U(\varepsilon;\tau),
 \label{eq_variational_delta}
\end{align}
 $\det U(\varepsilon;\tau)=1$ and $ U \cdot (\begin{smallmatrix} 1 \\ 0 \end{smallmatrix} ) = \delta$.

For the second variational equation we define $D(\varepsilon;\tau)=
F_\varepsilon'\left(W^-(\varepsilon;\tau) \right)$ and we denote by
$V(\varepsilon;\tau)=(\Xi(\varepsilon;\tau),\dot{W}^-(\varepsilon;\tau))$ the fundamental solution, 
a $2\times2 $ matrix that satisfies
\begin{align}
 V(\varepsilon;\tau+1) = D(\varepsilon;\tau)\cdot V(\varepsilon;\tau)
 \label{eq_variational_Wminus}
\end{align}
$\det V(\varepsilon;\tau)=1$ and $ V \cdot  (\begin{smallmatrix} 0 \\ 1 \end{smallmatrix} ) = \dot{W}^-$.

The goal of this section is to prove that $U$ exists and can be
approximated by the same function as $V$ with errors that are of the same order.
To this end we denote by $R$ the $2\times 2$ matrix which satisfies
$$A(\varepsilon;\tau)=D(\varepsilon;\tau)+R(\varepsilon;\tau)$$
and we define
$$U(\varepsilon;\tau)=V(\varepsilon;\tau)(I+Q(\varepsilon;\tau))$$
for some $2\times 2$ matrix $Q$ such that $U$ is a fundamental solution of \eqref{eq_variational_delta}.
Then we have
\begin{align*}
 U(\varepsilon;\tau+1)&= V(\varepsilon;\tau+1) ( I+Q(\varepsilon;\tau+1) )\\
 &= D(\varepsilon;\tau) V(\varepsilon;\tau) ( I + Q(\varepsilon;\tau+1) ),\\
 A(\varepsilon;\tau) U(\varepsilon;\tau) &= D(\varepsilon;\tau) U(\varepsilon;\tau) + R(\varepsilon;\tau) U(\varepsilon;\tau) \\
 &= D(\varepsilon;\tau) V(\varepsilon;\tau)(I+Q(\varepsilon;\tau)) + R(\varepsilon;\tau)V(\varepsilon;\tau)(I+Q(\varepsilon;\tau)).
\end{align*}
Here $I$ denotes the identity matrix.
From these we get the equation
\begin{align}
 Q(\varepsilon;\tau+1) - Q(\varepsilon;\tau)= V^{-1}(\varepsilon;\tau)\cdot D^{-1}(\varepsilon;\tau)\cdot R(\varepsilon;\tau) \cdot V(\varepsilon;\tau)(I+Q(\varepsilon;\tau)).
 \label{eq_Q_local_eq}
\end{align}

\begin{definition}
 We define the domains
 \begin{align*}
  \mathcal{M}_0 :&= \left\{ \tau\in\C : |\re(\tau)|\les2, |\im(\tau)|\les \tfrac{\pi}{\varepsilon} - \tfrac{1}{\sqrt{\varepsilon}} \right\}, \\
  \mathcal{M}^\pm :&= \left\{ \tau\in\C : |\re(\tau)|\les2,  \tfrac{\pi}{\varepsilon} - \tfrac{1}{\sqrt{\varepsilon}} \les \pm\im(\tau) \les \tfrac{\pi}{\varepsilon} -\Lambda \right\},  \\
  \mathcal{M}:&= \mathcal{M}_0 \cup \mathcal{M}^+ \cup \mathcal{M}^-.
 \end{align*}
\end{definition}

\begin{definition}
 Let $M\in\C^\omega(\mathcal{M})^{2\times2}$. Then we define
 $$\|M\|_{\sup} = \max_{i,j\in\{1,2\}} \sup_{t\in\mathcal{M}} |M_{ij}(t)|.$$
\end{definition}

\begin{lemma}
 Let $n > 8$ and let $F_\varepsilon$ agree with the normal form up to order $n$.
 Then there exists $C_V>0$ such that
 $$\|V\|_{\sup}=\frac{C_V}{\varepsilon^4}\left( 1 + O(\varepsilon^{1/2}) \right).$$
 \label{thm_var_eq_sol_V_bound}
\end{lemma}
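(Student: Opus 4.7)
The plan is to show that the analytic fundamental solution $V$ is uniformly well-approximated on $\mathcal{M}$ by the formal matrix solution $\Vtildesep$ constructed in Section \ref{ch_form_sols}, and then to read off the sup norm directly from the explicit structure of $\Vtildesep$. The approach parallels the complex-matching analysis already performed for the separatrices $W^\pm$ in Section \ref{ch_form_sols_approximate}.

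First I would establish a matrix-valued analogue of Lemma \ref{thm_local_lemma_matching}. Writing $V(\tau) = \Vtildesep_n(\tau) + \Delta(\tau)$, with $\Vtildesep_n$ the truncation of the formal fundamental solution (in flow coordinates on $\mathcal{M}_0$ and in the singularity coordinates $t = \tau - \pi\ii/\varepsilon$ on $\mathcal{M}^\pm$), the error $\Delta$ satisfies
\[
 \Delta(\tau+1) = D(\tau)\,\Delta(\tau) + \mathcal{R}_n(\tau),
\]
where $\mathcal{R}_n$ is controlled by a combination of the truncation residual of the formal series and the separatrix-approximation errors from Lemma \ref{thm_local_lemma_matching} and Theorem \ref{th-vassili_approx}. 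Iterating this equation in the same style as the proof of Lemma \ref{thm_local_lemma_matching}, and using Lemma \ref{thm_prod_bound} to bound $\prod_k \|D(\tau+k)\|_\infty$, one obtains $\|\Delta\|_{\sup} = O(\varepsilon^{(n-8)/2})$ once the growth of $\Vtildesep$ along the iteration is incorporated into the product estimate. Initial data is fixed at a base point of each subregion, and compatibility on the overlap $|\im \tau| = \pi/\varepsilon - 1/\sqrt{\varepsilon}$ is inherited from the matching already carried out for $W^\pm$.

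Second, I would compute $\|\Vtildesep\|_{\sup}$ directly. The second column of $\Vtildesep$ is $\dot{\formalFlowSing}$, which is uniformly small on $\mathcal{M}$. For the first column, the construction in Section \ref{ch_form_sols} gives $\tilde\Xi_{(2)} = C\,\tilde Z_{(2)}$ and $\tilde\Xi_{(1)} = 1/\tilde Z_{(2)} + C\,\tilde Z_{(1)}$, where $C$ is the antiderivative of $-\tilde H_{xx}/\tilde Z_{(2)}^2$. Substituting the leading orders from Lemma \ref{thm_formal_flow_existence} and Proposition \ref{thm_normal_form} gives $\tilde H_{xx} = O(\varepsilon)$ and $\tilde Z_{(2)} = O(\varepsilon^2(1-\sigma^2))$, so the integrand is $O(\varepsilon^{-3}(1-\sigma^2)^{-2})$; integrating along a $\tau$-path in $\mathcal{M}_0$ of length up to $O(\varepsilon^{-1})$ produces $|C| = O(\varepsilon^{-4})$, which propagates into an entry of $\Vtildesep$ of magnitude $C_V/\varepsilon^4$ and so fixes the leading constant. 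On $\mathcal{M}^\pm$, the structural bound $\tilde\xi_n \in t^{n+2}\C^2[[t^{-1}]]$ together with $|t| \les 1/\sqrt{\varepsilon}$ bounds the entries by $O(\varepsilon^{-1})$, so the global supremum is attained on $\mathcal{M}_0$.

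The hard part will be the quantitative iteration argument in the first step. Because $\Vtildesep$ itself reaches magnitude $\varepsilon^{-4}$, a relative error of $O(\varepsilon^{1/2})$ in $V$ requires an absolute error of $O(\varepsilon^{-7/2})$, and this is precisely where the hypothesis $n > 8$ enters: only then does the residual $\mathcal{R}_n = O(\varepsilon^{n/2})$ beat the $\varepsilon^{-4}$ growth by the margin $\varepsilon^{1/2}$ needed for the stated asymptotics. A secondary technicality is the compatibility of the flow-coordinate and singularity-coordinate expansions on the interface $|t| \sim 1/\sqrt{\varepsilon}$, which is inherited from the matching already performed for $W^\pm$ in Section \ref{ch_form_sols_approximate}.
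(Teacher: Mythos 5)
Your proposal takes a genuinely different route from the paper's. The paper never establishes a matching between the analytic fundamental solution $V$ and the formal fundamental solution $\Vtildesep$, and it never estimates the formal object. Instead it works directly with $V=(\Xi,\dot W^-)$: the constraint $\det V=1$ is used to eliminate $\xi_1$, the variational equation then collapses to a scalar first-order difference equation for $\xi_2$, and variation of parameters $\xi_2=C\,\zeta_2$ reduces that to $C(\tau+1)-C(\tau)=K(\tau)$ with $K=D_{21}/(\zeta_2(\tau+1)\zeta_2(\tau))$. The scalar equation is solved with the operator $S_a$ from Lemma~\ref{thm_lin_diff_eq_sol}, whose norm is $O(\varepsilon^{-2})$; the only other inputs are bounds on $\zeta_1,\zeta_2,D_{21}$ coming from the already-proved complex matching of $W^-$ (Lemma~\ref{thm_local_lemma_matching} and Theorem~\ref{th-vassili_approx}). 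This is considerably more economical than your plan: the reduction-of-order identity you compute for the formal Wronskian ($\tilde\Xi_{(2)}=C\,\tilde Z_{(2)}$, $\tilde\Xi_{(1)}=1/\tilde Z_{(2)}+C\,\tilde Z_{(1)}$) is exactly the structural observation the paper exploits, but the paper applies it to the analytic $V$ itself and thus avoids both of the steps you flag as hard.

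Those two steps are genuine gaps in your plan, not just technicalities. First, the matrix matching $V=\Vtildesep_n+\Delta$ with $\|\Delta\|_{\sup}$ small is not in the paper and does not follow from the scheme of Lemma~\ref{thm_local_lemma_matching}, because there the target is bounded while here $\Vtildesep$ itself has size $\varepsilon^{-4}$; you would need a \emph{relative}-error iteration, and the only place the paper controls a difference of fundamental solutions — Lemma~\ref{thm_var_eq_sol_pol_close}, bounding $U-V$ — uses precisely the estimate $\|V\|_{\sup}=O(\varepsilon^{-4})$ you are trying to prove, so borrowing that machinery would be circular. Second, your plan to ``read off $\|\Vtildesep\|_{\sup}$'' from its explicit structure presupposes that the formal series (in $\varepsilon$ and $\tanh(\varepsilon\tau/2)$ on $\mathcal M_0$, and in $\varepsilon$ and $t^{-1}$ on $\mathcal M^\pm$) has been given an analytic meaning and a remainder estimate on $\mathcal M$, which is not supplied and is not needed in the paper's argument: there, only the analytic quantities $\zeta_1$, $\zeta_2$, $D_{21}$ and $C=S_a[K]$ are estimated.
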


\begin{proof}
By writing $\Xi(\varepsilon;\tau)=(\xi_1(\varepsilon;\tau),\xi_2(\varepsilon;\tau))$
and $\dot{W}^-(\varepsilon;\tau)=(\zeta_1(\varepsilon;\tau),\zeta_2(\varepsilon;\tau))$ 
and using that $\det (\Xi(\varepsilon;\tau), \dot{W}^-(\varepsilon;\tau))=1$ we get 
$$ \xi_1(\varepsilon;\tau) = \frac{\zeta_1(\varepsilon;\tau)}{\zeta_2(\varepsilon;\tau)}
\xi_2(\varepsilon;\tau) + \frac{1}{\zeta_2(\varepsilon;\tau)}. $$
Substituting the above relation in equation \eqref{eq_variational_Wminus} we get the equation
\begin{align*}
 \xi_2(\varepsilon;\tau+1)= \left( D_{21}(\varepsilon;\tau) \frac{\zeta_1(\varepsilon;\tau)}{\zeta_2(\varepsilon;\tau)} + D_{22}(\varepsilon;\tau) \right)\xi_2(\varepsilon;\tau) + \frac{D_{21}(\varepsilon;\tau)}{\zeta_2(\varepsilon;\tau)}.
\end{align*}
Since $\zeta_2$ satisfies the homogeneous part of the above equation, we define
$\xi_2(\varepsilon;\tau)=C(\varepsilon;\tau) \, \zeta_2(\varepsilon;\tau)$ and by substitution we get 
\begin{align*}
 C(\varepsilon;\tau+1) - C(\varepsilon;\tau) = 
 \frac{D_{21}(\varepsilon;\tau)}{\zeta_2(\varepsilon;\tau+1)\,\zeta_2(\varepsilon;\tau)}=: K(\varepsilon;\tau).
\end{align*}

Combining the bounds we have for $\mathcal{D}_1$ and $\mathcal{D}_2$, for all $\tau\in\mathcal{M}_0 $ we 
have $ W^-(\varepsilon;\tau) = \formalSeparatrix_n(\varepsilon;\tau) + O(\varepsilon^{\tfrac{n+1}{2}})$.

We differentiate the approximation of $W^-(\varepsilon\tau)$ to get 
\begin{align*}
 \zeta_2(\varepsilon;\tau) &=  \frac{\varepsilon^2}{4 b_{0,0}}\text{sech}\left( \tfrac{\varepsilon \tau}{2} \right)
 + O(\varepsilon^3\,\tanh\left( \tfrac{\varepsilon\,\tau}{2} \right)^3), \\
 \zeta_1(\varepsilon;\tau) &= O(\varepsilon^3\,\tanh\left( \tfrac{\varepsilon\,\tau}{2} \right)^3) .
\end{align*}
This implies that in $\mathcal{M}_0$ the norm of $\dot{W}^-$ is bounded from below by
a constant independent of $\varepsilon$. We have
\begin{align*}
 |\zeta_2(\varepsilon;\tau)^{-1}| = \frac{C_0'}{\varepsilon^{2}}\left(1 + O(\varepsilon) \right)
\end{align*}
for some $C_0'>0$.
In order to bound $|\zeta_2(\varepsilon;\tau+1)|$ from below we repeat the above process
for $\tau\in\mathcal{M}_0+1$ and we see that only the constant changes, i.e.
\begin{align*}
 |\zeta_2(\varepsilon;\tau+1)^{-1}| = \frac{C_0''}{\varepsilon^2}\left(1 + O(\varepsilon) \right)
\end{align*}
for some $C_0''>0$.

To get a bound for $D_{21}(\varepsilon;\tau)$, we check that $\left[ F_\varepsilon'(x,y)
\right]_{21}= -2a_{0,1} \mu_1\varepsilon - 2b_{0,0} x +2 b_{0,0}^2 x y + O_3(\varepsilon,x,y)$ so
\begin{align*}
 |D_{21}(\varepsilon;\tau)|=C'''_0\varepsilon + O(\varepsilon^2\,\tanh\left( \tfrac{\varepsilon\,\tau}{2} \right))
\end{align*}
for some $C'''_0>0$ and since for any $\tau\in\mathcal{M}_0$ we have $\varepsilon \tanh\left(
\tfrac{\varepsilon\,\tau}{2} \right)=O(\varepsilon^{1/2})$ we have
\begin{align*}
 |K(\varepsilon;\tau)| = \frac{C_0}{\varepsilon^3}\left( 1 + O(\varepsilon^{1/2}) \right)
\end{align*}
for some $C_0>0$. 

For $\tau\in\mathcal{M}^+$ we need to use $\formalSeparatrixSing^-$ to get a bound.
From the bound in $\mathcal{D}_2$ we have $ W^-(\varepsilon;\tau) = \formalSeparatrixSing_n^-(\varepsilon;t)
+ O(\varepsilon^{\tfrac{n-1}{2}})$.

Recall that $\tau=t+\pi\ii/\varepsilon$, $W^-_0(t)=(0,-(b_{0,0}\,t)^{-1})+O(t^{-3})$ and $W^-_n(t)=O(t^{n-1})$.
Thus $\dot{W}^-_0(t)=(0,b_{0,0}^{-1}\,t^{-2})+O(t^{-4})$, $\dot{W}^-_1(t)=O(t^{-2})$ and $\dot{W}^-_n(t)=O(t^{n-2})$. 

For $\tau\in \mathcal{M}^+$, $|t|$ is bounded from above by $\varepsilon^{-\frac{1}{2}}$ and from below by $\Lambda$. So in order to bound $\zeta_2$ from below we need to estimate it for $\im(\tau)\approx \tfrac{\pi}{\varepsilon} - \tfrac{1}{\sqrt{\varepsilon}}$. In this region we get
\begin{align*}
 \dot{W}^-_0(t) = O(\varepsilon) \text{, } \dot{W}^-_1(t)= O(\varepsilon) \text{ and } \dot{W}^-_n(t) = O(\varepsilon^{\tfrac{n}{2}-1}) .
\end{align*}
Using these we get
\begin{align*}
 |\zeta_2(\varepsilon;\tau)^{-1}| = \frac{C_1'}{\varepsilon}\left(1 + O(\varepsilon^{1/2}) \right)
\end{align*}
for some $C_1'>0$. As above the same process on $\mathcal{M}^++1$ gives the same bound with
a different constant for $|\zeta_2(\varepsilon;\tau+1)^{-1}|$.

Finally, on $\mathcal{M}^+$ we have $|D_{21}(\varepsilon;\tau)|=C_1''(1+ O(\varepsilon))$ so we get
\begin{align*}
 |K(\varepsilon;\tau)| = \frac{C_1}{\varepsilon^2}\left( 1 + O(\varepsilon^{1/2}) \right)
\end{align*}
for some $C_1>0$. Due to the real symmetry we get exactly the same bounds on $\mathcal{M}^-$.

Now that we know that $K$ is bounded on $\mathcal{M}$ we can use Lemma \ref{thm_lin_diff_eq_sol}
to get the existence of $C$.
We set $a=\varepsilon/2$ and we have $r=2 \pi \varepsilon^{-1}$, so $\|S_a\|\les c' \varepsilon^{-2} $
and 
$| C(\varepsilon;\tau) |\les c''\, \varepsilon^{-5}$. From this we get that 
$$\xi_2(\varepsilon;\tau) = C(\varepsilon;\tau) \zeta_2(\varepsilon;\tau) = \frac{c_2}{\varepsilon^3}
\left( 1 + O(\varepsilon^{1/2}) \right)$$
and
\begin{align*}
\xi_1(\varepsilon;\tau) = \frac{1}{\zeta_2(\varepsilon;\tau)} + \frac{\xi_2(\varepsilon;\tau) \zeta_1(\varepsilon;\tau)}{\zeta_2(\varepsilon;\tau)} = \frac{c_1}{\varepsilon^4}\left( 1 + O(\varepsilon^{1/2}) \right). 
\end{align*} 
The maximum of these bounds gives the result.
\myqed
\end{proof}

\begin{lemma}
 Let $n\ges20$ and let $F_\varepsilon$ agree with the normal form up to order $n$.
 Then there exists $\varepsilon_0>0$ such that for all $0<\varepsilon\les\varepsilon_0$
 there exists a constant $C_Q>0$ such that $\left\|Q \right\|_{\sup} \les C_Q  \,
 \varepsilon^{\frac{n-19}{2}}\left( 1 + O(\varepsilon^{1/2}) \right)$ on $\mathcal{M}$.
 \label{thm_var_eq_sol_pol_close}
\end{lemma}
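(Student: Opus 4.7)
The plan is to solve equation \eqref{eq_Q_local_eq}, written as
\[
Q(\varepsilon;\tau+1) - Q(\varepsilon;\tau) \;=\; P(\varepsilon;\tau)\,\bigl(I + Q(\varepsilon;\tau)\bigr), \qquad P \;:=\; V^{-1} D^{-1} R\, V,
\]
by a Banach fixed-point argument. Applying the inverse difference operator $S_a$ from Lemma \ref{thm_lin_diff_eq_sol} with the choice $a = \varepsilon/2$ (as in the proof of Lemma \ref{thm_var_eq_sol_V_bound}) converts the equation into $Q = S_a[P(I+Q)]$. The map $T\colon Q \mapsto S_a[P(I+Q)]$ is affine in $Q$ with Lipschitz constant at most $\|S_a\|\cdot\|P\|_{\sup}$; once this product is less than $1/2$, $T$ is a contraction on $\mathcal{O}(\mathcal{M})^{2\times 2}$ and its unique fixed point satisfies $\|Q\|_{\sup} \les 2\,\|S_a[P]\|_{\sup}$.

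To estimate $\|P\|_{\sup}$ I would assemble four ingredients. From Lemma \ref{thm_var_eq_sol_V_bound}, $\|V\|_{\sup} \les C\varepsilon^{-4}$, and since $\det V = 1$ the cofactor formula in dimension two gives $\|V^{-1}\|_{\sup} \les C\varepsilon^{-4}$ as well. The matrix $D = F_\varepsilon'(W^-)$ is close to the identity with $\det D = 1$, so $\|D^{-1}\|_{\sup}$ is bounded by an $\varepsilon$-independent constant. Taylor expansion yields $R = A - D = \int_0^1\!\!\int_0^1 F_\varepsilon''(W^- + us\delta)\,\delta\, s\,du\,ds$, so $\|R\|_{\sup} \les C\|\delta\|_{\sup}$ with $\delta = W^+ - W^-$. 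Finally, Lemma \ref{thm_lin_diff_eq_sol} with $a = \varepsilon/2$ gives $\|S_a\| \les C\varepsilon^{-2}$, exactly as in the proof of Lemma \ref{thm_var_eq_sol_V_bound}. Combining these, $\|Q\|_{\sup} \les C\,\varepsilon^{-10}\,\|\delta\|_{\sup}$, so the target bound reduces to showing $\|\delta\|_{\sup} \les C\varepsilon^{(n+1)/2}$ on all of $\mathcal{M}$.

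I would prove the estimate on $\|\delta\|$ by splitting $\mathcal{M}$ into its core $\mathcal{M}_0$ and its wings $\mathcal{M}^\pm$. On $\mathcal{M}_0$ both $W^+$ and $W^-$ are close to the common real formal series $\formalSeparatrix_n$ by Lemma \ref{thm_loc_lemma_D1} (applied to $W^-$ directly, and to $W^+$ via the inverse map) with error $C/|t|^{n+1} \les C\varepsilon^{(n+1)/2}$, since $|t| \ges \varepsilon^{-1/2}$ in that region; subtraction gives $\|\delta\|_{\mathcal{M}_0} \les 2C\varepsilon^{(n+1)/2}$. On the wings $\mathcal{M}^\pm$, Lemma \ref{thm_local_lemma_matching} approximates each $W^\pm$ by its own formal separatrix $\formalSeparatrixSing_n^\pm$ with error $O(\varepsilon^{(n-1)/2})$, and the termwise difference $\varepsilon^k(W_k^+ - W_k^-) = O\bigl(\varepsilon^k|t|^{k+2} e^{2\pi\,\im t}\bigr)$ from Lemma \ref{thm_borel_trans_form_sol_sing} controls the difference of the two formal series. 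Choosing $\Lambda$ sufficiently large (with $\varepsilon_0$ accordingly smaller, per the convention of Section \ref{ch_normal_form}), the exponential factor dominates the polynomial growth $|t|^{k+2}$ uniformly on $\mathcal{M}^\pm$, giving $\|\delta\|_{\mathcal{M}^\pm} \les C\varepsilon^{(n+1)/2}$.

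The main obstacle is this last estimate on the wings. Near the singularity the two separatrices carry genuinely distinct formal expansions, differing by Stokes-type terms, and balancing the polynomial factor $|t|^{k+2}$ (which reaches $\varepsilon^{-(k+2)/2}$ at the inner boundary of $\mathcal{M}^\pm$) against the exponential $e^{-2\pi\,|\im t|}$ is delicate; one must also check uniformity of the implicit constants across $k = 0, \dots, n-1$ in the Stokes bound. The freedom to enlarge $\Lambda$ (while shrinking $\varepsilon_0$), together with the hypothesis $n \ges 20$ that leaves enough headroom over the loss $\varepsilon^{-10}$ coming from $\|V\|\cdot\|V^{-1}\|\cdot\|S_a\|$, is precisely what delivers the exponent $(n-19)/2$ in the conclusion; accordingly $C_Q$ and $\varepsilon_0$ depend on $n$.
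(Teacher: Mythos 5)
Your proposal takes essentially the same route as the paper's own proof: both set up $Q = S_a[M] + S_a[M\cdot Q]$ with $M = V^{-1}\cdot D^{-1}\cdot R\cdot V$ and run a contraction/fixed-point argument, both invoke the bounds $\|V\|_{\sup},\|V^{-1}\|_{\sup}\lesssim\varepsilon^{-4}$, $\|D^{-1}\|_{\sup}\lesssim 1$, $\|S_a\|\lesssim\varepsilon^{-2}$ with $a=\varepsilon/2$, and both land on the exponent $(n-19)/2$ via the same arithmetic. The one place you diverge is the bound on $R$: the paper simply \emph{asserts} $\|R\|_{\sup} = C_n\varepsilon^{(n+1)/2}\bigl(1+O(\varepsilon^{1/2})\bigr)$, attributing it to $W^\pm$ agreeing with the normal form, whereas you try to derive it via $\|R\|_{\sup}\lesssim\|\delta\|_{\sup}$ together with a claimed bound $\|\delta\|_{\sup}\lesssim\varepsilon^{(n+1)/2}$ on all of $\mathcal{M}$.

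That last claimed bound on the wings $\mathcal{M}^\pm$ does not hold as argued. The dominant contribution to $\formalSeparatrixSing_n^+ - \formalSeparatrixSing_n^-$ is the $k=0$ Stokes term $\delta_0(t) = W_0^+(t)-W_0^-(t) = O(t^2 e^{-2\pi\ii t})$. On the portion of $\mathcal{M}^\pm$ nearest the singularity, $\im t = -\Lambda$ and $|t|\approx\Lambda$, so $|\delta_0|\sim\Lambda^2 e^{-2\pi\Lambda}$: a fixed number, independent of $\varepsilon$. Increasing $\Lambda$ shrinks the constant but cannot produce a bound that is polynomially small in $\varepsilon$, so $\|\delta\|_{\mathcal{M}^\pm}$ is bounded \emph{below} by an $\varepsilon$-independent quantity, and the sentence ``the exponential factor dominates the polynomial growth $\ldots$ giving $\|\delta\|_{\mathcal{M}^\pm}\leq C\varepsilon^{(n+1)/2}$'' does not follow. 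You also have the inner/outer boundaries of $\mathcal{M}^\pm$ reversed: $|t|\approx\varepsilon^{-1/2}$ is the \emph{outer} edge shared with $\mathcal{M}_0$, where $e^{2\pi\im t}\approx e^{-2\pi/\sqrt{\varepsilon}}$ is super-exponentially small and harmless; the delicate edge is the inner one at $|t|\approx\Lambda$. To be fair, the paper's own proof does not address this either — it simply states the $\|R\|_{\sup}$ bound — so you have usefully surfaced the weak point of the argument rather than created a new one; but the proposed ``enlarge $\Lambda$'' resolution cannot close it, and a genuinely different mechanism (for instance, exploiting the smallness of the weight $\phi_a$ near the top and bottom of $\partial\mathcal{M}$ rather than just the crude operator norm $\|S_a\|$) would be needed to justify the stated $\varepsilon^{(n-19)/2}$ bound uniformly on $\mathcal{M}$.
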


\begin{proof}
We know that $\|D^{-1}\|_{\sup}=1 + O(\varepsilon^{1/2})$, also since $\det V = 1$ we
have $\|V^{-1}\|_{\sup}= C_V \, \varepsilon^{-4}\left( 1 + O(\varepsilon^{1/2}) \right)$. We define
$$M=V^{-1} \cdot D^{-1} \cdot R  \cdot V.$$
Then equation \eqref{eq_Q_local_eq} becomes
\begin{align*}
 Q(\tau+1) - Q(\tau)=  M(\tau) + M(\tau) \cdot Q (\tau).
\end{align*}
From this we get
\begin{align*}
 Q(\tau)= S_a\big[ M \big](\tau) + S_a \big[M\cdot Q\big](\tau).
\end{align*}
We define
$$\mathbf{X}:Q\mapsto S_a\big[ M \big] + S_a \big[M \cdot Q\big].$$

If $W^+$ and $W^-$ coincide with the normal form up to order $n$, then there exists $C_n>0$
such that $\| R \|_{\sup}=  C_n \, \varepsilon^{\frac{n+1}{2}}\left( 1 + O(\varepsilon^{1/2}) \right)$.
We combine this with the bounds for $D$ and $V$ and we find that there exists $C_M>0$
such that $\| M \|_{\sup} =  C_M \, \varepsilon^{\frac{n-15}{2}}\left( 1 + O(\varepsilon^{1/2}) \right)$.
Recall that $\|S_a\|\les c' \varepsilon^{-2} $, so
$$ \left\|S_a\big[ M \big] \right\|_\infty \les C'_M \, \varepsilon^{\frac{n-19}{2}}
\left( 1 + O(\varepsilon^{1/2}) \right)$$
and
$$ \left\|S_a\big[ M \cdot Q\big] \right\|_\infty \les C'_M \, \varepsilon^{\frac{n-19}{2}}
\left( 1 + O(\varepsilon^{1/2}) \right) \left\|  Q \right\|_\infty . $$

Then for $n\ges20$ there exists a neighbourhood of the origin
$\mathcal{V}_c = \{ x\in \C^\omega(\mathcal{M})^{2\times2} : \|x\|_\infty \les c \left\|S_a [ M ] \right\|_\infty \}$
for big enough $c$ and  small enough $\varepsilon$ in which the operator $\mathbf{X}$ is a contraction.
From this the result follows.
\myqed
\end{proof}

\begin{corollary}
 For all $\tau\in\mathcal{M}$
 $$U(\varepsilon,\tau) = V(\varepsilon,\tau) + O(\varepsilon^{\frac{n-27}{2}}).$$
 Since $U(\varepsilon,\tau) =   (\Psi(\varepsilon,\tau),\Phi(\varepsilon,\tau))$
 this implies that 
 $\Phi(\varepsilon,\tau)  = \dot{W}^-(\varepsilon,\tau) + O(\varepsilon^{\frac{n-27}{2}}).$
 \label{thm_initial_bounds_for_fund_sol}
\end{corollary}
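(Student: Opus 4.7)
The plan is to read Corollary \ref{thm_initial_bounds_for_fund_sol} off directly from the two preceding lemmas without any new analytic work. Recall that the matrix $Q$ was introduced in Section \ref{ch_approx_of_fund_sols} precisely through the factorisation $U(\varepsilon;\tau)=V(\varepsilon;\tau)\bigl(I+Q(\varepsilon;\tau)\bigr)$. Subtracting gives the identity $U-V=V\cdot Q$, so controlling $U-V$ in the $\sup$-norm on $\mathcal{M}$ reduces to estimating the product $V\cdot Q$.

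The first step is to bound $\|V\cdot Q\|_{\sup}$ by a constant multiple of $\|V\|_{\sup}\cdot\|Q\|_{\sup}$; this is the standard submultiplicativity of the chosen matrix sup-norm, picking up at most a factor of $2$. Then I would substitute the two previously established bounds: Lemma \ref{thm_var_eq_sol_V_bound} provides $\|V\|_{\sup}=C_V\varepsilon^{-4}\bigl(1+O(\varepsilon^{1/2})\bigr)$, while Lemma \ref{thm_var_eq_sol_pol_close} yields $\|Q\|_{\sup}\le C_Q\varepsilon^{(n-19)/2}\bigl(1+O(\varepsilon^{1/2})\bigr)$ once $n\ge 20$. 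Multiplying these, the exponents add as $-4+(n-19)/2=(n-27)/2$, which gives $\|U-V\|_{\sup}=O(\varepsilon^{(n-27)/2})$, establishing the first assertion.

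For the second assertion I would exploit the normalisations. By construction $U\cdot\bigl(\begin{smallmatrix}0\\1\end{smallmatrix}\bigr)=\Phi$ and $V\cdot\bigl(\begin{smallmatrix}0\\1\end{smallmatrix}\bigr)=\dot W^-$, so applying the matrix identity $U-V=V\cdot Q$ to the vector $\bigl(\begin{smallmatrix}0\\1\end{smallmatrix}\bigr)$ yields $\Phi-\dot W^-=(V\cdot Q)\cdot\bigl(\begin{smallmatrix}0\\1\end{smallmatrix}\bigr)$, and the bound already proved for $\|V\cdot Q\|_{\sup}$ passes through to the pointwise vector norm, giving $\Phi-\dot W^-=O(\varepsilon^{(n-27)/2})$ on $\mathcal M$.

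There is no real obstacle here: the two analytic lemmas have already done all the work, and the corollary is nothing more than their multiplicative combination together with the bookkeeping of the two remainder factors $(1+O(\varepsilon^{1/2}))$, which combine into a single factor of the same form and are absorbed into the final big-$O$. The only point worth double-checking is that the constant $c$ used when choosing the contraction neighbourhood in the proof of Lemma \ref{thm_var_eq_sol_pol_close} is genuinely $\varepsilon$-independent, since the exponent $(n-27)/2$ depends on this uniformity; but this was built into the statement of that lemma.
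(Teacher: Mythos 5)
Your proposal is correct and is exactly the intended route: the corollary in the paper is stated without proof precisely because it is the immediate multiplicative combination of Lemma \ref{thm_var_eq_sol_V_bound} and Lemma \ref{thm_var_eq_sol_pol_close} via the identity $U-V=V\cdot Q$, with the exponents adding as $-4+\tfrac{n-19}{2}=\tfrac{n-27}{2}$, and the second assertion read off from the second column since $U\cdot(\begin{smallmatrix}0\\1\end{smallmatrix})=\Phi$ and $V\cdot(\begin{smallmatrix}0\\1\end{smallmatrix})=\dot W^-$.
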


Combining the above corollary with Lemma \ref{thm_var_eq_sol_V_bound} we get a bound for $U$.

\begin{corollary}
 Let $n > 8$ and let $F_\varepsilon$ agree with the normal form up to order $n$.
 Then there exists $C_U>0$ such that
 $$\|U\|_{\sup}=\frac{C_U}{\varepsilon^4}\left( 1 + O(\varepsilon^{1/2}) \right).$$
 \label{thm_var_eq_sol_U_bound}
\end{corollary}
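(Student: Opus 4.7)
The plan is to deduce this bound as an immediate consequence of the two preceding results, via the triangle inequality applied to the decomposition $U = V + (U-V)$.

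First I would invoke Lemma \ref{thm_var_eq_sol_V_bound} to obtain the leading behaviour
\[
\|V\|_{\sup} = C_V\,\varepsilon^{-4}\bigl(1 + O(\varepsilon^{1/2})\bigr),
\]
so that $\|V\|_{\sup} = C_V\varepsilon^{-4} + O(\varepsilon^{-7/2})$. Next I would apply Corollary \ref{thm_initial_bounds_for_fund_sol}, which gives $U(\varepsilon,\tau) = V(\varepsilon,\tau) + O(\varepsilon^{(n-27)/2})$ uniformly on $\mathcal{M}$, and hence $\|U-V\|_{\sup} = O(\varepsilon^{(n-27)/2})$ componentwise (taking the maximum over the four matrix entries loses only a factor of order $1$).

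Combining via the triangle inequality in both directions yields
\[
\bigl|\,\|U\|_{\sup} - C_V\varepsilon^{-4}\bigr| \;\les\; O(\varepsilon^{-7/2}) + O(\varepsilon^{(n-27)/2}).
\]
For the second error to be absorbed by the first (so that it fits inside the $O(\varepsilon^{1/2})$ relative correction), we need $(n-27)/2 \ges -7/2$, i.e.\ $n\ges 20$. Setting $C_U := C_V$ then gives the stated asymptotic.

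The only subtlety — and this is the main bookkeeping obstacle — is reconciling the hypothesis $n>8$ stated in the corollary with the constraint $n\ges 20$ implicitly required by the invocation of Corollary \ref{thm_initial_bounds_for_fund_sol} (which in turn rests on Lemma \ref{thm_var_eq_sol_pol_close}, valid for $n\ges 20$). This is harmless in context: throughout the paper $n$ is taken to equal $N = 6M+39$ from equation \eqref{eq_N_definition}, which is far larger than $20$, so the approximation $U\approx V$ applies with plenty of margin and the corollary holds as stated. No further estimates are needed.
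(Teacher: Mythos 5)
Your proof is correct and is exactly the argument the paper uses (the paper condenses it to the one-line remark "Combining the above corollary with Lemma \ref{thm_var_eq_sol_V_bound} we get a bound for $U$"). Your observation about the mismatch between the stated hypothesis $n>8$ and the constraint $n\ges20$ inherited from Lemma \ref{thm_var_eq_sol_pol_close} via Corollary \ref{thm_initial_bounds_for_fund_sol} is a genuine (if minor) inconsistency in the paper's statement, and you correctly note that it is immaterial since in context $n=N=6M+39$.
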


\section{Sharper bounds}
\label{ch_sharper_bounds}

\subsection{Exponentially small upper bound for the splitting}

With everything that is known up to this point we can prove that the splitting
admits an exponentially small upper bound.

\begin{lemma}
 Let $\mathcal D = \{ z\in\C :|\re(z)| \les2, |\im(z)|\les \tfrac12 \}$.
 For all $\tau \in \mathcal D $ there exists a constant $C>0$ such that
 $$|\delta(\varepsilon;\tau)| \les C \varepsilon^{-2} \me^{-\frac{ 2\pi^2}{\varepsilon} }. $$
 \label{thm_splitting_bound}
\end{lemma}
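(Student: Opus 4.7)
The idea is to propagate the exponentially small estimate available for $\delta$ near the singularities $\pm\pi\ii/\varepsilon$ down to the thin strip $\mathcal{D}$ around the real axis. The inputs are Lemma~\ref{thm_local_lemma_matching} (matching of the separatrices with the formal solutions $\formalSeparatrixSing_n^\pm$), Lemma~\ref{thm_borel_trans_form_sol_sing} (the Borel--Laplace estimate $\delta_k(t) = O(t^{k+2}\me^{-2\pi\ii t})$), and Corollary~\ref{thm_var_eq_sol_U_bound} (the polynomial bound $\|U\|_{\sup}\les C_U/\varepsilon^4$).

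First I would establish, on the top horizontal boundary $\im\tau = \pi/\varepsilon - \Lambda$ of $\mathcal{M}$, the estimate $|\delta(\varepsilon;\tau)\me^{2\pi\ii\tau}|\les C\varepsilon^{-2}\me^{-2\pi^2/\varepsilon}$. Setting $t = \tau-\pi\ii/\varepsilon$, Lemma~\ref{thm_local_lemma_matching} gives
\[
 \delta(\varepsilon;\tau) \;=\; \sum_{k=0}^{n-1}\varepsilon^k\delta_k(t) \;+\; O\bigl(\varepsilon^{(n-1)/2}\bigr).
\]
At $|\im t|=\Lambda$ the Borel--Laplace bound yields $|\delta_k(t)|\les C|t|^{k+2}\me^{-2\pi\Lambda}$, so the formal contribution is controlled by $C\Lambda^2\me^{-2\pi\Lambda}$, and multiplying by $|\me^{2\pi\ii\tau}|=\me^{-2\pi(\pi/\varepsilon-\Lambda)}$ gives the exponent $-2\pi^2/\varepsilon$; the matching error $O(\varepsilon^{(n-1)/2})\me^{-2\pi^2/\varepsilon+2\pi\Lambda}$ is then absorbed by taking $n=N = 6M+39$ large enough. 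By real analyticity of the family, $\delta(\tau)\me^{-2\pi\ii\tau}$ obeys the analogous bound on the bottom boundary $\im\tau=-\pi/\varepsilon+\Lambda$. The prefactor $\varepsilon^{-2}$ is traceable to the polynomial factor $|t|^2\asymp(\pi/\varepsilon)^2$ of the Borel--Laplace leading term $\delta_0(t)\asymp\theta\tilde\Xi_0(t)\me^{-2\pi\ii t}$ (cf.\ Theorem~\ref{thm_existence_Borel_transform}) evaluated near $t=-\pi\ii/\varepsilon$.

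The second step transports these horizontal bounds into $\mathcal{D}$ by a Phragm\'en--Lindel\"of argument applied to $g(\tau):=\delta(\tau)\me^{2\pi\ii\tau}$ on the upper half-rectangle $\mathcal S^+ := \{|\re\tau|\les 2,\ 0\les\im\tau\les\pi/\varepsilon-\Lambda\}$ and to its conjugate $\bar g:=\delta(\tau)\me^{-2\pi\ii\tau}$ on the lower half $\mathcal S^-$. On $\mathcal S^+$, $g$ is analytic, exponentially small on the top edge, and controlled by the polynomial bound $|g|\les|\delta|\les C_U/\varepsilon^4$ on the vertical sides (using $|\me^{2\pi\ii\tau}|\les 1$ for $\im\tau\ges 0$). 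A symmetric estimate on $\mathcal S^-$ handles $\bar g$, and restricting to $\mathcal{D}$ where $|\me^{\pm 2\pi\ii\tau}|\les\me^\pi$ yields the claimed bound $|\delta(\tau)|\les C\varepsilon^{-2}\me^{-2\pi^2/\varepsilon}$.

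The main technical obstacle is precisely this Phragm\'en--Lindel\"of step: the polynomial side bound $C_U/\varepsilon^4$ a priori dominates the exponentially small top bound, so a naive maximum-modulus application on $\mathcal S^+$ alone recovers only the polynomial estimate. The resolution exploits the geometry of $\mathcal{M}$ --- very long in the imaginary direction ($\sim\pi/\varepsilon$) and narrow in the real direction (width $4$) --- so that harmonic-measure considerations attribute the interior value on the real axis predominantly to the horizontal boundary values rather than to the vertical ones. Concretely, one can either apply the three-line theorem to $g$ along a family of vertical strips of width $1$ (using the recurrence $\delta(\tau+1)=A(\tau)\delta(\tau)$ with $\|A\|\les 1+O(\varepsilon)$ to justify periodization in $\re\tau$) or work with a suitably weighted subharmonic function combining $g$ and $\bar g$, and this is where the bulk of the careful bookkeeping is done.
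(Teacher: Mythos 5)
There is a genuine gap, and it sits exactly where you flag it but is not resolvable along the lines you suggest. Your key propagation step is a Phragm\'en--Lindel\"of / harmonic-measure argument for $g(\tau)=\delta(\tau)\me^{2\pi\ii\tau}$ on the upper half-rectangle $\mathcal S^+$. But the geometry goes the \emph{wrong} way: $\mathcal S^+$ has width $4$ and height $\sim\pi/\varepsilon$, so from a point on the real axis the harmonic measure of the two vertical edges is $\Theta(1)$, while the harmonic measure of the top edge is itself exponentially small, of order $\me^{-\pi^2/(4\varepsilon)}$. A maximum-modulus estimate therefore yields $|g|\lesssim O(1)\cdot\|g\|_{\text{sides}}+\me^{-\pi^2/(4\varepsilon)}\cdot\|g\|_{\text{top}}$, which is dominated by the polynomial side contribution and recovers nothing beyond Corollary~\ref{thm_var_eq_sol_U_bound}. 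Your proposed fix of widening the rectangle by iterating $\delta(\tau+1)=A(\tau)\delta(\tau)$ also does not help: to force the side harmonic measure down to $\me^{-2\pi^2/\varepsilon}$ one would need width $\gtrsim\varepsilon^{-2}$, and over $O(\varepsilon^{-2})$ steps the factors $\|A\|\les1+O(\varepsilon)$ compound to $\me^{O(1/\varepsilon)}$, wiping out the gain.

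The paper circumvents this entirely by \emph{not} working with $\delta$ directly. It writes $\delta=\Theta\,\Psi+q\,\Phi$ with respect to the fundamental solution $U=(\Psi,\Phi)$ of the variational equation \eqref{eq_variational_delta}. Since $A$ is symplectic ($\det A=1$), both $\Theta=\omega(\delta,\Phi)$ and $q$ are \emph{exactly} $1$-periodic. Because the map is area-preserving, a real homoclinic point $\tau_h$ exists with $\delta(\varepsilon;\tau_h)=0$, hence $\Theta(\tau_h)=q(\tau_h)=0$; this kills the constant Fourier coefficient up to an exponentially small error. Corollary~\ref{thm_periodic_function_rect_domain} then gives $|\Theta|,|q|\lesssim\me^{-2\pi^2/\varepsilon}$ on the real axis (Lemma~\ref{thm_splitting_comps_bound}), and the claimed bound on $\delta$ follows by recombining with the bound on $U$. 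Both the exact periodicity (from symplecticity) and the vanishing at a real homoclinic point (from area preservation) are structural facts absent from your argument, and no amount of harmonic-measure bookkeeping for $\delta$ or $\delta\me^{\pm2\pi\ii\tau}$ on a width-$4$ strip can substitute for them.
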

Recall that we have defined $\delta(\varepsilon;\tau) = W^+(\varepsilon;\tau)-W^-(\varepsilon;\tau) $.
Before we proceed to prove this lemma we need some results on real analytic periodic functions.

\paragraph{Real analytic periodic functions in a rectangular domain} ~

\begin{lemma}
Let $D=\{z\in\C : |\re(z)|\les \alpha, |\im(z)|\les \beta \} $ for some
$\alpha, \beta\ges1$ and let $g$ be a real analytic function on $D$ and
continuous on $\partial D$ such that $g(\tau+1)=g(\tau)$ when both $\tau$
and $\tau+1$ are in $D$. Moreover, we assume that there exists
$\tau_h\in [-\alpha,\alpha]$ such that $g(\tau_h) = 0$.
We write $g$ as a Fourier series:
$$g(\tau) = g_0 + \sum_{n\ges1} g_n\, \me^{-2\pi n \ii \tau} + \sum_{n\ges1} \overline{g_n}\, \me^{2\pi n \ii \tau},$$
with $g_n\in\C$. Then it is true that
$$ |g_n| \les \|g\|_\infty \me^{- 2\pi \beta n} $$
for all $n\in\N$ and
$$|g_0| \les 4 \|g\|_\infty \me^{- 2\pi \beta }.$$
\end{lemma}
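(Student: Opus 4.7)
The plan is to treat the two bounds separately and to use different mechanisms for each. For $n\ges 1$, I would start from the standard Fourier coefficient formula
$$g_n = \int_0^1 g(\tau)\,\me^{2\pi n\ii\tau}\,\dd\tau,$$
which is valid on the real segment $[0,1]\subset D$. I then shift the contour vertically from $[0,1]$ up to $[\beta\ii,\,1+\beta\ii]$, invoking Cauchy's theorem on the rectangle $[0,1]\times[0,\beta]$ (which lies in $\overline{D}$ because $\alpha\ges 1$). The two vertical legs cancel: on them $\tau=\ii y$ and $\tau=1+\ii y$ with $y\in[0,\beta]$, and the integrand factor $\me^{2\pi n\ii\tau}$ is $1$-periodic while $g(1+\ii y)=g(\ii y)$ by the hypothesis (both points lying in $D$). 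On the shifted top edge $|\me^{2\pi n\ii\tau}|=\me^{-2\pi n\beta}$, so the trivial estimate gives
$$|g_n|\les \|g\|_\infty\,\me^{-2\pi n\beta}.$$

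The bound on the constant term is more delicate because no contour shift helps: I would instead use the assumed real zero. Plugging $\tau=\tau_h\in[-\alpha,\alpha]\subset\R$ into the Fourier series and using $\overline{g_n\me^{-2\pi n\ii\tau_h}}=\overline{g_n}\,\me^{2\pi n\ii\tau_h}$ yields
$$g_0=-2\,\re\sum_{n\ges 1} g_n\,\me^{-2\pi n\ii\tau_h}.$$
The triangle inequality together with the estimate above then produces a geometric series,
$$|g_0|\les 2\sum_{n\ges 1}\|g\|_\infty\,\me^{-2\pi n\beta}=2\|g\|_\infty\,\frac{\me^{-2\pi\beta}}{1-\me^{-2\pi\beta}}.$$
Since $\beta\ges 1$ implies $1-\me^{-2\pi\beta}\ges 1-\me^{-2\pi}$ and $2/(1-\me^{-2\pi})<4$, the claimed bound $|g_0|\les 4\|g\|_\infty\me^{-2\pi\beta}$ follows.

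I do not anticipate a genuine obstacle here. The only small checks are that the deformation rectangle fits inside $\overline{D}$ (ensured by $\alpha,\beta\ges 1$) and that the periodicity of $g$ truly cancels the vertical contributions (which in turn requires both endpoints of each horizontal slice to remain in $D$, again guaranteed by $\alpha\ges 1$). The argument does not use the symplectic structure or any specifics of the separatrix problem; it is a general fact about periodic real-analytic functions in a horizontal strip with a real zero.
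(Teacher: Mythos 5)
Your proof is correct and uses essentially the same approach as the paper: bound $|g_n|$ for $n\ges1$ by evaluating on the line $\im\tau=\beta$ (the paper phrases this as ``setting $\tau=\ii\beta$'' rather than an explicit contour shift, but the content is identical), then use the real zero $\tau_h$ and the resulting geometric series to control $g_0$. Your exposition fills in the Cauchy-theorem details that the paper leaves implicit, but there is no genuine difference in method.
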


\begin{proof}
By setting $\tau = \ii \beta$ we get
$$g(\ii \beta) = g_0 + \sum_{n\ges1} g_n\, \me^{2\pi \beta n} + \sum_{n\ges1} \overline{g_n}\, \me^{- 2\pi \beta n} $$
and this implies that
$$ |g_n| \les \|g\|_\infty \me^{- 2\pi \beta n} $$
for all $ n\ges1$.

From the equation $g(\tau_h) = 0$ we get
$$ |g_0| \les 2\sum_{n\ges1} |g_n|.$$
This sum is a geometric progression so 
$$ |g_0| \les 2\|g\|_\infty \me^{- 2\pi \beta }\frac{1}{1-\me^{- 2\pi \beta }}
\les 4 \|g\|_\infty \me^{- 2\pi \beta } . \qedhere $$
\myqed
\end{proof}

\begin{corollary}
 Let $g$ and $D$ be as described above. Then for all $\tau \in [-\alpha,\alpha]$ it is true that
 $$ |g(\tau)| \les 8\|g\|_\infty \me^{- 2\pi \beta }. $$
\label{thm_periodic_function_rect_domain}
\end{corollary}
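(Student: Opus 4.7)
The plan is to use the Fourier coefficient bounds established in the preceding lemma and simply sum them along the real axis. Since $\tau \in [-\alpha,\alpha] \subset \R$, every exponential $\me^{-2\pi n\ii\tau}$ has modulus 1, so the Fourier expansion
\begin{equation*}
 g(\tau) = g_0 + \sum_{n\ges1} g_n\, \me^{-2\pi n\ii\tau} + \sum_{n\ges1} \overline{g_n}\, \me^{2\pi n\ii\tau}
\end{equation*}
yields, by the triangle inequality,
\begin{equation*}
 |g(\tau)| \les |g_0| + 2\sum_{n\ges1} |g_n|.
\end{equation*}

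The plan is then to plug in the two bounds from the previous lemma: $|g_0| \les 4\|g\|_\infty \me^{-2\pi\beta}$ and $|g_n| \les \|g\|_\infty \me^{-2\pi\beta n}$ for $n\ges1$. The tail becomes a geometric series
\begin{equation*}
 2\sum_{n\ges1} \|g\|_\infty \me^{-2\pi\beta n} = \frac{2\,\|g\|_\infty\,\me^{-2\pi\beta}}{1-\me^{-2\pi\beta}},
\end{equation*}
and using $\beta\ges1$ (so $\me^{-2\pi\beta}\les \me^{-2\pi}$ and hence $(1-\me^{-2\pi\beta})^{-1} \les (1-\me^{-2\pi})^{-1}$, which is comfortably less than $2$) one obtains
\begin{equation*}
 |g(\tau)| \les 4\|g\|_\infty \me^{-2\pi\beta} + \frac{2\|g\|_\infty \me^{-2\pi\beta}}{1-\me^{-2\pi}} \les 8\|g\|_\infty \me^{-2\pi\beta}.
\end{equation*}

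There is no real obstacle: the entire content of the corollary is a direct combination of the two Fourier coefficient estimates with the fact that $\tau$ is real. The only mild care needed is to verify that the constant $8$ is indeed achieved, which follows from the crude bound $2/(1-\me^{-2\pi}) \les 4$ — more than enough slack given that $\beta\ges1$. The hypothesis that $g$ vanishes at some real point $\tau_h$ is used only implicitly, via the bound on $|g_0|$ inherited from the preceding lemma.
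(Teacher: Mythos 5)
Your proof is correct and is the straightforward derivation the paper leaves implicit (the corollary is stated without its own proof, immediately following the lemma): restrict to real $\tau$, apply the triangle inequality to the Fourier series, plug in the two coefficient bounds, and sum the geometric tail. The numerics check out: $2/(1-\me^{-2\pi}) \approx 2.004 \les 4$, so the total is at most $4 + 4 = 8$ as claimed.
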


\paragraph{The components of \texorpdfstring{$\delta$}{delta}} \hspace{0em}

Let $\Psi$ and $\Phi$ be such that $U(\varepsilon;\tau) = 
(\Psi(\varepsilon;\tau),\Phi(\varepsilon;\tau))$, with $U$ the fundamental
solution defined in Section \ref{ch_approx_of_fund_sols}.
Then there exist two functions $\Theta(\varepsilon;\tau)$ and $q(\varepsilon;\tau)$
such that
$$\delta(\varepsilon;\tau) = \Theta(\varepsilon;\tau)\,\Psi(\varepsilon;\tau)  + q(\varepsilon;\tau)\, \Phi(\varepsilon;\tau).$$
Then we have
\begin{equation}
 \Theta(\varepsilon;\tau) = \omega(\delta(\varepsilon;\tau),\Phi(\varepsilon;\tau))
 \label{eq_Theta_definition}
\end{equation}
and
\begin{align*}
 \Theta(\varepsilon;\tau+1) &= \omega(\delta(\varepsilon;\tau+1),\Phi(\varepsilon;\tau+1)) \\
 &= \omega(A(\varepsilon;\tau)\,\delta(\varepsilon;\tau),A(\varepsilon;\tau)\,\Phi(\varepsilon;\tau)) \\
 &= \omega(\delta(\varepsilon;\tau),\Phi(\varepsilon;\tau)) \\
 &= \Theta(\varepsilon;\tau).
\end{align*}
Similarly we get that $ q(\varepsilon;\tau + 1) = q(\varepsilon;\tau) $.

\begin{lemma}
 Let $\mathcal D = \{ z\in\C :|\re(z)| \les2, |\im(z)|\les \tfrac12 \}$.
 For all $\tau \in \mathcal D $ there exists a constant $C>0$ such that
 $$|\Theta(\varepsilon;\tau)|,|q(\varepsilon;\tau)| \les C \me^{-\frac{ 2\pi^2}{\varepsilon} }.$$
 \label{thm_splitting_comps_bound}
\end{lemma}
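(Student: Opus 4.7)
The plan is to exploit the $1$-periodicity of $\Theta$ and $q$ together with their analyticity on the wide domain $\mathcal M$ (of imaginary extent nearly $\pi/\varepsilon$) to invoke Corollary \ref{thm_periodic_function_rect_domain}, which turns an a priori polynomial-in-$1/\varepsilon$ bound on $\mathcal M$ into the sought exponentially small bound on the narrow rectangle $\mathcal D$.

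First I would establish that $\Theta$ and $q$ extend analytically to all of $\mathcal M$ with sup-norms at most polynomial in $1/\varepsilon$. Since $\det U = 1$, the decomposition $\delta = \Theta\Psi + q\Phi$ inverts to $\Theta = \omega(\delta,\Phi)$ and $q = -\omega(\delta,\Psi)$. Corollary \ref{thm_var_eq_sol_U_bound} gives $\|U\|_{\sup,\mathcal M} = O(\varepsilon^{-4})$, while Theorem \ref{th-vassili_approx} on $\mathcal M_0$ combined with Lemma \ref{thm_local_lemma_matching} on $\mathcal M^\pm$ yield polynomial-in-$1/\varepsilon$ bounds on $W^\pm$, hence on $\delta$, throughout $\mathcal M$. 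Multiplying produces $\|\Theta\|_{\sup,\mathcal M}, \|q\|_{\sup,\mathcal M} \les C_0\,\varepsilon^{-k}$ for some $k\in\N$. The $1$-periodicity of $\Theta$ is verified in the excerpt; the analogous computation applies to $q$, using that $\det A \equiv 1$ is forced by the recurrence $U(\tau+1) = A(\tau)U(\tau)$ and the normalization $\det U \equiv 1$.

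The main obstacle is producing a real zero of $\Theta$ and of $q$ in $[-2, 2]$, as required by Corollary \ref{thm_periodic_function_rect_domain}. I would argue using the reversibility of the third-iterate normal form: because $\tilde H(x, -y) = \tilde H(x, y)$, the time-one flow $\tilde F_\varepsilon = \phi^1_{3\tilde H}$ satisfies $\sigma\circ \tilde F_\varepsilon\circ\sigma = \tilde F_\varepsilon^{-1}$ with $\sigma(x,y) = (x,-y)$, and this involution swaps the stable separatrix of $v_s$ with the unstable separatrix of $v_u$ (the two saddles being mirror images across the horizontal axis), forcing their first horizontal-axis crossings to coincide. With the parametrization convention that $W^\pm(\varepsilon;0)$ are these first crossings, the truncated-system identity $W^+(\varepsilon;0) = W^-(\varepsilon;0)$ gives $\delta(\varepsilon;0) = 0$, hence $\Theta(\varepsilon;0) = q(\varepsilon;0) = 0$. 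For the full map $F_\varepsilon$, which agrees with $\tilde F_\varepsilon$ to order $N = 6M+39$, the residual non-reversibility is accommodated by observing that on $\mathbb{R}$ each of $\Theta, q$ is a real-analytic $1$-periodic function whose integrated flux over one period vanishes by area-preservation; hence either it is identically zero or it changes sign on $[-2,2]$, producing the required real zero $\tau_h$.

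Finally, applying Corollary \ref{thm_periodic_function_rect_domain} on the rectangle $|\re\tau|\les 2,\ |\im\tau|\les\pi/\varepsilon - \Lambda$ contained in $\mathcal M$ yields
\begin{equation*}
|\Theta(\varepsilon;\tau)|,\ |q(\varepsilon;\tau)|\ \les\ 8\,C_0\,\varepsilon^{-k}\,\me^{-2\pi(\pi/\varepsilon - \Lambda)}\ \les\ C\,\me^{-\frac{2\pi^2}{\varepsilon}}
\end{equation*}
for all $\tau\in\mathcal D$, with the polynomial prefactor absorbed into $C$. The hard part is thus securing the existence of the real zero; once granted, everything else is a routine assembly of the bounds collected in the previous sections.
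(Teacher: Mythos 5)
Your overall strategy matches the paper's exactly: establish $1$-periodicity and a polynomial (in $1/\varepsilon$) sup-bound on the tall rectangle contained in $\mathcal M$, produce a real zero of each of $\Theta$ and $q$, then invoke Corollary~\ref{thm_periodic_function_rect_domain} with $\beta = \pi/\varepsilon - \Lambda$. The preparatory bounds you assemble (on $\|U\|$, on $W^\pm$ via Theorem~\ref{th-vassili_approx} and Lemma~\ref{thm_local_lemma_matching}, and hence on $\delta$, $\Psi$, $\Phi$) are the right ingredients and are all available from the preceding sections.

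The gap is in your argument for the existence of the real zero $\tau_h$. The reversibility argument applies cleanly only to the truncated (normal-form) system, as you acknowledge. Your fallback for the full map — that ``the integrated flux over one period vanishes by area-preservation,'' so $\Theta$ and $q$ change sign — is not established and is, in fact, not literally true: the mean $c_0 = \int_0^1 \Theta\,\dd\tau$ is shown later in the paper to be merely \emph{exponentially small}, not zero, and that estimate itself \emph{uses} the existence of the homoclinic point. So the flux argument as stated is circular. The paper's route is simpler and avoids this entirely: area-preservation forces the stable and unstable separatrices to intersect somewhere on the real line (the usual topological lobe argument: a non-intersecting channel would have its area shrink under iteration toward the saddle), giving $\delta(\varepsilon;\tau_h)=0$ for some real $\tau_h$; since $\Psi$ and $\Phi$ are linearly independent (because $\det U = \omega(\Psi,\Phi)\equiv 1$), $\delta(\tau_h)=0$ forces $\Theta(\tau_h)=q(\tau_h)=0$ simultaneously. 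If you replace your flux argument with this direct topological observation, the rest of your proof goes through. One minor remark: your observation that $\det A\equiv 1$ is \emph{forced} by $\det U\equiv 1$ is a necessary-condition statement, not a derivation; the determinant condition on $A$ should be checked (or conceded as part of the paper's setup) rather than derived from the existence of a unimodular fundamental solution.
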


\begin{proof}

The map is area-preserving, so there has to be a homoclinic point $W^-(\varepsilon;\tau_h)$
such that $\delta(\varepsilon;\tau_h)=0$. Because $\Psi$ and $\Phi$ are linearly independent
this implies that $\Theta(\varepsilon;\tau_h) = q(\varepsilon;\tau_h)=0$.

Both $\Theta$ and $q$ are defined in a rectangular domain with $\alpha =2$ and $\beta =
\frac{\pi}{\varepsilon} - \Lambda$. We apply Corollary \ref{thm_periodic_function_rect_domain}
and we get that there exists a constant $C>0$ such that for all $\tau\in[-2,2] $ it holds that
$$|\Theta(\varepsilon;\tau)|,|q(\varepsilon;\tau)| \les C' \me^{-\frac{ 2\pi^2}{\varepsilon} }.$$
We can extend this bound to the whole $\mathcal{D}$ by increasing the constant since
$\mathcal{D}$ is independent of $\varepsilon$.
\end{proof}

\begin{proof}[Proof of Lemma \ref{thm_splitting_bound}] 
We combine Lemma \ref{thm_splitting_comps_bound} with Corollary \ref{thm_var_eq_sol_U_bound}.
\end{proof}

\subsection{Variational equations revisited}

In order to prove Lemma \ref{thm_var_eq_sol_pol_close} we used the fact that the stable
and unstable solutions can be approximated by the same formal series. This gives an error
that is polynomially small with $\varepsilon$. However, we saw in the previous section
that the splitting is actually exponentially small. We can now use this result to get a
sharper bound on the difference of the two fundamental solutions.

\begin{lemma}
 Let $\mathcal D = \{ z\in\C :|\re(z)| \les2, |\im(z)|\les \tfrac12 \}$.
 Then there exists $C>0$ such that on $ \mathcal D$ it is true that
 $$ \| U - V\|_{\sup}  \les C \varepsilon^{-16} \me^{-\frac{ 2\pi^2}{\varepsilon} } \big( 1 + O(\varepsilon^{1/2}) \big), $$
 where $U$ and $V$ are the fundamental solutions defined in Section \ref{ch_approx_of_fund_sols}.
 \label{thm_var_eq_sol_exp_bound}
\end{lemma}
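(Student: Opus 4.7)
The strategy is to upgrade the polynomial bound on $Q = V^{-1}U - I$ provided by Lemma \ref{thm_var_eq_sol_pol_close} into an exponentially small bound on $\mathcal{D}$, using the newly available estimate on $\delta$ from Lemma \ref{thm_splitting_bound}; the conclusion then follows from $U - V = V Q$ together with the polynomial bound on $V$ from Lemma \ref{thm_var_eq_sol_V_bound}.

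First I would translate the exponentially small bound on $\delta$ into one on the remainder $R = A - D$. Writing
$$R(\tau) = \int_0^1\!\!\int_0^s F''_\varepsilon\bigl(W^-(\tau)+r\delta(\tau)\bigr)\cdot\delta(\tau)\,dr\,ds,$$
and using that $F''_\varepsilon$ is uniformly bounded on the compact range of $W^-$, one gets $R = O(\delta)$, so Lemma \ref{thm_splitting_bound} yields $\|R\|_{\sup,\mathcal{D}} = O(\varepsilon^{-2}\me^{-2\pi^2/\varepsilon})$. Combining with $\|V\|_{\sup},\|V^{-1}\|_{\sup} = O(\varepsilon^{-4})$ on $\mathcal{M}$ from Lemma \ref{thm_var_eq_sol_V_bound} (the inverse via $\det V = 1$), and the uniform boundedness of $D^{-1}$, produces an exponentially small bound on $M = V^{-1}D^{-1}R V$ on $\mathcal{D}$.

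The main obstacle is converting this bound on $M$ into one on $Q$ itself, since the difference equation \eqref{eq_Q_local_eq} only directly controls the increments $Q(\tau+1) - Q(\tau)$. My plan is to re-run the fixed-point scheme from the proof of Lemma \ref{thm_var_eq_sol_pol_close} on a rectangular domain $\mathcal{D}^*$ slightly larger than $\mathcal{D}$ but still of $\varepsilon$-independent dimensions, chosen so that: first, the Fourier/periodicity argument underlying Lemma \ref{thm_splitting_bound} still gives $\|\delta\|_{\sup,\mathcal{D}^*} = O(\varepsilon^{-2}\me^{-2\pi^2/\varepsilon})$ with a modestly larger constant, so that $M$ remains exponentially small on $\mathcal{D}^*$; and second, the operator $S_a$ of Lemma \ref{thm_lin_diff_eq_sol} can be applied on $\mathcal{D}^*$ with weight parameter $a = O(1)$, with a norm estimate that combines the standard $O(\varepsilon^{-2})$ bound with the new exponentially small driving term. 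The resulting contraction argument, identical in structure to the proof of Lemma \ref{thm_var_eq_sol_pol_close} but with the polynomial estimate on $M$ replaced by the exponentially small one, produces a $Q$ whose sup norm on $\mathcal{D}$ is exponentially small; uniqueness of the Neumann series in the small neighbourhood $\mathcal{V}_c$ where the contraction estimate holds ensures that this $Q$ coincides with the restriction to $\mathcal{D}$ of the $Q$ already constructed on $\mathcal{M}$.

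Tallying the estimates, with $\|V\|_{\sup,\mathcal{D}} = O(\varepsilon^{-4})$, $\|V^{-1}\|_{\sup,\mathcal{D}} = O(\varepsilon^{-4})$, $\|R\|_{\sup,\mathcal{D}} = O(\varepsilon^{-2}\me^{-2\pi^2/\varepsilon})$, and a further $O(\varepsilon^{-2})$ from the weighted $S_a$ estimate, one obtains $\|Q\|_{\sup,\mathcal{D}} = O(\varepsilon^{-12}\me^{-2\pi^2/\varepsilon})$, and hence
$$\|U-V\|_{\sup,\mathcal{D}} = \|V Q\|_{\sup,\mathcal{D}} \les \|V\|_{\sup,\mathcal{D}}\,\|Q\|_{\sup,\mathcal{D}} = O\!\left(\varepsilon^{-16}\me^{-2\pi^2/\varepsilon}\right)\!\bigl(1+O(\varepsilon^{1/2})\bigr),$$
with the $O(\varepsilon^{1/2})$ correction inherited from the subleading terms in Lemma \ref{thm_var_eq_sol_V_bound}. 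The delicate technical point is the matching step: verifying that the $S_a$-solution produced on $\mathcal{D}^*$ agrees with the globally defined $Q$, which reduces to checking that both are the unique fixed point of the same Neumann series in the contraction neighbourhood.
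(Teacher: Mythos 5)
Your proposal is correct and follows the same route as the paper's own proof: express $R = A - D$ via Taylor's theorem to inherit the exponentially small bound on $\delta$ from Lemma~\ref{thm_splitting_bound}; form $M = V^{-1}D^{-1}RV$ and bound it using the polynomial estimates on $V$, $V^{-1}$, $D^{-1}$; and then re-run the $S_a$-based fixed-point scheme from Lemma~\ref{thm_var_eq_sol_pol_close} to obtain an exponentially small $Q$, whence $U - V = VQ$. The only departures are cosmetic: the paper keeps the weight $a = \varepsilon/2$ and works directly on $\mathcal{D}$ (where $\phi_a$ and its inverse are both $O(1)$, so the $S_a$ norm bound is if anything better than the quoted $\varepsilon^{-2}$), whereas you introduce a slightly enlarged $\varepsilon$-independent domain $\mathcal{D}^*$ and take $a = O(1)$; with $a = O(1)$ on a fixed-size domain, Lemma~\ref{thm_lin_diff_eq_sol} actually gives $\|S_a\| = O(1)$, not $O(\varepsilon^{-2})$, so your stated $\varepsilon^{-2}$ factor is a pessimistic (but harmless) overcount that merely reproduces the paper's $\varepsilon^{-16}$. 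You are right to flag the matching between the $S_a$-produced fixed point and the canonical $Q = V^{-1}U - I$ as the one genuinely delicate step; the paper does not address it explicitly either, and a fully rigorous treatment would need to check that the homogeneous solution separating the two is exponentially small (for instance by using the normalisation $U\cdot(1,0)^{T} = \delta$ to pin down the constant of integration). So your proof proposal is in essence the paper's proof, stated with more care about the domain and the uniqueness of the fixed point.
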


\begin{proof}
The proof is essentially the same as the proof of Lemma \ref{thm_var_eq_sol_pol_close}. Here we restate the main points.

By definition we have
\begin{align*}
  A(\varepsilon;\tau) &=\int_0^1 F_\varepsilon'\left( s\,W^+(\varepsilon;\tau)+ (1-s)\, W^-(\varepsilon;\tau) \right) \dd s  \\
  & =\int_0^1 F_\varepsilon'\left( W^-(\varepsilon;\tau)+ s\, \delta(\varepsilon;\tau) \right) \dd s .
\end{align*}
Then
\begin{align*}
 R(\varepsilon;\tau) &= A(\varepsilon;\tau)- D(\varepsilon;\tau) \\
 & = \int_0^1 \Bigg( F_\varepsilon' \Big( W^-(\varepsilon;\tau)+ s\, \delta(\varepsilon;\tau) \Big) - F_\varepsilon' \Big( W^-(\varepsilon;\tau) \Big)  \Bigg) \dd s 
\end{align*}
and by using Taylor's theorem and the bound for $\delta$ we get that there exists $C>0$
such that for all $\tau\in\mathcal D$ it holds that
$$ |R(\varepsilon;\tau)| \les C \varepsilon^{-2} \me^{-\frac{ 2\pi^2}{\varepsilon} }. $$

We have
$$ M = V^{-1} \cdot D^{-1} \cdot R \cdot V$$
and since $M$ is a function defined on $\mathcal D$ we get
\begin{align*}
 \|V\|_{\sup},\|V^{-1}\|_{\sup} & \les C \; \varepsilon^{-4} \big( 1 + O(\varepsilon^{1/2}) \big), \\
 \|D^{-1}\|_{\sup} & \les 1 + O(\varepsilon^{1/2}), \\
 \|S_a\|_{\sup} &\les C \; \varepsilon^{-2}.
\end{align*}
Recall that we have set $ a = \varepsilon/2 $. Then
$$ \|S_a[M]\|_{\sup} \les C \varepsilon^{-12} \me^{-\frac{ 2\pi^2}{\varepsilon} } \big( 1 + O(\varepsilon^{1/2}) \big). $$
Now by the same contraction mapping argument we get 
$$ \| Q \|_{\sup} \les C \varepsilon^{-12} \me^{-\frac{ 2\pi^2}{\varepsilon} } \big( 1 + O(\varepsilon^{1/2}) \big), $$
which implies
$$ \| U - V\|_{\sup} = \| V\cdot Q \|_{\sup} \les C \varepsilon^{-16} \me^{-\frac{ 2\pi^2}{\varepsilon} }
\big( 1 + O(\varepsilon^{1/2}) \big). \qedhere $$
\myqed
\end{proof}

\section{Asymptotic expansion of the separatrix splitting}
\label{ch_asym_exp_of_splitting}

Recall that in \eqref{eq_Theta_definition} we have defined the periodic function
$$\Theta(\varepsilon;\tau) = \omega(\delta(\varepsilon;\tau),\Phi(\varepsilon;\tau)).$$
We define
$$\Theta^-(\varepsilon;\tau) = \omega(\delta(\varepsilon;\tau),\dot{W}^-(\varepsilon;\tau)).$$
Note that unlike $\Theta$, $\Theta^-$ is not periodic, as $\delta$ and $\dot{W}^-$
do not satisfy the same equation. However, we will see that $\Theta^-$ is close enough
to $\Theta$, so that they can be approximated by the same asymptotic series.

We write $\Theta$ as a Fourier series
\begin{align*}
 \Theta(\varepsilon;\tau) = c_0 + \sum_{n\ges1} c_n(\varepsilon) \me^{-2\pi n \ii\tau} + \sum_{n\ges1} \overline{c_n(\varepsilon)} \me^{2\pi n \ii\tau}.
\end{align*}
Then
\begin{align}
 \Theta(\varepsilon;t+\tfrac{\pi}{\varepsilon}\ii) =c_0 + \sum_{n\ges1} c_n(\varepsilon) \me^{\frac{2\pi^2n}{\varepsilon}} \me^{-2\pi n \ii t} + \sum_{n\ges1} \overline{c_n(\varepsilon)} \me^{-\frac{2\pi^2n}{\varepsilon}} \me^{2\pi n \ii t}.
 \label{eq_Theta_fourier_high}
\end{align}

\subsection{Asymptotic series for $\Theta$}

We define
\begin{equation}
 L_1(\nu)=\{t\in\C:\im(t)=-\nu \text{ and } |\re(t)|\les\tfrac12 \}
 \label{eq_L1_definition}
\end{equation}
and we fix $\nu = - (M+2) (2\pi)^{-1} \log(\varepsilon)$.
Then we have $\me^{2\pi\ii t} = O(\varepsilon^{-M-2})$.
We will estimate $\Theta$ and $\Theta^-$ on that line.

\begin{lemma}
 There exists a series formal in $\varepsilon$
 $$ \tilde{\Theta}(\varepsilon;t) = \sum_{n\ges0}\varepsilon^n \zeta_n(t)  $$
 with $\zeta_n$ functions analytic in the semistrip $|\re(t)|\les2$, $\im(t)\les \Lambda$,
 such that for all $ t\in L_1(\nu)$ it holds that \footnote
{
Recall that $N=6M+39$.
}
 $$ \Theta(\varepsilon;t+\tfrac{\pi}{\varepsilon}\ii) = \sum_{n\ges0}^N\varepsilon^n \zeta_n(t) + O(\varepsilon^{2M+3}). $$
 \label{thm_Theta_asymptotic_series}
\end{lemma}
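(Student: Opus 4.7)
The plan is to construct the asymptotic series by inserting the inner (near-singularity) expansions of $W^\pm$ and $\dot W^-$ developed in Sections \ref{ch_form_sep_sing} and \ref{ch_borel_trans} into the definition $\Theta=\omega(\delta,\Phi)$, and absorbing the gap between $\Phi$ and $\dot W^-$ as a lower-order correction. The natural splitting is
\[
\Theta(\varepsilon;t+\tfrac{\pi}{\varepsilon}\ii) \;=\; \omega(\delta,\dot W^-) \;+\; \omega\bigl(\delta,\Phi-\dot W^-\bigr),
\]
so the first term is attacked by inner matching and the second is handled as a remainder using the variational-equation estimates.

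For the principal term, Lemma \ref{thm_local_lemma_matching} yields
\[
W^{\pm}(\varepsilon;t+\tfrac{\pi}{\varepsilon}\ii) = \sum_{k=0}^{N-1}\varepsilon^k W_k^\pm(t) + O\bigl(\varepsilon^{(N-1)/2}\bigr)
\]
on $\pm\mathcal D_2$, and a Cauchy-formula argument on a disc staying strictly inside $\mathcal D_2$ provides the analogous expansion for $\dot W^-$ with the same remainder order. Using Lemma \ref{thm_borel_trans_form_sol_sing} to bound $\delta_k:=W_k^+-W_k^- = O(t^{k+2}\me^{-2\pi\ii t})$ in the sector $D^+\cap D^-$ (which by the choice of $\Lambda$ contains the semistrip $\{|\re t|\les 2,\,\im t\les\Lambda\}$), I would define the Cauchy product
\[
\zeta_n(t) := \sum_{j+k=n}\omega\bigl(\delta_j(t),\,\dot W_k^-(t)\bigr).
\]
Each $\zeta_n$ is analytic in the semistrip, and truncating at order $N$ yields $\omega(\delta,\dot W^-) = \sum_{n=0}^{N}\varepsilon^n\zeta_n(t) + E_1$, where $E_1$ is controlled by the matching remainder times the size of the factor functions on $L_1(\nu)$. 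Since $|t|=O(\log(1/\varepsilon))$ on $L_1(\nu)$ and $|\me^{-2\pi\ii t}|=\varepsilon^{M+2}$ there, the polynomial growth of $\delta_j$ and $\dot W_k^-$ in $t$ contributes only poly-logarithmic factors, and $E_1 = O(\varepsilon^{(N-1)/2}\log^{O(N)}(1/\varepsilon))$, which for $N=6M+39$ is far smaller than $\varepsilon^{2M+3}$.

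For the correction $\omega(\delta,\Phi-\dot W^-)$, the inner expansion for $\delta$ together with $|\me^{-2\pi\ii t}|=\varepsilon^{M+2}$ on $L_1(\nu)$ gives $|\delta| = O(\varepsilon^{M+2}\log^2(1/\varepsilon))$, while Corollary \ref{thm_initial_bounds_for_fund_sol} (applied on $\mathcal M$, which contains the shifted line $L_1(\nu)+\tfrac{\pi}{\varepsilon}\ii$) provides $|\Phi-\dot W^-| = O(\varepsilon^{(N-27)/2})$. Their product is $O(\varepsilon^{M+2+(N-27)/2})$, which is absorbed into $O(\varepsilon^{2M+3})$ for $N\ges 2M+21$, again easily satisfied by $N=6M+39$.

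The main obstacle I anticipate is establishing the derivative-level matching for $\dot W^-$ with the same rate $\varepsilon^{(N-1)/2}$: a termwise differentiation of Lemma \ref{thm_local_lemma_matching} is not automatic, so one must verify that $\mathcal D_2$ has uniformly positive thickness so the Cauchy estimate preserves the power of $\varepsilon$. A secondary, technical point is bookkeeping the poly-logarithmic factors $\log^{O(N)}(1/\varepsilon)$ coming from the polynomial-in-$t$ growth of $\delta_j$ and $\dot W_k^-$; the choice $N=6M+39$ is engineered precisely so that the truncation power $\varepsilon^{(N-1)/2}$ comfortably dominates them.
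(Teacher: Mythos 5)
Your proposal is correct and follows essentially the same route as the paper: expand $\delta$ and $\dot W^-$ via the inner matching of Lemma \ref{thm_local_lemma_matching}, replace $\Phi$ by $\dot W^-$ up to the $O(\varepsilon^{3M+6})$ error of Corollary \ref{thm_initial_bounds_for_fund_sol}, take the Cauchy product to define $\zeta_n$, and check that all errors fall below $\varepsilon^{2M+3}$ thanks to the large margin built into $N=6M+39$. The one place you are more careful than the paper is the derivative-level matching for $\dot W^-$, where you explicitly invoke a Cauchy estimate on an $\varepsilon$-independent disc inside $\mathcal D_2$; the paper uses the expansion $\dot W^- = \sum_{n}\varepsilon^n\dot W_n^- + O(\cdot)$ without comment, so your observation closes a small gap rather than departing from the argument.
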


\begin{proof}
 We define the formal series
 $$\tilde{\delta}(\varepsilon;t) = \sum_{n\ges0} \varepsilon^n \delta_n(t),$$
 with $\delta_n(t)= W_n^+(t)-W_n^-(t)$, where $W_n^\pm$ were defined in Section \ref{ch_borel_trans}. 
 We denote by $\tilde{\delta}_N$ the sum of the first $N+1$ terms.
 
 Corollary \ref{thm_initial_bounds_for_fund_sol} implies that for $t\in L_1(\nu)$ it holds
 $$\Phi(\varepsilon;t+\tfrac{\pi}{\varepsilon}\ii) = \dot{W}^-(\varepsilon;t+
 \tfrac{\pi}{\varepsilon}\ii)  + O(\varepsilon^{3M+6}).$$

 By Lemma \ref{thm_local_lemma_matching}, for $t\in L_1(\nu)$ we also know that 
 $$W^\pm(\varepsilon;t+\tfrac\pi\varepsilon\ii) = \sum_{n=0}^N \varepsilon^n W_n^\pm(t) + O(\varepsilon^{3M+19}).$$
 Subtracting $W^-$ from $W^+$ we get
 $$\delta(\varepsilon;t+\tfrac{\pi}{\varepsilon}\ii)=\tilde{\delta}_N(\varepsilon;t)  + O(\varepsilon^{3M+19}).$$
 We know from Lemma \ref{thm_borel_trans_form_sol_sing} that $\delta_n(t)=O(t^{n+2} \me^{-2\pi\ii t})$.
 Then for $t\in L(\nu)$ we have $\varepsilon^n\delta_n(t)=O(\varepsilon^{-M-3+n})$ which implies that for all $N\in\N$
 $$ \tilde{\delta}_N(\varepsilon;t) = O(\varepsilon^{-M-3}). $$
 We saw that $\formalSeparatrixSing_n(t)=O(t^{n-1})$. The function $W^-_n$ is the Borel-Laplace
 sum of $\formalSeparatrixSing_n$ and this implies that it can be decomposed into a polynomial
 of degree $n-1$ and a function in the class $O(t^{-1})$. This implies that $\dot{W}_n^-(t) = O(|t|^{n-2})$.
 Using this fact and the above bounds, we get
 \begin{align*}
  \Theta(\varepsilon;t+\tfrac{\pi}{\varepsilon}\ii) &= \omega(\delta(\varepsilon;t+\tfrac{\pi}{\varepsilon}\ii),\Phi(\varepsilon;t+\tfrac{\pi}{\varepsilon}\ii)) = \\
  &= \omega\left(\tilde{\delta}_N(\varepsilon;t) + O(\varepsilon^{3M+19})  , \sum_{n=0}^N \varepsilon^n \dot{W}_n^-(t) + O(\varepsilon^{3M+6}) \right) \\
  &= \omega\left(\tilde{\delta}_N(\varepsilon;t) , \sum_{n=0}^N \varepsilon^n \dot{W}_n^-(t) \right) + \omega\left(\tilde{\delta}_N(\varepsilon;t) ,   O(\varepsilon^{3M+6}) \right) \\
  & \hphantom{=}\; + \omega\left( O(\varepsilon^{3M+19})  , \sum_{n=0}^N \varepsilon^n \dot{W}_n^-(t) \right) + \omega\left( O(\varepsilon^{3M+19})  , O(\varepsilon^{3M+6}) \right) \\
  &= \omega\left(\tilde{\delta}_N(\varepsilon;t) , \sum_{n=0}^N \varepsilon^n \dot{W}_n^-(t) \right) + O(\varepsilon^{2M+3}) \\
  &= \sum_{n=0}^N \varepsilon^n \sum_{m=0}^n \omega\left( \delta_m(t), \dot{W}_{n-m}^-(t)  \right) + O(\varepsilon^{2M+3}).
 \end{align*}
 We define $\zeta_n(t) = \sum_{m=0}^n\omega(\delta_m(t),\dot{W}_{n-m}^-(t))$, which is defined
 on the above mentioned semistrip since both $\delta_i$ and $W_{i}^-$ are defined there.
\myqed
\end{proof}

By the definition of $\tilde{\Theta}$ we get
\begin{align*}
 \tilde{\Theta}(\varepsilon;t+1) &= \omega\left(\tilde{\delta}(\varepsilon;t+1),
 \dot{\formalSeparatrixSing}^-(\varepsilon;t+1) \right) \\
 &= \omega\left( F_\varepsilon(\formalSeparatrixSing^+(\varepsilon;t)) - 
 F_\varepsilon(\formalSeparatrixSing^-(\varepsilon;t)),  F_\varepsilon'(\formalSeparatrixSing^-(\varepsilon;t))
 \cdot \dot{\formalSeparatrixSing}^-(\varepsilon;t) \right) \\
 &= \omega\left( F_\varepsilon'(\formalSeparatrixSing^-(\varepsilon;t)) \cdot \tilde{\delta}(\varepsilon;t),
 F_\varepsilon'(\formalSeparatrixSing^-(\varepsilon;t)) \cdot \dot{\formalSeparatrixSing}^-(\varepsilon;t) \right) \\
 &\quad+ \omega\left(\tilde{\mathfrak{V}}(\varepsilon;t) ,  F_\varepsilon'(\formalSeparatrixSing^-(\varepsilon;t))
 \cdot \dot{\formalSeparatrixSing}^-(\varepsilon;t) \right) \\
 &= \tilde{\Theta}(\varepsilon;t) + \omega\left(\tilde{\mathfrak{V}}(\varepsilon;t) , 
 \dot{\formalSeparatrixSing}^-(\varepsilon;t+1) \right),
\end{align*}
with
\begin{align*}
  \tilde{\mathfrak{V}}(\varepsilon;t) & =  F_\varepsilon(\formalSeparatrixSing^+(\varepsilon;t)) -
  F_\varepsilon(\formalSeparatrixSing^-(\varepsilon;t)) -  F_\varepsilon'(\formalSeparatrixSing^-(\varepsilon;t))
  \cdot \tilde{\delta}(\varepsilon;t)\\
  & = F_\varepsilon(\formalSeparatrixSing^-(\varepsilon;t)+\tilde{\delta}(\varepsilon;t)) - 
  F_\varepsilon(\formalSeparatrixSing^-(\varepsilon;t)) -  F_\varepsilon'(\formalSeparatrixSing^-(\varepsilon;t))
  \cdot \tilde{\delta}(\varepsilon;t).
\end{align*}

\begin{lemma}
 $\tilde{\mathfrak{V}}$ can be written as
\begin{align*}
 \tilde{\mathfrak{V}}(\varepsilon;t)=\sum_{n\ges0}\varepsilon^n\mathscr{V}_n(t)
\end{align*}
where the coefficients are analytic in the semistrip $|\re(t)|\les2$, $\im(t)\les \Lambda$
and admit an upper bound of the form $\mathscr{V}_n(t) = O(t^{n+4}\me^{-4\pi\ii t})$.
\label{thm_V_existence_and_bound}
\end{lemma}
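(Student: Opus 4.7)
The idea is to recognize $\tilde{\mathfrak{V}}$ as the second-order Taylor remainder of $F_\varepsilon$ expanded about $\formalSeparatrixSing^-(\varepsilon;t)$ with increment $\tilde{\delta}(\varepsilon;t) = \formalSeparatrixSing^+(\varepsilon;t) - \formalSeparatrixSing^-(\varepsilon;t)$. By the integral form of Taylor's theorem,
\begin{equation*}
\tilde{\mathfrak{V}}(\varepsilon;t) = \int_0^1 (1-s)\, F_\varepsilon''\bigl(\formalSeparatrixSing^-(\varepsilon;t) + s\,\tilde{\delta}(\varepsilon;t)\bigr)\bigl[\tilde{\delta}(\varepsilon;t),\tilde{\delta}(\varepsilon;t)\bigr]\,ds,
\end{equation*}
or equivalently as the formal sum $\tilde{\mathfrak{V}} = \sum_{k\ges 2}\tfrac{1}{k!}F_\varepsilon^{(k)}(\formalSeparatrixSing^-)[\tilde{\delta}]^k$. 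The plan is to collect this identity order-by-order in $\varepsilon$ in order to define $\mathscr{V}_n$, and to exploit the presence of two factors of $\tilde{\delta}$ to generate the claimed factor $\me^{-4\pi\ii t}$ in the bound.

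First I would establish analyticity. The coefficients $W_n^-(t)$ of $\formalSeparatrixSing^-(\varepsilon;t)$ are the Borel--Laplace sums produced in Lemma \ref{thm_borel_trans_form_sol_sing} and are analytic on $D^-$, while each $\delta_n(t)$ is analytic on $D^+\cap D^-$. By the choice of $\Lambda$ made at the start of Section \ref{ch_form_sols_approximate}, the semistrip $\{|\re t|\les 2,\ \im t\les \Lambda\}$ lies inside $D^+\cap D^-$. Since $F_\varepsilon$ is analytic in all its arguments, substitution into the displayed Taylor identity and extraction of the $\varepsilon^n$-coefficient expresses $\mathscr{V}_n(t)$ as a finite algebraic combination of $W_m^-(t)$, $\delta_m(t)$ for $m\les n$, and derivatives of the $F_m$ evaluated at these; hence each $\mathscr{V}_n$ is analytic on the semistrip.

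Next I would estimate $\mathscr{V}_n(t)$. The dominant contribution is the quadratic Taylor term: the $\varepsilon^n$-coefficient of $\tfrac12 F_\varepsilon''(\formalSeparatrixSing^-)[\tilde{\delta},\tilde{\delta}]$ is a finite sum of products of the form $G_{n_0}(t)\,\delta_{n_1}(t)\,\delta_{n_2}(t)$ with $n_0+n_1+n_2 = n$, where $G_{n_0}(t)$ denotes the $\varepsilon^{n_0}$-coefficient of $\tfrac12 F_\varepsilon''(\formalSeparatrixSing^-)$; each such $G_{n_0}$ has at most polynomial growth in $t$ on the semistrip (using $W_m^-(t)=O(t^{m-1})$ and $F_m$ analytic). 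Invoking Lemma \ref{thm_borel_trans_form_sol_sing}, $\delta_{n_i}(t) = O(t^{n_i+2}\me^{-2\pi\ii t})$, and each product is therefore $O(t^{n+4}\me^{-4\pi\ii t})$. The higher Taylor terms with $k\ges 3$ contribute factors $\delta_{n_1}\cdots\delta_{n_k}$ with $\sum n_i\les n$, namely $O(t^{n+2k}\me^{-2k\pi\ii t})$, which differ from the leading bound by a factor $t^{2(k-2)}\me^{-2(k-2)\pi\ii t}$. On the semistrip this extra factor is uniformly bounded: as $\im t\to-\infty$ the exponential decay beats any polynomial, and on the complementary bounded piece $\im t\in[-T,\Lambda]$ continuity gives a bound.

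The main technical subtlety is that $\tilde{\delta}$ has non-vanishing $\varepsilon^0$ coefficient $\delta_0$, so infinitely many $k$ contribute to each $\mathscr{V}_n$, and one must verify convergence and summability of the $k$-series uniformly in $t$ on the semistrip. Here I would use that $|\delta_0(t)|$ is uniformly bounded on the semistrip by a constant depending only on $\Lambda$, while $W_0^-(t)$ stays in a neighborhood of $0$ of uniform size, so that the Taylor series of $F_0$ at $W_0^-(t)$ has uniformly positive radius of convergence and converges absolutely for $h=\delta_0(t)$ after possibly shrinking $\varepsilon_0$; together with the factorials $1/k!$ and the exponential decay in $\im t$ of $\me^{-2(k-2)\pi\ii t}$, this yields an $n$-independent geometric bound for the tail in $k$ and completes the estimate $\mathscr{V}_n(t) = O(t^{n+4}\me^{-4\pi\ii t})$.
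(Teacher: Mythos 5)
Your proposal is correct in substance but takes a genuinely different route from the paper's. The paper deliberately avoids treating $\tilde{\delta}$ as a single Taylor increment: it absorbs $\delta_0$ into the anchor point, expanding $F_\varepsilon(\formalSeparatrixSing^-+\tilde{\delta})$ about $W_0^-+\delta_0$ and $F_\varepsilon(\formalSeparatrixSing^-)$ about $W_0^-$, so that each $\varepsilon^n$-coefficient is a \emph{finite} combinatorial sum indexed by $\mathcal{P}$ and $\hat{\mathcal{P}}$; the order-$\delta^2$ structure is then extracted by pairing up terms across the two anchor points via one more application of Taylor's theorem in $\delta_0$. Your approach keeps $\delta_0$ inside the increment and applies Taylor's theorem to $F_\varepsilon$ at $\formalSeparatrixSing^-$ directly, which is conceptually cleaner but forces you to control an infinite sum over $k$ at each order $n$. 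That is a legitimate trade: you replace the paper's combinatorial bookkeeping with an analytic convergence argument.

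Two points need tightening, however. First, there is a mild internal inconsistency between your analyticity step, where you assert that $\mathscr{V}_n$ is a ``finite algebraic combination,'' and your later admission that ``infinitely many $k$ contribute to each $\mathscr{V}_n$.'' These correspond to the two equivalent forms you wrote at the outset: the integral form (anchor $\formalSeparatrixSing^- + s\tilde{\delta}$, hence base point $W_0^- + s\delta_0$, yielding a finite sum at each order but with an $s$-integral) and the series form (anchor $\formalSeparatrixSing^-$, hence base point $W_0^-$, yielding the infinite sum over $k$). You should pick one and carry it through both the analyticity and the estimate; mixing them obscures where the convergence issue actually lives. Second, the smallness of $\delta_0$ that your convergence argument relies on cannot be obtained by ``shrinking $\varepsilon_0$,'' because $\delta_0(t)=W_0^+(t)-W_0^-(t)$ is $\varepsilon$-independent. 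What makes $\|\delta_0\|$ small on the semistrip is \emph{enlarging} $\Lambda$: since $\delta_0(t)=O(t^2\me^{-2\pi\ii t})$, its modulus on $\im t\les-\Lambda$ is controlled by $\Lambda^2\me^{-2\pi\Lambda}$. The same observation also makes the geometric tail in $k$ explicit: the extra factor $|t|^{2(k-2)}\me^{2(k-2)\pi\im t}$ is bounded on the semistrip by $\bigl((4+\Lambda^2)\me^{-2\pi\Lambda}\bigr)^{k-2}$, which for $\Lambda$ large is summable against the $1/k!$ and the Cauchy bound $\|F^{(k)}\|\lesssim k!/r^k$. With these corrections the proposal is sound.
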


Using this lemma we can write
\begin{align}
 \zeta_n(t+1) = \zeta_n(t) + \sum_{m=0}^n \omega(\mathscr{V}_m(t),\dot{W}_{n-m}^-(t+1)).
 \label{eq_zeta_recurence_relations}
\end{align}

\begin{proof}
For the proof we need to show that in a sense the following relation holds:
$$ F_\varepsilon(\formalSeparatrixSing^- +\tilde{\delta} ) = F_\varepsilon(\formalSeparatrixSing^- )
+ F_\varepsilon'(\formalSeparatrixSing^- )\cdot \tilde{\delta}  + O\big(\tilde{\delta}^2 \big). $$
This relation would be a direct corollary of Taylor's theorem if every quantity involved was a function.
However, since $\formalSeparatrixSing^-$ and $\tilde{\delta}$ are formal series, we need to
construct a more careful argument.

For the proof we need to introduce some new notation.
Let $H:\C^2\to\C^2$ be analytic in a neighbourhood of the origin. We write its Taylor series as
$$H(W+v)=\sum_{n\ges0}\frac{1}{n!}H^{(n)}(W;\underbrace{v,\dots,v}_{n-\text{times}}) , $$
where $H^{(n)}$ has to be viewed as a symmetric tensor. Notice that the tensor is not
linear with respect to its first argument.

Using this notation we write $H'(W)\cdot u$ as $H^{(1)}(W;u)$ and we have
$$H^{(1)}(W+v;u)=\sum_{n\ges0}\frac{1}{n!}H^{(n+1)}(W;\underbrace{v,\dots,v}_{n-\text{times}},u).$$
In general, it holds
$$H^{(n)}(W+u,v_1,\dots,v_n)=H^{(n)}(W,v_1,\dots,v_n)+H^{(n+1)}(W,v_1,\dots,v_n,u)+O(v^n u^2).$$

We also need a slight generalization of the multi-index notation. We define the set
$$\mathcal{P}(n,m):=\left\{(k_1,\dots,k_m)\in\N^{k}:\sum_{i=1}^{m} k_i=n\right\},$$
which is the set of all $m$-tuples of non-negative integers whose sum is $n$. Then we define the sets
$$\hat{\mathcal{P}}(n,m):=\left\{(j;k_1,\dots,k_m)\in\N^{m}:\sum_{i=1}^{m} k_i=n, 1\les j\les m\right\},$$
$$\hat{\mathcal{P}}_o(n,m):=\left\{(m;k_1,\dots,k_{m-1},k_{m})\in\N^{m}:
\sum_{i=1}^{m+1} k_i=n\right\}\subset \hat{\mathcal{P}}(n,m).$$

Using these we define
$$ \wcomb_{(k_1,\dots,k_m)}=(\dot{W}_{k_1}^-,\dots,\dot{W}_{k_m}^-),$$
$$ \wcomb_{(j;k_1,\dots,\hat{k}_j,\dots,k_m)}=(\dot{W}_{k_1}^-,\dots,\delta_{k_j},\dots,\dot{W}_{k_m}^-).$$

With this notation and having in mind that for any bounded bilinear map $A$ because of
\eqref{eq_bound_for_deltaN}, it holds $A(\delta_i(t), \delta_j(t))=O(t^{i+j+4}\,\me^{-4\pi\ii t})$.
We expand in Taylor series dropping terms that are quadratic in any $\delta_i$ and we get
\begin{align*}
  F_\varepsilon(\formalSeparatrixSing^-+\tilde{\delta}) &= \sum_{n\ges0} \varepsilon^n \Bigg(  F_n(W_0^- + \delta_0) \\
  &\hphantom{=}+ \sum_{m=0}^{n-1} \sum_{k=1}^{n-m}\sum_{p\in\mathcal{P}(n-m,k)} \frac{1}{k!}  F_m^{(k)}\left( W_0^- + \delta_0; \wcomb_p \right)  \\
  &\hphantom{=}+ \sum_{m=0}^{n-1} \sum_{k=1}^{n-m}\sum_{p\in\hat{\mathcal{P}}(n-m,k)} \frac{1}{k!}  F_m^{(k)}\left( W_0^- + \delta_0; \wcomb_p \right) \\
  &\hphantom{=}+ O(t^{n+4}\me^{-4\pi\ii t}) \Bigg),
\end{align*}
\begin{align*}
  F_\varepsilon(\formalSeparatrixSing^-(\varepsilon;t)) &=\sum_{n\ges0} \varepsilon^n \Bigg(  F_n(W_0^-)  \\
  &\hphantom{=} + \sum_{m=0}^{n-1} \sum_{k=1}^{n-m}\sum_{p\in\mathcal{P}(n-m,k)} \frac{1}{k!}  F_m^{(k)}\left( W_0^-; \wcomb_p \right) \\
  &\hphantom{=}+ O(t^{n+4}\me^{-4\pi\ii t}) \Bigg)
\end{align*}
and
\begin{align*}
  F_\varepsilon'(\formalSeparatrixSing^-(\varepsilon;t))\cdot \tilde{\delta}(\varepsilon;t)  = \sum_{n\ges0} \varepsilon^n \Bigg(\sum_{m=0}^n F_m^{(1)}(W_0^-; \delta_{n-m}) \\
 + \sum_{m=0}^{n-1} \sum_{k=1}^{n-m}\sum_{p\in\mathcal{P}(n-m,k)} \frac{1}{k!}  F_m^{(k+1)}\left( W_0^-; \wcomb_p,\delta_0 \right) \\
 + \sum_{m=0}^{n-1} \sum_{k=2}^{n-m}\sum_{p\in\hat{\mathcal{P}}_o(n-m,k)} \frac{1}{(k-1)!}  F_m^{(k)}\left( W_0^-; \wcomb_p \right) \\
 + O(t^{n+4}\me^{-4\pi\ii t}) \Bigg).
\end{align*}

We fix $n$, $m$, $k$ and $p\in\mathcal{P}(n-m,k)$. Then by Taylor's theorem we have that
\begin{align*}
 \frac{1}{k!}  F_m^{(k)}\left( W_0^- + \delta_0; \wcomb_p \right) -
 \frac{1}{k!}  F_m^{(k)}\left( W_0^-; \wcomb_p \right) - \frac{1}{k!} 
 F_m^{(k+1)}\left( W_0^-; \wcomb_p, \delta_0 \right)
\end{align*}
is of order $\delta_0^2$.

Then we fix $n$, $m$, set $k=1$ and similarly we get that
\begin{align*}
  F_m^{(1)}(W_0^- + \delta_0;\delta_{n-m}) -  F_m^{(1)}(W_0^-; \delta_{n-m})
\end{align*}
is of order $\delta_0 \delta_{n-m}$.

Finally we fix $n$, $m$, $k>1$ and since $F_m^{(k)}$ is a symmetric tensor we have 
\begin{align*}
 &\sum_{p\in\hat{\mathcal{P}}(n-m,k)} \frac{1}{k!}  F_m^{(k)}\left( W_0^-
 + \delta_0; \wcomb_p \right) - \sum_{p\in\hat{\mathcal{P}}_o(n-m,k)} \frac{1}{(k-1)!}
 F_m^{(k)}\left( W_0^-; \wcomb_p \right) \\
 &=\sum_{p\in\hat{\mathcal{P}}_o(n-m,k)} \frac{1}{(k-1)!}  F_m^{(k)}\left( W_0^- + \delta_0; \wcomb_p \right)
 - \frac{1}{(k-1)!}  F_m^{(k)}\left( W_0^-; \wcomb_p \right).
\end{align*}
This implies that each term of the sum is of order $\delta_0 \delta_{j}$ for some $j\in\{1,\dots,n-m\}$.

The above arguments show that for all $n$, $\mathscr{V}_n(t) = O(t^{n+4}\me^{-4\pi\ii t})$.
\myqed
\end{proof}

\subsection{The first Fourier coefficient of \texorpdfstring{$\Theta$}{Theta}}

We can compute the fist Fourier coefficient $c_1(\varepsilon)$ of the function $\Theta$ using equation
\eqref{eq_Theta_fourier_high} in the form of the integral 
\begin{equation}
 \theta(\varepsilon):=\int_{L_1(\nu)}\me^{2\pi\ii t} \Theta(\varepsilon;t+\tfrac{\pi}{\varepsilon}\ii)
 \dd t = c_1(\varepsilon) \me^{\frac{2\pi^2}{\varepsilon}},
 \label{eq_first_fourier_coeff}
\end{equation}
where $L_1(\nu)$ is the line segment defined in \eqref{eq_L1_definition}. Note that the
value of this integral is independent of the choice of $\nu$ as long as $L_1(\nu)$ remains
inside the domain of analyticity of $\Theta$.

Lemma \ref{thm_Theta_asymptotic_series} implies that
\begin{align}
\begin{split}
 \theta(\varepsilon) &= \int_{L_1(\nu)}\me^{2\pi\ii t} \left( \tilde{\Theta}_N(\varepsilon;t)
 + O(\varepsilon^{2M+3}) \right) \dd t \\
 &= \sum_{n=0}^N\varepsilon^n \left( \int_{L_1(\nu)}\me^{2\pi\ii t} \zeta_n(t) \dd t \right)
 + O(\varepsilon^{M+1}).
\end{split}
\label{eq_iner_label_01}
\end{align}

\begin{lemma}
 We define
 \begin{align*}
  \rho_n(t) = \int_{t}^{t+1}\me^{2\pi\ii s}\zeta_n(s) \dd s
 \end{align*}
 for all $t$ in the semistrip $\im t < -\Lambda$ and $|\re t|\les2$. 
 It is true that
 \begin{align*}
  \rho_n(t) = \theta_n + O(t^{n+2}\me^{-2\pi\ii t}).
 \end{align*}
\end{lemma}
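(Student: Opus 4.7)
The plan is to exploit the near-periodicity of $\zeta_n$ provided by \eqref{eq_zeta_recurence_relations} together with a contour deformation. Both $\zeta_n$ and $\me^{2\pi\ii s}$ are analytic in the semistrip $|\re s|\les 2$, $\im s<\Lambda$, so for any two points $t_1,t_2$ with $\re t_1=\re t_2$ and $\im t_1<\im t_2<-\Lambda$, closing the rectangle with vertices $t_1,t_1+1,t_2+1,t_2$, applying Cauchy's theorem, and substituting $s\mapsto s+1$ on the translated vertical edge (using $\me^{2\pi\ii}=1$) would give
\begin{equation*}
\rho_n(t_2)-\rho_n(t_1) \;=\; \int_{t_1}^{t_2} \me^{2\pi\ii s}\big(\zeta_n(s+1)-\zeta_n(s)\big)\, ds.
\end{equation*}

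Next I would estimate the integrand. The recurrence \eqref{eq_zeta_recurence_relations}, combined with the bound $\mathscr V_m(s)=O(s^{m+4}\me^{-4\pi\ii s})$ from Lemma \ref{thm_V_existence_and_bound} and the estimate $\dot W^-_k(s)=O(|s|^{k-2})$ (which follows from the decomposition of $W^-_k$ into a polynomial of degree at most $k-1$ plus an $O(|s|^{-1})$ tail, as in Section \ref{ch_borel_trans}), yields
\begin{equation*}
\zeta_n(s+1)-\zeta_n(s)=\sum_{m=0}^{n}\omega\big(\mathscr V_m(s),\dot W^-_{n-m}(s+1)\big)=O\big(s^{n+2}\me^{-4\pi\ii s}\big),
\end{equation*}
so the integrand satisfies $\me^{2\pi\ii s}(\zeta_n(s+1)-\zeta_n(s))=O(s^{n+2}\me^{-2\pi\ii s})$.

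To bound the resulting vertical integral, I would parametrise $s=\re t_2+\ii\sigma$ with $\sigma\in[\im t_1,\im t_2]$; then $|\me^{-2\pi\ii s}|=\me^{2\pi\sigma}$, and a standard Laplace-type estimate (substituting $u=\im t_2-\sigma$) gives
\begin{equation*}
\int_{-\infty}^{\im t_2}\big(|t_2|+|\sigma-\im t_2|\big)^{n+2}\,\me^{2\pi\sigma}\,d\sigma \;=\; O\big(|t_2|^{n+2}\me^{2\pi\im t_2}\big) \;=\; O\big(|t_2|^{n+2}\,|\me^{-2\pi\ii t_2}|\big),
\end{equation*}
uniformly in $t_1$. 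Hence $\rho_n$ satisfies the Cauchy condition as $\im t\to-\infty$, the limit $\theta_n:=\lim_{\im t\to-\infty}\rho_n(t)$ exists, and passing $\im t_1\to-\infty$ in the contour-deformation identity yields $\rho_n(t)-\theta_n=O(t^{n+2}\me^{-2\pi\ii t})$, as claimed.

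The one delicate step is the contour-deformation identity in the first paragraph; once the difference $\rho_n(t_2)-\rho_n(t_1)$ has been expressed as the integral of an exponentially decaying function of $\im s$, the existence of $\theta_n$ and the size of the remainder follow by routine asymptotics. The analyticity of $\zeta_n$ throughout the deformation rectangle is already guaranteed by Lemma \ref{thm_Theta_asymptotic_series} and the sectorial analyticity of the constituents $\delta_m$ and $\dot W^-_{n-m}$ recalled in Section \ref{ch_borel_trans}.
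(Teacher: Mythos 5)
Your proposal is correct and follows essentially the same route as the paper: you rewrite $\rho_n(t_2)-\rho_n(t_1)$ as an integral of $\me^{2\pi\ii s}(\zeta_n(s+1)-\zeta_n(s))$, feed in the recurrence \eqref{eq_zeta_recurence_relations} together with the bounds $\mathscr V_m=O(s^{m+4}\me^{-4\pi\ii s})$ and $\dot W^-_k=O(s^{k-2})$, and close with a Laplace-type estimate --- exactly the ingredients of the paper's argument, which instead phrases the same identity as the difference equation $\rho_n(t+1)=\rho_n(t)+\int_t^{t+1}r_n(s)\,\dd s$ with $r_n(s)=\me^{2\pi\ii s}(\zeta_n(s+1)-\zeta_n(s))$ and writes down $\rho_n(t)=\theta_n+\int_{-\ii\infty}^t r_n(s)\,\dd s$ directly. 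The only cosmetic difference is that you obtain the functional relation by contour deformation on a rectangle rather than by the elementary substitution $s\mapsto s+1$ in the definition of $\rho_n(t+1)$; both are valid, and the remainder bound and the identification of $\theta_n$ as the limit are the same.
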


\begin{proof}
We define
$$L_1^-(\mu) := \{ t\in\C : \im t\les\mu, |\re t| \les \tfrac12 \} $$
and
$$L^-(\mu) := L_1^-(\kappa)\cup \big(L_1^-(\kappa)+1\big). $$

Equation \eqref{eq_zeta_recurence_relations} implies that
\begin{align*}
 \me^{2\pi\ii (t+1)}\zeta_n(t+1) = \me^{2\pi\ii t}\zeta_n(t) + r_n(t),
\end{align*}
with
\begin{align*}
 r(t) = \sum_{m=0}^n \omega(\me^{2\pi\ii t}\mathscr{V}_m(t),\dot{W}_{n-m}^-(t+1))
\end{align*}
and due to Lemma \ref{thm_V_existence_and_bound}, $r_n(t) = O(t^{n+2}\me^{-2\pi\ii t})$.
All of the above functions are analytic in $L^-(\nu)$.

Then $\rho_n$ satisfies the equation
\begin{align*}
 \rho_n(t+1) = \rho_n(t) + \int_{t}^{t+1}r_n(s) \dd s,
\end{align*}
which has as a solution
\begin{align*}
 \rho_n(t) = \theta_n + \int_{-\ii \infty}^{t}r_n(s) \dd s
\end{align*}
for some constant $\theta_n$.

Since we know the bound for $r_n$ and 
\begin{align*}
 \int_{\infty}^{|t|} s^{n+2} \me^{-2\pi s} \dd s \les C_{n} |t|^{n+2} \me^{-2\pi |t|},
\end{align*}
we get that for all  $t\in L_1^-(\nu)$
\begin{align*}
 \rho_n(t) = \theta_n + O(t^{n+2}\me^{-2\pi\ii t}).
\end{align*}
\myqed
\end{proof}

\begin{remark}
 For the first constant $\theta_0$ we have
 $$ \theta_0 = \lim_{\nu\to\infty}\int_0^1 \omega\big(\delta_0(t),\dot{W}_0^-(t)\big) \dd t $$
 which is the Stokes constant of the resonant map.
\end{remark}

\begin{lemma}
 There exist constants $\theta_i\in\C$ such that
 \begin{align*}
  \theta(\varepsilon)= \sum_{n=0}^M \varepsilon^n \theta_n + O(\varepsilon^{M+1}).
 \end{align*}
\end{lemma}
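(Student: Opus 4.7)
The plan is to exploit the previous lemma, which already does the essentially technical work by showing that
$$\rho_n(t) := \int_{t}^{t+1}\me^{2\pi\ii s}\zeta_n(s) \dd s = \theta_n + O(t^{n+2}\me^{-2\pi\ii t})$$
for $t$ in the lower semistrip. First I would observe that the contour $L_1(\nu)$ is precisely a horizontal segment of unit length, so that
$$\int_{L_1(\nu)}\me^{2\pi\ii s}\zeta_n(s) \dd s = \rho_n(-\tfrac{1}{2}-\ii \nu).$$
Hence the previous lemma, applied at $t=-\tfrac{1}{2}-\ii\nu$, gives
$$\int_{L_1(\nu)}\me^{2\pi\ii s}\zeta_n(s) \dd s = \theta_n + O\!\left(\nu^{n+2}\me^{-2\pi \nu}\right).$$

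Next I would plug in the chosen value $\nu=-(M+2)(2\pi)^{-1}\log\varepsilon$, so that $\me^{-2\pi\nu}=\varepsilon^{M+2}$ and $\nu^{n+2}=O(|\log\varepsilon|^{n+2})$. The remainder on each coefficient becomes $O(\varepsilon^{M+2}|\log\varepsilon|^{n+2})$, which is $o(\varepsilon^{M+1})$ uniformly for $0\les n\les N$. Substituting this into \eqref{eq_iner_label_01}, I split the resulting sum:
$$\theta(\varepsilon)=\sum_{n=0}^{M}\varepsilon^n\theta_n + \sum_{n=M+1}^{N}\varepsilon^n\theta_n + \sum_{n=0}^{N}\varepsilon^n\cdot O(\varepsilon^{M+2}|\log\varepsilon|^{n+2}) + O(\varepsilon^{M+1}).$$
The middle sum is $O(\varepsilon^{M+1})$ since each of its terms is, and the third sum is also $O(\varepsilon^{M+1})$ because each summand is $O(\varepsilon^{n+M+2}|\log\varepsilon|^{n+2})$ and $N$ is fixed. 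Collecting these remainders yields the claimed expansion.

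There is essentially no hard step left: the analytical heart of the argument is in the previous lemma (controlling $\rho_n$ via the defining difference equation and integration from $-\ii\infty$), together with Lemma~\ref{thm_Theta_asymptotic_series} which provides the polynomial-order asymptotic of $\Theta$ along $L_1(\nu)$. The only point deserving care is the bookkeeping: one must verify that the tail of the sum from $M+1$ to $N$ is absorbed by $O(\varepsilon^{M+1})$ (which is immediate since the constants $\theta_n$ do not depend on $\varepsilon$) and that the logarithmic factor $|\log\varepsilon|^{n+2}$ is harmless because $\varepsilon^{M+2}|\log\varepsilon|^{n+2}=o(\varepsilon^{M+1})$ for every fixed $n\les N$. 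The constants $\theta_n\in\C$ appearing in the statement are exactly those produced by the previous lemma, and in particular $\theta_0$ coincides with the Stokes constant as noted in the remark.
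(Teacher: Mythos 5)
Your proposal is correct and follows essentially the same route as the paper: apply the previous lemma to express the integral in \eqref{eq_iner_label_01} via $\rho_n$, use the choice of $\nu$ to bound the remainder, and truncate the sum at $n=M$. The only difference is cosmetic: you track the $|\log\varepsilon|^{n+2}$ factor explicitly and note that $\varepsilon^{M+2}|\log\varepsilon|^{n+2}=o(\varepsilon^{M+1})$, whereas the paper compresses this into the bound $t^{n+2}\me^{-2\pi\ii t}=O(\varepsilon^{M+1})$ on $L_1(\nu)$ without commenting on the logarithm; both are correct.
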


\begin{proof}
We use the previous lemma with equation \eqref{eq_iner_label_01}.
As $\nu$ was chosen such that $\me^{-2\pi\ii t} = O(\varepsilon^{M+2})$ for $t\in L_1(\nu)$,
then for any $n\in\N$ it holds $t^{n+2} \me^{-2\pi\ii t} = O(\varepsilon^{M+1})$. 
This gives
\begin{align*}
 \rho_n(t) = \theta_n + O(\varepsilon^{M+1}),
\end{align*}
which we can combine with the equation \eqref{eq_iner_label_01} to get
\begin{align*}
 \theta(\varepsilon) =&  \sum_{n=0}^M\varepsilon^n \theta_n + O(\varepsilon^{M+1}). \qedhere
\end{align*}
\myqed
\end{proof}

\subsection{The constant term of \texorpdfstring{$\Theta$}{Theta}}

\begin{lemma}
 There exists $C>0$ such that
 $$|c_0| \les C \varepsilon^{-18} \me^{-\frac{ 4\pi^2}{\varepsilon} } \big( 1 + O(\varepsilon^{1/2}) \big). $$
\end{lemma}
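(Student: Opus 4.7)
The plan is to combine a sharp pointwise comparison between $\Theta$ and $\Theta^-$ with a structural smallness of the mean of $\Theta^-$ over one period.

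First, from the definitions one has $\Theta - \Theta^- = \omega(\delta, \Phi - \dot W^-)$. Combining Lemma~\ref{thm_splitting_bound}, which gives $|\delta| \les C\varepsilon^{-2}\me^{-2\pi^2/\varepsilon}$ on $\mathcal{D}$, with Lemma~\ref{thm_var_eq_sol_exp_bound}, which gives $|\Phi - \dot W^-| \les \|U - V\|_{\sup} \les C\varepsilon^{-16}\me^{-2\pi^2/\varepsilon}$ on $\mathcal{D}$, yields
\begin{align*}
|\Theta(\varepsilon;\tau) - \Theta^-(\varepsilon;\tau)| \les C\varepsilon^{-18}\me^{-\tfrac{4\pi^2}{\varepsilon}} \quad \text{for all } \tau \in \mathcal{D}.
\end{align*}
Since $\Theta$ is exactly periodic of period $1$, the zeroth Fourier coefficient equals $c_0 = \int_0^1 \Theta(\tau)\,d\tau$, and the bound above gives
\begin{align*}
c_0 = \int_0^1 \Theta^-(\tau)\,d\tau + O\bigl(\varepsilon^{-18}\me^{-\tfrac{4\pi^2}{\varepsilon}}\bigr).
\end{align*}

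The remaining task is to bound $\int_0^1 \Theta^-(\tau)\,d\tau$ by a quantity of the same order. I would use an almost-periodicity of $\Theta^-$: writing $A = D + R$ with $|R| = O(|\delta|)$ (by Taylor's theorem applied to the definition of $A$) and using $\det D = 1$, a direct computation gives
\begin{align*}
\Theta^-(\tau+1) - \Theta^-(\tau) = \omega(R(\tau)\delta(\tau), D(\tau)\dot W^-(\tau)),
\end{align*}
whose right-hand side is $O(\varepsilon^{-2}\me^{-4\pi^2/\varepsilon})$ on the real axis near $\mathcal{D}$. Combined with the exponential decay $|\Theta^-(\tau)| = O(\varepsilon\me^{2\tau\varepsilon})$ as $\tau \to -\infty$ along the real axis (from $|\dot W^-(\tau)|, |\delta(\tau)| \sim \me^{\tau\varepsilon}$ near the saddle $v_u$), choosing $T \sim \varepsilon^{-2}$ such that $\me^{-2T\varepsilon} \les \varepsilon^{18}\me^{-4\pi^2/\varepsilon}$ makes $\int_{-T}^{-T+1}\Theta^-$ negligible, while telescoping the almost-periodicity relation from $-T$ to $0$ costs an additional $O(\varepsilon^{-2}\me^{-4\pi^2/\varepsilon})$ because the drift contributions decay geometrically away from $\mathcal{D}$. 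This gives $\bigl|\int_0^1 \Theta^-\bigr| = O(\varepsilon^{-2}\me^{-4\pi^2/\varepsilon})$, which combined with the previous step yields the claimed bound on $|c_0|$.

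The main obstacle is justifying the long-tracking argument: one needs pointwise control of $|\delta|$ and $|\dot W^-|$ along the whole real interval $[-T, 1]$ with $T \sim \varepsilon^{-2}$, well beyond the small rectangle $\mathcal{D}$ where the sharp estimates were derived, which requires propagating bounds from near the saddle. A cleaner alternative route would appeal directly to the exact-symplecticity of $F_\varepsilon$: in that framework $\Theta^-$ is the $\tau$-derivative of the discrete homoclinic action $\mathcal{L}(\tau) = \sum_{n\in\Z}\bigl[S(q^+(\tau+n), q^+(\tau+n+1)) - S(q^-(\tau+n), q^-(\tau+n+1))\bigr]$, with $S$ a generating function for $F_\varepsilon$, and $\mathcal{L}$ is periodic in $\tau$ by a reindexing argument, which forces $\int_0^1 \Theta^-\,d\tau = 0$ exactly and reduces the lemma to the first step only.
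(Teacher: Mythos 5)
The first half of your argument---the decomposition $c_0 = \sigma_0 + (c_0 - \sigma_0)$ with $\sigma_0 := \int_0^1\Theta^-$, and the bound $|c_0 - \sigma_0| \les C\varepsilon^{-18}\me^{-4\pi^2/\varepsilon}$ obtained by combining Lemma \ref{thm_splitting_bound} with Lemma \ref{thm_var_eq_sol_exp_bound}---is exactly the paper's second step, so that part is fine.

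The problem is the bound on $\sigma_0$. Your primary route (telescoping $\Theta^-(\tau+1)-\Theta^-(\tau)=\omega(R\delta,D\dot W^-)$ out to $\tau\sim -\varepsilon^{-2}$) has the gap you yourself flag, and it is a real one: the only pointwise bounds on $\delta$ and $\dot W^-$ established in the paper live on the fixed, $\varepsilon$-independent rectangle $\mathcal D$, and nothing in the paper controls them on a real interval of length $\sim\varepsilon^{-2}$; filling that in would need an exponential-dichotomy argument along the unstable manifold that the paper never has to make. Your ``cleaner alternative'' is wrong as stated: $\int_0^1\Theta^-$ is \emph{not} exactly zero. The identification $\Theta^- = \mathcal L'$ is only correct to first order in $\delta$. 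A direct computation with the generating function gives $\mathcal L'(\tau) = p^-\dot q^- - p^+\dot q^+$, and
\begin{align*}
  \mathcal L'(\tau)-\Theta^-(\tau) = -\tfrac{d}{d\tau}\big(p^-\delta_q\big) - \delta_p\dot\delta_q ,
\end{align*}
so periodicity of $\mathcal L$ and vanishing of $\delta$ at the endpoints force $\int_0^1\Theta^- = \int_0^1\delta_p\dot\delta_q = -\tfrac12\int_0^1\omega(\delta,\dot\delta)$, which is exponentially small but nonzero in general (it is essentially a lobe area).

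The paper obtains exactly this identity, but more cheaply: it applies Green's theorem to the closed arc formed by $W^+|_{[\tau_h,\tau_h+1]}$ and $W^-|_{[\tau_h,\tau_h+1]}$, notes that area-preservation of $F_\varepsilon$ makes the enclosed signed area vanish, that the boundary term $\omega(W^-,\delta)\big|_{\tau_h}^{\tau_h+1}$ vanishes because $\delta(\tau_h)=\delta(\tau_h+1)=0$, and concludes $\sigma_0 = \pm\tfrac12\int_0^1\omega(\delta,\dot\delta)$. That reduces the estimate of $\sigma_0$ to the already-proved $\delta$ bound on one period and avoids any long-range tracking. You should replace both of your proposed routes for the $\sigma_0$ bound with this local Green's-theorem argument; the correct version of your action-based heuristic is precisely that computation, and it yields a nonzero quadratic correction, not an exact cancellation.
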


\begin{proof}
Let $\tau_h$ be such that $W^+(\varepsilon;\tau_h) = W^-(\varepsilon;\tau_h)$.
Then $W^+(\varepsilon;\tau_h+1) = W^-(\varepsilon;\tau_h+1)$.
 Let $A$ be the signed area enclosed by these two pieces of the separatrices.
 Using Green's formula to calculate the area we get that
\begin{align*}
 A = \frac{1}{2}\int_0^1 \omega \big(W^+(\varepsilon;\tau_h+s),\dot{W}^+(\varepsilon;\tau_h+s)\big)
 - \omega \big(W^-(\varepsilon;\tau_h+s),\dot{W}^-(\varepsilon;\tau_h+s)\big) \dd s.
\end{align*}
It holds that $W^+ = W^- + \delta$ so we have
\begin{align*}
 \omega \big(W^+ ,\dot{W}^+ \big) - \omega \big(W^- ,\dot{W}^- \big) &= \omega
 \big(\delta ,\dot{W}^- \big) +  \omega \big(W^- ,\dot{\delta} \big)  +  \omega \big(\delta ,\dot{\delta} \big) \\
 &= \frac{\dd}{\dd t}\omega \big( W^- , \delta \big) + 2\omega \big(\delta ,\dot{W}^-
 \big) - \omega \big(\delta ,\dot{\delta} \big).
\end{align*}
We define
$$\sigma_0 = \int_0^1 \omega \big( \delta(\varepsilon;\tau_h+s), \dot{W}^-(\varepsilon;\tau_h+s) \big) \dd s $$
and we have
$$ A - \sigma_0 = \frac{1}{2} \omega \big(  W^-(\varepsilon;\tau_h+s) \big),
\delta(\varepsilon;\tau_h+s) \Big|_{s=0}^1 - \frac{1}{2}\int_0^1 \omega \big(\delta(\varepsilon;\tau_h+s)
,\dot{\delta}(\varepsilon;\tau_h+s) \big) \dd s .$$
Since the map is area-preserving, $A=0$ and since $\delta(\varepsilon;\tau_h)=\delta(\varepsilon;\tau_h+1)=0$,
we get that
$$\sigma_0 = \frac{1}{2}\int_0^1 \omega \big(\delta(\varepsilon;\tau_h+s)  ,
\dot{\delta}(\varepsilon;\tau_h+s) \big) \dd s. $$
Using Lemma \ref{thm_splitting_bound} for $\delta$ and $\dot{\delta}$ we get $|\sigma_0|
\les C \varepsilon^{-4} \me^{-\frac{ 4\pi^2}{\varepsilon}} $.

Since $c_0$ is the constant term of a periodic function, it is true that
$$c_0 = \int_0^1 \omega \big( \delta(\varepsilon;\tau_h+s), \Phi(\varepsilon;\tau_h+s) \big) \dd s. $$
So
$$ |c_0 - \sigma_0| \les \int_0^1 \big|\omega \big( \delta(\varepsilon;\tau_h+s),
\Phi(\varepsilon;\tau_h+s) - \dot{W}^-(\varepsilon;\tau_h+s) \big)\big| \dd s . $$
Using Lemmas \ref{thm_splitting_bound} and \ref{thm_var_eq_sol_exp_bound} we get that
$$ |c_0 - \sigma_0| \les C \varepsilon^{-18} \me^{-\frac{ 4\pi^2}{\varepsilon} } \big( 1 + O(\varepsilon^{1/2}) \big). $$

Combining the above estimates concludes the proof.
\myqed
\end{proof}

\subsection{Asymptotic series  for the homoclinic invariant}

Now we have all the ingredients we need in order to prove the asymptotic series for the Lazutkin homoclinic invariant.

\begin{lemma}
 There exist real numbers $\vartheta_n$ such that
 $$\Omega(\varepsilon) = \Bigg( \sum_{n=0}^M \vartheta_n \varepsilon^n  + O(\varepsilon^{M+1})
 \Bigg)\me^{-\frac{2\pi^2}{\varepsilon}}.$$
 Moreover, $\vartheta_0 = 4 \pi |\theta_0|$, where $\theta_0$ is the Stokes constant of the resonant map.
\end{lemma}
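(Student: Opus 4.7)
The plan is to realize $\Omega$ as the derivative at a zero of an almost-periodic function, reduce it to the first Fourier coefficient of $\Theta$ already controlled by the preceding subsection, and then insert the asymptotic expansion of $\theta(\varepsilon)$.

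First I would observe that since $\delta(\varepsilon;\tau_h)=0$ and $\dot\delta=\dot W^+-\dot W^-$,
$$\Omega=\omega\big(\dot W^+(\varepsilon;\tau_h),\dot W^-(\varepsilon;\tau_h)\big)=\omega\big(\dot\delta(\varepsilon;\tau_h),\dot W^-(\varepsilon;\tau_h)\big)=\partial_\tau\Theta^-(\varepsilon;\tau_h),$$
the last equality using that $\omega(\delta,\ddot W^-)$ vanishes at $\tau_h$. Next I would replace $\Theta^-$ by the periodic $\Theta=\omega(\delta,\Phi)$: since $\Theta-\Theta^-=\omega(\delta,\Phi-\dot W^-)$ and $\delta$ vanishes at $\tau_h$, one has $\partial_\tau(\Theta-\Theta^-)|_{\tau_h}=\omega(\dot\delta(\tau_h),\Phi(\tau_h)-\dot W^-(\tau_h))$. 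Lemma \ref{thm_splitting_bound} together with a Cauchy estimate controls $\dot\delta$, and Lemma \ref{thm_var_eq_sol_exp_bound} controls $\Phi-\dot W^-$, each by polynomials in $\varepsilon^{-1}$ times $\me^{-2\pi^2/\varepsilon}$; consequently this mismatch is $O(\varepsilon^{-k}\me^{-4\pi^2/\varepsilon})$ and is negligible at the resolution of the lemma.

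Then I would expand $\Theta$ as a Fourier series
$$\Theta(\varepsilon;\tau)=c_0+c_1\me^{-2\pi\ii\tau}+\overline{c_1}\me^{2\pi\ii\tau}+\mathcal{R}(\varepsilon;\tau),$$
collecting harmonics $|n|\ges2$ into $\mathcal R$. The Fourier bound in the strip of analyticity (width $\pi/\varepsilon-\Lambda$) gives $|c_n|\les\|\Theta\|_\infty\me^{-2\pi n(\pi/\varepsilon-\Lambda)}$, so $\mathcal R$ and $\partial_\tau\mathcal R$ are $O(\me^{-4\pi^2/\varepsilon})$ times polynomials in $\varepsilon^{-1}$. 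Combining $\Theta(\varepsilon;\tau_h)=0$ with the constant-term bound $|c_0|\les C\varepsilon^{-18}\me^{-4\pi^2/\varepsilon}$ forces $\mathrm{Re}(c_1\me^{-2\pi\ii\tau_h})=O(\varepsilon^{-18}\me^{-4\pi^2/\varepsilon})$. Writing $c_1=|c_1|\me^{\ii\varphi}$ (legitimate since $\theta_0\neq 0$ implies $c_1\neq 0$ for small $\varepsilon$), this determines $\varphi-2\pi\tau_h=\pm\pi/2+O(\varepsilon^{-18}\me^{-2\pi^2/\varepsilon})$, producing the two primary homoclinic orbits and yielding $\mathrm{Im}(c_1\me^{-2\pi\ii\tau_h})=\pm|c_1|\bigl(1+O(\varepsilon^{-36}\me^{-4\pi^2/\varepsilon})\bigr)$. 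Differentiating the Fourier expansion and evaluating at $\tau_h$ then gives
$$\Omega=\partial_\tau\Theta(\varepsilon;\tau_h)+O(\varepsilon^{-k}\me^{-4\pi^2/\varepsilon})=\pm 4\pi|c_1|+O(\varepsilon^{-k}\me^{-4\pi^2/\varepsilon}).$$

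Finally I would use the previous subsection's expansion $c_1\me^{2\pi^2/\varepsilon}=\theta(\varepsilon)=\sum_{n=0}^M\theta_n\varepsilon^n+O(\varepsilon^{M+1})$. Since $\theta_0\ne 0$, the modulus $|\theta(\varepsilon)|=\sqrt{\theta(\varepsilon)\,\overline{\theta(\varepsilon)}}$ admits an asymptotic expansion in $\varepsilon$ with real coefficients $|\theta(\varepsilon)|=|\theta_0|+\sum_{n=1}^M\eta_n\varepsilon^n+O(\varepsilon^{M+1})$. Setting $\vartheta_0=4\pi|\theta_0|$ and $\vartheta_n=4\pi\eta_n$ for $n\ges 1$ and absorbing the $O(\varepsilon^{-k}\me^{-4\pi^2/\varepsilon})$ corrections into the $O(\varepsilon^{M+1})\me^{-2\pi^2/\varepsilon}$ remainder completes the argument. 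The main obstacle is the phase-extraction step: it depends critically on the sharper bound $|c_0|\les C\varepsilon^{-18}\me^{-4\pi^2/\varepsilon}$, without which one could only conclude that the cosine is small on the scale of $|c_1|$ rather than quadratically small, and the identification $\vartheta_0=4\pi|\theta_0|$ would fail.
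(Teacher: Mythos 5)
Your proposal is correct and follows essentially the same path as the paper: reduce $\Omega$ to $\partial_\tau\Theta^-(\varepsilon;\tau_h)$, pass to the genuinely periodic $\Theta$ at the cost of an error that is exponentially smaller, extract the first Fourier harmonic, use the sharp $c_0$ bound and $\Theta(\tau_h)=0$ to pin the phase so that the cosine term is quadratically small and the sine term is $\pm 1$ up to negligible error, then substitute the expansion $\theta(\varepsilon)=\sum\theta_n\varepsilon^n+O(\varepsilon^{M+1})$ and take the modulus. The only cosmetic differences are that you phrase the phase extraction via real and imaginary parts of $c_1\me^{-2\pi\ii\tau_h}$ rather than via the amplitude-phase form $2|\theta(\varepsilon)|\cos(2\pi\tau-\arg\theta(\varepsilon))$ used in the paper, and that you make explicit the Cauchy-estimate step needed to bound $\dot\delta$ from the bound on $\delta$; neither changes the argument.
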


\begin{proof}
 
 By Lemma \ref{thm_var_eq_sol_exp_bound} we have
 \begin{align*}
  \Phi(\varepsilon;\tau) - \dot{W}^-(\varepsilon;\tau) = O(\varepsilon^{-16}\me^{-\frac{2\pi^2}{\varepsilon})}).
 \end{align*}
 This implies that since 
 \begin{align*}
  \Theta(\varepsilon;\tau) - \Theta^-(\varepsilon;\tau) = \omega\big( \delta(\varepsilon;\tau) ,
  \Phi(\varepsilon;\tau) - \dot{W}^-(\varepsilon;\tau) \big),
 \end{align*}
 using the bound of Lemma \ref{thm_splitting_bound} for real $\tau$, we get 
 \begin{align*}
  \Theta(\varepsilon;\tau) - \Theta^-(\varepsilon;\tau) = O(\varepsilon^{-18}\me^{-\frac{4\pi^2}{\varepsilon}}).
 \end{align*}
 By the Fourier expansion \eqref{eq_Theta_fourier_high} and equation \eqref{eq_first_fourier_coeff} we have
 \begin{align*}
  \Theta(\varepsilon;\tau) &= c_0 + \theta(\varepsilon) \me^{-\frac{2\pi^2}{\varepsilon}}
  \me^{-2\pi\ii\tau} + \overline{\theta(\varepsilon)} \me^{-\frac{2\pi^2}{\varepsilon}}
  \me^{2\pi\ii\tau} + O(\me^{-\frac{4\pi^2}{\varepsilon}}) .
 \end{align*}
 We know that $|c_0| \les C \varepsilon^{-18} \me^{-\frac{ 4\pi^2}{\varepsilon} }
 \big( 1 + O(\varepsilon^{1/2}) \big)$, so we get
 \begin{align*}
  \Theta^-(\varepsilon;\tau) = 2|\theta(\varepsilon)| \, \me^{-\frac{2\pi^2}{\varepsilon}}\,
  \cos\big(2\pi\tau - \arg(\theta(\varepsilon))\big) + O(\varepsilon^{-18}\me^{-\frac{4\pi^2}{\varepsilon}}).
 \end{align*}

 Let $W^+(\tau_h)$ be a homoclinic point, then $\Theta^-(\varepsilon;\tau_h) = 0$.
 From the above relation we get that
 $$ \cos\big(2\pi \tau_h - \arg(\theta(\varepsilon))\big) = O(\varepsilon^{-18}\me^{-\frac{2\pi^2}{\varepsilon}}). $$
 This implies that
 $$ \sin \big(2\pi \tau_h - \arg(\theta(\varepsilon))\big) = 1 + O(\varepsilon^{-36}\me^{-\frac{4\pi^2}{\varepsilon}}). $$
 So
 \begin{align*}
  \dot\Theta^-(\varepsilon;\tau_h) &= 4\pi|\theta(\varepsilon)| \, \me^{-\frac{2\pi^2}{\varepsilon}}\,
  \sin \big(2\pi\tau - \arg(\theta(\varepsilon))\big) + O(\varepsilon^{-18}\me^{-\frac{4\pi^2}{\varepsilon}}) \\
  &= 4\pi|\theta(\varepsilon)| \, \me^{-\frac{2\pi^2}{\varepsilon}}\,
  \Big( 1 + O(\varepsilon^{-36}\me^{-\frac{4\pi^2}{\varepsilon}}) \Big)
  + O(\varepsilon^{-18}\me^{-\frac{4\pi^2}{\varepsilon}}) \\
  &= 4\pi|\theta(\varepsilon)| \, \me^{-\frac{2\pi^2}{\varepsilon}}
  + O(\varepsilon^{-18}\me^{-\frac{4\pi^2}{\varepsilon}}).
 \end{align*}
 Differentiating the relation $\Theta^-(\varepsilon;\tau) =
 \omega(\delta(\varepsilon;\tau),\dot W^-(\varepsilon;\tau))$ we get
 \begin{align*}
  \dot\Theta^-(\varepsilon;\tau) = \omega \big(\dot\delta(\varepsilon;\tau),
  \dot W^-(\varepsilon;\tau) \big) + \omega \big(\delta(\varepsilon;\tau),
  \ddot W^-(\varepsilon;\tau) \big).
 \end{align*}
 Since $\delta(\varepsilon;\tau_h)=0$ we get
 \begin{align*}
  \dot\Theta^-(\varepsilon;\tau_h) &= \omega \big(\dot\delta(\varepsilon;\tau_h),
  \dot W^-(\varepsilon;\tau_h) \big) \\
  &=\omega \big(\dot W^+(\varepsilon;\tau_h),\dot W^-(\varepsilon;\tau_h) \big),
 \end{align*}
 which is by definition the homoclinic invariant.
 
 Finally, in order to prove the lemma, we use the fact that $ \theta(\varepsilon)=
 \sum_{n=0}^M \varepsilon^n \theta_n + O(\varepsilon^{M+1})$. This implies that
 \begin{align*}
  4\pi|\theta(\varepsilon)| = \sum_{n=0}^M \vartheta_n \varepsilon^n  + O(\varepsilon^{M+1})
 \end{align*}
 for some real constants $\vartheta_n$.
\myqed
\end{proof}

\section*{Acknowledgments}

I would like to thank Vassili Gelfreich for having suggested this question
to me and for having generously shared his ideas.

\bibliographystyle{plain}
\bibliography{APM31unfold}

\end{document}